\newtheorem*{thm*}{Theorem}
\numberwithin{equation}{section} 
\newtheorem{thm}{Theorem}[section]
\newtheorem{prp}[thm]{Proposition} 
\newtheorem{lmm}[thm]{Lemma}  
\newtheorem{crl}[thm]{Corollary}
\newtheorem*{prp*}{Proposition}
\theoremstyle{definition}
\newtheorem{eg}[thm]{Example}
\newtheorem{rmk}[thm]{Remark}
\newtheorem{cnj}[thm]{Conjecture}
\DeclareFontFamily{U}{mathx}{\hyphenchar\font45}
\DeclareFontShape{U}{mathx}{m}{n}{
      <5> <6> <7> <8> <9> <10>
      <10.95> <12> 
      mathx10
      }{}
\DeclareSymbolFont{mathx}{U}{mathx}{m}{n}
\DeclareMathAccent{\widecheck}{0}{mathx}{"71}
\def\BE#1{\begin{equation}\label{#1}}
\def\EE{\end{equation}}
\def\eref#1{(\ref{#1})}
\def\lra{\longrightarrow}
\def\xra#1{\xrightarrow{\hspace*{#1cm}}}
\def\lr#1{\langle{#1}\rangle}
\def\blr#1{\big\langle{#1}\big\rangle}
\def\ov#1{\overline#1}
\def\wc#1{\widecheck{#1}}
\def\wt#1{\widetilde{#1}}
\def\wh#1{\widehat{#1}}
\def\tn#1{\textnormal{#1}}
\def\sm#1{\begin{small}#1\end{small}}
\def\ori#1{\accentset{\circ}{#1}}
\def\obu#1{\accentset{\bu}{#1}}
\def\sf#1{\textsf{#1}}
\def\al{\alpha}
\def\be{\beta}
\def\de{\delta}
\def\ga{\gamma}
\def\io{\iota}
\def\ka{\kappa}
\def\la{\lambda}
\def\om{\omega}
\def\si{\sigma}
\def\th{\theta}
\def\vph{\varphi}
\def\Ga{\Gamma}
\def\De{\Delta}
\def\Th{\Theta}
\def\C{\mathbb C}
\def\bE{\mathbb E}
\def\cF{\mathcal F}
\def\cH{\mathcal H}
\def\bK{\mathbb K}
\def\cL{\mathcal L}
\def\M{\mathcal M}
\def\fM{\mathfrak M}
\def\cN{\mathcal N}
\def\cO{\mathcal O}
\def\P{\mathbb P}
\def\cR{\mathcal R}
\def\fR{\mathfrak R}
\def\Q{\mathbb Q}
\def\R{\mathbb R}
\def\bS{\mathbb S}
\def\T{\mathbb T}
\def\fX{\mathfrak X}
\def\cZ{\mathcal Z}
\def\Z{\mathbb Z}
\def\bs{\mathbf s}
\def\bff{\mathbf f}
\def\ne{\textnormal{e}}
\def\fI{\mathfrak i}
\def\fc{\mathfrak c}
\def\ff{\mathfrak f}
\def\fs{\mathfrak s}
\def\x{\mathbf x}
\def\z{\mathbf z}
\def\0{\mathbf 0}
\def\Si{\Sigma}
\def\Aut{\tn{Aut}}
\def\Contr{\tn{Contr}}
\def\deg{\textnormal{deg}}
\def\Deck{\tn{Deck}}
\def\Eff{\tn{Eff}}
\def\ev{\textnormal{ev}}
\def\Flux{\tn{Flux}}
\def\GW{\textnormal{GW}}
\def\id{\textnormal{id}}
\def\Im{\textnormal{Im}}
\def\Obs{\tn{Obs}}
\def\ord{\textnormal{ord}}
\def\pt{\tn{pt}}
\def\PD{\textnormal{PD}}
\def\vir{\textnormal{vir}}
\def\i{\infty}
\def\eset{\emptyset}
\def\nd{\textnormal{d}}
\def\bu{\bullet}
\begin{document}

\title{On the Refined Symplectic Sum Formula\\ 
for Gromov-Witten Invariants}
\author{Mohammad F.~Tehrani and Aleksey Zinger\thanks{Partially 
supported by NSF grant 0846978}}
\date{\small\today}
\maketitle

\begin{abstract}
\noindent
We describe the extent to which Ionel-Parker's proposed refinement of 
the standard relative Gromov-Witten invariants  sharpens
the usual symplectic sum formula.
The key product operation on the target spaces for the refined invariants 
is specified in terms of abelian covers of symplectic divisors,
making it suitable for studying from a topological perspective.
We give several qualitative applications of this refinement,
which include vanishing results for  Gromov-Witten invariants.
\end{abstract}

\tableofcontents

\section{Introduction}
\label{intro_sec}

\noindent
Gromov-Witten invariants of symplectic manifolds, which include nonsingular projective varieties, 
are certain counts of pseudo-holomorphic curves that play 
prominent roles in symplectic topology, algebraic geometry, and string theory.
The decomposition formulas, known as symplectic sum formulas 
in symplectic topology and degeneration formulas in algebraic geometry, 
are one of the main tools used to compute Gromov-Witten invariants;
they relate Gromov-Witten invariants of one symplectic manifold 
to Gromov-Witten invariants of two simpler symplectic manifolds.
Unfortunately, the formulas of \cite{Jun2,LR} do not completely determine 
the former in terms of the latter
in many cases because of the so-called \sf{vanishing cycles}:
second homology classes in the first manifold which vanish when projected 
to the union of the other two manifolds; see~\eref{cRXYVprop_e}.
A~refinement to the usual relative Gromov-Witten invariants of \cite{Jun1,LR} 
is sketched in~\cite{IPrel};
the aim of this refinement is to resolve the unfortunate deficiency of 
the  formulas of \cite{Jun2,LR} in~\cite{IPsum}.
In~\cite{GWrelIP}, we formally constructed  
the refinement to relative invariants suggested in~\cite{IPrel} and 
discussed the invariance and computability aspects of the resulting curve counts.
In this paper, we 
describe the extent to which it sharpens the usual symplectic sum formula
and obtain some qualitative applications.

\subsection{Relative GW-invariants}
\label{RelGW_subs0}

\noindent
Let $(X,\om)$ be a compact symplectic manifold and $J$ be an $\om$-tame almost complex structure on~$X$.
For $g,k\!\in\!\Z^{\ge0}$ and $A\!\in\!H_2(X;\Z)$, we denote by $\ov\fM_{g,k}(X,A)$ 
the moduli space of stable $J$-holomorphic $k$-marked degree~$A$ maps from connected nodal 
curves of genus~$g$.
By \cite{LT,FO,BF}, this moduli space carries a virtual class, which is independent of~$J$
and of representative~$\om$ in a deformation equivalence class 
of symplectic forms on~$X$.
If $V\!\subset\!X$ is a compact symplectic divisor (symplectic submanifold of real codimension~2), 
$\ell\!\in\!\Z^{\ge0}$, $\bs\!\equiv\!(s_1,\ldots,s_{\ell})$
is an $\ell$-tuple of positive integers such~that 
\BE{bsumcond_e} s_1+\ldots+s_{\ell}=A\cdot V,\EE
and $J$ restricts to an almost complex structure on $V$, let 
$\ov\fM_{g,k;\bs}^V(X,A)$  denote the moduli space of 
stable $J$-holomorphic $(k\!+\!\ell)$-marked maps from connected nodal curves of genus~$g$ 
that have contact with~$V$ at the last $\ell$ marked points of orders $s_1,\ldots,s_{\ell}$.
According to \cite{LR,Jun1}, this moduli space carries a virtual class, 
which is independent of~$J$ and of representative~$\om$ in a deformation equivalence class 
of symplectic forms on~$(X,V)$.\\

\noindent
There are natural \sf{evaluation morphisms}
\begin{alignat}{1}\label{evdfn_e1}
\ev_X\!\equiv\!\ev_1\!\times\!\ldots\!\times\!\ev_k\!: 
\ov\fM_{g,k}(X,A),\ov\fM_{g,k;\bs}^V(X,A)&\lra X^k,\\
\label{evdfn_e2}
\ev_X^V\!\equiv\!\ev_{k+1}\!\times\!\ldots\!\times\!\ev_{k+\ell}:
\ov\fM_{g,k;\bs}^V(X,A)&\lra V_{\bs}\equiv V^{\ell},
\end{alignat}
sending each stable map to its values at the marked points.
The (\textsf{absolute}) \textsf{GW-invariants of~$(X,\om)$}
are obtained by pulling back elements of $H^*(X^k;\Q)$ by the morphism~\eref{evdfn_e1}
and integrating them and other natural classes on  $\ov\fM_{g,k}(X,A)$
against the virtual class of $\ov\fM_{g,k}(X,A)$.
The (\textsf{relative}) \textsf{GW-invariants of~$(X,V,\om)$}
are obtained by pulling back elements of $H^*(X^k;\Q)$ and $H^*(V_{\bs};\Q)$
by the morphisms~\eref{evdfn_e1} and~\eref{evdfn_e2}, and integrating them and 
other natural classes on  $\ov\fM_{g,k;\bs}^V(X,A)$
against the virtual class of $\ov\fM_{g,k;\bs}^V(X,A)$.\\

\noindent
As emphasized in \cite[Section~5]{IPrel}, two preimages of the same point in~$V_{\bs}$
under~\eref{evdfn_e2} determine an element~of 
\BE{cRXVdfn_e0}\cR_X^V\equiv \ker\big\{\io_{X-V*}^X\!:\,H_2(X\!-\!V;\Z)\lra H_2(X;\Z)\big\},\EE
where $\io_{X-V}^X\!:X\!-\!V\!\lra\!X$ is the inclusion; 
see \cite[Section~2.1]{GWrelIP}.
The elements of~$\cR_X^V$, called \sf{rim tori} in~\cite{IPrel},
can be represented by circle bundles over loops~$\ga$ in~$V$; 
see \cite[Section~3.1]{GWrelIP}.
By standard topological considerations,
\BE{cRXVvsH1V_e}\cR_X^V\approx H_1(V;\Z)_X\equiv 
\frac{H_1(V;\Z)}{H_X^V},
\qquad\hbox{where}\quad H_X^V\equiv \big\{A\!\cap\!V\!:\,A\!\in\!H_3(X;\Z)\big\} \,;\EE
see \cite[Corollary~3.2]{GWrelIP}.\\

\noindent
The main claim of \cite[Section~5]{IPrel} is that the above observations can be used 
to lift~\eref{evdfn_e2} over some regular (Galois), possibly disconnected (unramified) covering
\BE{IPcov_e}\pi_{X;\bs}^V\!: \wh{V}_{X;\bs}\lra V_{\bs};\EE
the topology of this cover is specified in \cite[Section~6.1]{GWrelIP}.
Its group of deck transformations~is 
\BE{DeckGr_e}\Deck\big(\pi_{X;\bs}^V\big) 
=\frac{\cR_X^V}{\cR_{X;\bs}'^{\,V}}\!\times\! \cR_{X;\bs}'^{\,V}\EE
for a certain submodule $\cR_{X;\bs}'^{\,V}$ of $\cR_{X}^V$.
For example,
$$\cR_{X;\bs}'^{\,V}=\begin{cases}\{0\},&\hbox{if}~\ell\!=\!0;\\
\gcd(\bs)\cR_X^V,&\hbox{if}~|\pi_0(V)|\!=\!1.
\end{cases}$$
As discussed in \cite[Section~1.1]{GWrelIP}, the topology of the covering~\eref{IPcov_e}
is usually very complicated.
Since
\BE{evfactor_e}\ev_X^V\!=\!\pi_{X;\bs}^V\!\circ\!\wt\ev_X^V\!:\,\ov\fM_{g,k;\bs}^V(X,A)\lra V_{\bs}\EE
for some morphism
\BE{evXVlift_e} \wt\ev_X^V\!:\ov\fM_{g,k;\bs}^V(X,A)\lra \wh{V}_{X;\bs}\,,\EE
the numbers obtained by pulling back elements of $H^*(\wh{V}_{X;\bs};\Q)$ by~\eref{evXVlift_e},
instead of elements of $H^*(V_{\bs};\Q)$ by~\eref{evdfn_e2}, and integrating them and 
other natural classes on  $\ov\fM_{g,k;\bs}^V(X,A)$ against the virtual class of 
$\ov\fM_{g,k;\bs}^V(X,A)$ refine the usual GW-invariants of~$(X,V,\om)$.
We will call these numbers the {\sf{IP-counts for~$(X,V,\om)$}.
These numbers generally depend on the choice of the lift~\eref{evXVlift_e}.\\

\noindent
The construction of the coverings~\eref{IPcov_e} is recalled in Section~\ref{RelGW_subs}.
The lifts~\eref{evXVlift_e} can be chosen systematically in a manner suitable
for use in the symplectic sum context; see Proposition~\ref{RimToriAct_prp}.
In~\cite{GWrelIP}, we deduced vanishing results for the standard GW-invariants 
of $(X,V,\om)$ from the existence of the lifts~\eref{evXVlift_e}.
We use a very basic case of these vanishing results
in Section~\ref{RES_sec} to streamline the proof of \cite[(15.4)]{IPsum},
after correcting its statement;
this formula computes the GW-invariants of the blowup~$\wh\P^2_9$ of~$\P^2$ at 9~points.

\subsection{Symplectic sum formulas}
\label{SympSum_subs0}

\noindent
Let  $(X,\om_X)$ and $(Y,\om_Y)$ be compact symplectic manifolds
with a common compact symplectic divisor $V\!\subset\!X,Y$.
If
\BE{cNVcond_e}e(\cN_XV)=-e(\cN_YV)\in H^2(V;\Z),\EE
then there exists an isomorphism
\BE{cNpair_e} \Phi\!:\cN_XV\otimes\cN_YV\approx V\!\times\!\C\EE
of complex line bundles.
A \sf{symplectic sum} of symplectic manifolds $(X,\om_X)$ and $(Y,\om_Y)$ 
with a common symplectic divisor~$V$ such that~\eref{cNVcond_e} holds
is a symplectic manifold $(Z,\om_Z)\!=\!(X\!\#_V\!Y,\om_{\#})$ obtained from~$X$ and~$Y$ 
by gluing the complements of tubular neighborhoods of~$V$ in~$X$ and~$Y$ 
along their common boundary as directed by~$\Phi$.
In fact, the symplectic sum construction of~\cite{Gf, MW} produces 
a \sf{symplectic fibration} $\pi\!:\cZ\!\lra\!\De$ 
with central fiber \hbox{$\cZ_0\!=\!X\!\cup_V\!Y$}, where
$\De\!\subset\!\C$ is a disk centered at the origin and 
$\cZ$ is a symplectic manifold with symplectic form~$\om_{\cZ}$
such~that 
\begin{enumerate}[label=$\bullet$,leftmargin=*]
\item  $\pi$ is surjective and is a submersion outside of $V\!\subset\!\cZ_0$, 
\item the restriction~$\om_{\la}$ of~$\om_{\cZ}$ to $\cZ_{\la}\equiv\pi^{-1}(\la)$ 
is nondegenerate for every $\la\!\in\!\De^*$,
\item $\om_{\cZ}|_X\!=\!\om_X$, $\om_{\cZ}|_Y\!=\!\om_Y$.
\end{enumerate} 
The symplectic manifolds $(\cZ_{\la},\om_{\la})$ with $\la\!\in\!\De^*$
are then symplectically deformation equivalent to each other and denoted~$(X\!\#_V\!Y,\om_{\#})$.
However, different homotopy classes of the isomorphisms~\eref{cNpair_e} give rise
to generally different topological manifolds; see~\cite{Gf0}.
There is also a retraction $q\!:\cZ\!\lra\!\cZ_0$ such that $q_{\la}\!\equiv\!q|_{\cZ_{\la}}$ 
restricts to a diffeomorphism 
$$\cZ_{\la}-q_{\la}^{-1}(V)\lra \cZ_0-V$$
and to an $S^1$-fiber bundle $q_{\la}^{-1}(V)\!\lra\!V$, whenever $\la\!\in\De^*$.
We denote by $q_{\#}\!:X\!\#_V\!Y\!\lra\!X\!\cup_V\!Y$ a typical \sf{collapsing map}~$q_{\la}$.\\

\noindent
The symplectic sum formulas of \cite{LR,Jun2} for GW-invariants relate 
the absolute GW-invariants of a smooth fiber $\cZ_{\la}\!=\!X\!\#_V\!Y$ 
to the GW-invariants of a singular fiber $\cZ_0\!=\!X\!\cup_V\!Y$ 
and to the relative GW-invariants of the pairs $(X,V)$ and~$(Y,V)$.
The first relation is often called a degeneration formula for GW-invariants
in symplectic topology and an  \sf{invariance property of GW-invariants} in algebraic geometry;
the formula \cite[(5.7)]{LR} and the second formula at the bottom of \cite[p201]{Jun2}
fall under this category. 
The second relation is often called a \sf{decomposition formula for GW-invariants}
in symplectic topology and a degeneration formula for GW-invariants in algebraic geometry;
the three formulas \cite[(5.4),(5.7),(5.8)]{LR} together and the first formula 
at the bottom of \cite[p201]{Jun2} fall under this category. 
In order to reduce confusion, we will call the first relation an \sf{invariance property} 
and the second a \sf{decomposition formula}; see Table~\ref{termin_tbl}.
As indicated below, a decomposition formula for GW-invariants is an immediate consequence
of an invariance property  in the basic symplectic sum settings of \cite{LR,Jun2}.
However, the difference between the two formulas
turns out to be insurmountable in the refined setting of~\cite{IPsum}
and substantial in the (unrefined) multifold degeneration settings of \cite{Brett,GS,AC}.\\

\begin{table}
\begin{center}
\begin{tabular}{||c||c|c||}
\hline\hline
relation for GWs& ST name& AG name\\
\hline
$X\!\#_V\!Y$ vs.~$X\!\cup_V\!\!Y$
&degeneration formula &\sf{invariance property}\\
\hline
$X\!\#_V\!Y$ vs.~$(X,V)$ and~$(Y,V)$
&\sf{decomposition formula} &degeneration formula\\
\hline\hline
\end{tabular}
\end{center}
\caption{ST and AG terminology describing the two types of symplectic sum formulas
for GW-invariants; our terminology is in \sf{sans-serif}.}
\label{termin_tbl}
\end{table}

\noindent
With $V\!\subset\!X,Y$ as above, let 
\begin{alignat}{1}
\label{cRXYVprop_e}
\cR_{X,Y}^V&=\ker\big\{q_{\#*}\!:H_2(X\!\#_V\!Y;\Z)\lra H_2(X\!\cup_V\!\!Y;\Z)\big\},\\
\notag
H_2(X;\Z)\!\times_V\!H_2(Y;\Z)&=
\big\{(A_X,A_Y)\!\in\!H_2(X;\Z)\!\times\!H_2(Y;\Z)\!:~A_X\!\cdot_X\!V=A_Y\!\cdot_Y\!V\big\}.
\end{alignat}
As recalled in \cite[Section~2.2]{GWrelIP}, there is a natural homomorphism
\BE{homsumdfn_e}
H_2(X;\Z)\!\times_V\!H_2(Y;\Z)\lra H_2(X\!\#_V\!Y;\Z)/\cR_{X,Y}^V, \quad
(A_X,A_Y)\lra A_X\!\#_V\!A_Y.\EE
It is obtained by representing $A_X$ and $A_Y$ by cycles in~$X$ and~$Y$ with the same contacts
with~$V$ and smoothing out the nodes of the resulting cycle into $X\!\cup_V\!\!Y\!\subset\!\cZ$.  
Let  \hbox{$\eta\!\in\!H_2(X\!\#_V\!Y;\Z)/\cR_{X,Y}^V$}
be an $\cR_{X,Y}^V$-coset of $H_2(X\!\#_V\!Y;\Z)$ and $g\!\in\!\Z^{\ge0}$.
According to the invariance formulas of \cite{LR,Jun2},
the sum of the genus~$g$ GW-invariants of $X\!\#_V\!Y$ with degrees $A\!\in\!\eta$
is the same as the sum of the genus~$g$ GW-invariants of $X\!\cup_V\!\!Y$
of degrees
$$(A_X,A_Y)\in H_2(X;\Z)\!\times_V\!H_2(Y;\Z)
\qquad\hbox{s.t.}\quad A_X\!\#_V\!A_Y=\eta$$
with the same cohomological insertions.
The allowed cohomological insertions consist of intrinsic classes on moduli spaces of
stable maps, such as $\psi$- and $\la$-classes, and pullbacks of cohomology classes on~$\cZ$
by the evaluation morphism~\eref{evdfn_e1};
we will call such insertions \sf{$\Phi$-admissible inputs}.
Two characterizations of cohomology classes on a smooth fiber $\cZ_{\la}\!=\!X\!\#_V\!Y$ that are 
restrictions of cohomology classes on~$\cZ$ are provided in \cite[Section~4.4]{GWrelIP}.
By Gromov's Compactness Theorem for $J$-holomorphic curves, 
both sums have only finitely many possibly nonzero terms for each fixed~$g$ and~$\eta$
(independently of the cohomological insertions).\\

\noindent
The GW-invariants of $X\!\cup_V\!\!Y$ count curves that lie in~$X$ and~$Y$ and
meet the divisor~$V$ at the same points of~$V$ and with the same order of contact;
see Figure~\ref{limitcurve_fig}.
The contacts of such a curve with~$V$ can be described by a tuple $\bs\!\in\!\Z_+^{\ell}$,
where $\ell\!\in\!\Z^{\ge0}$ is the number of nodes on~$V$. 
Its contribution to the corresponding GW-invariant of $X\!\cup_V\!\!Y$ is the product 
of the orders of contacts,
$$\lr\bs\equiv s_1\cdot\ldots\cdot s_{\ell}\,.$$
The combinatorial structure of such a curve is described by a bipartite graph~$\Ga$.
The vertices of~$\Ga$ specify the genus and degree of each maximal connected curve mapped
into~$X$ and~$Y$; its edges determine the meeting pattern between the components
and the degrees of contacts with~$V$.
In the terminology of~\cite{Jun2}, $\Ga$ corresponds to a triple $(\Ga_1,\Ga_2,I)$,
with $\Ga_1$ and $\Ga_2$ specifying the components mapped into~$X$ and~$Y$, respectively, and
$I$ determining the distribution of the marked points between the components.
Each graph~$\Ga$ and an ordering of $\ell\!=\!\ell(\Ga)$ relative contact points  
determine moduli spaces $\ov\fM_{\Ga}^V(X)$ and $\ov\fM_{\Ga}^V(Y)$
of relative stable maps into~$(X,V)$ and~$(Y,V)$ from disconnected domains
with the same relative contact vector $\bs\!=\!\bs(\Ga)$.
These moduli spaces are quotients of products of moduli spaces of stable maps 
into~$(X,V)$ and~$(Y,V)$ from connected domains by $\Aut(\Ga_1,I|_{\Ga_1})$ 
and $\Aut(\Ga_2,I|_{\Ga_2})$, respectively.\\

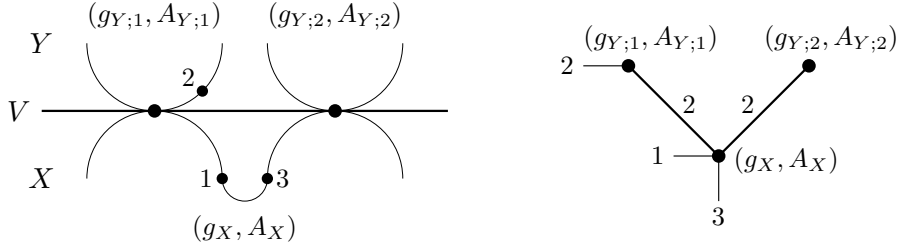
\begin{figure}
\begin{pspicture}(-1,-2.5)(20,1)
\psset{unit=.3cm}
\psline[linewidth=.1](5,-2)(23,-2)
\rput(4,-2){$V$}\rput(5,-5){$X$}\rput(5,1){$Y$}
\psarc[linewidth=.04](10,-5){3}{0}{180}\pscircle*(10,-2){.3}
\psarc[linewidth=.04](14,-5){1}{180}{0}
\psarc[linewidth=.04](18,-5){3}{0}{180}\pscircle*(18,-2){.3}
\psarc[linewidth=.04](10,1){3}{180}{0}\psarc[linewidth=.04](18,1){3}{180}{0}
\pscircle*(13,-5){.25}\pscircle*(15,-5){.25}\pscircle*(12.12,-1.12){.25}
\rput(12.3,-5){\sm{1}}\rput(11.5,-.5){\sm{2}}\rput(15.7,-5){\sm{3}}
\rput(14,-7.2){\sm{$(g_X,A_X)$}}\rput(10,2.2){\sm{$(g_{Y;1},A_{Y;1})$}}
\rput(18,2.2){\sm{$(g_{Y;2},A_{Y;2})$}}
% 2nd diagram
\pscircle*(35,-4){.3}\pscircle*(31,0){.3}\pscircle*(39,0){.3}
\psline[linewidth=.1](35,-4)(31,0)\psline[linewidth=.1](35,-4)(39,0)
\psline[linewidth=.05](35,-4)(33,-4)\psline[linewidth=.05](35,-4)(35,-6)
\psline[linewidth=.05](31,0)(29,0)
\rput(32.3,-4){\sm{1}}\rput(28.3,0){\sm{2}}\rput(35,-6.7){\sm{3}}
\rput(38,-4.2){\sm{$(g_X,A_X)$}}\rput(32,1.2){\sm{$(g_{Y;1},A_{Y;1})$}}
\rput(40,1.2){\sm{$(g_{Y;2},A_{Y;2})$}}
\rput(33.7,-1.7){\sm{2}}\rput(36.3,-1.7){\sm{2}}
\end{pspicture}
\caption{A curve in $X\!\cup_V\!Y$ possibly contributing to the genus $g_X\!+\!g_{Y;1}\!+\!g_{Y;2}$
degree $(A_X,A_{Y;1}\!+\!A_{Y;2})$ GW-invariant of $X\!\cup_V\!\!Y$ and the corresponding bipartite
graph.}
\label{limitcurve_fig}
\end{figure}

\noindent
The morphisms~\eref{evdfn_e1} and~\eref{evdfn_e2} induce morphisms on  
$\ov\fM_{\Ga}^V(X)$ and $\ov\fM_{\Ga}^V(Y)$.
Let 
$$\ov\fM_{\Ga}^V(X\!\cup_V\!\!Y)\subset \ov\fM_{\Ga}^V(X)\times\ov\fM_{\Ga}^V(Y)$$
denote the preimage of the diagonal $\De_{\bs}^V\!\subset\!V_{\bs}^2$ under
the product evaluation morphism
\BE{ProdEval_e}\ev_X^V\!\times\!\ev_Y^V\!:\, 
\ov\fM_{\Ga}^V(X)\!\times\!\ov\fM_{\Ga}^V(Y)\lra V_{\bs}^2\,.\EE
The moduli space of curves into $X\!\cup_V\!\!Y$ of type $\Ga$ is the quotient of 
$\ov\fM_{\Ga}^V(X\!\cup_V\!\!Y)$ by the action of the symmetric group~$\bS_{\ell}$
permuting the relative marked points.
The number of such maps is computed by pulling back the Poincare dual 
$\PD_{\bs}^V\De$ of $\De_{\bs}^V$ by~\eref{ProdEval_e} 
and integrating it over the product of the moduli spaces
along with the original cohomology insertions.  
By the Kunneth formula for cohomology \cite[Theorem~60.6]{Mu2},
\BE{KunnDecomp_e} \PD_{\bs}^V\De=\sum_{i=1}^N\ka_{X;i}\!\otimes\!\ka_{Y;i}\in H^{(n-1)\ell}(V_{\bs}^2;\Q)\EE
for some $\ka_{X;i},\ka_{Y;i}\!\in\!H^*(V_{\bs};\Q)$.
Thus, the contribution to the GW-invariant of $X\!\cup_V\!\!Y$ from maps of type~$\Ga$
is the sum of $N$ products of relative invariants of $(X,V)$ and~$(Y,V)$ 
with relative insertions~$\ka_{X;i}$ and~$\ka_{Y;i}$, times $\lr{\bs(\Ga)}/\ell(\Ga)!$.
So, in this case, a decomposition formula for GW-invariants is a direct consequence
of an invariance property for GW-invariants.\\

\noindent
An obvious deficiency of the decomposition formulas of \cite{LR,Jun2} is that 
they express sums of GW-invariants of $X\!\#_V\!Y$ over degrees differing 
by elements of~$\cR_{X,Y}^V$ in terms of relative GW-invariants of $(X,V)$ and $(Y,V)$;
it would of course be preferable to express individual GW-invariants of $X\!\#_V\!Y$
in terms of relative GW-invariants of $(X,V)$ and~$(Y,V)$.
The rim tori refinement of relative invariants is introduced in~\cite{IPrel}
with the aim of resolving this deficiency in~\cite{IPsum}.\\

\noindent
For $\ell\!\in\!\Z^{\ge0}$ and $\bs\!\in\!\Z_+^{\ell}$, let
\BE{cHXYV_e0}\wh{V}_{X,Y;\bs} =  \wh{V}_{X;\bs}\!\times_{V_{\bs}}\!\wh{V}_{Y;\bs}
\equiv\big\{\pi_{X;\bs}^V\!\times\!\pi_{Y;\bs}^V\big\}^{-1}
\big(\De_{\bs}^V\big).\EE
The idea of~\cite{IPsum} is that there is a continuous map
\BE{IPdegmap_e} g_{A_X,A_Y}\!:\wh{V}_{X,Y;\bs}\lra A_X\!\#_V\!A_Y\subset H_2(X\!\#_V\!Y;\Z)\EE
such that its composition with the restriction of
\BE{LfProdEv_e}\wt\ev_X^V\!\times\!\wt\ev_Y^V\!: 
\ov\fM_{\Ga}^V(X)\!\times\!\ov\fM_{\Ga}^V(Y)   \lra  \wh{V}_{X;\bs}\!\times\! \wh{V}_{Y;\bs}\EE
to $\ov\fM_{\Ga}^V(X\!\cup_V\!Y)$
is the homology degree of the glued map into~$X\!\#_V\!Y$;
see Proposition~\ref{RimToriAct_prp2} and Figure~\ref{RimToriAct_fig}.
Thus, the space of maps into $X\!\cup_V\!\!Y$ contributing to
the GW-invariant of $X\!\#_V\!Y$ of a degree $A\!\in\!A_X\!\#_V\!A_Y$
is the preimage of 
\BE{cHXYVC_e}\wh{V}_{X,Y;\bs}^A\equiv g_{A_X,A_Y}^{-1}(A)\EE
under the morphism~\eref{LfProdEv_e}.\\

\noindent
Each $\wh{V}_{X,Y;\bs}^A$ determines an intersection homomorphism and thus a class 
\BE{PDcXY_e} \PD_{X,Y;\bs}^{V,A}\De\in 
H^*\big(\wh{V}_{X;\bs}\!\times\!\wh{V}_{Y;\bs};\Q\big),\EE 
as suggested by \cite[Definition~10.2]{IPsum}.
The contribution to the IP-count of $X\!\cup_V\!\!Y$ from maps of type~$\Ga$
is computed by pulling back the cohomology class $\PD_{X,Y;\bs}^{V,A}\De$
by the morphism~\eref{LfProdEv_e}.
Thus, the approach of~\cite{IPsum} expresses GW-invariants of $X\!\#_V\!Y$
of {\it each} degree $A\!\in\!H_2(X\!\#_V\!Y;\Z)$ in terms of IP-counts of $X\!\cup_V\!\!Y$,
i.e.~provides a refined  {\it invariance property} for GW-invariants.
It can be summarized as follows.

\begin{thm*}[Refined Invariance Property for GW-Invariants]
Let $(X,\om_X)$ and $(Y,\om_Y)$ be compact symplectic manifolds
with a common compact symplectic divisor $V\!\subset\!X,Y$
and $\Phi$ be an isomorphism of complex line bundles as in~\eref{cNpair_e}.
For all $g\!\in\!\Z^{\ge0}$, $A\!\in\!H_2(X\!\#_V\!Y;\Z)$, and 
$\Phi$-admissible insertions~$\ka$,
\BE{GWsumIP_e} \GW_{g,A}^{X\#_VY}(\ka)=
\sum_{\begin{subarray}{c} (A_X,A_Y)\in H_2(X;\Z)\times_V H_2(Y;\Z)\\ 
A\in A_X\#_V A_Y\end{subarray}}
\sum_{\begin{subarray}{c}\bs\in\Z_+^{\ell}\\ \ell\ge0 \end{subarray}}
\wt\GW_{g,(A_X,A_Y);\bs}^{X\cup_VY}\big(\ka;\PD_{X,Y;\bs}^{V,A}\De\big),\EE
where $\wt\GW$ denotes an IP-count for $X\!\cup_V\!Y$.
\end{thm*}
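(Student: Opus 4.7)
The strategy is to reduce the refined statement to the unrefined Li-Ruan/J.~Li invariance formula, upgraded to the level of the abelian covers $\pi_{X;\bs}^V$ and $\pi_{Y;\bs}^V$, and then to decompose the resulting class on $\wh{V}_{X;\bs}\!\times\!\wh{V}_{Y;\bs}$ according to the fibers of the gluing map $g_{A_X,A_Y}$, so as to isolate individual degrees $A\!\in\!A_X\!\#_V\!A_Y$ rather than entire $\cR_{X,Y}^V$-cosets.

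To begin, I would fix $g$, $A$, and $\Phi$-admissible insertions $\ka$, and assemble the unrefined invariance formula of \cite{LR,Jun2} applied to every degree $A'$ lying in the same $\cR_{X,Y}^V$-coset $\eta$ as $A$. This writes $\sum_{A'\in\eta}\GW_{g,A'}^{X\#_VY}(\ka)$ as a sum over pairs $(A_X,A_Y)\!\in\!H_2(X;\Z)\!\times_V\!H_2(Y;\Z)$ with $A_X\!\#_V\!A_Y\!=\!\eta$, over bipartite graphs $\Ga$ decorated by $(A_X,A_Y,g)$ and a contact vector $\bs\!=\!\bs(\Ga)$, of the graph contribution obtained by pulling back $\PD_{\bs}^V\De$ via the product evaluation~\eref{ProdEval_e} and integrating against the virtual class of $\ov\fM_{\Ga}^V(X\!\cup_V\!\!Y)$, weighted by $\lr{\bs(\Ga)}/\ell(\Ga)!$ and by the appropriate $\Aut$-factors already built into the discussion following~\eref{KunnDecomp_e}.

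Next I would invoke the factorization~\eref{evfactor_e} on each side, so that $(\ev_X^V\!\times\!\ev_Y^V)^*\PD_{\bs}^V\De$ agrees with the pullback under the lifted morphism~\eref{LfProdEv_e} of a class on $\wh{V}_{X;\bs}\!\times\!\wh{V}_{Y;\bs}$ Poincar\'e dual to the fiber product $\wh{V}_{X,Y;\bs}$ of~\eref{cHXYV_e0}. The geometric content of Proposition~\ref{RimToriAct_prp2} is that the continuous map $g_{A_X,A_Y}$ of~\eref{IPdegmap_e} sends a lifted compatible pair of evaluations to precisely the homology class in $X\!\#_V\!Y$ of any curve glued from constituents of classes $A_X$ and $A_Y$ along that prescribed contact data. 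Consequently $\wh{V}_{X,Y;\bs}$ decomposes as a disjoint union of the clopen strata $\wh{V}_{X,Y;\bs}^A$ of~\eref{cHXYVC_e}, one for each $A\!\in\!A_X\!\#_V\!A_Y$, and the class representing $\wh{V}_{X,Y;\bs}$ on $\wh{V}_{X;\bs}\!\times\!\wh{V}_{Y;\bs}$ splits as the sum of the refined diagonals $\PD_{X,Y;\bs}^{V,A}\De$ of~\eref{PDcXY_e}.

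Combining the two steps, the contribution computed by the unrefined formula over $\eta$ breaks up as a sum over individual $A\!\in\!\eta$ of IP-counts of $X\!\cup_V\!\!Y$ with relative insertion $\PD_{X,Y;\bs}^{V,A}\De$. Because the strata $\wh{V}_{X,Y;\bs}^{A'}$ for $A'\!\in\!\eta$ are pairwise disjoint and a given relative stable map in $\ov\fM_{\Ga}^V(X\!\cup_V\!\!Y)$ lifts via $\wt\ev_X^V\!\times\!\wt\ev_Y^V$ into exactly one of them, the coset-wise identity splits degree-wise into~\eref{GWsumIP_e}. The main obstacle, which I expect to absorb most of the technical effort, is the geometric input of Proposition~\ref{RimToriAct_prp2}: verifying that the homology class of the glued curve is a well-defined function on $\wh{V}_{X,Y;\bs}$, that $\wt\ev_X^V\!\times\!\wt\ev_Y^V$ records it correctly independently of the gluing parameter $\la\!\in\!\De^*$ and of the systematic choice of lifts~\eref{evXVlift_e}, and that the construction is equivariant with respect to the Deck group~\eref{DeckGr_e} in a way compatible with the $\cR_{X,Y}^V$-action on $H_2(X\!\#_V\!Y;\Z)$. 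Once this compatibility is in place, the remainder reduces to topological bookkeeping already carried out in the unrefined setting.
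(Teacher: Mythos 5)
Your proposal follows essentially the same route the paper takes: the theorem is not given a self-contained proof in the text, but is presented as the refined version of the standard symplectic sum formula that follows once the topological ingredients are in place, with the analytic gluing theory of~\cite{LR,Jun2} presupposed (this is stated explicitly in Section~\ref{outline_subs}). The substantive content the paper supplies is precisely what you identify: the systematic choice of lifts~\eref{evXVlift_e} from Proposition~\ref{RimToriAct_prp}, the construction of the refined gluing degree map~\eref{gAdfn_e}, Proposition~\ref{RimToriAct_prp2} showing that $g_{A_X,A_Y}$ composed with the lifted evaluations recovers the degree of the glued map, and the resulting stratification of $\wh{V}_{X,Y;\bs}$ into the clopen pieces~\eref{cHXYVC_e} whose Poincar\'e duals give the refined insertions~\eref{cHclass_e}. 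One small caution on wording: "the coset-wise identity splits degree-wise" slightly compresses the logic. A numerical equality between two sums does not by itself split along a partition of one side; what actually makes the refinement work is that the analytic gluing correspondence underlying the unrefined formula is compatible with the stratification — i.e., a glued curve of a given degree $A$ corresponds under the gluing map to a pair whose lifted evaluation lands in $\wh{V}_{X,Y;\bs}^A$, which is exactly the content of Proposition~\ref{RimToriAct_prp2} together with the presupposed analytic proof. You do say this, so the proposal is sound, but it is worth keeping in mind that the refined formula is a strengthening of the analytic input, not a purely formal consequence of the coset-summed numerical identity.
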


\noindent
If the $\Q$-homology of either $\wh{V}_{X;\bs}$ or $\wh{V}_{Y;\bs}$ is finitely generated,
then
\BE{KunnDecomp_e2}
\PD_{X,Y;\bs}^{V,A}\De=\sum_{i=1}^N\wt\ka_{X;i}\!\otimes\!\wt\ka_{Y;i}\in 
H^{(n-1)\ell}(\wh{V}_{X;\bs}\!\times\!\wh{V}_{Y;\bs};\Q)\EE
for some $\wt\ka_{X;i}\!\in\!H^*(\wh{V}_{X;\bs};\Q)$ and $\wt\ka_{Y;i}\!\in\!H^*(\wh{V}_{Y;\bs};\Q)$.
This is also the case if the submodule~$\cR_{X,Y}^V$ of $H_1(X\!\#_V\!Y;\Z)$ is finite;
see Corollary~\ref{Kunn_crl}.
In such cases, the approach of~\cite{IPsum} provides a refined 
{\it decomposition formula} for GW-invariants of $X\!\#_V\!Y$ in terms of IP-counts
for $(X,V)$ and~$(Y,V)$.
However, in general  the homologies of $\wh{V}_{X;\bs}$ and $\wh{V}_{Y;\bs}$ are not finitely generated 
and a Kunneth decomposition~\eref{KunnDecomp_e2} need not exist; see Example~\ref{noKunn_eg}.
In these cases, the approach of~\cite{IPsum} does not provide a decomposition formula 
for GW-invariants of $X\!\#_V\!Y$ in terms of any kind of invariants of~$(X,V)$ and~$(Y,V)$.

\subsection{Vanishing applications}
\label{VanishAppl_subs}

\noindent
Even in cases when a Kunneth decomposition~\eref{KunnDecomp_e2} exists
(and $\cR_{X;Y}^V\!\neq\!\{0\}$, $|\bs|\!\neq\!0$),
the use of the refined decomposition formula of~\cite{IPsum} for quantitative applications
does not appear practical outside of rare cases, in part because of the dependence
of the IP-counts for $(X,V)$ and $(Y,V)$ on the lifts~\eref{evXVlift_e}. 
However, we are able to extract some qualitative applications from the approach 
of \cite{IPrel,IPsum}.\\

\noindent
Let $(X,\om)$ be a symplectic manifold and $V\!\subset\!X$ be a common symplectic divisor
with connected components~$V_1,\ldots,V_N$.
Denote~by 
$$\wh{V}_X\lra V_1\!\times\!\ldots\!\times\!V_N$$
the covering projection corresponding to the preimage of $H_X^V$ under the natural homomorphism
$$\pi_1(V_1\!\times\!\ldots\!\times\!V_N)\lra H_1(V_1\!\times\!\ldots\!\times\!V_N;\Z)
=\bigoplus_{r=1}^N H_1(V_r;\Z)=H_1(V;\Z),$$
i.e.~$\wh{V}_{X;(1)^N}$ in the notation of Section~\ref{RelGW_subs}.
We will call $V\!\subset\!X$ \sf{virtually connected} if  the cokernel
of the composition homomorphism
\BE{VirConn_e}H_1(V_r;\Z)\lra H_1(V;\Z)\lra H_1(V;\Z)_X\EE
is finite for every component $V_r\!\subset\!V$.
For example, this is the case if $V$ is connected or 
$X\!=\!\P^1\!\times\!F$ and $V\!=\!\{0,\i\}\!\times\!F$ for some connected symplectic manifold~$F$;
the homomorphism~\eref{VirConn_e} is surjective in both cases.
We will call a class $A\!\in\!H_2(X\!-\!V;\Z)$ \sf{$\om$-effective} 
if for every $\om$-tame almost complex structure~$J$ on~$(X,V)$ there
exists a $J$-holomorphic map $u\!:\Si\!\lra\!X\!-\!V$ from a compact Riemann surface
such~that
$$u_*[\Si]=A\in H_2(X\!-\!V;\Z)\,.$$
We denote by  $\Eff_{\om}(X,V)\!\subset\!H_2(X\!-\!V;\Z)$ the subset of effective classes.

\begin{thm}\label{AbsGW_thm}
Let $(X,\om_X)$ and $(Y,\om_Y)$ be compact symplectic manifolds
with a common compact symplectic divisor $V\!\subset\!X,Y$
and $\Phi$ be an isomorphism of complex line bundles as in~\eref{cNpair_e}.
Suppose $\cR_{X,Y}^V$ is infinite,  $V\!\subset\!X$ is virtually connected,
and $H_*(\wh{V}_X;\Q)$ is finitely generated.
If 
\BE{AbsGW_e2}A\in H_2(X\!\#_V\!Y;\Z) - \io_{X-V*}^{X\#_VY}\big(\Eff_{\om_X}(X,V)\big)
- \io_{Y-V*}^{X\#_VY}\big(\Eff_{\om_Y}(Y,V)\big),\EE
then all degree~$A$ GW-invariants of $X\!\#_V\!Y$ with $\Phi$-admissible inputs
%with intrinsic moduli space insertions and pullbacks $\vph$-admissible cohomology classes 
vanish.
\end{thm}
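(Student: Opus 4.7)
The plan is to apply the Refined Invariance Property stated immediately above, which writes $\GW_{g,A}^{X\#_V Y}(\ka)$ as an iterated sum of IP-counts for $X\cup_V Y$ indexed by pairs $(A_X,A_Y)\!\in\!H_2(X;\Z)\!\times_V\!H_2(Y;\Z)$ with $A\!\in\!A_X\#_V A_Y$ and by contact vectors $\bs\!\in\!\Z_+^\ell$. It then suffices to show every summand on the right-hand side vanishes, and I would split this analysis on the length~$\ell$ of~$\bs$.

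For $\ell\!=\!0$: a stable $J$-holomorphic map contributing to $\wt\GW_{g,(A_X,A_Y);\emptyset}^{X\cup_V Y}$ has no contact with~$V$, so for a connected genus-$g$ domain its image must lie entirely in one of $X\!-\!V$ or $Y\!-\!V$, say $X\!-\!V$. Its class $\wt A\!\in\!H_2(X\!-\!V;\Z)$ satisfies $\io_{X-V*}^{X\#_V Y}(\wt A)\!=\!A$, since the gluing is trivial in the absence of relative nodes. Running this argument for every $\om_X$-tame~$J$---the standard deformation-invariance consequence of $\GW_{g,A}^{X\#_V Y}(\ka)\!\neq\!0$---places $\wt A$ in $\Eff_{\om_X}(X,V)$, contradicting the hypothesis on~$A$. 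The $Y\!-\!V$ case is symmetric, so all $\ell\!=\!0$ terms vanish.

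For $\ell\!>\!0$: I would first use the virtual connectedness of $V\!\subset\!X$ to reduce the finite generation of each $H_*(\wh V_{X;\bs};\Q)$ to that of $H_*(\wh V_X;\Q)$. Intuitively, since each component $V_r$ covers $H_1(V;\Z)_X$ up to finite index, the cover $\wh V_{X;\bs}\!\to\!V_\bs\!=\!V^\ell$ is dominated by $\ell$ copies of $\wh V_X$ up to a finite correction from the $\cR_{X;\bs}'^{\,V}$ factor in~\eref{DeckGr_e}. A K\"unneth decomposition~\eref{KunnDecomp_e2} for $\PD_{X,Y;\bs}^{V,A}\De$ then follows parallel to Corollary~\ref{Kunn_crl}, and each $\ell\!>\!0$ IP-count factors as a finite sum of products of IP-counts for $(X,V)$ and $(Y,V)$ with refined relative insertions $\wt\ka_{X;i}$ and $\wt\ka_{Y;i}$.

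The main obstacle is to show that every $X$-side factor in this product vanishes. Here I would invoke a vanishing result for IP-counts from~\cite{GWrelIP}: the infinitude of $\cR_{X,Y}^V$ forces the deck group~\eref{DeckGr_e} of $\pi_{X;\bs}^V$ to be infinite, while the finite generation of $H_*(\wh V_{X;\bs};\Q)$ restricts the pushforward of the relative virtual class along $\wt\ev_X^V$ to a finite-dimensional range---which, paired against any class of the form $\wt\ka_{X;i}$ arising from $\PD_{X,Y;\bs}^{V,A}\De$, must vanish by an orbit-counting argument exploiting the $\cR_X^V$-equivariance of the pushforward. Confirming that the K\"unneth factors $\wt\ka_{X;i}$ genuinely carry this cover-supported character---rather than reducing to pullbacks from~$V_\bs$, where unrefined invariants need not vanish---is the most delicate technical point.
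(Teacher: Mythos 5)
Your setup follows the paper through the K\"unneth decomposition stage: invoking the Refined Invariance Property, disposing of the $\ell=0$ contributions via the effectivity hypothesis on~$A$, and using virtual connectedness of $V\!\subset\!X$ together with finite generation of $H_*(\wh V_X;\Q)$ (via \cite[Theorem~1]{DF} and \cite[Remark~5.3]{GWrelIP}) to obtain a finite K\"unneth decomposition of $\PD_{X,Y;\bs}^{V,A}\De$ on each topological component. This all agrees with the paper's argument. The divergence comes at your self-described ``main obstacle.''

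You propose to finish by showing that each $X$-side IP-count $\wt\GW_\Ga^{X,V}(\ka_{\#;X};\wt\ka_{X;i})$ vanishes individually, via an ``orbit-counting argument exploiting $\cR_X^V$-equivariance.'' This is a genuine gap: no such direct vanishing argument appears in the paper, and none is supplied in your proposal. You correctly flag that some K\"unneth factors $\wt\ka_{X;i}$ may be pullbacks from $V_\bs$, against which the unrefined relative invariants need not vanish --- but you do not resolve this, and no equivariance statement about the single virtual-class pushforward by itself forces those pairings to zero. The paper instead runs a \emph{reductio}: assume some contribution $\Contr_\Ga^{A;\ga^*}$ with $\ell>0$ is nonzero. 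Because $\cR_{X,Y}^V$ is infinite, it contains an infinite cyclic $G_0$ whose deck transformations $\Th_\eta$ fix each topological component of $\wh V_{X;\bs_1\ldots\bs_N}$, and by \eref{PsiThDeg_e} the diagonal class for $A-\eta$ is $\{\Th_\eta\times\id\}^*$ of the one for $A$. Since $\Th_\eta^*$ is an automorphism of the \emph{finite-dimensional} space $H^*(\wh W'_{X;\bs_1\ldots\bs_{N'}};\Q)$, a Cayley--Hamilton--type linear recursion shows that if any $d$ consecutive values of $\eta\mapsto\Contr_\Ga^{A-\eta;\ga^*}$ vanished, all would; hence infinitely many are nonzero. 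This is the contradiction: the classes $A-\eta$ all have the same symplectic energy, and the compact moduli space $\ov\fM_\Ga^V(X\cup_V Y)$ meets only finitely many of the (disjoint, closed) diagonal components $\wh V_{X,Y;\bs}^{A-\eta}$, so only finitely many of these contributions can be nonzero by Gromov compactness. The essential ingredient you are missing is this global contradiction across the infinite family $\{A-\eta\}_{\eta\in G_0}$; the finite-dimensionality and deck-group action are only half the argument, and they do not yield term-by-term vanishing.
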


\noindent
By \cite[Corollary~4.2(1)]{GWrelIP}, $\cR_{X,Y}^V$ is infinite if
either of the homomorphisms
$$\io_{V*}^X\!: H_2(V;\Z)\lra H_2(X;\Z) \qquad\hbox{or}\qquad 
\io_{V*}^Y\!: H_2(V;\Z)\lra H_2(Y;\Z)$$
is injective and 
$$ H_1(V;\Q)\neq \big\{B\!\cap_X\!V\!:\,B\!\in\!H_3(X;\Q)\big\}
+\big\{B\!\cap_Y\!V\!:\,B\!\in\!H_3(Y;\Q)\big\}\,.$$
In light of \cite[Assertion~6]{Mi68} in the case $H_1(V;\Z)$ is of rank~1 and 
its extension~\cite{Mi14}, the finite generation condition can be satisfied 
only if~$\chi(V)\!=\!0$ (assuming $\cR_{X,Y}^V$ is infinite).
All covers of the fibrations in \cite[Theorem~1.1]{GWrelIP} have finitely generated homology.
A characterization of abelian covers with finitely generated homology for an arbitrary
compact base is provided
by \cite[Theorem~1]{DF}, though it appears difficult to use in practice.\\

\noindent
In a typical situation, most classes $A\!\in\!H_2(X\!\#_V\!Y;\Z)$ with potentially 
nonzero GW-invariants satisfy~\eref{AbsGW_e2}. 
For example, if $X$ and $Y$ are Kahler, $V\!\subset\!X$ is ample, and $V\!\subset\!Y$ is
anti-ample, then $X\!-\!V$ and $Y\!-\!V$ contain no curves and the restriction~\eref{AbsGW_e2}
is no longer necessary.
The same is the case if $\PD_XV$ and $\PD_YV$ are nonzero multiples of 
the cohomology classes represented by~$\om_X$ and~$\om_Y$, respectively.
Remark~\ref{AbsGW_rmk} describes a generalization of Theorem~\ref{AbsGW_thm}.\\

\noindent
If $X\!\lra\!\Si_1$ and $Y\!\lra\!\Si_2$ are possibly singular fibrations over curves
with the same smooth fiber~$F$ and $V\!\subset\!X,Y$ is a union of finitely many fibers, 
all elements~$A$ in the image of $H_2(F;\Z)$ in $H_2(X\!\#_V\!Y;\Z)$ fail the condition~\eref{AbsGW_e2}.
For example, $\T^{2n}$ with $n\!>\!1$ is the symplectic sum of 
$\P^1\!\times\!\T^{2n-2}$ with itself along $V\!=\!\{0,\i\}\!\times\!\T^{2n-2}$
with respect to the canonical isomorphism~\eref{cNpair_e}.
By Theorem~\ref{AbsGW_thm}, the GW-invariants of~$\T^{2n}$ in classes~$A$ not contained 
in a fiber vanish.
By changing the projection, we recover the vanishing of all GW-invariants of~$\T^{2n}$ with $n\!>\!1$,
except in degree~0.\\

\noindent
The $K3$ surface~$\bK_3$ is the symplectic sum of the blowup~$\wh\P_9^2$ of~$\P^2$ at 9 points
with itself along a smooth fiber $V\!=\!F$ of the fibration $\wh\P_9^2\!\lra\!\P^1$
with respect to the canonical isomorphism~\eref{cNpair_e}.
In this case, Theorem~\ref{AbsGW_thm} recovers the vanishing of the GW-invariants 
of~$\bK_3$ except in degrees that are multiples of a fiber of
the fibration $\bK_3\!\lra\!\P^1$ (all GW-invariants of~$\bK_3$ are known to vanish).\\

\noindent
The symplectic sum of $\P^1\!\times\!F$ with itself along $V\!=\!\{0,\i\}\!\times\!F$ 
 with respect to the canonical isomorphism~\eref{cNpair_e} is $\T^2\!\times\!F$.
Applying Theorem~\ref{AbsGW_thm} in this case to the genus~1 GW-invariants 
in the section class $A\!=\![\T^2\!\times\!\pt]$, we obtain the following statement
about the maximal abelian cover $\wh{F}\!\lra\!F$,
i.e.~the covering projection corresponding to the commutator subgroup of~$\pi_1(F)$.

\begin{crl}\label{AbsGW_crl}
Let $(F,\om)$ be a compact connected symplectic manifold.
If $H_1(F;\Q)\!\neq\!\{0\}$ and $\chi(F)\!\neq\!0$, then 
$H_*(\wh{F};\Q)$ is not finitely generated over~$\Q$.
\end{crl}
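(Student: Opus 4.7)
\emph{Plan.} We argue by contradiction: suppose $H_*(\wh{F};\Q)$ is finitely generated. Realize $\T^2\!\times\!F$ as the symplectic sum $X\!\#_V\!Y$ of two copies of $X\!=\!Y\!=\!\P^1\!\times\!F$ along $V\!=\!\{0,\i\}\!\times\!F$ via the canonical isomorphism~\eref{cNpair_e}, and set $A\!=\![\T^2\!\times\!\pt]\!\in\!H_2(\T^2\!\times\!F;\Z)$. The strategy is to apply Theorem~\ref{AbsGW_thm} to deduce $\GW_{1,A}^{\T^2\times F}\!=\!0$ and contradict this with a direct computation. For the direct computation, for a generic product almost complex structure a degree-$A$ genus-$1$ stable map to $\T^2\!\times\!F$ is, up to translations of~$\T^2$, of the form $\phi\!\times\!c_p$ with $\phi\!:E\!\to\!\T^2$ a biholomorphism and $c_p$ the constant map at some $p\!\in\!F$; the moduli space reduces to a copy of~$F$ parametrizing~$p$, and the excess obstruction bundle is canonically $H^1(E,\cO_E)\!\otimes\!TF\!=\!TF$ over this~$F$, giving $\int_F e(TF)\!=\!\chi(F)\!\neq\!0$.

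The bulk of the argument is verifying the hypotheses of Theorem~\ref{AbsGW_thm}. Virtual connectedness of $V\!\subset\!X$ is explicitly noted after~\eref{VirConn_e}. Infiniteness of $\cR_{X,Y}^V$ under $H_1(F;\Q)\!\neq\!0$ follows either from the criterion cited from \cite[Cor.~4.2(1)]{GWrelIP} or directly: for each $\ga\!\in\!H_1(F;\Z)$ the class $[S^1\!\times\!\ga]\!\in\!H_2(\T^2\!\times\!F;\Z)$, with $S^1$ a vanishing cycle, collapses to the $1$-cycle $\{\pt\}\!\times\!\ga$ in $X\!\cup_V\!Y$ and so lies in $\cR_{X,Y}^V$. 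The crucial hypothesis is finite generation of $H_*(\wh{V}_X;\Q)$. A K\"unneth decomposition $H_3(\P^1\!\times\!F)\!=\!H_3(F)\!\oplus\!H_1(F)$ together with the intersection $[\P^1\!\times\!\ga]\!\cap\!V\!=\!(\ga,\ga)$ identifies $H_X^V\!\subset\!H_1(V;\Z)\!=\!H_1(F)^{\oplus2}$ as exactly the diagonal, so $\wh{V}_X$ is the quotient of $\wh{F}\!\times\!\wh{F}$ by the diagonal $H_1(F;\Z)$-action. Projection to the first factor realizes this quotient as a locally trivial fiber bundle over~$F$ with fiber~$\wh{F}$, and the Leray--Serre spectral sequence with $E_2^{p,q}\!=\!H^p(F;\cH^q)$, $\cH^q$ the local system on~$F$ with stalk $H^q(\wh{F};\Q)$, has only finitely many nonzero, finite-dimensional entries under the standing assumption and compactness of~$F$.

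To conclude, $X\!-\!V\!\simeq\!\C^*\!\times\!F$ gives $H_2(X\!-\!V;\Z)\!=\!H_2(F)\!\oplus\!H_1(F)$, whose image under $\io_{X-V*}^{X\#_V\!Y}$ in the K\"unneth decomposition $H_2(\T^2\!\times\!F;\Z)\!=\!H_2(F)\!\oplus\!H_1(F)^{\oplus2}\!\oplus\!\Z[\T^2\!\times\!\pt]$ misses the $\Z$-summand generated by~$A$; the same holds for $Y\!-\!V$. Hence $A$ satisfies~\eref{AbsGW_e2}, so Theorem~\ref{AbsGW_thm} forces $\GW_{1,A}^{\T^2\times F}\!=\!0$, contradicting $\chi(F)\!\neq\!0$. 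The main obstacle in this plan is the geometric identification of $\wh{V}_X$ as an $\wh{F}$-bundle over~$F$, which is what turns our standing assumption on $\wh{F}$ into the precise finite-generation hypothesis required by Theorem~\ref{AbsGW_thm}.
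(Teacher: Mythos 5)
Your proof is correct and follows essentially the same route as the paper's: both realize $\T^2\!\times\!F$ as the self-sum of $\P^1\!\times\!F$ along $V\!=\!\{0,\i\}\!\times\!F$, compute $\GW_{1,[\T^2\times\pt]}^{\T^2\times F}()\!=\!\chi(F)$ via the moduli space $\approx F$ with obstruction bundle $\cH^{0,1}_{\T^2}\!\otimes\!TF$, verify the hypotheses of Theorem~\ref{AbsGW_thm} (virtual connectedness, $\cR_{X,Y}^V$ infinite, $\wh{V}_X$ as a quotient of $\wh{F}\!\times\!\wh{F}$ by the diagonal action, then an $\wh{F}$-bundle over $F$ and a spectral sequence), and conclude by contradiction. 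The only difference is cosmetic: where the paper invokes \cite[Examples~3.6, 4.7, 6.2]{GWrelIP} for the identification of $H_X^V$ as the diagonal, $\cR_{X,Y}^V\!\approx\!H_1(F;\Z)$, and the structure of $\wh{V}_X$, you reproduce these computations directly via the Künneth formula, and you also write out the check that $A$ misses $\io_{X-V*}^{X\#_VY}H_2(X\!-\!V;\Z)+\io_{Y-V*}^{X\#_VY}H_2(Y\!-\!V;\Z)$, which the paper asserts without proof.
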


\noindent
By \cite[Assertion~6]{Mi68} in the case $H_1(F;\Z)$ is of rank~1 and its extension~\cite{Mi14}, 
the conclusion of this corollary holds for all finite simplicial complexes~$F$ 
(not just compact symplectic manifolds) and for all infinite abelian covers 
(not just the maximal one).
Thus, its conclusion is not surprising.
What perhaps is surprising is that this purely topological property is detected by
the refined invariance property for GW-invariants arising from~\cite{IPsum}.

\subsection{GW-invariants and the flux group}
\label{FluxAppl_subs}

\noindent
We next describe qualitative applications of the refined decomposition formula 
of~\cite{IPsum} that equate GW-invariants of the symplectic sum in classes
differing by some rim tori.
In certain cases, these observations suffice to express individual GW-invariants
of the symplectic sum in terms of the usual relative GW-invariants of the two pieces.\\

\noindent
If $V$ is any topological space, a loop of homeomorphisms 
$$\Psi_t\!: V\lra V, \qquad t\in[0,1],~~\Psi_0=\Psi_1,$$
and a point $x\!\in\!V$ determines a loop $t\!\lra\!\Psi_t(x)$ in~$V$
and thus an element of $H_1(V;\Z)$.
The latter is independent of the choice of $x\!\in\!V$.
We denote the set of all elements of $H_1(V;\Z)$ obtained in this way by~$\Flux(V)$. 
It is a subgroup of $H_1(V;\Z)$, usually called \sf{the flux subgroup} 
(or \sf{group}).
If in addition $V\!\subset\!X$ is a compact oriented submanifold of a compact oriented manifold
and $H_1(V;\Z)_X$ is as in~\eref{cRXVvsH1V_e}, let
$$\Flux(V)_X\subset H_1(V;\Z)_X$$
denote the image of $\Flux(V)$ under the quotient projection.\\

\noindent
Let $V\!\subset\!X$ be a symplectic divisor with topological
components $V_1,\ldots,V_N$.
For each \hbox{$r\!=\!1,\ldots,N$}, denote by $\ff_{X,V_r}\!\in\!H_1(S_XV_r;\Z)$
the homology class of a fiber of the circle bundle in $\cN_XV_r\!\lra\!V_r$.
If $Y$ is another symplectic manifold containing~$V$ and  $\Phi$ is an isomorphism of complex line bundles 
as in~\eref{cNpair_e}, let
$$\de_{\Phi}\!: H_2(X\!\#_V\!Y;\Z)\lra H_1(S_XV;\Z)$$
be the connecting homomorphism of the Mayer-Vietoris sequence for 
$X\!\#_V\!Y\!=\!(X\!-\!V)\!\cup\!(Y\!-\!V)$,
i.e.~as in the first exact sequence 
in the proof of \cite[Lemma~4.1]{GWrelIP} with $m\!=\!\fc\!=\!2$.
For each $A\!\in\!A_X\!\#_V\!A_Y$,
\BE{AVrdfn_e}\de_{\Phi}(A)=\sum_{r=1}^N|A|_{V_r}\ff_{X,V_r}\in H_1(S_XV;\Z)
, \qquad\hbox{where}\quad
|A|_{V_r}\!\equiv A_X\cdot_X\!V_r\in \Z\,.\EE
If $\cN_XV_r\!\approx\!V_r\!\times\!\C$,  $\ff_{X,V_r}\!\neq\!0$ 
and so the number $|A|_{V_r}$ depends only on $A_X\!\#_V\!A_Y\!\subset\!H_2(X\!\#_V\!Y;\Z)$.

\begin{prp}\label{EqGWs_prp1}
Let $(X,\om_X)$ and $(Y,\om_Y)$ be compact symplectic manifolds,
$V\!\subset\!X,Y$ be a common  compact connected symplectic divisor
such that $\cN_XV\!\approx\!V\!\times\!\C$ and $\Flux(V)_X\!=\!H_1(V;\Z)_X$, 
and $\Phi$ be an isomorphism of complex line bundles as in~\eref{cNpair_e}.
If 
\BE{EqGWsprp1_e}A_1,A_2\!\in\!H_2(X\!\#_V\!Y;\Z)  \qquad\hbox{and}\qquad
A_1\!-\!A_2\in  |A_1|_V\cR_{X,Y}^V,\EE
then  the GW-invariants of $X\!\#_V\!Y$ of degrees~$A_1$ and~$A_2$ with 
$\Phi$-admissible inputs are the same.
\end{prp}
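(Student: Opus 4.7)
The plan is to exploit the independence of $\GW_{g,A_i}^{X\#_VY}(\ka)$ from the lift~\eref{evXVlift_e} used on the right-hand side of the refined invariance formula~\eref{GWsumIP_e}. Using the flux hypothesis, I will exhibit a deck transformation of~\eref{IPcov_e} whose action on the partition $\{\wh V_{X,Y;\bs}^A\}_{A}$ of~$\wh V_{X,Y;\bs}$ shifts $A_1$ to $A_2$; changing the lift by this deck transformation converts the right-hand side of~\eref{GWsumIP_e} for $A_1$ into the right-hand side for $A_2$.

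Write $A_2=A_1+|A_1|_VR$ for some $R\in\cR_{X,Y}^V$. Since $A_1\!-\!A_2\in\cR_{X,Y}^V\subset\ker\de_\Phi$, one has $|A_1|_V=|A_2|_V$ by~\eref{AVrdfn_e}, and any contact vector~$\bs$ occurring in the sum~\eref{GWsumIP_e} for either $A_i$ satisfies $|\bs|=|A_1|_V$. The connectedness of~$V$ and the triviality of $\cN_XV$ (and hence of $\cN_YV$, by~\eref{cNVcond_e}) identify $\cR_X^V\!\approx\!H_1(V;\Z)_X$ via~\eref{cRXVvsH1V_e}; the Mayer-Vietoris description of $\cR_{X,Y}^V$ from the proof of~\cite[Lemma~4.1]{GWrelIP} realizes $R$ as the image of some $R_V\in H_1(V;\Z)_X$. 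The hypothesis $\Flux(V)_X=H_1(V;\Z)_X$ then produces a loop $\Psi_t\colon V\lra V$, $t\in[0,1]$, with $\Psi_0=\Psi_1=\id_V$ and flux class~$R_V$. Monodromy along $\Psi_t$ lifts to a deck transformation $\tau_X$ of $\wh V_{X;\bs}\lra V_\bs$ and, compatibly, to a deck transformation $\tau_Y$ of $\wh V_{Y;\bs}\lra V_\bs$, whose product $\tau=\tau_X\!\times\!\tau_Y$ acts on $\wh V_{X;\bs}\!\times\!\wh V_{Y;\bs}$ and preserves the fiber product~$\wh V_{X,Y;\bs}$.

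The heart of the argument is the computation of the action of~$\tau$ on the sheets of the degree map~\eref{IPdegmap_e}: each contact of order~$s_j$ should contribute $s_j\cdot R$ to the shift in homological degree induced by~$\tau$, so (after a sign choice)
$$\tau\big(\wh V_{X,Y;\bs}^{A_2}\big)=\wh V_{X,Y;\bs}^{A_1}\qquad\textnormal{and hence}\qquad\tau^*\PD_{X,Y;\bs}^{V,A_1}\De=\PD_{X,Y;\bs}^{V,A_2}\De.$$
Replacing the lift $\wt\ev_X^V\!\times\!\wt\ev_Y^V$ in~\eref{GWsumIP_e} by its composition with~$\tau$ produces an equally admissible lift by Proposition~\ref{RimToriAct_prp}; the pullback of $\PD_{X,Y;\bs}^{V,A_1}\De$ under the new lift equals the pullback of $\PD_{X,Y;\bs}^{V,A_2}\De$ under the original lift. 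Hence the right-hand side of~\eref{GWsumIP_e} for~$A_1$ computed against the new lift coincides term by term with the right-hand side for~$A_2$ computed against the original lift. Since $\GW_{g,A_1}^{X\#_VY}(\ka)$ is independent of the choice of lift, we conclude $\GW_{g,A_1}^{X\#_VY}(\ka)=\GW_{g,A_2}^{X\#_VY}(\ka)$.

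The principal obstacle is the identification in the third paragraph: pinning down the deck transformation induced by the flux loop and, crucially, producing the factor of~$|\bs|$ in the shift on homological degrees. This requires tracing the construction of the degree map~\eref{IPdegmap_e} (see Proposition~\ref{RimToriAct_prp2}) through the explicit description of the cover~\eref{IPcov_e} given in Section~\ref{RelGW_subs}; the key geometric point is that going once around $\Psi_t$ drags each $s_j$-fold tangency across the loop representing~$R_V$, producing a contribution of $s_j R$ which sums to $|\bs|R=|A_1|_VR$. Once this identification is in place, the remaining steps are formal consequences of Proposition~\ref{RimToriAct_prp} and the refined invariance property.
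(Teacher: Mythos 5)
Your proposal identifies the right ingredients (the flux hypothesis, the multiplication by $|\bs|=|A_1|_V$, the refined invariance property) but the mechanism you propose fails at exactly the point you flag as "the heart of the argument." Your $\tau=\tau_X\times\tau_Y$ applies the flux monodromy to \emph{both} factors of $\wh V_{X;\bs}\times\wh V_{Y;\bs}$. But by~\eref{PsiThDeg_e}, the shift in $\wt\Psi^{H_{12}}_{H_1,H_2}$ contributed by $\Th_\eta$ on the first factor is $+[\eta]_{H_{12}}$ and by $\Th_\eta$ on the second factor is $-[\eta]_{H_{12}}$, so the two contributions cancel exactly: the simultaneous deck transformation \emph{preserves} each diagonal component $\wh V_{X,Y;\bs}^A$ rather than shifting $A_2$ to $A_1$. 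The shift by $\Phi_{V;\bs}(\ga)=|\bs|\ga_V$ that you want is produced by acting on only one factor, as in~\eref{whVdiag_e} via~\eref{wtPsi_e}, not by the simultaneous action.

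There is a second, independent problem with the change-of-lift strategy. In~\eref{GWsumIP_e} the lift enters both in the IP-count $\wt\GW$ and in the class $\PD_{X,Y;\bs}^{V,A}\De$, because the degree map $g_{A_X,A_Y}$ in~\eref{gAdfn_e} and the lifted evaluations of Proposition~\ref{RimToriAct_prp} are built from the same coset representatives~\eref{gaXY_e} so that Proposition~\ref{RimToriAct_prp2} holds. Replacing $\wt\ev_X^V$ by $\Th_\eta\!\circ\!\wt\ev_X^V$ forces a compatible replacement of $g_{A_X,A_Y}$, hence of $\PD_{X,Y;\bs}^{V,A}\De$, and the two changes cancel: each summand on the right-hand side of~\eref{GWsumIP_e} is individually invariant under change of lift, so no nontrivial identity emerges this way. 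What the paper actually proves is that the two classes $\PD_{X,Y;\bs}^{V,A_1}\De$ and $\PD_{X,Y;\bs}^{V,A_2}\De$ are \emph{equal for a fixed lift}. This is Corollary~\ref{DiagFlux_crl}, deduced from Lemma~\ref{DiagFlux_lmm}: the flux loop lifts to an isotopy $\wt\Psi_t$ of $\wh V_{H_1;\bs}$ from $\id$ to $\Th_{\Phi_{V;\bs}(\ga)}$, and the map $(t,\wt x_1,\wt x_2)\mapsto(\wt\Psi_t(\wt x_1),\wt x_2)$ (note: isotopy on the first factor only) is a proper cobordism between $\wh V^{\wt\eta_2}$ and $\wh V^{\wt\eta_1}$, so the intersection homomorphisms~\eref{cHinter_e} coincide. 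Once the Poincar\'e duals are equal, the equality of GW-invariants follows from~\eref{GWsumIP_e} without any change of lift; the proposition is then the $N=N'=1$, $\Flux(V)_{X,Y}=\cR_{X,Y}^V$ special case of Theorem~\ref{EqGWs_thm}.
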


\begin{prp}\label{EqGWs_prp2}
Let $(X,\om_X)$ and $(Y,\om_Y)$ be compact symplectic manifolds,
$V\!\subset\!X,Y$ be a common compact symplectic divisor with topological
components $V_1,\ldots,V_N$ such that $\cN_XV\!\approx\!V\!\times\!\C$ and $\Flux(V)_X\!=\!H_1(V;\Z)_X$, 
and $\Phi$ be an isomorphism of complex line bundles as in~\eref{cNpair_e}.
If 
\BE{EqGWsprp2_e} A_1,A_2 \in H_2(X\!\#_V\!Y;\Z), \qquad A_1\!-\!A_2\in\cR_{X,Y}^V,\EE
and $|A_1|_{V_r}$ is prime relative~to $|\cR_{X,Y}^V|$ for every~$r$,
then the GW-invariants of $X\!\#_V\!Y$ of degrees~$A_1$ and~$A_2$ with $\Phi$-admissible 
inputs are the same.
\end{prp}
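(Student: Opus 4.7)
The plan is to reduce Proposition~\ref{EqGWs_prp2} to a multi-component refinement of the argument underlying Proposition~\ref{EqGWs_prp1}. Applying the refined invariance property~\eref{GWsumIP_e} to $A_1$ and $A_2$, I must show that each IP-count contribution $\wt\GW_{g,(A_X,A_Y);\bs}^{X\cup_V Y}(\ka;\PD_{X,Y;\bs}^{V,A_1}\De)$ equals the corresponding one for~$A_2$. Since any rim torus has trivial intersection with each component~$V_r$, the condition $A\!\in\!A_X\!\#_V\!A_Y$ forces $A_X\!\cdot_X\!V_r\!=\!|A|_{V_r}$ to be the same for $A\!=\!A_1$ and $A\!=\!A_2$, so the sets of pairs $(A_X,A_Y)$ and contact vectors~$\bs$ appearing in the two sums coincide.

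The core step is to produce an identification $\wh V_{X,Y;\bs}^{A_1}\!\cong\!\wh V_{X,Y;\bs}^{A_2}$ compatible with the restriction of~\eref{LfProdEv_e} to $\ov\fM_\Ga^V(X\!\cup_V\!Y)$, which will force $\PD_{X,Y;\bs}^{V,A_1}\De$ and $\PD_{X,Y;\bs}^{V,A_2}\De$ to pull back to the same class on the relevant moduli space. For this, I would use the deck action on $\wh{V}_{X;\bs}\!\times_{V_\bs}\!\wh{V}_{Y;\bs}$ induced by $\Flux(V_r)\!\subset\!\Flux(V)$: arguing as in Proposition~\ref{EqGWs_prp1}, a flux $\phi\!\in\!\Flux(V_r)$ yields a symmetry whose composition with the degree map $g_{A_X,A_Y}$ of~\eref{IPdegmap_e} shifts its value by $|A|_{V_r}\tau_\phi$, where $\tau_\phi\!\in\!\cR_{X,Y}^V$ is the image of $\phi$ under the composition $\Flux(V_r)\!\to\!H_1(V;\Z)_X\!\cong\!\cR_X^V\!\to\!\cR_{X,Y}^V$.

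Write $G_r\!\subset\!\cR_{X,Y}^V$ for the image of $\Flux(V_r)$ under this composition. The hypothesis $\Flux(V)_X\!=\!H_1(V;\Z)_X$ together with $\Flux(V)\!=\!\bigoplus_r\Flux(V_r)$ gives $G_1+\ldots+G_N=\cR_{X,Y}^V$. The coprimality assumption makes multiplication by~$|A_1|_{V_r}$ an automorphism of the finite group~$\cR_{X,Y}^V$, and so sends each subgroup $G_r$ to itself. Hence
$$\sum_{r=1}^N |A_1|_{V_r}G_r=\sum_{r=1}^N G_r=\cR_{X,Y}^V,$$
and we may decompose $A_1\!-\!A_2=\sum_{r=1}^N |A_1|_{V_r}\tau_r$ with $\tau_r\!\in\!G_r$. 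Applying the successive deck transformations corresponding to these $\tau_r$ then yields the desired identification.

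The principal difficulty lies in the second step: verifying rigorously that the deck symmetry induced by $\phi\!\in\!\Flux(V_r)$ intertwines the degree map $g_{A_X,A_Y}$ of~\eref{IPdegmap_e} with translation by $|A|_{V_r}\tau_\phi$. This requires an explicit analysis of the gluing construction underlying~\eref{IPdegmap_e} in the multi-component setting, showing that the weight $|A|_{V_r}$ arises because each contact point of the curve with~$V_r$ transports a copy of the flux loop, with contributions weighted by the corresponding entries of~$\bs$ summing to $|A|_{V_r}$. Once this compatibility is in hand, the rest of the argument is purely formal.
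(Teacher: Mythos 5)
The proposal takes essentially the same route as the paper. The paper proves Proposition~\ref{EqGWs_prp2} as a special case of Theorem~\ref{EqGWs_thm}, whose proof passes through Lemma~\ref{DiagFlux_lmm} and Corollary~\ref{DiagFlux_crl}, and then derives the proposition from the group-theoretic observations $|A_1|_{V_r}\Flux(V_r)_{X,Y}=\Flux(V_r)_{X,Y}$ and $\bigoplus_r\Flux(V_r)_{X,Y}=\cR_{X,Y}^V$ — exactly your coprimality-implies-automorphism and spanning arguments.

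Two remarks, neither fatal. First, you have mislocated the real content. The ``principal difficulty'' you describe — that a flux $\phi\!\in\!\Flux(V_r)$ applied at every contact point with $V_r$ produces the deck transformation $\Th_{|\bs_r|\phi}$ and hence, via~\eref{PsiThDeg_e}, shifts $\wt\Psi_{X,Y}^V$ by $|A|_{V_r}\tau_\phi$ — is in fact the easy, bookkeeping part; the paper records~\eref{PsiThDeg_e} without proof. Conversely, the step you dismiss as ``purely formal'' is the one that actually needs work: a deck transformation $\Th$ with $\Th(\wh V_{X,Y;\bs}^{A_1})=\wh V_{X,Y;\bs}^{A_2}$ does not by itself yield $\PD_{X,Y;\bs}^{V,A_1}\De=\PD_{X,Y;\bs}^{V,A_2}\De$, because $\Th$ need not act trivially on $H^*(\wh V_{X;\bs}\times\wh V_{Y;\bs};\Q)$. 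What saves you is that $\Th$ is the time-one endpoint of the lifted flux isotopy $\wt\Psi_t$, and the paper's Lemma~\ref{DiagFlux_lmm} uses the trace of this isotopy, $[0,1]\times\wh V^{\wt\eta_2}\to \wh V_{H_1;\bs}\times\wh V_{H_2;\bs}$, $(t,\wt x_1,\wt x_2)\mapsto(\wt\Psi_t(\wt x_1),\wt x_2)$, as a cobordism between the two diagonal components. You gesture at this with ``transports a copy of the flux loop,'' but you should make the cobordism explicit; it is where the flux hypothesis is actually used. Second, the paper's Lemma~\ref{DiagFlux_lmm} allows a different flux at each contact point and thus proves the stronger statement with $\gcd(\bs_r)$ in place of $|\bs_r|=|A_1|_{V_r}$; your version (the same flux at all $\ell_r$ points) gives the weaker statement, which is still enough here since $\gcd(\bs_r)\mid|\bs_r|$.
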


\noindent
If the vanishing cycles module $\cR_{X,Y}^V$ is infinite, 
we call the numbers $|A_1|_{V_r}$ and $|\cR_{X,Y}^V|$ \sf{relatively prime} if $|A_1|_{V_r}\!=\!\pm1$.
Propositions~\ref{EqGWs_prp1} and~\ref{EqGWs_prp2} are special cases of Theorem~\ref{EqGWs_thm}.
The latter also leads to a vanishing result for the GW-invariants of $X\!\#_V\!Y$
in the spirit of Theorem~\ref{AbsGW_thm}, but with different assumptions;
see Corollary~\ref{EqGWs_crl}.\\

\noindent
The flux condition in the above propositions is automatically satisfied if $V$ is a union of
the tori~$\T^{2n-2}$.
The relevant covers $\wh{V}_{X;\bs}$ and $\wh{V}_{Y;\bs}$
are then of the form $\R^m\!\times\!\T^{m'}$;
the groups of deck transformations act trivially on~them
and thus on the Poincare duals~\eref{PDcXY_e} of the diagonals components~\eref{cHXYVC_e}.
This approach provides a direct justification of Propositions~\ref{EqGWs_prp1}
and~\ref{EqGWs_prp2} from the refined invariance property for GW-invariance arising from~\cite{IPsum}
when  $V$ is a union of~tori.

\subsection{Miscellaneous considerations}
\label{misc_subs}

\noindent
The algebraic approach of~\cite{Jun2} considers only Kahler fibrations 
$\pi\!:\cZ\!\lra\!\De$  which come with an ample line bundle $\cL\!\lra\!\cZ$.
Since every element of $\cR_{X,Y}^V$ in~$\cZ_{\la}$ can be represented by a totally
real submanifold, its homology intersection with every complex hyperplane in~$\cZ_{\la}$
is~zero.
Thus, by the Lefschetz Theorem on $(1,1)$-classes \cite[p163]{GH},
an  element of $\cR_{X,Y}^V$ in~$\cZ_{\la}$ determines a class
in $H^{n-2,n}(\cZ_{\la})\!\oplus\!H^{n,n-2}(\cZ_{\la})$,
where $n$ is the complex dimension of~$\cZ_{\la}$, $X$, and~$Y$.
In particular, curve classes in $H_2(\cZ_{\la};\Z)$  
differing by an element of~$\cR_{X,Y}^V$ differ by a torsion class.
This observation and Theorem~\ref{AbsGW_thm} suggest the following conjecture
about non-Kahler symplectic sums.

\begin{cnj}\label{RimTori_cnj}
Let $(X,\om_X)$ and $(Y,\om_Y)$ be compact symplectic manifolds
with a common compact symplectic divisor $V\!\subset\!X,Y$
and $\Phi$ be an isomorphism of complex line bundles as in~\eref{cNpair_e}.
If $A_1,A_2\!\in\!H_2(X\!\#_V\!Y;\Z)$, $A_1\!-\!A_2\!\in\!\cR_{X,Y}^V$,
and some GW-invariants of $X\!\#_V\!Y$ of degrees~$A_1$ and~$A_2$ are nonzero,
then $A_1\!-\!A_2$ is a torsion class.
\end{cnj}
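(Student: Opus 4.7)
The plan is to leverage the Refined Invariance Property for GW-Invariants stated above, together with Gromov Compactness, to reduce the conjecture to a finiteness argument about the deck-transformation orbits of the covers $\wh{V}_{X,Y;\bs}$. Since $A_1\!-\!A_2\in\cR_{X,Y}^V$, the two classes $A_1,A_2$ represent the same coset in $H_2(X\!\#_V\!Y;\Z)/\cR_{X,Y}^V$, and so the right-hand side of~\eref{GWsumIP_e} for $\GW_{g,A_1}^{X\#_VY}(\ka)$ and $\GW_{g,A_2}^{X\#_VY}(\ka)$ is indexed by exactly the same pairs $(A_X,A_Y)\!\in\!H_2(X;\Z)\!\times_V\!H_2(Y;\Z)$ and contact vectors~$\bs$. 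The difference of the two GW-invariants therefore collapses to
\begin{align*}
\GW_{g,A_1}^{X\#_VY}(\ka) - \GW_{g,A_2}^{X\#_VY}(\ka)
= \sum_{(A_X,A_Y),\bs} \wt\GW_{g,(A_X,A_Y);\bs}^{X\cup_VY}\bigl(\ka;\,(\PD^{V,A_1}_{X,Y;\bs} - \PD^{V,A_2}_{X,Y;\bs})\De\bigr),
\end{align*}
reducing the task to understanding how the cohomology class~\eref{PDcXY_e} depends on the lift~$A$ of the coset.

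The next step is to exploit the action of $\cR_{X,Y}^V$ on $\wh{V}_{X,Y;\bs}$ by deck transformations, as furnished by Proposition~\ref{RimToriAct_prp2}. A rim torus class $R\!\equiv\!A_1\!-\!A_2$ should carry the component $\wh{V}^{A_1}_{X,Y;\bs}$ diffeomorphically onto $\wh{V}^{A_2}_{X,Y;\bs}$ and identify the Poincar\'e duals~\eref{PDcXY_e} along this diffeomorphism. If $R$ has infinite order in $\cR_{X,Y}^V$, its $\Z$-orbit partitions the preimage of the coset into infinitely many mutually isomorphic components $\wh{V}^{A_1+kR}_{X,Y;\bs}$, $k\!\in\!\Z$, each contributing the same quantity to the coset-sum of GW-invariants of $X\!\#_V\!Y$ via the refined invariance property applied orbit-wise.

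The conclusion then follows from finiteness: by Gromov's Compactness Theorem, for fixed $g$ and $\Phi$-admissible insertion $\ka$, only finitely many terms of
$$\sum_{A\in A_X\#_V A_Y} \GW_{g,A}^{X\#_VY}(\ka)$$
are nonzero. If the equal contributions from the infinite $\langle R\rangle$-orbit were all nonzero, this coset total would be infinite, contradicting the Compactness Theorem. Hence both $\GW_{g,A_1}$ and $\GW_{g,A_2}$ must vanish whenever $R\!=\!A_1\!-\!A_2$ is non-torsion, which is the contrapositive of the conjecture.

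The main obstacle is the intermediate equality step. A priori the Poincar\'e duals $\PD^{V,A_1+kR}_{X,Y;\bs}\De$ only match \emph{up to the deck action}, and promoting this to an actual equality of the pulled-back classes on $\ov\fM_\Ga^V(X)\!\times\!\ov\fM_\Ga^V(Y)$ demands that the systematic choice of lifts~\eref{evXVlift_e} be deck-equivariant in the relevant sense. Verifying this, especially when $|\bs|\!\neq\!0$ and the covers $\wh{V}_{X;\bs},\wh{V}_{Y;\bs}$ are topologically complex, is the crux. The Hodge-theoretic argument sketched in the paragraph preceding the conjecture serves only as a K\"ahler-case heuristic, so the whole proof must be executed symplectically through the refined invariance formula rather than through complex geometry.
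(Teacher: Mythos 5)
This statement is labeled a \emph{conjecture} in the paper, not a theorem; the authors do not prove it. The paragraph preceding it only gives a Kähler-case heuristic (the Lefschetz $(1,1)$ argument), and Remark~\ref{nonKahler_rmk} immediately cautions that this reasoning breaks down symplectically. Your proposal therefore cannot be ``the paper's proof,'' and in fact it fails at exactly the point you flag in your last paragraph.

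The ``equal contributions'' step is a genuine gap, not a technicality to be verified. The deck transformation $\Th_\eta$ (with $\eta$ mapping to $R\!=\!A_1\!-\!A_2$) does carry the diagonal component $\wh{V}^{A_1}_{X,Y;\bs}$ to $\wh{V}^{A_2}_{X,Y;\bs}$, and the identity~\eref{PsiThDeg_e} gives
$$\PD_{X,Y;\bs}^{V,A-\eta}\De=\big\{\Th_{\eta}\!\times\!\id\big\}^*\big(\PD_{X,Y;\bs}^{V,A}\De\big),$$
but this is a relation by \emph{pullback along a non-trivial diffeomorphism}, not an equality of cohomology classes. When these two classes are pulled back by the \emph{fixed} lifted evaluation morphism $\wt\ev_X^V\!\times\!\wt\ev_Y^V$ of~\eref{LfProdEv_e} and integrated over the fixed virtual fundamental cycle of $\ov\fM_\Ga^V(X)\!\times\!\ov\fM_\Ga^V(Y)$, the two integrals are different in general; the moduli cycle does not move with the deck action. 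There is no ``deck-equivariance of the lift'' in the sense that would force equality: Proposition~\ref{RimToriAct_prp} says the lift is pinned down (up to an overall deck translation) by the rim-torus-difference formula~\eref{RimToriAct_e}, which is precisely what makes $\wt\ev_X^V$ detect, rather than ignore, the coset of the lift.

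What the paper actually proves in this direction is Theorem~\ref{AbsGW_thm}, whose argument has the same skeleton as yours but closes the gap under an extra hypothesis: $H_*(\wh{V}_X;\Q)$ is finitely generated. There, a finite Kunneth decomposition $\PD^{V,A;\ga^*}\De=\sum_i\wt\ka_{X;i}\!\otimes\!\wt\ka_{Y;i}$ exists, the deck action acts linearly on the finite-dimensional space $H^*(\wh{W}_{X;\bs}';\Q)$, and a linear-algebra argument (not an equality of contributions) shows that if one orbit contribution~\eref{AbsGWthm_e21} is nonzero then infinitely many must be, contradicting Gromov compactness. Without the finite-generation hypothesis (absent in the conjecture), the covers can fail to admit any Kunneth decomposition (Example~\ref{noKunn_eg}), and even the refined \emph{decomposition} formula is unavailable — the approach of \cite{IPsum} only yields the invariance property then. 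So your strategy, correctly fleshed out, yields Theorem~\ref{AbsGW_thm} and Corollary~\ref{EqGWs_crl}, but it does not reach the general conjecture; the authors explicitly leave the latter open.
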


\begin{rmk}\label{nonKahler_rmk}
The reasoning above Corollary~\ref{RimTori_cnj} does not apply outside of the Kahler setting.
For example, let $X\!=\!\P^1\!\times\!\T^2$, $V\!=\!\{0,\i\}\!\times\!\T^2$,
$$f_1,f_2\!:\T^2\lra X\!-\!V, \qquad 
f_1(\ne^{\fI\th_1},\ne^{\fI\th_2})=(2,\ne^{\fI\th_1},\ne^{\fI\th_2}), \quad
f_2(\ne^{\fI\th_1},\ne^{\fI\th_2})=(\ne^{\fI\th_1},\ne^{\fI\th_1},\ne^{\fI\th_2}).$$
Since the images of the embeddings~$f_1$ and~$f_2$ are disjoint symplectic submanifolds of~$X$,
we can choose an almost complex structure~$J_X$ on~$X$
which is standard around $V$ and makes these images $J_X$-holomorphic.
The two maps~$f_1$ and~$f_2$ differ by a rim torus.
Since both maps miss~$V$, they induce $J$-holomorphic maps into 
$Z\!=\!X\!\#_V\!X$, which differ by a non-trivial element of $\cR_{X,X}^V\!\approx\!\Z^2$.
\end{rmk}

\subsection{Outline of the paper}
\label{outline_subs}

\noindent
We review the notation for abelian covers from~\cite{GWrelIP} in Section~\ref{AbCov_subs}
and the definition of the coverings~\eref{IPcov_e} in Section~\ref{RelGW_subs}.
In Section~\ref{GlDeg_subs}, we define the map~\eref{IPdegmap_e} as
a special case of the map~\eref{wtPsi_e} for arbitrary abelian covers constructed 
in Section~\ref{DiagonSplit_subs}.
In Section~\ref{XYVdiagprop_subs},
the map~\eref{wtPsi_e} is used to construct and study 
the diagonal components~\eref{whVdiag_e} and their cohomology classes~\eref{GenPD_e},
which specialize to~\eref{cHXYVC_e} an~\eref{PDcXY_e}, respectively, 
in the symplectic sum context.
Theorem~\ref{AbsGW_thm} and Corollary~\ref{AbsGW_crl} are established in Section~\ref{SympSumPf_subs}.
A rim tori refinement of~\eref{IPdegmap_e}  is defined in Section~\ref{RTconv_subs}
as a special case of the convolution product on abelian covers defined in Section~\ref{AbConv_subs};
see~\eref{XiXYVdfn_e} and~\eref{Xidfn_e0}.
Section~\ref{RES_sec} streamlines the computation of some GW-invariants of~$\wh\P^2_9$ in~\cite{IPsum} 
by making use of a vanishing result for relative GW-invariants, which is an immediate consequence
of the existence of IP-counts, and corrects some statements in~\cite{IPsum} concerning 
these counts.\\

\noindent
The purpose of this paper is to investigate the topological aspects of the rim tori
refinement to the standard symplectic sum formula.
We pre-suppose that the latter has been established 
and describe the necessary steps to implement the suggestion of \cite{IPsum} 
as an enhancement on an existing analytic proof.
We deduce some qualitative applications arising from this refinement and 
discuss its usability for quantitative purposes.
A significant number of examples are included in Section~\ref{SympSum_sec} 
for illustrative purposes;
some of them are also used in Section~\ref{RES_sec}.\\

\noindent
The authors would like to thank  E.~Ionel, D.~McDuff, M.~McLean, J.~Milnor,
and J.~Starr for enlightening discussions.

\section{Review of preliminaries}
\label{review_sec}

\noindent
The coverings~\eref{IPcov_e} are special cases of the abelian covers described 
in Section~\ref{AbCov_subs}.
The former are constructed in \cite[Section~6.1]{GWrelIP} and 
reviewed in Section~\ref{RelGW_subs}.
We also recall a crucial statement concerning choices of the lifts~\eref{evXVlift_e} 
established in~\cite{GWrelIP}; see Proposition~\ref{RimToriAct_prp}.

\subsection{Abelian covers of topological spaces}
\label{AbCov_subs}

\noindent
Let $\Z_{\pm}\!\subset\!\Z$ denote the nonzero integers.
For a tuple $\bs\!=\!(s_1,\ldots,s_{\ell})\in\Z_{\pm}^{\,\ell}$ with $\ell\!\in\!\Z^{\ge0}$,
we denote by $\gcd(\bs)$ the greatest common divisor of~$s_1,\ldots,s_{\ell}$;
if $\ell\!=\!0$, we set $\gcd(\bs)\!=\!0$.\\

\noindent
Let $V$ be a topological space. 
For any submodule $H\!\subset\!H_1(V;\Z)$, let
\BE{cRH} q_H\!: H_1(V;\Z)\lra \cR_H\equiv \frac{H_1(V;\Z)}{H}\EE
be the projection to the corresponding quotient module.
If $V_1,\ldots,V_N$ are the topological components of~$V$,
\hbox{$\ell_1,\ldots,\ell_N\!\in\!\Z^{\ge0}$}, and
$\bs_1\!\in\!\Z_{\pm}^{\,\ell_1},\ldots,\bs_N\!\in\!\Z_{\pm}^{\,\ell_N}$, then the topological space
$$V_{\bs_1\ldots\bs_N}\equiv V_1^{\ell_1}\!\times\!\ldots\!\times\!V_N^{\ell_N}$$
is connected.\\

\noindent
With $V$ and $\bs_1,\ldots,\bs_N$ as above, define
\begin{gather}\label{PhiVbs_e}
\Phi_{V;\bs_1\ldots\bs_N}\!: 
H_1\big(V_{\bs_1\ldots\bs_N};\Z\big)=\bigoplus_{r=1}^N\! H_1(V_r;\Z)^{\oplus\ell_r} 
\lra H_1(V;\Z),\\ 
\notag
\Phi_{V;\bs_1\ldots\bs_N}
\big((\ga_{r;i})_{i\le\ell_r,r\le N}\big)= \sum_{r=1}^N\sum_{i=1}^{\ell_r}s_{r;i}\ga_{r;i}\,.
\end{gather}
For any submodule $H\!\subset\!H_1(X;\Z)$, let
\begin{gather}
\label{Hbsdfn_e}  H_{\bs_1\ldots\bs_N}
=\Phi_{V;\bs_1\ldots\bs_N}^{-1}(H)\subset H_1\big(V_{\bs_1\ldots\bs_N};\Z\big),\\
\label{cRbsdfn_e}
\cR_{H;\bs_1\ldots\bs_N}'=\Im\big\{q_H\!\circ\!\Phi_{V;\bs_1\ldots\bs_N}\big\}
\subset\cR_H, \qquad
\cR_{H;\bs_1\ldots\bs_N}=\frac{\cR_H}{\cR_{H;\bs_1\ldots\bs_N}'}
\!\times\!\cR_{H;\bs_1\ldots\bs_N}'\,.
\end{gather}
If $\gcd(\bs_r)\!=\!1$ for every $r\!=\!1,\ldots,N$, then 
$$\cR_{H;\bs_1\ldots\bs_N}'=\cR_{H;\bs_1\ldots\bs_N}=\cR_H\,.$$
If $V$ is connected, then \hbox{$\cR_{H;\bs}'\!=\!\gcd(\bs)\cR_H$}
for any $\bs\!\in\!\Z_{\pm}^{\ell}$ and $H_{(1)}\!=\!H$.\\

\noindent
For each $r\!=\!1,\ldots,N$, let $\wh{V}_r\!\lra\!V_r$ be the maximal abelian cover of~$V_r$,
i.e.~the covering projection corresponding to the commutator subgroup of~$\pi_1(V)$.
The group of deck transformations of this regular covering is $H_1(V_r;\Z)$.
The maximal abelian cover of~$V_{\bs_1\ldots\bs_N}$ is given~by
\BE{whVbs_e}
\wh{V}_{\bs_1\ldots\bs_N}\equiv\prod_{r=1}^N\wh{V}_r^{\ell_r}\lra V_{\bs_1\ldots\bs_N}\,;\EE
there is a natural action of $H_1(V_{\bs};\Z)$ on this space.
For any submodule $H\!\subset\!H_1(V;\Z)$, let
\BE{HbsCov_e}\begin{split}
\pi_{H;\bs_1\ldots\bs_N}'\!:\wh{V}_{H;\bs_1\ldots\bs_N}'&
\equiv\wh{V}_{\bs_1\ldots\bs_N}\big/H_{\bs_1\ldots\bs_N} \lra V_{\bs_1\ldots\bs_N}\,,\\
\pi_{H;\bs_1\ldots\bs_N}\!:\wh{V}_{H;\bs_1\ldots\bs_N}&
\equiv \frac{\cR_H}{\cR_{H;\bs_1\ldots\bs_N}'}\!\times\!\wh{V}_{H;\bs_1\ldots\bs_N}' 
\lra V_{\bs_1\ldots\bs_N}\,.
\end{split}\EE
The groups of deck transformations of these regular coverings are
\BE{HbsDeck_e}
\Deck\big(\pi_{H;\bs_1\ldots\bs_N}'\big)=\cR_{H;\bs_1\ldots\bs_N}' 
\qquad\hbox{and}\qquad
\Deck\big(\pi_{H;\bs_1\ldots\bs_N}\big)=\cR_{H;\bs_1\ldots\bs_N},\EE 
respectively.
We will write elements of the second covering in~\eref{HbsCov_e} as
\BE{whVelem_e}\big([\ga]_{H;\bs_1\ldots\bs_N},[\wh{x}]_H\big)
\in  \frac{\cR_H}{\cR_{H;\bs_1\ldots\bs_N}'}\!\times\!\wh{V}_{H;\bs_1\ldots\bs_N}' ,\EE
with the first component denoting the image of $\ga\!\in\!H_1(V_{\bs_1\ldots\bs_N};\Z)$
under the homomorphism
$$H_1(V;\Z)\lra \cR_H\lra \frac{\cR_H}{\cR_{H;\bs_1\ldots\bs_N}'}$$
and the second component denoting the image of $\wh{x}\!\in\!\wh{V}_{\bs_1\ldots\bs_N}$.\\

\noindent
A collection $\{\ga_j\}\!\subset\!H_1(V;\Z)$ of representatives for the elements of 
$\cR_H/\cR_{H;\bs_1\ldots\bs_N}'$ induces a homomorphism
$$H_1(V;\Z)\lra \Deck(\pi_{H;\bs_1\ldots\bs_N}\big), \qquad 
\eta\lra\Th_{\eta}\,,$$
as follows.
For every $\eta\!\in\!H_1(V;\Z)$ and a coset representative~$\ga_j$,
let $\ga_j(\eta)$ be the unique coset representative from the chosen collection
such~that 
\BE{gajeta_e} \ga_j+\eta-\ga_j(\eta)-\Phi_{V;\bs_1\ldots\bs_N}(\eta_j)\in H\EE
for some $\eta_j\!\in\!H_1(V_{\bs_1\ldots\bs_N};\Z)$.
Define 
\BE{Thetadfn_e}\Th_{\eta}\!:\wh{V}_{H;\bs_1\ldots\bs_N}\lra \wh{V}_{H;\bs_1\ldots\bs_N}, ~~
\Th_{\eta}\big([\ga_j]_{H;\bs_1\ldots\bs_N},[\wh{x}]_H\big)
=\big([\ga_j(\eta)]_{H;\bs_1\ldots\bs_N},[\eta_j\!\cdot\!\wh{x}]_H\big).\EE
Since \eref{gajeta_e} determines $\eta_j$ up to an element of $H_{\bs_1\ldots\bs_N}$,
the last component of~$\Th_{\eta}$ is well-defined.\\

\noindent
The coverings $V_{\bs_1\ldots\bs_N}$ often do not have finitely generated homology groups.
Some cases when their homology groups are finitely generated are described
in \cite[Section~5.2]{GWrelIP}.\\

\noindent
The next example underlines most examples in Section~\ref{SympSum_sec}.

\begin{eg}[{\cite[Example~5.1]{GWrelIP}}]\label{Tcov_eg}
If $V\!=\!\T^2$, $\ell\!\in\!\Z^+$, and $H\!=\!\{0\}$,
then
$$\wh{V}_{H;\bs}=\C\!\times\!\T_{\bs}^{2(\ell-1)}, \quad\hbox{where}\quad
\T_{\bs}^{2(\ell-1)}=\big\{(z_i)_{i\le\ell}\!\in\!\C^{\ell}\!:
\sum_{i=1}^{\ell}\!s_iz_i\in\Z\!\oplus\!\fI\Z\big\} \big/\Z^{2\ell}
\subset \T^{2\ell}\!=\!V_{\bs}.$$ 
The second covering in~\eref{HbsCov_e} can be written~as
\BE{EScover_e}\C\!\times\!\T_{\bs}^{2(\ell-1)}\lra \T^{2\ell}, \qquad
\big(z,[z_i]_{i\le\ell}\big) \lra  \bigg[z_i\!-\!\frac{z}{s_i}\bigg]_{i\le\ell}.\EE
Under the standard identification of $H_1(\T^2;\Z)$ with $\Z\!\oplus\!\fI\Z$,
the action of $H_1(\T^2;\Z)^{\oplus\ell}$ on this cover is given~by
\BE{T2act_e1} (\ga_{i'})_{i'\le\ell}\cdot\big(z,[z_i]_{i\le\ell}\big) 
=\bigg(z\!+\!\frac1\ell\sum_{i'=1}^{\ell}s_{i'}\ga_{i'},
\bigg[z_i\!+\!\frac{1}{\ell s_i}\sum_{i'=1}^{\ell}s_{i'}\ga_{i'}\bigg]_{i\le\ell}\bigg)\,.\EE
The group of deck transformations of this cover is $\Z_{\gcd(\bs)}^{\,2}\!\oplus\!\gcd(\bs)\Z^2$.
The action of the second component is induced by the action of $H_1(\T^2;\Z)^{\oplus\ell}$ via
the surjective homomorphism 
\BE{T2act_e2} H_1(\T^2;\Z)^{\oplus\ell}\lra \gcd(\bs)H_1(\T^2;\Z), \qquad
(\ga_{i'})_{i'\le\ell}\lra \sum_{i'=1}^{\ell}s_{i'}\ga_{i'}\,.\EE
\end{eg}

\subsection{The rim tori covers and lifts}
\label{RelGW_subs}

\noindent
The coverings~\eref{IPcov_e} are defined below as special cases
of the abelian covers~\eref{HbsCov_e}.
The relative evaluation morphisms~\eref{evdfn_e2} lift
over these coverings,  though not uniquely.
Fixing lifts of these morphisms (as is essentially done in~\cite{IPrel}) corresponds
to choosing base points in certain spaces; see \cite[Remark~6.7]{GWrelIP}.
These choices can be made in a systematic manner; see Proposition~\ref{RimToriAct_prp}.\\

\noindent
Let $X$ be a manifold and $V\!\subset\!X$ be a closed submanifold of codimension~$\fc$.
Denote by $S_XV\!\subset\!\cN_XV$ the sphere subbundle of the normal bundle of~$V$ in~$X$,
which we will view as a hypersurface in~$X$, and~let
\BE{cRXVdfn_e}\cR_X^V\equiv \ker\big\{\io^X_{X-V*}\!:
H_{\fc}(X\!-\!V;\Z)\!\lra\!H_{\fc}(X;\Z)\big\}.\EE
If in addition $X$ and $V$ are compact and oriented,
we~define
\begin{alignat*}{2}
\cap V\!: H_*(X;\Z)&\lra H_{*-\fc}(V;\Z), &\qquad A\cap V&=\PD_V\big((\PD_XA)|_V\big),\\
\De_X^V\!: H_m(V;\Z)&\lra H_{m+\fc-1}(S_XV;\Z), &\qquad
\De_X^V(\ga)&=\PD_{S_XV}\big(q_X^{V\,^*}(\PD_V\ga)\big),
\end{alignat*}
where $q_X^V\!:S_XV\!\lra\!V$ is the projection map.
Let
\BE{H1VXdfn_e} H^V_X=
\big\{A\!\cap\!V\!: A\!\in\!H_{\fc+1}(X;\Z)\big\}\subset H_1(V;\Z), \quad
H_1(V;\Z)_X=\frac{H_1(V;\Z)}{H^V_X}.\EE
By \cite[Corollary~3.2]{GWrelIP}, the homomorphism
\BE{RTisom_e}\io_{S_XV*}^{X-V}\!\circ\!\De_X^V\!: H_1(V;\Z)_X\lra 
\cR_X^V \subset H_{\fc}(X\!-\!V;\Z)\EE
is well-defined and is an isomorphism.\\

\noindent
With notation as in~\eref{cRH},
$$\cR_{H_X^V}\equiv H_1(V;\Z)_X\approx\cR_X^V\,.$$
Let $V_1,\ldots,V_N$ be the topological components of~$V$.
For $\ell_1,\ldots,\ell_N\!\in\!\Z^{\ge0}$ and
$\bs_1\!\in\!\Z_{\pm}^{\,\ell_1},\ldots,\bs_N\!\in\!\Z_{\pm}^{\,\ell_N}$, define
\begin{gather*}
H_{X;\bs_1\ldots\bs_N}^V=\big(H_X^V\big)_{\bs_1\ldots\bs_N}\subset H_1(V_{\bs_1\ldots\bs_N};\Z),\\
\cR_{X;\bs_1\ldots\bs_N}'^{\,V}=\cR_{H_X^V;\bs_1\ldots\bs_N}'\subset H_1(V;\Z)_X\,,
\qquad  \cR_{X;\bs_1\ldots\bs_N}^V=\cR_{H_X^V;\bs_1\ldots\bs_N}\,.
\end{gather*}
The rim tori covers~\eref{IPcov_e} are the abelian covers
\BE{IPcov_e2}\begin{split}
\pi_{X;\bs_1\ldots\bs_N}'^V\!\equiv\!\pi_{H_X^V;\bs_1\ldots\bs_N}'^V\!:
\wh{V}_{X;\bs_1\ldots\bs_N}'&\equiv\wh{V}_{H_X^V;\bs_1\ldots\bs_N}'
\lra V_{\bs_1\ldots\bs_N},\\
\pi_{X;\bs_1\ldots\bs_N}^V\!\equiv\!\pi_{H_X^V;\bs_1\ldots\bs_N}^V\!: 
\wh{V}_{X;\bs_1\ldots\bs_N}&\equiv\wh{V}_{H_X^V;\bs_1\ldots\bs_N}
\lra V_{\bs_1\ldots\bs_N}.
\end{split}\EE
By~\eref{HbsDeck_e}, the groups of deck transformations of these regular coverings are
\BE{IPdeck_e}\Deck\big(\pi_{X;\bs_1\ldots\bs_N}'^V\big)=\cR_{X;\bs_1\ldots\bs_N}'^{\,V}
\qquad\hbox{and}\qquad
\Deck\big(\pi_{X;\bs_1\ldots\bs_N}^V\big) =\cR_{X;\bs_1\ldots\bs_N}^V\,,\EE
respectively.
We will write elements of the second covering in~\eref{IPcov_e2} as
\BE{whVelem_e2}\big([\ga]_{X;\bs_1\ldots\bs_N},[\wh{x}]_X\big)
\in  \frac{\cR_X^V}{\cR_{X;\bs_1\ldots\bs_N}'^V}\!\times\!\wh{V}_{X;\bs_1\ldots\bs_N}'\,,\EE
with notation as in~\eref{whVelem_e} for $H\!=\!H_X^V$.\\

\noindent
If $\Si$ is a  compact oriented $m$-dimensional manifold, $A\!\in\!H_m(X;\Z)$,
$k\!\in\!\Z^{\ge0}$, and \hbox{$p\!>\!m$}, let $\fX_{\Si,k}(X,A)$ be the space of tuples
$(z_1,\ldots,z_k,f)$
such~that $f\!\in\!L^p_1(\Si;X)$,
$f_*[\Si]\!=\!A$, and  \hbox{$z_1,\ldots,z_k\in\Si$} are distinct points. 
If $m\!=\!\fc$ and $r\!=\!1,\ldots,N$, each isolated point $z\!\in\!f^{-1}(V_r)$
has well-defined order of contact with~$V_r$, $\ord_z^{V_r}\!f\!\in\!\Z$; see the beginning of 
\cite[Section~2.1]{GWrelIP}. 
If in addition $\bs_1,\ldots,\bs_N$ are as before and \eref{bsumcond_e} holds for each
$(V,\bs)\!=\!(V_r,\bs_r)$, let 
$$\fX_{\Si,k;\bs_1\ldots\bs_N}^{V_1,\ldots,V_N}(X,A)\subset 
\fX_{\Si,k+\ell_1+\ldots+\ell_N}(X,A)$$
be the subspace of tuples $(z_1,\ldots,z_{k+\ell_1+\ldots+\ell_N},f)$ such~that 
\begin{alignat*}{2}
&f^{-1}(V_r)=\big\{z_{k+\ell_1+\ldots+\ell_{r-1}+1},\ldots,
z_{k+\ell_1+\ldots+\ell_r}\big\}
&\qquad &\forall~r=1,\ldots,N,\\
&\ord_{z_{k+\ell_1+\ldots+\ell_{r-1}+i}}^{V_r}f=s_{r;i}
&\qquad &\forall~i\!=\!1,2,\ldots,\ell_r,~r\!=\!1,\ldots,N.
\end{alignat*}
We denote~by 
\BE{evXVdfn_e}
\ev_X^V\!=\!\ev_{k+1}\!\times\!\ldots\!\times\!\ev_{k+\ell_1+\ldots+\ell_N}\!:
\fX_{\Si,k;\bs_1\ldots\bs_N}^{V_1,\ldots,V_N}(X,A)
\lra V_{\bs_1\ldots\bs_N}\EE
the total relative evaluation morphism.
Any pair $\bff,\bff'\!\in\!\fX_{\Si,k;\bs_1\ldots\bs_N}^{V_1,\ldots,V_N}(X,A)$ with
$$\ev_X^V\big(\bff)=\ev_X^V(\bff')\in V_{\bs}$$
determines an element $[f\!\#\!(-f')]$ of $\cR_X^V\!\subset\!H_{\fc}(X\!-\!V;\Z)$;
see \cite[Section~2.1]{GWrelIP}.\\

\noindent
By \cite[Lemma~6.3]{GWrelIP}, the morphism~\eref{evXVdfn_e} lifts over 
the coverings~\eref{IPcov_e2}.
Such lifts can be chosen so that they are compatible with the morphisms between
the configuration spaces of maps obtained by dropping some of the components
of~$V$ and the corresponding relative contact points;
see \cite[Figure~1]{GWrelIP}.
They can also be chosen compatibly with the isomorphism~\eref{RTisom_e}, in the sense 
described~below.

\begin{prp}[{\cite[Theorem~6.5]{GWrelIP}}]\label{RimToriAct_prp}
Suppose $X$ is a compact oriented manifold, $V\!\subset\!X$ is a 
compact oriented submanifold of codimension~$\fc$ with connected components
$V_1,\ldots,V_N$, $A\!\in\!H_{\fc}(X;\Z)$, and 
$\bs_r\!\in\!\Z_{\pm}^{\,\ell_r}$ for \hbox{$r\!=\!1,\ldots,N$}.
Let $\{\ga_j\}\!\subset\!H_1(V;\Z)$ be a collection of representatives
for the elements of $\cR_X^V/\cR_{X;\bs_1\ldots\bs_N}'^{\,V}$.
If $\Si$ is a  compact oriented $\fc$-dimensional manifold and $k\!\in\!\Z^{\ge0}$,
there exists a~lift 
\BE{EvLift_e}\wt\ev_X^V\!:
\fX_{\Si,k;\bs_1\ldots\bs_N}^{V_1,\ldots,V_N}(X,A) \lra 
\wh{V}_{X;\bs_1\ldots\bs_N}\EE
of the morphism~$\ev_X^V$ in~\eref{evXVdfn_e} 
over the covering $\pi_{X;\bs_1\ldots\bs_N}^V$ in~\eref{IPcov_e2} with the following property.
For any $\bff,\bff'\!\in\!\fX_{\Si,k;\bs_1\ldots\bs_N}^{V_1,\ldots,V_N}(X,A)$ with
\BE{RTmatch_e}\wt\ev_X^V(\bff)=\big([\ga_j]_{X;\bs_1\ldots\bs_N},[\ga\!\cdot\!\wh{x}]_X\big)
\quad\hbox{and}\quad 
\wt\ev_X^V(\bff')=\big([\ga_{j'}]_{X;\bs_1\ldots\bs_N},[\wh{x}]_X\big)\EE
for some $\wh{x}\!\in\!\wh{V}_{\bs_1\ldots\bs_N}$, 
$\ga\!\in\!H_1(V_{\bs_1\ldots\bs_N};\Z)$, and $j,j'$ indexing the coset representatives,
the map components of~$\bff$ and~$\bff'$ satisfy
\BE{RimToriAct_e}\big[f\!\#\!(-f')\big]=
\io_{S_XV*}^{X-V}\big(\De_X^V\big(\Phi_{V;\bs_1\ldots\bs_N}(\ga)\!+\!\ga_j\!-\!\ga_{j'}\big)\big)
\in H_{\fc}(X\!-\!V;\Z).\EE
Furthermore, $\wt\ev_X^V(\bff')$ is the unique point in 
$\pi_{X;\bs_1\ldots\bs_N}^{V~-1}(\ev_X^V(\bff'))$ so that \eref{RimToriAct_e} holds
for a given value of~$\wt\ev_X^V(\bff)$.
\end{prp}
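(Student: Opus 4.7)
\medskip
\noindent
\textbf{Proof proposal.} The existence of \emph{some} lift of $\ev_X^V$ over $\pi_{X;\bs_1\ldots\bs_N}^V$ is a standard covering-space computation and is already recorded as \cite[Lemma~6.3]{GWrelIP}; the content of the proposition is to distinguish the ``right'' lift from within the torsor of all lifts by imposing the compatibility~\eref{RimToriAct_e}. The plan is to construct $\wt\ev_X^V$ component-by-component on $\fX\!\equiv\!\fX_{\Si,k;\bs_1\ldots\bs_N}^{V_1,\ldots,V_N}(X,A)$: on each path component $\fX_\alpha$ fix a base map $\bff_\alpha$ and fix a reference point $\wt\ev_X^V(\bff_\alpha)$ in the fiber of $\pi_{X;\bs_1\ldots\bs_N}^V$ over $\ev_X^V(\bff_\alpha)$; extend to all of $\fX_\alpha$ via unique path-lifting along $\ev_X^V$. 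The non-trivial task is to choose the reference points $\{\wt\ev_X^V(\bff_\alpha)\}$ coherently across components so that \eref{RimToriAct_e} holds for every pair $\bff,\bff'$.

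\smallskip
\noindent
\emph{Same component case.} For $\bff,\bff'\!\in\!\fX_\alpha$ with $\ev_X^V(\bff)\!=\!\ev_X^V(\bff')$, choose a path $\{\bff_t\}$ joining them in $\fX_\alpha$. Its image under $\ev_X^V$ is a loop in $V_{\bs_1\ldots\bs_N}$ representing some $\ga\!\in\!H_1(V_{\bs_1\ldots\bs_N};\Z)$; the monodromy of the lifted path determines the deck-group element relating $\wt\ev_X^V(\bff)$ and $\wt\ev_X^V(\bff')$, which acts on the $\wh V'$-factor by $\Phi_{V;\bs_1\ldots\bs_N}(\ga)\,\tn{mod}\,H_X^V$. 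On the other hand, $[f\#(-f')]\!\in\!\cR_X^V$ is obtained by sweeping a cylinder along the loop of contact points, which identifies it via the isomorphism $\io_{S_XV*}^{X-V}\!\circ\!\De_X^V$ of \cite[Corollary~3.2]{GWrelIP} with $\Phi_{V;\bs_1\ldots\bs_N}(\ga)$. Since $\bff,\bff'$ share a component, the coset labels satisfy $[\ga_j]\!=\![\ga_{j'}]$, so \eref{RimToriAct_e} reduces to this identity.

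\smallskip
\noindent
\emph{Different component case and main obstacle.} When $\bff\!\in\!\fX_\alpha$ and $\bff'\!\in\!\fX_{\alpha'}$ lie in distinct components, no path exists in $\fX$ realizing $[f\#(-f')]$ by a loop downstairs, so the discrete factor $\cR_X^V/\cR_{X;\bs_1\ldots\bs_N}'^{\,V}$ of the cover must absorb precisely the coset of $[f\#(-f')]$ modulo $\cR_{X;\bs_1\ldots\bs_N}'^{\,V}$. The main obstacle is to arrange the reference lifts $\{\wt\ev_X^V(\bff_\alpha)\}$ so that the coset labels $[\ga_{j(\alpha)}]$ and the $\wh V'$-coordinates are \emph{globally} consistent with~\eref{RimToriAct_e}; this is a cocycle condition. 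I would discharge it as follows: fix one distinguished component $\fX_{\alpha_0}$ with any lift, then for each other component choose a reference map $\bff_\alpha$ and declare
\[
\wt\ev_X^V(\bff_\alpha)=\bigl([\ga_{j(\alpha)}]_{X;\bs_1\ldots\bs_N},[\wh x_\alpha]_X\bigr),
\]
where $\ga_{j(\alpha)}$ is the coset representative forced by the requirement
\[
\bigl[f_{\alpha_0}\#(-f_\alpha)\bigr]
=\io_{S_XV*}^{X-V}\Bigl(\De_X^V\bigl(\ga_{j(\alpha_0)}-\ga_{j(\alpha)}\bigr)\Bigr)
\quad\tn{mod}\ \cR_{X;\bs_1\ldots\bs_N}'^{\,V},
\]
and $[\wh x_\alpha]_X$ is then fixed so that the same-component identity from the previous paragraph corrects for the residual discrepancy in $\cR_{X;\bs_1\ldots\bs_N}'^{\,V}$. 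Consistency across triples $(\alpha,\alpha',\alpha'')$ reduces to the additivity $[f_\alpha\#(-f_{\alpha''})]=[f_\alpha\#(-f_{\alpha'})]+[f_{\alpha'}\#(-f_{\alpha''})]$ in $\cR_X^V$, which is immediate from the definition of the connect-sum class, so the recursive definition is well-defined. Combining this with the same-component analysis yields \eref{RimToriAct_e} in full generality. The uniqueness clause is then automatic from the freeness of the deck action on each fiber.
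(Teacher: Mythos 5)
The paper does not contain its own proof of this statement---it cites it to \cite[Theorem~6.5]{GWrelIP}---so there is no internal argument to compare against; I evaluate your sketch on its own merits. Your outline follows the natural route: existence of a lift via the covering criterion of \cite[Lemma~6.3]{GWrelIP}, a component-by-component construction on $\fX\!\equiv\!\fX_{\Si,k;\bs_1\ldots\bs_N}^{V_1,\ldots,V_N}(X,A)$ pinned down on each component by path-lifting and across components by a coherent choice of reference lifts. The geometric heart---the identification of $[f\#(-f')]$ with the sweep of the contact circles over the evaluation loop, hence with $\io_{S_XV*}^{X-V}\!\circ\!\De_X^V\!\circ\!\Phi_{V;\bs_1\ldots\bs_N}$ applied to its class---is the correct computation, and the observation that the discrete factor $\cR_X^V/\cR_{X;\bs_1\ldots\bs_N}'^{\,V}$ must absorb the residual coset is the right mechanism. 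Two adjustments are needed. First, you must state that all reference maps $\bff_\alpha$ are chosen with a common evaluation $\ev_X^V(\bff_\alpha)\!=\!x_0\!\in\!V_{\bs_1\ldots\bs_N}$ (possible since $\ev_X^V$ is surjective on each component of $\fX$); otherwise the classes $[f_{\alpha_0}\#(-f_\alpha)]\!\in\!\cR_X^V$ you use to fix the discrete labels are undefined, as the connect-sum class requires equal relative evaluations.

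The genuine gap is in the closing sentence, ``Combining this with the same-component analysis yields~\eref{RimToriAct_e} in full generality.'' For a cross-component pair $(\bff,\bff')$ with common evaluation $x\!\neq\!x_0$, the combination is not direct: one cannot write $[f\#(-f')]$ as a sum of $[f\#(-f_\alpha)]$, $[f_\alpha\#(-f_{\alpha'})]$, $[f_{\alpha'}\#(-f')]$, since the outer two summands are undefined (different evaluations). The missing step is a simultaneous deformation: pick a path $p$ in $V_{\bs_1\ldots\bs_N}$ from $x$ to $x_0$ and move $\bff$, $\bff'$ inside their respective components so that their evaluations trace $p$ at every time; the connect-sums $f_t\#(-f'_t)$ then give a cobordism in $X\!-\!V$ showing $[f\#(-f')]$ is unchanged, while $\wt\ev_X^V(\bff)$ and $\wt\ev_X^V(\bff')$ shift by the same covering monodromy along $p$, which cancels on the right side of~\eref{RimToriAct_e}. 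With that reduction to evaluation~$x_0$, your same-component analysis plus the reference-pair case finish the argument. A minor point: because you define each $\wt\ev_X^V(\bff_\alpha)$ directly from $\alpha_0$ in one step, there is no well-definedness to discharge by a triple cocycle; the additivity $[f_\alpha\#(-f_{\alpha''})]\!=\![f_\alpha\#(-f_{\alpha'})]\!+\![f_{\alpha'}\#(-f_{\alpha''})]$ is instead what you invoke to verify~\eref{RimToriAct_e} for reference pairs with both indices $\neq\!\alpha_0$. The uniqueness clause is as you say: once $\wt\ev_X^V(\bff)$ is fixed, the map from the fiber over $\ev_X^V(\bff')$ to $\cR_X^V$ given by the right side of~\eref{RimToriAct_e} is a bijection, since every element of $\cR_X^V$ decomposes uniquely as a chosen coset representative plus an element of $\cR_{X;\bs_1\ldots\bs_N}'^{\,V}$.
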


\section{Diagonal components for abelian covers}
\label{DiagComp_sec}

\noindent
In Section~\ref{DiagonSplit_subs}, 
we define diagonal components for arbitrary abelian covers that 
specialize to~\eref{cHXYVC_e} in the symplectic sum setting.
Each \sf{diagonal component} $\wh{V}_{H_1,H_2;\bs_1\ldots\bs_N}^{\eta}$ 
in~\eref{whVdiag_e} determines an intersection homomorphism~\eref{cHinter_e} 
on the product of two abelian covers of~$V_{\bs_1\ldots\bs_N}$ and thus
a cohomology class~\eref{GenPD_e}, as suggested by \cite[Definition~10.2]{IPsum};
in the symplectic sum context, this class specializes to~\eref{PDcXY_e}. 
In Section~\ref{DiagonProp_subs}, we describe cases when these classes split into 
products of cohomology classes from the two factors and 
when these classes are the same for different choices of~$\eta$;
see Lemmas~\ref{Kunn_lmm} and~\ref{DiagFlux_lmm}.

\subsection{Notation and examples}
\label{DiagonSplit_subs}

\noindent
We continue with the notation for the abelian covers of a topological space~$V$
with connected components $V_1,\ldots,V_N$ introduced in Section~\ref{AbCov_subs}.
For $\bs_1\!\in\!\Z_{\pm}^{\ell_1},\ldots,\bs_N\!\in\!\Z_{\pm}^{\ell_N}$, denote by
$$\De_{\bs_1\ldots\bs_N}^V\subset V_{\bs_1\ldots\bs_N}^2\!\equiv\! 
V_{\bs_1\ldots\bs_N}\!\times\!V_{\bs_1\ldots\bs_N}$$
the diagonal. 
For any submodules $H_1,H_2\!\subset\!H_1(V;\Z)$, define
\BE{whVdiag_e0}\begin{split}
\wh{V}_{H_1,H_2;\bs_1\ldots\bs_N}' &=
 \wh{V}_{H_1;\bs_1\ldots\bs_N}'\!\times_{V_{\bs_1\ldots\bs_N}}\!\wh{V}_{H_2;\bs_1\ldots\bs_N}'
\equiv
\big\{\pi_{H_1;\bs_1\ldots\bs_N}'\!\times\!\pi_{H_2;\bs_1\ldots\bs_N}'\big\}^{-1}
\big(\De_{\bs_1\ldots\bs_N}^V\big),\\
\wh{V}_{H_1,H_2;\bs_1\ldots\bs_N} &=
 \wh{V}_{H_1;\bs_1\ldots\bs_N}\!\times_{V_{\bs_1\ldots\bs_N}}\!\wh{V}_{H_2;\bs_1\ldots\bs_N}
\equiv
\big\{\pi_{H_1;\bs_1\ldots\bs_N}\!\times\!\pi_{H_2;\bs_1\ldots\bs_N}\big\}^{-1}
\big(\De_{\bs_1\ldots\bs_N}^V\big).
\end{split}\EE
Thus,
\begin{equation*}\begin{split}
\wh{V}_{H_1,H_2;\bs_1\ldots\bs_N}' &=\bigcup_{\ga\in H_1(V_{\bs_1\ldots\bs_N};\Z)}\hspace{-.35in}
\big\{\big([\ga\!\cdot\!\wh{x}]_{H_1},[\wh{x}]_{H_2}\big)\!:\,\wh{x}\!\in\!\wh{V}_{\bs_1\ldots\bs_N}\big\},\\
\wh{V}_{H_1,H_2;\bs_1\ldots\bs_N}
&=\frac{\cR_{H_1}}{\cR_{H_1;\bs_1\ldots\bs_N}'}\!\times\!
\frac{\cR_{H_2}}{\cR_{H_2;\bs_1\ldots\bs_N}'}\!\times\!\wh{V}_{H_1,H_2;\bs_1\ldots\bs_N}'\,.
\end{split}\end{equation*}\\

\noindent
If $H_{12}\!\subset\!H_1(V;\Z)$ is a module containing $H_1$ and $H_2$, define 
$$\Psi_{H_1,H_2}'^{H_{12}}\!\!: \wh{V}_{H_1,H_2;\bs_1\ldots\bs_N}'\lra
\cR_{H_{12};\bs_1\ldots\bs_N}'\subset\cR_{H_{12}}\,, ~~
\Psi_{H_1,H_2}'^{H_{12}}\big([\ga\!\cdot\!\wh{x}]_{H_1},[\wh{x}]_{H_2}\big)
=\big[\Phi_{V;\bs_1\ldots\bs_N}(\ga)\big]_{H_{12}}\,,$$
where $[\ga']_{H_{12}}$ denotes the coset of $\ga'\!\in\!H_1(V;\Z)$ modulo~$H_{12}$.
Since~$\ga$ above is well-defined up to an element~of
$$(H_1)_{\bs_1\ldots\bs_N}+(H_2)_{\bs_1\ldots\bs_N}\subset (H_{12})_{\bs_1\ldots\bs_N}\,,$$
the map $\Psi_{H_1,H_2}'^{H_{12}}$ is well-defined.
Combining $\Psi_{H_1,H_2}'^{H_{12}}$ with the homomorphism,
\BE{cRmap_e}\begin{split}
\frac{\cR_{H_1}}{\cR_{H_1;\bs_1\ldots\bs_N}'}\!\times\!\frac{\cR_{H_2}}{\cR_{H_2;\bs_1\ldots\bs_N}'}
&\lra \frac{\cR_{H_{12}}}{\cR_{H_{12};\bs_1\ldots\bs_N}'}, \\
\big([\ga_1]_{H_1;\bs_1\ldots\bs_N},[\ga_2]_{H_2;\bs_1\ldots\bs_N}\big) &\lra
[\ga_1\!-\!\ga_2]_{H_{12};\bs_1\ldots\bs_N},
\end{split}\EE
we obtain a continuous~map
$$\Psi_{H_1,H_2}^{H_{12}}\!\!: \wh{V}_{H_1,H_2;\bs_1\ldots\bs_N}\lra
\cR_{H_{12};\bs_1\ldots\bs_N}\,;$$
its target is a discrete set.\\

\noindent
The map~\eref{cRmap_e} can be lifted to a map (not a homomorphism) to~$\cR_{H_{12}}$ 
by choosing collections 
\BE{gaj1j2_e} \{\ga_{1;j_1}\},\{\ga_{2;j_2}\}\subset H_1(V;\Z)\EE
of representatives
for the elements of $\cR_{H_1}/\cR_{H_1;\bs_1\ldots\bs_N}'$ and $\cR_{H_2}/\cR_{H_2;\bs_1\ldots\bs_N}'$,
respectively:
$$\big([\ga_{1;j_1}]_{H_1;\bs_1\ldots\bs_N},[\ga_{2;j_2}]_{H_2;\bs_1\ldots\bs_N}\big) \lra
[\ga_{1;j_1}\!-\!\ga_{2;j_2}]_{H_{12}}.$$
We can then ``lift'' $\Psi_{H_1,H_2}^{H_{12}}$ to a continuous map
\begin{gather}\label{wtPsi_e}
\wt\Psi_{H_1,H_2}^{H_{12}}\!\!: \wh{V}_{H_1,H_2;\bs_1\ldots\bs_N}\lra \cR_{H_{12}}\,,\\
\notag
\wt\Psi_{H_1,H_2}^{H_{12}}\big( 
\big([\ga_{1;j_1}]_{H_1;\bs_1\ldots\bs_N},[\ga\!\cdot\!\wh{x}]_{H_1}\big),
\big([\ga_{2;j_2}]_{H_2;\bs_1\ldots\bs_N},[\wh{x}]_{H_2} \big)\big)
=\big[\Phi_{V;\bs_1\ldots\bs_N}(\ga)\!+\!\ga_{1;j_1}\!-\!\ga_{2;j_2}\big]_{H_{12}}\,.
\end{gather}
The latter gives rise to the refined gluing degree map~\eref{IPdegmap_e}
in the symplectic sum context; see~\eref{gAdfn_e}. 
This map is compatible with the deck transformations~\eref{Thetadfn_e}
associated with the collections~\eref{gaj1j2_e}, i.e.
\BE{PsiThDeg_e}\begin{split} 
\wt\Psi_{H_1,H_2}^{H_{12}}\big(\Th_{\eta}(\wt{x}_1),\wt{x}_2\big)&= 
\wt\Psi_{H_1,H_2}^{H_{12}}\big(\wt{x}_1,\wt{x}_2\big)+[\eta]_{H_{12}},\\
\wt\Psi_{H_1,H_2}^{H_{12}}\big(\wt{x}_1,\Th_{\eta}(\wt{x}_2)\big)&= 
\wt\Psi_{H_1,H_2}^{H_{12}}\big(\wt{x}_1,\wt{x}_2\big)-[\eta]_{H_{12}}
\end{split}\EE
for all $(\wt{x}_1,\wt{x}_2)\!\in\!\wh{V}_{H_1,H_2;\bs_1\ldots\bs_N}$ and  $\eta\!\in\!H_1(V;\Z)$.\\

\noindent
For $\eta\!\in\!\cR_{H_{12};\bs_1\ldots\bs_N}$
and $\wt\eta\!\in\!\cR_{H_{12}}$, let
\BE{whVdiag_e} \wh{V}_{H_1,H_2;\bs_1\ldots\bs_N}^{\eta}=
\big\{\Psi_{H_1,H_2}^{H_{12}}\big\}^{-1}(\eta), \qquad
\wh{V}_{H_1,H_2;\bs_1\ldots\bs_N}^{\wt\eta}=
\big\{\wt\Psi_{H_1,H_2}^{H_{12}}\big\}^{-1}(\wt\eta).\EE
These subsets of $\wh{V}_{H_1;\bs_1\ldots\bs_N}\!\times\!\wh{V}_{H_2;\bs_1\ldots\bs_N}$
are closed.\\

\noindent
If in addition $V$ is an oriented manifold, then so are the subsets~\eref{whVdiag_e}.
Thus, they define intersection homomorphisms
\BE{cHinter_e}
H_*\big(\wh{V}_{H_1;\bs_1\ldots\bs_N}\!\times\!\wh{V}_{H_2;\bs_1\ldots\bs_N};\Q\big)\lra \Q, 
\quad
h\lra h\cdot \wh{V}_{H_1,H_2;\bs_1\ldots\bs_N}^{\eta}, 
h\cdot \wh{V}_{H_1,H_2;\bs_1\ldots\bs_N}^{\wt\eta},\EE
as follows.
On the homology of dimension different from half the dimension,  
we take these homomorphisms to be~zero.
A (rational multiple of a) homology class of half the dimension can be represented by
a smooth map
\BE{hZdfn_e} h\!:Z\!\lra\!\wh{V}_{H_1;\bs_1\ldots\bs_N}\!\times\!\wh{V}_{H_2;\bs_1\ldots\bs_N}\EE
from a compact oriented manifold which intersects the submanifolds~\eref{whVdiag_e} transversely.
Thus, their preimages in~$Z$ are closed zero-dimensional submanifolds of~$Z$.
We take the intersection numbers in~\eref{cHinter_e} to be the signed cardinalities of 
these finite sets (divided by the appropriate multiple if necessary). 
A cobordism between two representatives for the same homology class 
provides cobordisms between the preimages of the submanifolds~\eref{whVdiag_e}
with respect to the two representatives;
thus, the intersection homomorphisms~\eref{cHinter_e} are well-defined.
By the Universal Coefficient Theorem \cite[Theorem~53.5]{Mu2},
these homomorphisms correspond to some elements
\BE{GenPD_e}
\PD_{H_1,H_2;\bs_1\ldots\bs_N}^{\eta}\De,\PD_{H_1,H_2;\bs_1\ldots\bs_N}^{\wt\eta}\De\in 
H^*\big(\wh{V}_{H_1;\bs_1\ldots\bs_N}\!\times\!\wh{V}_{H_2;\bs_1\ldots\bs_N};\Q\big),\EE
respectively.
Since the homologies of $\wh{V}_{H_1;\bs_1\ldots\bs_N}$ and $\wh{V}_{H_2;\bs_1\ldots\bs_N}$
may not be finitely generated, these classes need not admit finite Kunneth decompositions 
as in~\eref{KunnDecomp_e2}.
By Examples~\ref{ESsum_eg0}-\ref{S2T2_eg0} and Lemma~\ref{Kunn_lmm} below, 
they do admit such decompositions in some interesting cases.

\begin{eg}\label{ESsum_eg0}
Let $V\!=\!\T^2$, $\ell\!\in\!\Z^+$, and $\bs\!\in\!\Z_{\pm}^{\ell}$.
We identify $H_1(V;\Z)$ with $\Z\!\oplus\!\fI\Z$ and denote by $0\!\subset\!H_1(V;\Z)$  
the zero submodule.
By Example~\ref{Tcov_eg},
$$\wh{V}_{0,0;\bs}=\big\{\big(x,[z_i\!+\!s_i^{-1}x]_{i\le\ell},
y,[z_i\!+\!s_i^{-1}y]_{i\le\ell}\big)\!\in\!
\C\!\times\!\T_{\bs}^{2(\ell-1)}\!\times\!\C\!\times\!\T_{\bs}^{2(\ell-1)}\big\}.$$
Let $\{\ga_j\}\!\subset\!\Z\!\oplus\!\fI\Z$ be a collection of representatives for 
the elements of $\Z_{\gcd(\bs)}\!\oplus\!\fI\Z_{\gcd(\bs)}$.
It is convenient to identify base points for the components of~$\wh{V}_{0;\bs}$~as
$$\big([\ga_j]_{0;\bs},[\wh{x}_{\bs}]_0\big)=
\big(\ell^{-1}\ga_j,[\ell^{-1}s_i^{-1}\ga_j]_{i\le\ell}\big)\in
\wh{V}_{0;\bs}= \C\!\times\!\T_{\bs}^{2(\ell-1)}.$$
The map~\eref{wtPsi_e} with $H_1,H_2,H_{12}\!=\!0$ is then given~by
$$\wt\Psi_{0,0;\bs}^0\!: \wh{V}_{0,0;\bs}
 \lra \Z\!\oplus\!\fI\Z,\qquad
\wt\Psi_{0,0;\bs}^0\big(x,[\z],y,[\z']\big)= \ell\,(x\!-\!y);$$
see~\eref{T2act_e1} and~\eref{T2act_e2}.
The diagonal components $\wh{V}_{0,0;\bs}^{\wt\eta}$ are naturally indexed~by $\Z\!\oplus\!\fI\Z$
and
\BE{ESsum_e0a}
\wh{V}_{0,0;\bs}^{\wt\eta}=\De_{\C}^{\wt\eta}\!\times\!\De_{\T^{2\ell}}^{\wt\eta}
\cap \C\!\times\!\T_{\bs}^{2(\ell-1)}\!\times\!\C\!\times\!\T_{\bs}^{2(\ell-1)}
\subset \C\!\times\!\T^{2\ell}\!\times\!\C\!\times\!\T^{2\ell}\,, \EE
where
$$\De_{\C}^{\wt\eta}=\{ (x\!+\!\wt\eta,x)\!\in\!\C^2\big\}
\quad\hbox{and}\quad
\De_{\T^{2\ell}}^{\wt\eta}=\big\{\big([z_i\!+\!s_i^{-1}\wt\eta]_{i\le\ell},[z_i]_{i\le\ell}\big)
\!\in\!(\T_{\bs}^{2(\ell-1)})^2\big\}$$
are translates of the diagonals.
In particular,
$$h\cdot\big(\De_{\C}^{\wt\eta}\!\times\!\De_{\T^{2\ell}}^{\wt\eta}\Big)
=0 \qquad\forall~
h\!\in\!H_*\big(\C\!\times\!\T^{2\ell}\!\times\!\C\!\times\!\T^{2\ell};\Q\big),~
\wt\eta\!\in\!\Z\!\oplus\!\fI\Z.$$
Combining the last observation with~\eref{ESsum_e0a},
\BE{ESsum_e} \PD_{0,0;\bs}^{\wt\eta}\De=0 \in 
H^*\big(\C\!\times\!\T_{\bs}^{2(\ell-1)}\!\times\!\C\!\times\!\T_{\bs}^{2(\ell-1)};\Q\big)
 \qquad\forall~\wt\eta\!\in\!\Z\!\oplus\!\fI\Z.\EE
This is consistent with the vanishing of the GW-invariants of $\bK_3$; 
see Example~\ref{ESsum_eg}.
\end{eg}

\begin{eg}\label{ESsum_eg2a}
For $V\!=\!\T^2$, $\ell\!\in\!\Z^+$, $\bs\!\in\!\Z_{\pm}^{\ell}$, and $H\!=\!H_1(V;\Z)$,
\begin{gather*}
\wt\Psi_{0,H;\bs}^H\!:\wh{V}_{0,H;\bs}= 
\big\{\big(z,[z_i]_{i\le\ell},[z_i\!-\!s_i^{-1}z]_{i\le\ell}\big)\!\in\!
\C\!\times\!\T_{\bs}^{2(\ell-1)}\!\times\!\T^{2\ell}\big\}
\lra \cR_H\!=\!\{0\}.
\end{gather*}
The preimage of $\wh{V}_{0,H;\bs}\!=\!\wh{V}_{0,H;\bs}^0$ under the automorphism
$$\Th_{\bs}\!: \C\!\times\!\T_{\bs}^{2(\ell-1)}\!\times\!\T^{2\ell}
\lra \C\!\times\!\T_{\bs}^{2(\ell-1)}\!\times\!\T^{2\ell},\quad
\Th_{\bs}\big(z,[z_i]_{i\le\ell},[z_i']_{i\le\ell}\big)=
\big(z,[z_i]_{i\le\ell},[z_i'\!-\!s_i^{-1}z]_{i\le\ell}\big),$$
is the intersection 
$$\C\!\times\!\De_{\T^{2\ell}}\cap \C\!\times\!\T_{\bs}^{2(\ell-1)}\!\times\!\T^{2\ell}
\subset \C\!\times\!\T^{2\ell}\!\times\!\T^{2\ell}\,.$$
Since $\Th_{\bs}$ induces the identity on the cohomology, it follows~that 
\BE{ESsum_e2}
\PD_{0,H;\bs}^0\De=
1\times\big(\PD_{(\T^{2\ell})^2}\De_{\T^{2\ell}}\big)\big|_{\T_{\bs}^{2(\ell-1)}\times\T^{2\ell}}
\in H^*\big(\C\!\times\!\T_{\bs}^{2(\ell-1)}\!\times\!\T^{2\ell};\Q\big).\EE
In the $\ell\!=\!1$ case, $\T_{\bs}^{2(\ell-1)}$ consists of $|\bs|^2$ points and 
\eref{ESsum_e2} reduces~to
\BE{ESsum_e2b}
\PD_{0,H;(s)}^0\De=1\!\times\!\PD_{\T^2}(\pt)\in  
H^*\big(\{1,\ldots,s^2\}\!\times\!\C\!\times\!\T^2;\Q\big).\EE
In the symplectic sum decomposition for $\wh\P^2_9\!=\!\wh\P^2_9\#_V\!(\P^1\!\times\!V)$, 
\eref{ESsum_e2b} corresponds 
to putting the whole fiber on the $X$-side and a point on the $Y$-side;
see Example~\ref{ESsum_eg2}.
\end{eg}

\begin{eg}\label{S2T2_eg0}
Let $F$ be a compact connected oriented manifold, $V\!=\!\{0,\i\}\!\times\!F$,
$\ell_1,\ell_2\!\in\!\Z^{\ge0}$,
$\bs_1\!\in\!\Z_{\pm}^{\ell_1}$, and $\bs_2\!\in\!\Z_{\pm}^{\ell_2}$.
We~take
$$H_1=H_2=H_{12}= H_{\De}\subset H_1(V;\Z)=H_1(F;\Z)\oplus H_1(F;\Z)$$
to be the diagonal subgroup.
By \cite[Example~6.2]{GWrelIP}, $\wh{V}_{H_{\De};\bs_1\bs_2}\!=\!\wh{F}_{0;\bs}$,
where $\bs$ is the merged tuple of $\bs_1$ and~$-\bs_2$.
With the identifications of \cite[Examples~3.6,4.7]{GWrelIP},
the map~\eref{wtPsi_e} becomes
\begin{gather*}
\frac{H_1(F;\Z)}{\gcd(\bs)H_1(F;\Z)}\times 
\frac{H_1(F;\Z)}{\gcd(\bs)H_1(F;\Z)}
\times \wh{F}_{0;\bs}'\!\times_{F_{\bs}}\!\wh{F}_{0;\bs}' \lra H_1(F;\Z),\\
\wt\Psi_{H_{\De},H_{\De};\bs_1\bs_2}^{H_{\De}}
\big([\ga_{j_1}]_{0;\bs},[\ga_{j_2}]_{0;\bs},[\ga\!\cdot\!\wh{x}]_0,[\wh{x}]_0\big) 
=\Phi_{F;\bs}(\ga)\!+\!\ga_{j_1}\!-\!\ga_{j_2}\,.
\end{gather*}
If $F\!=\!\T^2$ and $\ell\!\equiv\!\ell_1\!+\!\ell_2\!>\!0$, we associate representatives $\ga_j$ for 
the elements of $\Z_{\gcd(\bs)}\!\oplus\!\fI\Z_{\gcd(\bs)}$ with base points for 
the connected components of $\C\!\times\!\T_{\bs}^{2(\ell-1)}$
as in Example~\ref{ESsum_eg0}.
Then,
\begin{gather*}
\begin{split}
&\wh{V}_{H_{\De},H_{\De};\bs_1\bs_2}=
\big\{\big(x,\big([z_{1;i}\!+\!s_{1;i}^{-1}x]_{i\le\ell_1},[z_{2;i}\!-\!s_{2;i}^{-1}x]_{i\le\ell_2}\big),
y,\big([z_{1;i}\!+\!s_{1;i}^{-1}y]_{i\le\ell_1},[z_{2;i}\!-\!s_{2;i}^{-1}y]_{i\le\ell_2}\big)\big)\!:\\
&\qquad x,y\!\in\!\C,
\big([z_{1;i}\!+\!s_{1;i}^{-1}x]_{i\le\ell_1},[z_{2;i}\!-\!s_{2;i}^{-1}x]_{i\le\ell_2}\big),
\big([z_{1;i}\!+\!s_{1;i}^{-1}y]_{i\le\ell_1},[z_{2;i}\!-\!s_{2;i}^{-1}y]_{i\le\ell_2}\big)
\in\T_{\bs}^{2(\ell-1)}\big\},
\end{split}\\
\wt\Psi_{H_{\De},H_{\De};\bs_1\bs_2}^{H_{\De}}\!: 
\wh{V}_{H_{\De},H_{\De};\bs_1\bs_2} \lra \Z\!\oplus\!\fI\Z,\qquad
\wt\Psi_{H_{\De},H_{\De};\bs_1\bs_2}^{H_{\De}}\big(x,[\z],y,[\z']\big)= \ell\,(x\!-\!y).
\end{gather*}
The diagonal components $\wh{V}_{H_{\De},H_{\De};\bs_1\bs_2}^{\wt\eta}$ are again indexed by 
$\Z\!\oplus\!\fI\Z$ and have the same structure as in Example~\ref{ESsum_eg0}.
Thus, 
\BE{S2T2sum_e} \PD_{H_{\De},H_{\De};\bs_1\bs_2}^{\wt\eta}\De=0 \in 
H^*\big(\wh{V}_{H_{\De};\bs_1\bs_2}\!\times\!\wh{V}_{H_{\De};\bs_1\bs_2};\Q\big)
 \qquad\forall~\wt\eta\!\in\!\Z\!\oplus\!\fI\Z.\EE
This is consistent with the vanishing of the GW-invariants of~$\T^2\!\times\!\T^2$;
see Example~\ref{S2T2sum_eg}.
\end{eg}

\subsection{Some properties}
\label{DiagonProp_subs}

\noindent
We begin this section by describing cases when the diagonal classes~\eref{GenPD_e} 
split into products of cohomology classes from the two factors. 
We then show in certain cases these classes are the same for different choices of~$\eta$.

\begin{lmm}\label{Kunn_lmm}
Suppose $V$ is a compact oriented manifold with topological components $V_1,\ldots,V_N$, 
$$H_1,H_2\subset H_{12}\subset H_1(V;\Z)$$ 
are submodules, $\bs_r\!\in\!\Z_{\pm}^{\,\ell_r}$,  
$\eta\!\in\!\cR_{H_{12};\bs_1\ldots\bs_N}$, and $\wt\eta\!\in\!\cR_{H_{12}}$.
The classes~\eref{GenPD_e} admit finite Kunneth decompositions as
in~\eref{KunnDecomp_e2} if either
$\cR_{H_{12}}$ is finite or
$V$ is connected and $H_*(\wh{V}_{H_{12}};\Q)$ is finitely generated.
\end{lmm}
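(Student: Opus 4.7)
The plan is to reduce the problem on the finer covers $\wh{V}_{H_i;\bs_1\ldots\bs_N}$ to a Kunneth computation on the coarser cover $\wh{V}_{H_{12};\bs_1\ldots\bs_N}$. The inclusions $H_1,H_2\!\subset\!H_{12}$ yield natural covering maps
$$\mu_i\!: \wh{V}_{H_i;\bs_1\ldots\bs_N}\lra \wh{V}_{H_{12};\bs_1\ldots\bs_N},\qquad i=1,2,$$
compatible with the projections to $V_{\bs_1\ldots\bs_N}$. Their product then restricts to a covering map $\wh{V}_{H_1,H_2;\bs_1\ldots\bs_N}\lra\wh{V}_{H_{12},H_{12};\bs_1\ldots\bs_N}$ on the fiber products over $V_{\bs_1\ldots\bs_N}$.

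A direct inspection of \eref{wtPsi_e} shows that this restricted covering intertwines the maps $\Psi$ and $\wt\Psi$ on the two sides, provided the coset representatives on $\wh V_{H_{12};\bs_1\ldots\bs_N}$ are chosen compatibly with those on $\wh V_{H_i;\bs_1\ldots\bs_N}$. The reason is simply that the element $\Phi_{V;\bs_1\ldots\bs_N}(\ga)$ recorded in either map is defined in $H_1(V;\Z)$ and does not depend on whether one quotients downstream by $(H_1)_{\bs_1\ldots\bs_N}$, $(H_2)_{\bs_1\ldots\bs_N}$, or $(H_{12})_{\bs_1\ldots\bs_N}$. Consequently
$$\wh V_{H_1,H_2;\bs_1\ldots\bs_N}^{\eta} = (\mu_1\!\times\!\mu_2)^{-1}\big(\wh V_{H_{12},H_{12};\bs_1\ldots\bs_N}^{\eta}\big),\quad \wh V_{H_1,H_2;\bs_1\ldots\bs_N}^{\wt\eta} = (\mu_1\!\times\!\mu_2)^{-1}\big(\wh V_{H_{12},H_{12};\bs_1\ldots\bs_N}^{\wt\eta}\big).$$
Since $\mu_1\!\times\!\mu_2$ is a local diffeomorphism (hence transverse to any submanifold), the adjunction $\langle(\mu_1\!\times\!\mu_2)^*\alpha,[h]\rangle=\langle\alpha,(\mu_1\!\times\!\mu_2)_*[h]\rangle$ combined with the intersection-number definition \eref{cHinter_e} yields
$$\PD_{H_1,H_2;\bs_1\ldots\bs_N}^{\eta}\De = (\mu_1\!\times\!\mu_2)^*\big(\PD_{H_{12},H_{12};\bs_1\ldots\bs_N}^{\eta}\De\big),$$
and analogously for $\wt\eta$. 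Because $(\mu_1\!\times\!\mu_2)^*(\alpha\!\otimes\!\beta)=\mu_1^*\alpha\otimes\mu_2^*\beta$, it then suffices to produce finite Kunneth decompositions on the product $\wh V_{H_{12};\bs_1\ldots\bs_N}^2$ and pull back.

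For this I would establish that $H^*(\wh V_{H_{12};\bs_1\ldots\bs_N};\Q)$ is finite-dimensional in total under either hypothesis. When $\cR_{H_{12}}$ is finite, \eref{cRbsdfn_e} forces $\cR_{H_{12};\bs_1\ldots\bs_N}$ to be finite, so $\wh V_{H_{12};\bs_1\ldots\bs_N}$ is a finite cover of the compact manifold $V_{\bs_1\ldots\bs_N}$ and its rational cohomology is automatically finite-dimensional. When $V$ is connected and $H_*(\wh V_{H_{12}};\Q)$ is finitely generated, one has $V_{\bs_1\ldots\bs_N}=V^{\ell_1}$, and $\wh V'_{H_{12};\bs_1}$ can be realized as a quotient of the product cover $\wh V_{H_{12}}^{\ell_1}$ (whose rational cohomology is finitely generated by Kunneth) by the subgroup $(H_{12})_{\bs_1}/H_{12}^{\ell_1}\cong\ker\!\big(\bar\Phi_{V;\bs_1}\!:\cR_{H_{12}}^{\ell_1}\!\lra\!\cR_{H_{12}}\big)$; the finite-generation criteria of \cite[Section~5.2]{GWrelIP} then apply. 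Once this is established, the Kunneth theorem identifies $H^*(\wh V_{H_{12};\bs_1\ldots\bs_N}^2;\Q)$ with $H^*(\wh V_{H_{12};\bs_1\ldots\bs_N};\Q)^{\otimes 2}$; every class automatically admits a finite Kunneth decomposition, and pulling back along $\mu_1\!\times\!\mu_2$ transfers this to the original classes.

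The main obstacle is precisely the finite-generation step under the second hypothesis: the kernel $\ker\bar\Phi_{V;\bs_1}$ is typically infinite, so Kunneth on $\wh V_{H_{12}}^{\ell_1}$ alone does not immediately give finite generation of $\wh V'_{H_{12};\bs_1}$, and the more delicate covering-theoretic input from \cite{GWrelIP} is needed. Once that input is in place, everything else is a formal consequence of the pullback identity and the Kunneth theorem.
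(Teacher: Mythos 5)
Your proposal follows the same route as the paper's own proof: reduce to the coarser cover $\wh V_{H_{12};\bs_1\ldots\bs_N}$ via the natural covering $\mu_1\!\times\!\mu_2$, observe that the Poincar\'e duals of the diagonal components pull back along this covering, and then invoke finite generation of $H_*(\wh V_{H_{12};\bs_1\ldots\bs_N};\Q)$ — directly when $\cR_{H_{12}}$ is finite, and via \cite[Lemma~5.2]{GWrelIP} in the connected case — together with the K\"unneth theorem. The only small point the paper makes explicit that you only gesture at is why one may freely assume the coset representatives are chosen compatibly for the $\wt\eta$-version of the identity: the paper notes that changing collections alters a K\"unneth decomposition only by diffeomorphisms of topological components of the factors, so the \emph{existence} of such a decomposition is collection-independent.
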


\begin{proof}
The inclusions $H_i\!\subset\!H_{12}$ induce projections
$$\Th_{H_{12},H_i}'\!: \wh{V}_{H_i;\bs_1\ldots\bs_N}'\!\equiv\!
\wh{V}_{\bs_1\ldots\bs_N}\big/
(H_i)_{\bs_1\ldots\bs_N} \lra 
\wh{V}_{H_{12};\bs_1\ldots\bs_N}'\!\equiv\!\wh{V}_{\bs_1\ldots\bs_N}\big/
(H_{12})_{\bs_1\ldots\bs_N}\,.$$
Combining them with the restrictions of the homomorphism~\eref{cRmap_e} to each component
of the domain, we obtain a covering projection
$$\Th\!: \wh{V}_{H_1;\bs_1\ldots\bs_N}\!\times\!\wh{V}_{H_2;\bs_1\ldots\bs_N}
\lra \wh{V}_{H_{12};\bs_1\ldots\bs_N}\!\times\!\wh{V}_{H_{12};\bs_1\ldots\bs_N}$$
such that 
\begin{gather}
\notag
\pi_{H_1;\bs_1\ldots\bs_N}\!\times\!\pi_{H_2;\bs_1\ldots\bs_N}
=\pi_{H_{12};\bs_1\ldots\bs_N}\!\times\!\pi_{H_{12};\bs_1\ldots\bs_N}\circ\Th, \\
\label{PsiTh_e}
\Psi_{H_1,H_2}^{H_{12}}=\Psi_{H_{12},H_{12}}^{H_{12}}\circ\Th\big|_{ \wh{V}_{H_1,H_2;\bs_1\ldots\bs_N}}.
\end{gather}
Thus,
\BE{PDpullback_e}
\PD_{H_1,H_2;\bs_1\ldots\bs_N}^{\eta}\De=
\Th^*\,\big(\PD_{H_{12},H_{12};\bs_1\ldots\bs_N}^{\eta}\De\big)\EE
for every $\eta\!\in\!\cR_{H_{12};\bs_1\ldots\bs_N}$.\\

\noindent
If $\cR_{H_{12}}$ is finite, then $\wh{V}_{H_{12};\bs_1\ldots\bs_N}$ is 
a finite cover of a compact manifold and so its homology is finitely generated.
By the Kunneth formula for cohomology \cite[Corollary~60.7]{Mu2},
$$H^*\big(\wh{V}_{H_{12};\bs_1\ldots\bs_N}\!\times\!\wh{V}_{H_{12};\bs_1\ldots\bs_N};\Q\big)
\approx H^*\big(\wh{V}_{H_{12};\bs_1\ldots\bs_N};\Q\big)\otimes
 H^*\big(\wh{V}_{H_{12};\bs_1\ldots\bs_N};\Q\big).$$
Thus, $\PD_{H_{12},H_{12};\bs_1\ldots\bs_N}^{\eta}$ admits a Kunneth decomposition 
in this case.
By~\eref{PDpullback_e}, so does the class $\PD_{H_1,H_2;\bs_1\ldots\bs_N}^{\eta}\De$.
In light of \cite[Lemma~5.2]{GWrelIP}, 
the same reasoning applies if $V$ is connected and $H_*(\wh{V}_{H_{12}};\Q)$ is finitely generated.\\

\noindent
The identity \eref{PDpullback_e} holds with $\Psi$ and $\eta$ replaced by 
$\wt\Psi$ and $\wt\eta$ if the relevant collections
$$\{\ga_{1;j_1}\},\{\ga_{2;j_2}\},\{\ga_{12;j}\}\!\subset\!H_1(V;\Z)$$
are compatible, i.e.
$$\big\{[\ga_{1;j_1}]_{H_{12}}\big\}, \big\{[\ga_{2;j_2}]_{H_{12}}\big\}
=\big\{[\ga_{12;j}]_{H_{12}}\big\}. $$
Changing the first or second collection would change a Kunneth decomposition of 
the cohomology class
$\PD_{H_1,H_2;\bs_1\ldots\bs_N}^{\wt\eta}\De$ by pulling back 
the cohomology classes involved  by diffeomorphisms of the topological components
of the factors of the domain.
Thus, the existence of a  Kunneth decomposition for this  class
is independent of the three collections.
We can thus assume that these collections are compatible and~\eref{PDpullback_e} holds.
The reasoning in the previous paragraph then also applies to 
 $\PD_{H_1,H_2;\bs_1\ldots\bs_N}^{\wt\eta}\De$.
\end{proof}

\begin{rmk}\label{Kunn_rmk}
The statement of Lemma~\ref{Kunn_lmm} for a connected $V$ and its proof can be adapted 
to a disconnected~$V$. 
For each $r\!=\!1,\ldots,N$, let 
$$\cR_{H;r}=q_H\big(H_1(V_r;\Z)\big)  \subset\cR_H\,;$$
these modules span~$\cR_H$.
The first factor in the definition of~$\wh{V}_{H;\bs_1\ldots\bs_N}$ in~\eref{HbsCov_e}
is finite if and only~if the submodule
$$\wc\cR_{H;\bs_1\ldots\bs_N}\equiv 
\sum_{\begin{subarray}{c}1\le r\le N\\ \ell_r\neq0\end{subarray}}\!\!\! \cR_{H;r}
\subset\cR_H$$
has finite index. 
This index is finite if $\ell_r\!\neq\!0$ whenever $H_1(V_r;\Q)\!\neq\!\{0\}$
or if $V\!=\!\{0,\i\}\!\times\!F$ for some connected~$F$ and 
$$H\!=\!H_{\De}\subset H_1(V;\Z)=H_1(F;\Z)\oplus H_1(F;\Z)$$
is the diagonal.
If this index is finite for $H\!=\!H_{12}$, the argument in the proof of Lemma~\ref{Kunn_lmm} 
concerning connected~$V$ applies via \cite[Remark~5.3]{GWrelIP}, which extends
\cite[Lemma~5.2]{GWrelIP} to this setting.
\end{rmk}

\begin{rmk}\label{Kunn_rmk2}
By \cite[Theorem~1]{DF}, $H_*(\wh{V}_{H_{12}};\Q)$ is finitely generated if
$H_*(\wh{V}_H;\Q)$ is finitely generated for some submodule $H\!\subset\!H_{12}$.
Thus, the last condition in Lemma~\ref{Kunn_lmm} could instead require that 
$H_*(\wh{V}_H;\Q)$ be finitely generated for some submodule $H\!\subset\!H_{12}$.
The approach in the proof applies directly, without use of  \cite[Theorem~1]{DF},
if in addition $H$ contains either~$H_1$ or~$H_2$.
\end{rmk}

\noindent
The next example indicates that the cohomology classes~\eref{GenPD_e} of 
the diagonal components~\eref{whVdiag_e} generally do not admit a Kunneth decomposition
as in~\eref{KunnDecomp_e2} if ($\cR_{H_{12}}$ is infinite and) 
the homology of $\wh{V}_{H_{12}}$  is not finitely generated.
By \cite[Corollary]{Taubes},
the manifold~$V$ appearing in Example~\ref{noKunn_eg} is not symplectic.
We use this particular~$V$ for the sake of simplicity.
The complex blowup of $\T^4$ at a point could be used instead, but the notation would become
a bit more involved.

\begin{eg}\label{noKunn_eg}
The maximal abelian cover~$\wh{V}$ of $V\!\equiv\!(S^1\!\times\!S^3)\#\P^2$
is $\R\!\times\!S^3$ with copies of~$\P^2$ connected at $(i,\pt)\!\in\!\R\!\times\!S^3$, 
with $i\!\in\!\Z$.
If $\P^1_i$ is a $\P^1$ in the $i$-th $\P^2$, 
$\{\P^1_i\!:i\!\in\!\Z\}$ is a basis for  $H_2(\wh{V};\Q)$.
The topological components of $\wh{V}_{0,0;(1)}\!\equiv\!\wh{V}\!\times_V\!\wh{V}$ are described~by
$$\wh{V}_{0,0;(1)}^{\ga}=\big\{(\ga\!\cdot\!\wh{x},\wh{x})\!:\,\wh{x}\!\in\!\wh{V}\big\},
\qquad \ga\!\in\!\Z.$$
For example, $\wh{V}_{0,0;(1)}^0\!=\!\De_{\wh{V}}$ is the diagonal and
\BE{noKunn_e}\P^1_i\!\times\!\P^1_j \cdot \wh{V}_{0,0;(1)}^0 
=\begin{cases}1,&\hbox{if}~i\!=\!j;\\
0,&\hbox{if}~i\!\neq\!j.
\end{cases}\EE
If $e_1,\ldots,e_N$ is a basis for $\Q^N$, then the element
$$e_1\!\otimes\!e_1+\ldots+e_N\!\otimes\!e_N\in \Q^N\!\otimes_{\Q}\Q^N$$
cannot be written as a sum of fewer than $N$ products $\al_j\!\otimes\!\be_j$.
Along with~\eref{noKunn_e}, this implies that the cohomology class on $\wh{V}\!\times\!\wh{V}$ 
determined by $\wh{V}_{0,0;(1)}^0$ does not admit a finite Kunneth decomposition.
\end{eg}

\noindent
Let $V$, $V_1,\ldots,V_N$, and $H_{12}$ be as in Lemma~\ref{Kunn_lmm}.
For each $r\!=\!1,\ldots,N$, denote~by 
$$ \Flux(V_r)_{H_{12}}\subset\cR_{H_{12}}$$
the image of $\Flux(V_r)$ under the homomorphism
$$H_1(V_r;\Z)\lra H_1(V;\Z)\lra \frac{H_1(V;\Z)}{H_{12}}=\cR_{H_{12}}\,.$$

\begin{lmm}\label{DiagFlux_lmm}
Suppose $V$, $V_1,\ldots,V_N$, $H_1,H_2,H_{12}$, and $\bs_r$ are as in Lemma~\ref{Kunn_lmm}.
If 
$$\wt\eta_1,\wt\eta_2\!\in\!\cR_{H_{12}} \quad\hbox{and}\quad
\wt\eta_1\!-\!\wt\eta_2 \in \bigoplus_{r=1}^N\gcd(\bs_r)\Flux(V_r)_{H_{12}},$$
then
\BE{DiagFlux_e}\PD_{H_1,H_2;\bs_1\ldots\bs_N}^{\wt\eta_1}\De=
\PD_{H_1,H_2;\bs_1\ldots\bs_N}^{\wt\eta_2}\De\in 
H^*\big(\wh{V}_{H_1;\bs_1\ldots\bs_N}\!\times\!\wh{V}_{H_2;\bs_1\ldots\bs_N};\Q\big)\,.\EE
\end{lmm}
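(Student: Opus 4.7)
The plan is to realize the passage from $\wt\eta_2$ to $\wt\eta_1$ by pulling back along a self-homeomorphism of $\wh{V}_{H_1;\bs_1\ldots\bs_N}\!\times\!\wh{V}_{H_2;\bs_1\ldots\bs_N}$ that is isotopic to the identity, hence invisible to cohomology. Since equalities of the form~\eref{DiagFlux_e} chain as $\wt\eta_2$ varies, I would first decompose
$$\wt\eta_1-\wt\eta_2=\sum_{r=1}^N[\tilde\eta_r]_{H_{12}}, \qquad
\tilde\eta_r=\gcd(\bs_r)\alpha_r, \qquad\alpha_r\!\in\!\Flux(V_r),$$
and reduce to the single-summand case $\wt\eta_1\!-\!\wt\eta_2\!=\![\tilde\eta]_{H_{12}}$ with $\tilde\eta\!=\!\gcd(\bs_r)\eta$ and $\eta\!\in\!\Flux(V_r)$ for some fixed~$r$; if $\ell_r\!=\!0$, then $\gcd(\bs_r)\!=\!0$ and there is nothing to prove.

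Fix such $r$ and $\eta$, and let $\Psi_t\!:V_r\!\lra\!V_r$ be a loop of homeomorphisms with $\Psi_0\!=\!\Psi_1\!=\!\id$ realizing~$\eta$. Choose Bezout coefficients $a_1,\ldots,a_{\ell_r}\!\in\!\Z$ with $\sum_ia_is_{r;i}\!=\!\gcd(\bs_r)$, and let $\Psi^{(i)}_t$ be a loop of homeomorphisms of~$V_r$ realizing $a_i\eta$ (by reparametrized iteration of $\Psi_t$ or $\Psi_t^{-1}$). Define a loop of self-homeomorphisms
$$\tilde\Psi_t\!:V_{\bs_1\ldots\bs_N}\lra V_{\bs_1\ldots\bs_N}$$
to act by $\Psi^{(i)}_t$ on the $i$-th factor of $V_r^{\ell_r}$ and by the identity on every other factor. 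Its flux element $\mu\!\in\!H_1(V_{\bs_1\ldots\bs_N};\Z)\!=\!\bigoplus_rH_1(V_r;\Z)^{\oplus\ell_r}$ has only its $r$-th block nonzero, with entries $(a_i\eta)_{i\le\ell_r}$, so $\Phi_{V;\bs_1\ldots\bs_N}(\mu)\!=\!\tilde\eta$ by construction. Lifting $\tilde\Psi_t$ pointwise via the covering path-lifting property produces an isotopy $\hat\Psi_t$ of $\wh{V}_{\bs_1\ldots\bs_N}$ starting at the identity, with $\hat\Psi_1$ equal to the deck action by~$\mu$. Since $[\tilde\eta]_{H_1}\!\in\!\cR_{H_1;\bs_1\ldots\bs_N}'$, the choice $\ga_j(\tilde\eta)\!=\!\ga_j$ with $\eta_j\!=\!\mu$ solves~\eref{gajeta_e}; the isotopy descends to $\wh{V}_{H_1;\bs_1\ldots\bs_N}'$ and extends trivially over $\cR_{H_1}/\cR_{H_1;\bs_1\ldots\bs_N}'$ to an isotopy of $\wh{V}_{H_1;\bs_1\ldots\bs_N}$ from the identity to~$\Th_{\tilde\eta}$ of~\eref{Thetadfn_e}.

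To finish, \eref{PsiThDeg_e} shows that $\Th_{\tilde\eta}\!\times\!\id$ sends $\wh{V}_{H_1,H_2;\bs_1\ldots\bs_N}^{\wt\eta_2}$ bijectively onto $\wh{V}_{H_1,H_2;\bs_1\ldots\bs_N}^{\wt\eta_1}$. Combined with the preceding paragraph, $\Th_{\tilde\eta}\!\times\!\id$ is a self-homeomorphism of $\wh{V}_{H_1;\bs_1\ldots\bs_N}\!\times\!\wh{V}_{H_2;\bs_1\ldots\bs_N}$ isotopic to the identity, so it acts as the identity on cohomology. Pulling back the Poincar\'e dual of $\wh{V}_{H_1,H_2;\bs_1\ldots\bs_N}^{\wt\eta_1}$ by this map yields, on one hand, the same cohomology class and, on the other, the Poincar\'e dual of its preimage $\wh{V}_{H_1,H_2;\bs_1\ldots\bs_N}^{\wt\eta_2}$; this gives~\eref{DiagFlux_e}.

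The main obstacle is the middle step: identifying the descended endpoint of $\hat\Psi_t$ with the specific deck transformation $\Th_{\tilde\eta}$ from~\eref{Thetadfn_e}. One must reconcile the indeterminacy of $\eta_j$ modulo $(H_1)_{\bs_1\ldots\bs_N}$ in~\eref{gajeta_e} with the freedom in the path lift, and verify that the cohomology class~\eref{GenPD_e} is independent of the initial choice of coset representatives~$\{\ga_j\}$ entering the definition of~$\wt\Psi_{H_1,H_2}^{H_{12}}$ (as touched on in the last paragraph of the proof of Lemma~\ref{Kunn_lmm}), so that our particular choice is permitted. Everything else reduces to isotopy invariance of the intersection homomorphism~\eref{cHinter_e} and the direct-sum reduction in the first paragraph.
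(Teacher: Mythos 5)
Your proof is correct and follows essentially the same route as the paper's: lift flux loops to an isotopy of $\wh{V}_{H_1;\bs_1\ldots\bs_N}$ ending at $\Th_{\tilde\eta}$, observe via~\eref{PsiThDeg_e} that $\Th_{\tilde\eta}\!\times\!\id$ maps $\wh{V}_{H_1,H_2;\bs_1\ldots\bs_N}^{\wt\eta_2}$ onto $\wh{V}_{H_1,H_2;\bs_1\ldots\bs_N}^{\wt\eta_1}$, and conclude equality of the classes~\eref{GenPD_e} by cobordism invariance of the intersection homomorphisms~\eref{cHinter_e}; the paper merely skips your per-summand reduction by choosing $\ga\!\in\!\bigoplus_r\Flux(V_r)^{\oplus\ell_r}$ with $[\Phi_{V;\bs_1\ldots\bs_N}(\ga)]_{H_{12}}\!=\!\wt\eta_1\!-\!\wt\eta_2$ directly, which is the same Bezout maneuver you make explicit. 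The ``main obstacle'' you flag is not a gap: since your Bezout choice gives $\Phi_{V;\bs_1\ldots\bs_N}(\mu)\!=\!\tilde\eta$ on the nose, $\ga_j(\tilde\eta)\!=\!\ga_j$ and $\eta_j\!=\!\mu$ solve~\eref{gajeta_e} for every~$j$ with the already-fixed collection $\{\ga_j\}$, so the descended endpoint of $\hat\Psi_t$ is literally $\Th_{\tilde\eta}$ and there is no change of coset representatives to reconcile.
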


\begin{proof}
Let 
$$\ga\equiv (\ga_{r;i})_{i\le\ell_r,r\le N}\in \bigoplus_{r=1}^N \Flux(V_r)^{\oplus\ell_r}$$
be such that 
\BE{DiagFlux_e3}\wt\eta_1-\wt\eta_2=\big[\Phi_{V;\bs_1\ldots\bs_N}(\ga)\big]_{H_{12}}\in \cR_{H_{12}}\,.\EE
For each $r\!=\!1,\ldots,N$ and $i\!=\!1,\ldots,\ell_r$, 
let $\Psi_{r;i;t}\!:V_r\!\lra\!V_r$ be a loop of diffeomorphisms generating~$\ga_{r;i}$
such that $\Psi_{r;i;0}\!=\!\id$.
These loops lift to paths of diffeomorphisms
\begin{alignat*}{3}
\wh\Psi_{r;i;t}\!: \wh{V}_r&\lra \wh{V}_r, &\quad t&\in[0,1], 
&\quad &\wh\Psi_{r;i;0}=\id_{\wh{V}_r},~~\wh\Psi_{r;i;1}(\wh{x}_{r;i})=\ga_{r;i}\cdot\wh{x}_{r;i},\\
\wt\Psi_t\!: \wh{V}_{H_1;\bs_1\ldots\bs_N}&\lra \wh{V}_{H_1;\bs_1\ldots\bs_N}, &\quad t&\in[0,1],  
&\quad &\wt\Psi_t\big(\big[(\wh{x}_{r;i})_{i\le\ell_r,r\le N}\big]_{H_{12}}\big)=
\big[\big(\wh\Psi_{r;i;t}(\wh{x}_{r;i})\big)_{i\le\ell_r,r\le N}\big]_{H_{12}}.
\end{alignat*}
In particular, $\wt\Psi_1\!=\!\Th_{\Phi_{V;\bs_1\ldots\bs_N}(\ga)}$.
By~\eref{PsiThDeg_e} and~\eref{DiagFlux_e3},
$$\wh{V}_{H_1,H_2;\bs_1\ldots\bs_N}^{\wt\eta_1}
=\big\{\wt\Psi_1\!\times\!\id_{\wh{V}_{H_2;\bs_1\ldots\bs_N}}\big\}
\big(\wh{V}_{H_1,H_2;\bs_1\ldots\bs_N}^{\wt\eta_2}\big).$$ 
Thus, the smooth proper map 
$$[0,1]\!\times\!\wh{V}_{H_1,H_2;\bs_1\ldots\bs_N}^{\wt\eta_2}
\lra \wh{V}_{H_1;\bs_1\ldots\bs_N}\!\times\!\wh{V}_{H_2;\bs_1\ldots\bs_N}, \qquad
(t,\wt{x}_1,\wt{x}_2)\lra \big(\wt\Psi_t(\wt{x}_1),\wt{x}_2\big),$$
is a cobordism between $\wh{V}_{H_1,H_2;\bs_1\ldots\bs_N}^{\wt\eta_1}$
and $\wh{V}_{H_1,H_2;\bs_1\ldots\bs_N}^{\wt\eta_2}$.
Its intersection with a smooth map~$h$ as in~\eref{hZdfn_e} is a compact cobordism
between the intersections of~$h$ with $\wh{V}_{H_1,H_2;\bs_1\ldots\bs_N}^{\wt\eta_1}$
and with $\wh{V}_{H_1,H_2;\bs_1\ldots\bs_N}^{\wt\eta_2}$.
Thus, the intersection homomorphisms on the homology induced by the two diagonal components
are the same; this establishes~\eref{DiagFlux_e}. 
\end{proof}

\section{The refined invariance property}
\label{SympSum_sec}

\noindent
We now provide the details needed to refine the usual symplectic sum formula,
as suggested in \cite[Sections~3,10]{IPsum} and outlined in Section~\ref{SympSum_subs0}.
Once the lifts~\eref{evXVlift_e} of~\eref{evdfn_e2} are chosen systematically 
as in Proposition~\ref{RimToriAct_prp}, 
the fiber products~\eref{cHXYV_e0} of the coverings~\eref{IPcov_e} for $(X,V)$ and~$(Y,V)$ 
over the diagonal in~$V_{\bs}\!\times\!V_{\bs}$
can be split into unions of topological components~\eref{cHXYVC_e}, 
as suggested by \cite[(3.10)]{IPsum}.
We define the crucial refined degree-gluing map~\eref{IPdegmap_e} in Section~\ref{GlDeg_subs}
as a special case of the map~\eref{wtPsi_e} for arbitrary abelian covers.
It is used in Section~\ref{XYVdiagprop_subs} to define the diagonal components~\eref{cHXYVC_e}.
Corollary~\ref{Kunn_crl} describes cases when the Poincare duals of
these components split as in~\eref{KunnDecomp_e2}.
The approach of \cite{IPrel,IPsum} can then be used to distinguish
the GW-invariants of $X\!\#_V\!Y$ in degrees differing by elements of~$\cR_{X,Y}^V$ 
in terms of the IP-counts of~$(X,V)$ and~$(Y,V)$.
Otherwise, they can be distinguished only in terms of the IP-counts of 
the singular fiber $X\!\cup_V\!Y$.
The assertions made in Sections~\ref{VanishAppl_subs} and~\ref{FluxAppl_subs} 
are established in Section~\ref{SympSumPf_subs}.

\subsection{The refined gluing degree map}
\label{GlDeg_subs}

\noindent
We begin this section by defining the \sf{refined gluing degree map}~\eref{IPdegmap_e}
as a special case of the continuous map~\eref{wtPsi_e} on the diagonal fiber product of 
two abelian covers of the divisor~$V$.
We then show that its composition with the lifted evaluation morphisms~\eref{EvLift_e}
gives the degree of the glued map, provided the coset representatives in
the constructions of~\eref{wtPsi_e} and~\eref{EvLift_e} are chosen in the same way;
see Figure~\ref{RimToriAct_fig} and Proposition~\ref{RimToriAct_prp2}.\\

\noindent
Throughout this section, $X$ and $Y$ denote compact oriented manifolds, $V\!\subset\!X,Y$
is a compact oriented submanifold of codimension~$\fc$, and
\hbox{$\vph\!:S_XV\!\lra\!S_YV$} is an orientation-reversing
diffeomorphism commuting with the projections to~$V$.
With
$$q_V\!:\,S_XV\!=\!S_YV\lra V$$
denoting the bundle projection map, define
\BE{Hfcdfn_e}H_{\fc}(SV;\Z)_{X,Y}=
\big\{A_{SV}\!\in\!H_{\fc}(SV;\Z)\!:\,q_{V*}(A_{SV})\!\in\!\ker\io^X_{V*}\cap\ker\io^Y_{V*}\big\}.\EE
Let
\BE{HfcXTdfn_e}\begin{split}
H_{\fc}(X;\Z)\!\times_V\!H_{\fc}(Y;\Z)&=
\big\{(A_X,A_Y)\!\in\!H_{\fc}(X;\Z)\!\times\!H_{\fc}(Y;\Z)\!:\\
&\hspace{1in}
A_X\!\cdot_X\!V_r=A_Y\!\cdot_Y\!V_r~~\forall\,r\!=\!1,\ldots,N\big\},
\end{split}\EE
where $\cdot_X$ and $\cdot_Y$ are the homology intersection pairings in~$X$ and~$Y$,
respectively.\\

\noindent
Let $X\!\#_{\vph}\!Y$ be the manifold
obtained by gluing the complements of tubular neighborhoods of~$V$ in~$X$ and~$Y$
by~$\vph$ along their common boundary.
We denote~by 
$$q_{\vph}\!:X\!\#_{\vph}\!Y\lra X\!\cup_V\!Y$$
a continuous map which restricts to the identity outside of a tubular neighborhood
of $S_XV\!=_{\vph}\!S_YV$, is a diffeomorphism on the complement of $q_{\vph}^{-1}(V)$,
and restricts to the bundle projection $S_XV\!\lra\!V$.
Let 
\BE{cRXYVdfn_e}\begin{split}
\cR_{X,Y}^V\equiv\big\{\io^{X\#_{\vph}Y}_{X-V*}(A_{X-V})\!+\!\io^{X\#_{\vph}Y}_{Y-V*}(A_{Y-V})\!:~
&(A_{X-V},A_{Y-V})\in \cR_X^V\!\oplus\!\cR_Y^V\big\}.
\end{split}\EE 
By \cite[Lemma~4.1]{GWrelIP}, this definition of $\cR_{X,Y}^V$ agrees with 
\eref{cRXYVprop_e} in the $\fc\!=\!2$ case.\\

\noindent
Define 
$$\De_{X,Y}^V\!: 
H_1(V;\Z)_X \oplus H_1(V;\Z)_Y \lra\frac{H_1(V;\Z)}{H_X^V\!+\!H_Y^V}\,,\quad
\big([\ga_X]_{H_X^V},[\ga_Y]_{H_Y^V}\big)\lra \big[\ga_X\!-\!\ga_Y\big]_{H_X^V+H_Y^V}.$$
Denote by $\ov{H}_{X,Y}^V$ the image of the composition
$$H_{\fc}(SV;\Z)_{X,Y}
\stackrel{(\io_{SV*}^{X-V},-\io_{SV*}^{Y-V})}{\xra{1.8}}
\cR_X^V\!\oplus\!\cR_Y^V \stackrel{\approx}{\lra}
H_1(V;\Z)_X \oplus H_1(V;\Z)_Y \stackrel{\De_{X,Y}^V}{\xra{.8}}\frac{H_1(V;\Z)}{H_X^V\!+\!H_Y^V},$$
with the second arrow above given by the isomorphisms in~\eref{RTisom_e}.
Let $H_{X,Y}^V\!\subset\!H_1(V;\Z)$ be the preimage of $\ov{H}_{X,Y}^V$
under the quotient projection
$$H_1(V;\Z)\lra \frac{H_1(V;\Z)}{H_X^V\!+\!H_Y^V}\,.$$
By \cite[Corollary~4.4(1)]{GWrelIP}, there is a commutative diagram
\BE{cRH1diag_e}\begin{split}
\xymatrix{ \frac{H_1(V;\Z)}{H_X^V}\oplus  \frac{H_1(V;\Z)}{H_Y^V}
\ar[d]|{\io_{S_XV*}^{X-V}\!\circ\De_X^V\oplus\io_{S_YV*}^{Y-V}\!\circ\De_Y^V}
\ar[rrrr]^{\ov\De_{X,Y}^V}  &&&&    \frac{H_1(V;\Z)}{H_{X,Y}^V}  \ar[d]^{\fR_{X,Y}^V}_{\approx} \\
 \cR_X^V\oplus\cR_Y^V   \ar[rrrr]^{\io_{X-V*}^{X\#_{\vph}Y}+\io_{Y-V*}^{X\#_{\vph}Y}} &&&& 
  \cR_{X,Y}^V}
\end{split}\EE
of homomorphisms, with both vertical arrows being isomorphisms.\\
 
\noindent
Let $V_1,\ldots,V_N$ be the topological components of~$V$, 
$\bs_1\!\in\!\Z_{\pm}^{\ell_1},\ldots,\bs_N\!\in\!\Z_{\pm}^{\ell_N}$,  and
$$\wh{V}_{X,Y;\bs_1\ldots\bs_N}' = \wh{V}_{H_X^V,H_Y^V;\bs_1\ldots\bs_N}'\,,\quad
\wh{V}_{X,Y;\bs_1\ldots\bs_N}= \wh{V}_{H_X^V,H_Y^V;\bs_1\ldots\bs_N};$$
see~\eref{whVdiag_e0}.
Choose collections 
\BE{gaXY_e}\big\{\ga_{X;j}\big\},\big\{\ga_{Y;j}\big\}\subset H_1(V;\Z)\EE
of coset representatives for the elements of $\cR_X^V/\cR_{X;\bs_1\ldots\bs_N}'^{\,V}$
and $\cR_Y^V/\cR_{Y;\bs_1\ldots\bs_N}'^{\,V}$.
As described above~\eref{wtPsi_e}, these collections determine a smooth~map
$$\wt\Psi_{X,Y}^V\!: 
\wh{V}_{X,Y;\bs_1\ldots\bs_N}\stackrel{\wt\Psi_{H_X^V,H_Y^V}^{H_{X,Y}^V}}{\xra{1}} \cR_{H_{X,Y}^V}
\underset{\approx}{\stackrel{\fR_{X,Y}^V}{\xra{.8}}} \cR_{X,Y}^V  
\subset H_{\fc}(X\!\#_{\vph}\!Y;\Z),$$
with $\fR_{X,Y}^V$ as in~\eref{cRH1diag_e}.\\

\noindent
Let $\Si_X,\Si_Y$ be compact oriented $\fc$-dimensional manifolds,
$k_X,k_Y\!\in\!\Z^{\ge0}$, and $(A_X,A_Y)$ be an element of 
$H_{\fc}(X;\Z)\!\times_V\!H_{\fc}(Y;\Z)$.
Denote~by
$$\wt\ev_X^V\!:\fX_{\Si_X,k_X;\bs_1\ldots\bs_N}^{V_1,\ldots,V_N}(X,A_X)\lra \wh{V}_{X;\bs_1\ldots\bs_N}
\quad\hbox{and}\quad
\wt\ev_Y^V\!:\fX_{\Si_Y,k_Y;\bs_1\ldots\bs_N}^{V_1,\ldots,V_N}(Y,A_Y) \lra 
\wh{V}_{Y;\bs_1\ldots\bs_N}$$
the lifted relative evaluation morphisms of Proposition~\ref{RimToriAct_prp} for $(X,V)$ and~$(Y,V)$
compatible with the coset representatives~\eref{gaXY_e}.
Let
\BE{Xdiagdfn_e}\begin{split}
\fX_{(\Si_X,\Si_Y),(k_X,k_Y);\bs_1\ldots\bs_N}^{V_1,\ldots,V_N}\big((X,Y),(A_X,A_Y)\big)
&=\big\{\wt\ev_X^V\!\times\!\wt\ev_Y^V\big\}^{-1}
(\wh{V}_{X,Y;\bs_1\ldots\bs_N})\\
&=\big\{\ev_X^V\!\times\!\ev_Y^V\big\}^{-1}
(\De_{\bs_1\ldots\bs_N}^V)\,.
\end{split}\EE
Each element $(\bff_X,\bff_Y)$ of this space gives rise to a marked map
\BE{bffXY_e}\bff_X\!\#_{\vph}\bff_Y\in  \bigsqcup_{A_{\#}\in A_X\#_{\vph}A_Y}\hspace{-.3in}
\fX_{\Si_X\#\Si_Y,k_X+k_Y}(X\!\#_{\vph}\!Y,A_{\#});\EE
see \cite[Section~2.2]{GWrelIP}.\\

\noindent
Fix a base  point %$\wh{x}_{\bs_1\ldots\bs_N}\!\in\!\wh{V}_{\bs_1\ldots\bs_N}$ over some 
$x_{\bs_1\ldots\bs_N}\!\in\!V_{\bs_1\ldots\bs_N}$.
Suppose 
$$\fX_{\Si_X,k_X;\bs_1\ldots\bs_N}^{V_1,\ldots,V_N}(X,A_X),
\fX_{\Si_Y,k_Y;\bs_1\ldots\bs_N}^{V_1,\ldots,V_N}(Y,A_Y)\neq\eset.$$
Choose
\BE{bffXbffY_e}\begin{split}
\bff_X^{\bu}&\equiv (z_{X;1}^{\bu},\ldots,z_{X;k_X+\ell_1+\ldots+\ell_N}^{\bu},f_X^{\bu})\in 
\fX_{\Si_X,k_X;\bs_1\ldots\bs_N}^{V_1,\ldots,V_N}(X,A_X),\\
\bff_Y^{\bu}&\equiv (z_{Y;1}^{\bu},\ldots,z_{Y;k_Y+\ell_1+\ldots+\ell_N}^{\bu},f_Y^{\bu})\in 
\fX_{\Si_Y,k_Y;\bs_1\ldots\bs_N}^{V_1,\ldots,V_N}(Y,A_Y)
\end{split}\EE
such that 
$$\ev_X^V\big(\bff_X^{\bu}),\ev_Y^V\big(\bff_Y^{\bu})=x_{\bs_1\ldots\bs_N}\in V_{\bs_1\ldots\bs_N}\,.$$
These two marked maps correspond to the initially chosen base points, denoted by~$\bff_0$, 
in the proof of \cite[Theorem~6.5]{GWrelIP}. 
Let
\BE{AXYdfn_e}
A_{X,Y}=\big[f_X^{\bu}\!\#_{\vph}\!f_Y^{\bu}\big]\in H_{\fc}(X\!\#_{\vph}\!Y;\Z),\EE
and define
\BE{gAdfn_e}\begin{split}
&g_{A_X,A_Y}\!:
\wh{V}_{X,Y;\bs_1\ldots\bs_N}\lra H_{\fc}(X\!\#_{\vph}\!Y;\Z) \qquad\hbox{by}\\ 
&g_{A_X,A_Y}(\wt{x},\wt{y})=A_{X,Y}-
\wt\Psi_{X,Y}^V\big(\wt\ev_X^V\big(\bff_X^{\bu}),\wt\ev_Y^V\big(\bff_Y^{\bu})\big)
+\wt\Psi_{X,Y}^V\big(\wt{x},\wt{y}\big).
\end{split}\EE
The last map is the intended refined gluing degree map of \cite[(3.10)]{IPsum}.
By Proposition~\ref{RimToriAct_prp2} below, it gives 
the degree $A_{\#}$ of each glued map~\eref{bffXY_e}.
The statement of  Proposition~\ref{RimToriAct_prp2} is illustrated in Figure~\ref{RimToriAct_fig},
where we abbreviate $\bs_1\ldots\bs_N$ as $\bs$ and ignore the absolute marked points.

\begin{rmk}\label{RimToriAct_rmk2}
The map component $f_X\!\#_{\vph}f_Y$ of $\bff_X\!\#_{\vph}\bff_Y$  depends 
on the choices made in the construction of \cite[Section~2.2]{GWrelIP}.
However, the degree (homology class) of $f_X\!\#_{\vph}f_Y$ does not depend
on these choices.
\end{rmk}

\begin{prp}\label{RimToriAct_prp2}
Suppose $X$ and $Y$ are oriented manifolds, $V\!\subset\!X,Y$ is 
a compact oriented submanifold of codimension~$\fc$ with connected components $V_1,\ldots,V_N$,
\hbox{$\vph\!:S_XV\!\lra\!S_YV$} is an orientation-reversing
diffeomorphism commuting with the projections to~$V$, and
$$(A_X,A_Y)\in H_{\fc}(X;\Z)\!\times_V\!H_{\fc}(Y;\Z), \qquad
\bs_r\in\Z_{\pm}^{\,\ell_r}~~\hbox{with}~~ r\!=\!1,\ldots,N.$$ 
Then, 
\BE{RimToriActPrp2_e}
\big[\bff_X\!\#_{\vph}\bff_Y\big]=  g_{A_X,A_Y}\big(\wt\ev_X^V(\bff_X),\wt\ev_Y^V(\bff_Y)\big)\EE
for all pairs $(\bff_X,\bff_Y)$
in $\fX_{(\Si_X,\Si_Y),(k_X,k_Y);\bs_1\ldots\bs_N}^{V_1,\ldots,V_N}\!\big((X,Y),(A_X,A_Y)\big)$.
\end{prp}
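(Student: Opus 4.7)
The plan is to reduce the identity~\eref{RimToriActPrp2_e} to two applications of Proposition~\ref{RimToriAct_prp}, one for $(X,V)$ and one for $(Y,V)$, glued together via the Mayer--Vietoris commutative diagram~\eref{cRH1diag_e}. At the base pair $(\bff_X^{\bu},\bff_Y^{\bu})$ the identity is tautological: $[\bff_X^{\bu}\!\#_{\vph}\bff_Y^{\bu}]=A_{X,Y}$ by~\eref{AXYdfn_e}, and the two $\wt\Psi_{X,Y}^V$-terms in~\eref{gAdfn_e} cancel. So it suffices to verify that the difference
$$\big[\bff_X\!\#_{\vph}\bff_Y\big]-\big[\bff_X^{\bu}\!\#_{\vph}\bff_Y^{\bu}\big]
\;\in\;H_{\fc}(X\!\#_{\vph}\!Y;\Z)$$
equals $\wt\Psi_{X,Y}^V(\wt\ev_X^V(\bff_X),\wt\ev_Y^V(\bff_Y))-\wt\Psi_{X,Y}^V(\wt\ev_X^V(\bff_X^{\bu}),\wt\ev_Y^V(\bff_Y^{\bu}))$ and in particular lies in $\cR_{X,Y}^V$.

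First, I would use the gluing construction of \cite[Section~2.2]{GWrelIP} to produce $\bff_X\!\#_{\vph}\bff_Y$ and $\bff_X^{\bu}\!\#_{\vph}\bff_Y^{\bu}$ from the same tubular-neighborhood and identification data. Since both glued maps agree outside a neighborhood of $V\!\subset\!X\!\#_{\vph}\!Y$, a standard Mayer--Vietoris decomposition (as in the first exact sequence of \cite[Lemma~4.1]{GWrelIP}) gives
$$\big[\bff_X\!\#_{\vph}\bff_Y\big]-\big[\bff_X^{\bu}\!\#_{\vph}\bff_Y^{\bu}\big]
=\io^{X\#_{\vph}Y}_{X-V*}\big[f_X\!\#\!(-f_X^{\bu})\big]
+\io^{X\#_{\vph}Y}_{Y-V*}\big[f_Y\!\#\!(-f_Y^{\bu})\big],$$
with both rim-tori classes well-defined in $\cR_X^V$ and $\cR_Y^V$ respectively because the lifted evaluations~$\wt\ev_X^V(\bff_X),\wt\ev_X^V(\bff_X^{\bu})$ lie over the same base point $x_{\bs_1\ldots\bs_N}$, and similarly on the $Y$-side. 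By~\eref{cRXYVdfn_e}, the right-hand side is in~$\cR_{X,Y}^V$, and by Remark~\ref{RimToriAct_rmk2} it is independent of gluing choices.

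Next, I would write the lifted evaluations as in Proposition~\ref{RimToriAct_prp}:
\begin{align*}
\wt\ev_X^V(\bff_X)&=\big([\ga_{X;j_X}]_{X;\bs_1\ldots\bs_N},[\ga_X\!\cdot\!\wh{x}]_X\big),
& \wt\ev_X^V(\bff_X^{\bu})&=\big([\ga_{X;j_X^{\bu}}]_{X;\bs_1\ldots\bs_N},[\wh{x}]_X\big),\\
\wt\ev_Y^V(\bff_Y)&=\big([\ga_{Y;j_Y}]_{Y;\bs_1\ldots\bs_N},[\ga_Y\!\cdot\!\wh{y}]_Y\big),
& \wt\ev_Y^V(\bff_Y^{\bu})&=\big([\ga_{Y;j_Y^{\bu}}]_{Y;\bs_1\ldots\bs_N},[\wh{y}]_Y\big),
\end{align*}
where the second components agree in both pairs because $\bff_X^{\bu}$ and $\bff_Y^{\bu}$ evaluate to $x_{\bs_1\ldots\bs_N}$. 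Proposition~\ref{RimToriAct_prp} applied in each factor then yields
\begin{align*}
\big[f_X\!\#\!(-f_X^{\bu})\big]&=\io^{X-V}_{S_XV*}\De_X^V\big(\Phi_{V;\bs_1\ldots\bs_N}(\ga_X)+\ga_{X;j_X}-\ga_{X;j_X^{\bu}}\big),\\
\big[f_Y\!\#\!(-f_Y^{\bu})\big]&=\io^{Y-V}_{S_YV*}\De_Y^V\big(\Phi_{V;\bs_1\ldots\bs_N}(\ga_Y)+\ga_{Y;j_Y}-\ga_{Y;j_Y^{\bu}}\big).
\end{align*}

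Finally, I would substitute these into the Mayer--Vietoris formula and read off the result from the commutative diagram~\eref{cRH1diag_e}: chasing the pair of classes along the left vertical arrow and the bottom horizontal arrow equals chasing their $H_X^V$- and $H_Y^V$-cosets along the top horizontal arrow $\ov\De_{X,Y}^V$ and then the right vertical isomorphism~$\fR_{X,Y}^V$. By definition of $\ov\De_{X,Y}^V$ the top route produces the $H_{X,Y}^V$-coset of
$$\big(\Phi_{V;\bs_1\ldots\bs_N}(\ga_X)+\ga_{X;j_X}-\ga_{X;j_X^{\bu}}\big)
-\big(\Phi_{V;\bs_1\ldots\bs_N}(\ga_Y)+\ga_{Y;j_Y}-\ga_{Y;j_Y^{\bu}}\big),$$
which is precisely $\wt\Psi_{H_X^V,H_Y^V}^{H_{X,Y}^V}(\wt\ev_X^V(\bff_X),\wt\ev_Y^V(\bff_Y))-\wt\Psi_{H_X^V,H_Y^V}^{H_{X,Y}^V}(\wt\ev_X^V(\bff_X^{\bu}),\wt\ev_Y^V(\bff_Y^{\bu}))$ by formula~\eref{wtPsi_e}. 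Applying $\fR_{X,Y}^V$ and comparing with the definition of $\wt\Psi_{X,Y}^V$ completes the proof. The main obstacle is bookkeeping: one must check that the coset representatives $\{\ga_{X;j}\},\{\ga_{Y;j}\}$ used in constructing $\wt\Psi_{X,Y}^V$ via~\eref{wtPsi_e} are exactly the ones used in the lifts $\wt\ev_X^V,\wt\ev_Y^V$ coming from Proposition~\ref{RimToriAct_prp}, and that orientation signs propagate consistently through~\eref{cRH1diag_e} (in particular, the relative minus sign in $\De_{X,Y}^V$ matches the orientation-reversing role of~$\vph$).
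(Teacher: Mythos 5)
Your proposal is correct and follows essentially the same route as the paper's proof: reduce to the base pair via~\eref{gAdfn_e}, compute the degree difference by Mayer--Vietoris, apply Proposition~\ref{RimToriAct_prp} on each side, and match the result to the definition of $\wt\Psi_{X,Y}^V$ through the commutative diagram~\eref{cRH1diag_e}. The only step you should make explicit is the reduction to the case $\ev_X^V(\bff_X)=\ev_Y^V(\bff_Y)=x_{\bs_1\ldots\bs_N}$: you assert that the lifted evaluations of $\bff_X$ and $\bff_X^{\bu}$ lie over the same base point, but this holds for a general $(\bff_X,\bff_Y)$ only after invoking that both sides of~\eref{RimToriActPrp2_e} are continuous with values in the discrete set $H_{\fc}(X\!\#_{\vph}\!Y;\Z)$; without that reduction, the rim-tori classes $[f_X\!\#(-f_X^{\bu})]$ and $[f_Y\!\#(-f_Y^{\bu})]$ in your Mayer--Vietoris decomposition are not even defined. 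Your concluding bookkeeping point is well-taken and is in fact the reason the collections~\eref{gaXY_e} must be used both to construct $\wt\Psi_{X,Y}^V$ and to normalize the lifts of Proposition~\ref{RimToriAct_prp}.
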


\begin{figure}
$$\xymatrix{
\fX_{(\Si_X,\Si_Y);\bs}^V\big((X,Y),(A_X,A_Y)\big)
\ar[rr]^>>>>>>>>>>>>>>>>{\wt\ev_X^V\times\wt\ev_Y^V} \ar[dd]_{\#_{\vph}} 
\ar[dr]|-{\ev_X^V\times\ev_Y^V}
&& \wh{V}_{X,Y;\bs} \ar[dd]^{g_{A_X,A_Y}} \ar[dl]|-{\pi_{X;\bs}^V\times\pi_{Y;\bs}^V}\\
& \De^V_{\bs}\\
\bigsqcup\limits_{A_{\#}\in A_X\#_{\vph}A_Y}\!\!\!\!\!\!\!\!\fX_{\Si_X\#\Si_Y}(X\!\#_{\vph}\!Y,A_{\#}) 
\ar[rr]^>>>>>>>>>>>>>>{\deg}&&  H_{\fc}(X\!\#_{\vph}\!Y;\Z)}$$
\caption{The evaluation and gluing maps of Proposition~\ref{RimToriAct_prp2}.}
\label{RimToriAct_fig}
\end{figure}

\begin{proof} 
Let $\wh{x}\!\in\!\wh{V}_{\bs_1\ldots\bs_N}$, $\ga^{\bu}\!\in\!H_1(V_{\bs_1\ldots\bs_N};\Z)$,
and $\ga_X^{\bu},\ga_Y^{\bu}\!\in\!H_1(V;\Z)$ be elements
of the collections in~\eref{gaXY_e} such~that 
\BE{RimToriAct_e1}\wt\ev_X^V(\bff_X^{\bu})=\big([\ga_X^{\bu}]_{X;\bs_1\ldots\bs_N},
[\ga^{\bu}\!\cdot\!\wh{x}]_X\big)
\quad\hbox{and}\quad 
\wt\ev_Y^V(\bff_Y^{\bu})=\big([\ga_Y^{\bu}]_{Y;\bs_1\ldots\bs_N},[\wh{x}]_X\big).\EE
Thus,
\BE{RimToriAct_e2}
\wt\Psi_{X,Y}^V\big(\wt\ev_X^V\big(\bff_X^{\bu}),\wt\ev_Y^V\big(\bff_Y^{\bu})\big)
=\fR_{X,Y}^V\big(\big[\Phi_{V;\bs_1\ldots\bs_N}(\ga^{\bu})
\!+\!\ga_X^{\bu}\!-\!\ga_Y^{\bu}\big]_{H_{X,Y}^V} \big)\,.\EE\\

\noindent
Since the two sides of~\eref{RimToriActPrp2_e} take discrete values and are
continuous in $(\bff_X,\bff_Y)$, it is sufficient to verify~\eref{RimToriActPrp2_e} 
under the assumption~that 
$$\ev_X^V\big(\bff_X),\ev_Y^V\big(\bff_Y)=x_{\bs_1\ldots\bs_N}\in V_{\bs_1\ldots\bs_N}\,.$$
Let  $\ga,\ga'\!\in\!H_1(V_{\bs_1\ldots\bs_N};\Z)$,
and $\ga_X,\ga_Y\!\in\!H_1(V;\Z)$ be elements
of the collections in~\eref{gaXY_e} such~that 
\BE{RimToriAct_e5c}\wt\ev_X^V(\bff_X)=\big([\ga_X]_{X;\bs_1\ldots\bs_N},
[\ga\ga'\!\cdot\!\wh{x}]_X\big)
\quad\hbox{and}\quad 
\wt\ev_Y^V(\bff_Y)=\big([\ga_Y]_{Y;\bs_1\ldots\bs_N},[\ga'\!\cdot\!\wh{x}]_X\big).\EE
Thus,
\BE{RimToriAct_e6}
\wt\Psi_{X,Y}^V\big(\wt\ev_X^V\big(\bff_X),\wt\ev_Y^V\big(\bff_Y)\big)
=\fR_{X,Y}^V\big(\big[\Phi_{V;\bs_1\ldots\bs_N}(\ga)
\!+\!\ga_X\!-\!\ga_Y\big]_{H_{X,Y}^V} \big)\,.\EE\\

\noindent
By~\eref{RimToriAct_e1}, \eref{RimToriAct_e5c}, and~\eref{RimToriAct_e},
\begin{equation*}\begin{split}
\big[f_X\!\#(-f_X^{\bu})\big]&=
\io_{S_XV*}^{X-V}\big(\De_X^V
\big(\Phi_{V;\bs_1\ldots\bs_N}(\ga\!+\!\ga'\!-\!\ga^{\bu})\!+\!\ga_X\!-\!\ga_X^{\bu}\big)\big)  
\in H_{\fc}(X\!-\!V;\Z),\\
\big[f_Y\!\#(-f_Y^{\bu})\big]&=
\io_{S_YV*}^{Y-V}\big(\De_Y^V
\big(\Phi_{V;\bs_1\ldots\bs_N}(\ga')\!+\!\ga_Y\!-\!\ga_Y^{\bu}\big)\big)  \in H_{\fc}(Y\!-\!V;\Z).
\end{split}\end{equation*}
Thus, 
\BE{RimToriAct_e5}\begin{split} 
&\big[f_X\!\#_{\vph}\!f_Y\big]-\big[f_X^{\bu}\!\#_{\vph}\!f_Y^{\bu}\big]
=\io_{X-V*}^{X\#_{\vph}Y}\big(\big[f_X\!\#(-f_X^{\bu})\big]\big)+
\io_{Y-V*}^{X\#_{\vph}Y}\big(\big[f_Y\!\#\!(-f_Y^{\bu})\!\big]\big)\\
&\qquad\qquad =\io_{S_XV*}^{X\#_{\vph}Y}\big(\De_X^V\big(
\Phi_{V;\bs_1\ldots\bs_N}(\ga\!-\!\ga^{\bu})\!+\!\ga_X\!-\!\ga_Y
-\ga_X^{\bu}\!+\!\ga_Y^{\bu}\big)\big).
\end{split}\EE
From~\eref{AXYdfn_e} and~\eref{RimToriAct_e5},  
we find~that 
\begin{equation*}\begin{split}
\big[f_X\!\#_{\vph}\!f_Y\big] =A_{X,Y}
&-\fR_{X,Y}^V\big(\big[\Phi_{V;\bs_1\ldots\bs_N}(\ga^{\bu})
\!+\!\ga_X^{\bu}\!-\!\ga_Y^{\bu}\big]_{H_{X,Y}^V} \big)\\
&+\fR_{X,Y}^V\big(\big[\Phi_{V;\bs_1\ldots\bs_N}(\ga)
\!+\!\ga_X\!-\!\ga_Y\big]_{H_{X,Y}^V} \big).
\end{split}\end{equation*}
Comparing this with~\eref{gAdfn_e}, \eref{RimToriAct_e2} and~\eref{RimToriAct_e6},
we obtain the claim. 
\end{proof}

\subsection{Diagonal components for rim tori covers}
\label{XYVdiagprop_subs}

\noindent
We now use the refined gluing degree map~\eref{gAdfn_e} to split fiber products
of rim tori covers into diagonal components and describe some of their properties.
This completes the details  needed to refine the usual symplectic sum formula,
as suggested in \cite[Sections~3,10]{IPsum} and outlined in Section~\ref{SympSum_subs0}. 
We include three examples of applying the refined invariance property for GW-invariants
in simple cases when the diagonal splits and its Kunneth decomposition can be easily determined.\\

\noindent
Similarly to~\eref{whVdiag_e}, we define
\BE{cHXYVsplit_e}
\wh{V}_{X,Y;\bs_1\ldots\bs_N}^A \equiv g_{A_X,A_Y}^{-1}(A)
=\wh{V}_{H_X^V,H_Y^V;\bs_1\ldots\bs_N}^{A-A_{X,Y}} 
\qquad \forall~A\in A_X\!\#_{\vph}\!A_Y\subset H_{\fc}(X\!\#_{\vph}\!Y;\Z) .\EE
Similarly to~\eref{GenPD_e}, let 
\BE{cHclass_e}\PD_{X,Y;\bs_1\ldots\bs_N}^{V,A}\De
\equiv \PD_{H_X^V,H_Y^V;\bs_1\ldots\bs_N}^{A-A_{X,Y}}\De
\in H^*\big(\wh{V}_{X;\bs_1\ldots\bs_N}\!\times\!\wh{V}_{Y;\bs_1\ldots\bs_N};\Q\big)\EE
denote the cohomology class determined by the closed submanifold~\eref{cHXYVsplit_e}.
Lemma~\ref{Kunn_lmm} gives the following description of some cases when this class
admits a finite Kunneth decomposition. 

\begin{crl}\label{Kunn_crl}
Suppose $X$, $Y$, $V$, $\vph$, 
$(A_X,A_Y)$, and $\bs_r\!\in\!\Z_{\pm}^{\,\ell_r}$ are as in Proposition~\ref{RimToriAct_prp2}
and $A\!\in\!A_X\!\#_{\vph}\!A_Y$.
The class~\eref{cHclass_e} admits a finite Kunneth decomposition as
in~\eref{KunnDecomp_e2} if either $\cR_{X,Y}^V$ is finite or
$V$ is connected and $H_*(\wh{V}_{H_{X,Y}^V};\Q)$ is finitely generated.
\end{crl}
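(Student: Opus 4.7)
The plan is to deduce this corollary as a direct specialization of Lemma~\ref{Kunn_lmm}. First I would take $H_1\!=\!H_X^V$, $H_2\!=\!H_Y^V$, and $H_{12}\!=\!H_{X,Y}^V$ in the setup of Section~\ref{DiagComp_sec}. The inclusions $H_X^V,H_Y^V\subset H_{X,Y}^V$ follow from the construction of $H_{X,Y}^V$ as the preimage in $H_1(V;\Z)$ of $\ov{H}_{X,Y}^V\subset H_1(V;\Z)/(H_X^V\!+\!H_Y^V)$, since any such preimage automatically contains the kernel $H_X^V\!+\!H_Y^V$ of the quotient projection. Consequently, the cover $\wh{V}_{H_{12};\bs_1\ldots\bs_N}$ appearing in Lemma~\ref{Kunn_lmm} coincides with $\wh{V}_{H_{X,Y}^V;\bs_1\ldots\bs_N}$.

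Next I would translate the class and the hypotheses. By the definition~\eref{cHclass_e},
$$\PD_{X,Y;\bs_1\ldots\bs_N}^{V,A}\De=\PD_{H_X^V,H_Y^V;\bs_1\ldots\bs_N}^{A-A_{X,Y}}\De,$$
and under the isomorphism $\fR_{X,Y}^V\!:\cR_{H_{X,Y}^V}\!\to\!\cR_{X,Y}^V$ supplied by~\eref{cRH1diag_e}, the element $A\!-\!A_{X,Y}\!\in\!\cR_{X,Y}^V$ corresponds to a unique $\wt\eta\!\in\!\cR_{H_{X,Y}^V}$. This exhibits $\PD_{X,Y;\bs_1\ldots\bs_N}^{V,A}\De$ as one of the classes $\PD_{H_1,H_2;\bs_1\ldots\bs_N}^{\wt\eta}\De$ treated by Lemma~\ref{Kunn_lmm}. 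The same isomorphism shows that $\cR_{X,Y}^V$ is finite precisely when $\cR_{H_{X,Y}^V}$ is finite, matching the first alternative of the lemma to that of the corollary; the second alternative matches verbatim under $H_{12}\!=\!H_{X,Y}^V$. Applying Lemma~\ref{Kunn_lmm} then yields the required finite K\"unneth decomposition.

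I do not expect a substantive obstacle here: the analytic and topological content was already absorbed into Lemma~\ref{Kunn_lmm} and the commutative diagram~\eref{cRH1diag_e}, so the corollary reduces to checking that the notation specializing the abelian-cover diagonal framework to the symplectic sum context aligns with the data recorded in~\eref{cHclass_e} and~\eref{cHXYVsplit_e}. The only small verification worth noting is that $A\!-\!A_{X,Y}$ indeed lies in $\cR_{X,Y}^V$, so that $\fR_{X,Y}^V$ may be inverted on it; this follows immediately from the assumption $A\!\in\!A_X\!\#_{\vph}\!A_Y$.
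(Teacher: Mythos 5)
Your proof is correct and matches the paper's intent: the corollary is stated as an immediate specialization of Lemma~\ref{Kunn_lmm} with $H_1\!=\!H_X^V$, $H_2\!=\!H_Y^V$, $H_{12}\!=\!H_{X,Y}^V$, and you have verified the needed identifications (the containments $H_X^V,H_Y^V\!\subset\!H_{X,Y}^V$, the translation of the superscript via $\fR_{X,Y}^V$, and the finiteness equivalence) carefully. The paper gives no separate proof beyond citing Lemma~\ref{Kunn_lmm}, so your write-up essentially supplies the bookkeeping the authors leave implicit.
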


\begin{eg}\label{ESsum_eg}
We take $X,Y\!=\!\wh\P^2_9$ to be the blowup of $\P^2$ at 9 points 
and $V\!=\!F$ to be a smooth fiber of the fibration $\wh\P^2_9\!\lra\!\P^1$;
see the beginning of Section~\ref{RES_sec}.
By \cite[Examples~3.5,4.6]{GWrelIP}, 
$$H_X^V,H_Y^V,H_{X,Y}^V=\{0\}\subset H_1(V;\Z)\approx\Z^2\,.$$
Let $|A|_V\!\in\!\Z$ be as in~\eref{AVrdfn_e}.
By~\eref{ESsum_e}, 
\BE{ESsum_e5}\PD_{X,Y;\bs}^{V,A}\De=0
\in H^*\big(\wh{V}_{X;\bs}\!\times\!\wh{V}_{Y;\bs};\Q\big)\EE
for all $\bs\!\in\!\Z^{\ell}$ with $\ell\!>\!0$ and $A\!\in\!H_2(X\!\#_{\vph}\!Y;\Z)$
with $|A|_V\!=\!|\bs|$.
Since the contribution to the GW-invariant of $X\!\#_{\vph}\!Y$ in a degree
$A\!\in\!A_X\!\#_{\vph}\!A_Y$ from its splitting as $A_X\!\#_{\vph}\!A_Y$ 
is obtained by pulling back the classes~\eref{ESsum_e5} with $|\bs|\!=\!|A|_V$ 
by $\wt\ev_X^V\!\times\!\wt\ev_Y^V$, this contribution vanishes if $|A|_V\!\neq\!0$.
If $C\!\subset\!\wh\P^2_9$ is a holomorphic curve disjoint from~$V$, its projection to~$\P^1$
misses a point and thus $C$ is a union of fibers.
Therefore, the fiber class of $\wh\P^2_9$ and its multiples are 
the only classes that have zero intersection with~$V$ and a priori may have nonzero GW-invariants.
By~\eref{ESsum_e5}, all GW-invariants of $X\!\#_{\vph}\!Y$ in non-fiber classes vanish.
This is a direct illustration of the vanishing statement of Theorem~\ref{AbsGW_thm}.
For the standard identification~$\vph$, $X\!\#_{\vph}\!Y\!=\!\bK_3$.
By \cite[Theorem~2.4]{Junho}, $\bK_3$ admits an almost complex structure~$J$
with no $J$-holomorphic curves.
Thus, \eref{ESsum_e5}~is consistent with the vanishing of all GW-invariants of~$\bK_3$.
\end{eg}

\begin{eg}\label{S2T2sum_eg}
We take $X,Y\!=\!\P^1\!\times\!\T^2$ and $V\!=\!\{0,\i\}\!\times\!\T^2$.
By \cite[Examples~3.6,4.7]{GWrelIP}, 
$$H_X^V,H_Y^V,H_{X,Y}^V=H_{\De}\subset H_1(V;\Z)=H_1(\T^2;\Z)\oplus H_1(\T^2;\Z)
\approx\Z^2\oplus\Z^2\,,$$
where $H_{\De}$ is the diagonal subgroup.
By~\eref{S2T2sum_e},
\BE{S2T2sum_e5}\PD_{X,Y;\bs_1\bs_2}^{V,A}\De=0
\in H^*\big(\wh{V}_{X;\bs_1\bs_2}\!\times\!\wh{V}_{Y;\bs_1\bs_2};\Q\big)\EE
for all $\bs_1\!\in\!\Z^{\ell_1},\bs_2\!\in\!\Z^{\ell_2}$ with $\ell_1\!+\!\ell_2\!>\!0$
and $A\!\in\!H_2(X\!\#_{\vph}\!Y;\Z)$ with $|A|_V\!=\!|\bs_1|\!+\!|\bs_2|$. 
Similarly to Example~\ref{ESsum_eg}, this implies that  
all GW-invariants of $X\!\#_{\vph}\!Y$ in non-fiber classes vanish
and provides another direct illustration of the vanishing statement of Theorem~\ref{AbsGW_thm}.
For the standard identification~$\vph$, $X\!\#_{\vph}\!Y\!=\!\T^2\!\times\!\T^2$.
By \cite[Theorem~2.4]{Junho}, $\T^2\!\times\!\T^2$ admits an almost complex structure~$J$
with no $J$-holomorphic curves.
Thus, \eref{S2T2sum_e5}~is consistent with the vanishing of all GW-invariants of~$\T^2\!\times\!\T^2$.
\end{eg}

\begin{eg}\label{ESsum_eg2}
We now take $X\!=\!\wh\P^2_9$, $Y\!=\!\P^1\!\times\!\T^2$, and $V\!=\!F\!\subset\!X,Y$ 
to be a smooth fiber.
By \cite[Examples~3.5,3.6]{GWrelIP},
$$H_X^V=\{0\},~H_Y^V,H_{X,Y}^V=H_1(V;\Z)\approx\Z^2.$$
Since $\cR_{X,Y}^V\!=\!H_1(V;\Z)/H_{X,Y}^V$, there are no rim tori in $X\!\#_{\vph}\!Y$
in this case.
By~\eref{ESsum_e2b},
\BE{ESsum_e9}  \PD_{X,Y;(1)}^{V,A}\De= 1\!\times\!\PD_{\T^2}(\pt)
\in H^*\big(\wh{V}_{X;(1)}\!\times\!\wh{V}_{Y;(1)};\Q\big)
=  H^*\big(\C\!\times\!\T^2;\Q\big) \EE
for all $A\!\in\!H_2(X\!\#_{\vph}\!Y;\Z)$ with $|A|_V\!=\!1$. 
Let $\fs_1,\ldots,\fs_9,\ff\!\in\!H_2(X;\Z)$ denote the homology classes
of the 9~sections corresponding to the exceptional divisors and of the fiber class
and $\fs,\ff\!\in\!H_2(Y;\Z)$ denote the homology classes of the section class
and of the fiber class.
Let 
$$A_{i;d}=(\fs_i\!+\!d\ff)\!\#_{\vph}\!\fs\in H_2(X\!\#_{\vph}\!Y;\Z)\,.$$
The only decompositions $A_{i;d}\!=\!A_X\!\#_{\vph}\!A_Y$ into classes $A_X,A_Y$
with possibly nonzero GW-invariants are of the~form
$$ A_{i;d}= (\fs_i\!+\!d_1\ff)\!\#_{\vph}\!(\fs\!+\!d_2\ff)
\qquad\hbox{with}\quad d_1,d_2\!\in\!\Z^{\ge0},~d_1\!+\!d_2\!=\!d.$$
Thus, the refined invariance property for GW-invariants of~\cite{IPsum},
the decomposition~\eref{ESsum_e9}, and dimensional considerations give
\BE{ESsum_e15}
\GW_{g,A_{i;d}}^{X\#_{\vph}Y}\big(\pt^g\big)
=\sum_{\begin{subarray}{c}d_1,d_2\in\Z^{\ge0}\\ d_1+d_2=d\end{subarray}}\!\!\!\!\!
\wt\GW_{g-1,\fs_i+d_1\ff;(1)}^{\wh\P^2_9,F}\big(\pt^{g-1};1\big)
\wt\GW_{1,\fs+d_2\ff;(1)}^{\P^1\times\T^2,F}\big(\pt;\PD_{\T^2}(\pt)\big),\EE
where $\pt^g$ denotes $g$~point constraints (pullback of $\PD_{(X\#_{\vph}Y)^g}(\pt)$
by the evaluation map at $g$ absolute points) and 
$\wt\GW$ denotes the IP-counts with the relative constraints 
$$1\in H^0\big(\wh{V}_{X;(1)};\Q)=H^0\big(\C;\Q) \qquad\hbox{and}\qquad
\PD_{\T^2}(\pt) \in H^2\big(\wh{V}_{Y;(1)};\Q)=H^2\big(\T^2;\Q).$$
Since the first class above is the pullback of the cohomology class~1 on~$\T^2$ 
by the covering map, \eref{ESsum_e15} reduces~to
\BE{ESsum_e15b}
\GW_{g,A_{i;d}}^{X\#_{\vph}Y}\big(\pt^g\big)
=\sum_{\begin{subarray}{c}d_1,d_2\in\Z^{\ge0}\\ d_1+d_2=d\end{subarray}}\!\!\!\!\!
\GW_{g-1,\fs_i+d_1\ff;(1)}^{\wh\P^2_9,F}\big(\pt^{g-1};1\big)
\GW_{1,\fs+d_2\ff;(1)}^{\P^1\times\T^2,F}\big(\pt;\PD_{\T^2}(\pt)\big),\EE
with the standard relative GW-invariants on the right-hand side above.
Since $\cR_{X,Y}^V\!=\!\{0\}$, the standard symplectic sum formulas of \cite{LR,Jun2} 
leave nothing to be refined in this case.
In the proof of Lemma~\ref{RES_lmm3}, \eref{ESsum_e15b} is deduced from~\eref{RESsplit_e2}
and~\eref{RelGWvan_e}.
The last statement is a consequence of~\eref{evfactor_e} and $\wh{V}_{\wh\P^2_9;(1)}\!\approx\!\C$.
The same conclusions are obtained in the proof of \cite[Lemma~15.2]{IPsum}
through a simultaneous triple induction with three separate applications of the standard
symplectic sum~formula.
\end{eg}

\noindent
We next turn to connections with the flux group defined in Section~\ref{FluxAppl_subs}.
Let $X$, $Y$,  $V$, $V_1,\ldots,V_N$, $\fc$, and~$\vph$ be as in Proposition~\ref{RimToriAct_prp2}.
For each $r\!=\!1,\ldots,N$, define
\BE{FluxXYVdfn_e}\Flux(V_r)_{X,Y}=\fR_{X,Y}^V\big(\Flux(V_r)_{H_{X,Y}^V}\big)
\subset \cR_{X,Y}^V \subset H_{\fc}(X\!\#_{\vph}Y;\Z).\EE
In particular,
$$\Flux(V_r)_{X,Y}=\io_{X-V*}^{X\#_VY}\big(\De_X^V\big(\Flux(V_r)\big)\big)
=\io_{Y-V*}^{X\#_VY}\big(\De_Y^V\big(\Flux(V_r)\big)\big).$$
Since $\fR_{X,Y}^V$ is an isomorphism, the next statement follows immediately 
from~\eref{gAdfn_e} and Lemma~\ref{DiagFlux_lmm}.

\begin{crl}\label{DiagFlux_crl}
Suppose $X$, $Y$, $V$, $\vph$, 
$(A_X,A_Y)$, and $\bs_r\!\in\!\Z_{\pm}^{\,\ell_r}$ are as in Proposition~\ref{RimToriAct_prp2}.
If 
\BE{DiagFluxcrl_e}
A_1,A_2 \in A_X\!\#_{\vph}\!A_Y \quad\hbox{and}\quad
A_1\!-\!A_2 \in \bigoplus_{r=1}^N\gcd(\bs_r)\Flux(V_r)_{X,Y}\subset H_{\fc}(X\!\#_{\vph}Y;\Z),\EE
then
$$\PD_{X,Y;\bs_1\ldots\bs_N}^{V,A_1}\De=\PD_{X,Y;\bs_1\ldots\bs_N}^{V,A_2}\De
\in 
H^*\big(\wh{V}_{X;\bs_1\ldots\bs_N}\!\times\!\wh{V}_{Y;\bs_1\ldots\bs_N};\Q\big)\,.$$
\end{crl}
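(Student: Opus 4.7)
The plan is to deduce the corollary directly from Lemma~\ref{DiagFlux_lmm}, applied with $H_1=H_X^V$, $H_2=H_Y^V$, $H_{12}=H_{X,Y}^V$; these inclusions $H_X^V,H_Y^V\subset H_{X,Y}^V$ hold because $H_{X,Y}^V$ is by definition the preimage of $\ov{H}_{X,Y}^V$ under the quotient $H_1(V;\Z)\!\to\!H_1(V;\Z)/(H_X^V\!+\!H_Y^V)$ and thus contains the kernel of that quotient.

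First I would identify each class $\PD_{X,Y;\bs_1\ldots\bs_N}^{V,A_i}\De$ with an instance of the class $\PD_{H_X^V,H_Y^V;\bs_1\ldots\bs_N}^{\wt\eta_i}\De$ produced by the Lemma. By~\eref{cHXYVsplit_e} and~\eref{cHclass_e} this amounts to choosing $\wt\eta_i\in\cR_{H_{X,Y}^V}$; although the precise value of~$\wt\eta_i$ depends on the base-point data $\bff_X^{\bu},\bff_Y^{\bu}$ used to define $g_{A_X,A_Y}$ in~\eref{gAdfn_e}, unwinding that definition via the factorization $\wt\Psi_{X,Y}^V=\fR_{X,Y}^V\!\circ\!\wt\Psi_{H_X^V,H_Y^V}^{H_{X,Y}^V}$ shows that the base-point shift cancels in the difference, so
\[
\wt\eta_1-\wt\eta_2 \;=\; \bigl(\fR_{X,Y}^V\bigr)^{-1}(A_1-A_2)\in\cR_{H_{X,Y}^V}.
\]

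Next I would translate the flux hypothesis across the isomorphism $\fR_{X,Y}^V$ of~\eref{cRH1diag_e}. By definition~\eref{FluxXYVdfn_e}, $\Flux(V_r)_{X,Y}=\fR_{X,Y}^V\bigl(\Flux(V_r)_{H_{X,Y}^V}\bigr)$, and since $\fR_{X,Y}^V$ is an isomorphism of $\Z$-modules the assumption in~\eref{DiagFluxcrl_e} becomes
\[
\wt\eta_1-\wt\eta_2\in\bigoplus_{r=1}^N\gcd(\bs_r)\Flux(V_r)_{H_{X,Y}^V}.
\]
Lemma~\ref{DiagFlux_lmm} then yields $\PD_{H_X^V,H_Y^V;\bs_1\ldots\bs_N}^{\wt\eta_1}\De=\PD_{H_X^V,H_Y^V;\bs_1\ldots\bs_N}^{\wt\eta_2}\De$, which is precisely the desired equality.

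There is no genuine obstacle: the argument is pure bookkeeping, with all substantive content performed in Lemma~\ref{DiagFlux_lmm}, whose proof exhibits an explicit cobordism between the two diagonal components by lifting a loop of diffeomorphisms of~$V$ realizing the flux class. The one point that warrants care is the cancellation of the base-point ambiguity from~\eref{gAdfn_e} in the difference $\wt\eta_1-\wt\eta_2$; this cancellation is what makes the statement of the corollary intrinsic and independent of the auxiliary choices made in the construction of the refined gluing degree map.
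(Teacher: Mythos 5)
Your proof is correct and matches the paper's approach: the paper dispatches this corollary with a one-line remark that it follows from~\eref{gAdfn_e} and Lemma~\ref{DiagFlux_lmm} because $\fR_{X,Y}^V$ is an isomorphism, and you have simply (and accurately) unpacked that remark — identifying $H_1,H_2,H_{12}$ with $H_X^V,H_Y^V,H_{X,Y}^V$, noting the cancellation of the base-point term in $\wt\eta_1-\wt\eta_2$, and transporting the flux hypothesis across $\fR_{X,Y}^V$ via~\eref{FluxXYVdfn_e}.
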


\subsection{Proofs of qualitative implications}
\label{SympSumPf_subs}

\noindent
In this section, we establish the assertions made in Sections~\ref{VanishAppl_subs} and~\ref{FluxAppl_subs}.
We first obtain Theorem~\ref{AbsGW_thm} and deduce 
Corollary~\ref{AbsGW_crl} from~it.
We then state and prove Theorem~\ref{EqGWs_thm}, which includes Propositions~\ref{EqGWs_prp1}
and~\ref{EqGWs_prp2} as special cases, and conclude with Corollary~\ref{EqGWs_crl}.

\begin{proof}[{\bf{\emph{Proof of Theorem~\ref{AbsGW_thm}}}}]
Let $V_1,\ldots,V_N$ be the connected components of~$V$.
Suppose $g\!\in\!\Z^{\ge0}$, $A$ is as in~\eref{AbsGW_e2}, and $\ka_{\#}$ is
a $\Phi$-admissible input for the GW-invariants of $X\!\#_V\!Y$ such that 
the corresponding GW-invariant is nonzero,
\BE{AbsGWthm_e1} \GW_{g,A}^{X\#_VY}(\ka_{\#})\neq0.\EE
In light of~\eref{GWsumIP_e}, \eref{AbsGWthm_e1} implies
that there exist an element $(A_X,A_Y)$ of~\eref{HfcXTdfn_e} and 
relative contact vectors
\hbox{$\bs_1\!\in\Z^{\ell_1},\ldots,\bs_N\!\in\Z^{\ell_N}$} such~that 
\BE{AbsGWthm_e3} A\in A_X\!\#_V\!A_Y, \qquad
\wt\GW_{g,(A_X,A_Y);\bs_1\ldots\bs_N}^{X\cup_VY}\big(\ka_{\#};\PD_{X,Y;\bs_1\ldots\bs_N}^{V,A}\De\big)\neq0,\EE
where $\wt\GW$ is an IP-count for $X\!\cup_V\!Y$ as described in Section~\ref{SympSum_subs0}.\\

\noindent
By~\eref{AbsGWthm_e3} and~\eref{AbsGW_e2}, $\ell_r\!\neq\!0$ for some $r\!=\!1,\ldots,N$.
Reordering the components of $V_1,\ldots,V_N$, we can assume that 
$$\ell_r\neq0~~~\forall~r\!=\!1,\ldots,N' \qquad\hbox{and}\qquad 
\ell_r=0~~~\forall~r\!=\!N'\!+\!1,\ldots,N,$$
for some $N'\!=\!1,\ldots,N$.
Let $W$ denote the union of the first $N'$ components of~$V$.
By \eref{IPcov_e2} and~\eref{HbsCov_e}, 
\BE{whVsplit_e}\wh{V}_{X;\bs_1\ldots\bs_N}\!\times\!\wh{V}_{Y;\bs_1\ldots\bs_N}
=\frac{\cR_X^V}{\cR_{X;\bs_1\ldots\bs_N}'^V}\times
\frac{\cR_Y}{\cR_{Y;\bs_1\ldots\bs_N}'^V}\times
\wh{W}_{X;\bs_1\ldots\bs_{N'}}'\!\times\!\wh{W}_{Y;\bs_1\ldots\bs_{N'}}'\,.\EE
Since $H_1(V_1;\Z)$ is finitely generated, the index of $\gcd(\bs_1)H_1(V_1;\Z)$
in $H_1(V_1;\Z)$ is finite.
If $V\!\subset\!X$ is virtually connected,
the cokernel of the homomorphism~\eref{VirConn_e} with $r\!=\!1$ is finite
and so the image of $\gcd(\bs_1)H_1(V_1;\Z)$ under this homomorphism is of a finite index.
Thus, the first quotient on the right-hand side of~\eref{whVsplit_e} is finite if
 $V\!\subset\!X$ is virtually connected.\\ 

\noindent
For each element $[\ga]$ in the product of the two quotients on the right-hand side of~\eref{whVsplit_e},
let
$$\PD_{X,Y;\bs_1\ldots\bs_N}^{V,A;\ga}\De\in 
H^*\big(\big\{[\ga]\big\}\!\times\!
\wh{W}_{X;\bs_1\ldots\bs_{N'}}'\!\times\!\wh{W}_{Y;\bs_1\ldots\bs_{N'}}';\Q\big)
\subset H^*\big(\wh{V}_{X;\bs_1\ldots\bs_N}\!\times\!\wh{V}_{Y;\bs_1\ldots\bs_N};\Q\big)$$
be the cohomology class obtained by restricting 
$\PD_{X,Y;\bs_1\ldots\bs_N}^{V,A}\De$ to the topological component
$$\big\{[\ga]\big\}\!\times\!\wh{W}_{X;\bs_1\ldots\bs_{N'}}'\!\times\!\wh{W}_{Y;\bs_1\ldots\bs_{N'}}'
\subset \wh{V}_{X;\bs_1\ldots\bs_N}\!\times\!\wh{V}_{Y;\bs_1\ldots\bs_N}$$
and then extending it by zero over the remaining components.
By~\eref{AbsGWthm_e3},
\BE{AbsGWthm_e9} 
\wt\GW_{g,(A_X,A_Y);\bs_1\ldots\bs_N}^{X\cup_VY}
\big(\ka_{\#};\PD_{X,Y;\bs_1\ldots\bs_N}^{V,A;\ga^*}\De\big)\neq0\EE
for some~$\ga^*$.\\

\noindent
Suppose the homology of $\wh{V}_X$ is finitely generated.
The natural projections
$$\prod_{r=1}^N\!\wh{V}_r\lra \prod_{r=1}^{N'}\!\wh{V}_r
\qquad\hbox{and}\qquad H_X^V\lra H_X^W$$
induce covering maps
$$\wh{V}_X \lra \wh{W}_X\times \prod_{r=N'+1}^N\!\!\!\!\!\!V_r\lra \prod_{r=1}^N\!V_r\,.$$
By \cite[Theorem~1]{DF}, the $\Q$-homology of the middle space above is finitely generated;
thus, so is $H_*(\wh{W}_X;\Q)$.
From \cite[Remark~5.3]{GWrelIP}, we then conclude that $H_*(\wh{W}'_{X;\bs_1\ldots\bs_{N'}};\Q)$
is finitely generated as well.
By the Kunneth formula for cohomology \cite[Corollary~60.7]{Mu2}, this implies that 
\BE{AbsGWthm_e15} \PD_{X,Y;\bs_1\ldots\bs_N}^{V,A;\ga^*}\De
=\sum_{i=1}^m\wt\ka_{X;i}\!\otimes\!\wt\ka_{Y;i}
\in H^*(\wh{W}'_{X;\bs_1\ldots\bs_{N'}}\!\times\!\wh{W}'_{Y;\bs_1\ldots\bs_{N'}};\Q)\EE
for some $\wt\ka_{X;i}\!\in\!H^*(\wh{W}'_{X;\bs_1\ldots\bs_{N'}};\Q)$ and 
$\wt\ka_{Y;i}\!\in\!H^*(\wh{W}'_{Y;\bs_1\ldots\bs_{N'}};\Q)$.
By~\eref{AbsGWthm_e9} and~\eref{AbsGWthm_e15}, 
\BE{AbsGWthm_e17}  
\Contr_{\Ga}^{A;\ga^*}\big(\ka_{\#;X},\ka_{\#;Y}\big)
\equiv \sum_{i=1}^m \wt\GW_{\Ga}^{X,V}\!\!\big(\ka_{\#;X};\wt\ka_{X;i}\big)
\,\wt\GW_{\Ga}^{Y,V}\!\!\big(\ka_{\#;Y};\wt\ka_{Y;i}\big)\neq0\EE
for some absolute insertions $\ka_{\#;X}$ for $X$ and $\ka_{\#;Y}$ for $Y$,
with $\wt\GW_{\Ga}^{X,V}$ and $\wt\GW_{\Ga}^{Y,V}$ denoting the disconnected IP-counts
associated with the moduli spaces in~\eref{ProdEval_e}.\\

\noindent
Since $\cR_{X,Y}^V$ is infinite, $H_1(V;\Z)$ contains an infinite cyclic subgroup $G\!\approx\!\Z$
on which the quotient projection
$$H_1(V;\Z)\lra \frac{H_1(V;\Z)}{H_{X,Y}^V} \stackrel{\fR_{X,Y}^V}{\approx} \cR_{X,Y}^V$$
is injective. 
In particular, all classes
$$A\!-\!\eta\equiv A- \fR_{X,Y}^V\big([\eta]_{H_{X,Y}^V}\big)\in H_2(X\!\#_VY;\Z), 
\qquad \eta\in G,$$
are distinct.
Since the first quotient on the right-hand side in~\eref{whVsplit_e} is finite, 
$G$ contains an infinite cyclic subgroup~$G_0$ so~that the deck transformation
$$\Th_{\eta}\!:\wh{V}_{X;\bs_1\ldots\bs_N}\lra \wh{V}_{X;\bs_1\ldots\bs_N}$$
maps each topological component of $\wh{V}_{X;\bs_1\ldots\bs_N}$ to itself
whenever $\eta\!\in\!G_0$.
In light of~\eref{gAdfn_e} and~\eref{PsiThDeg_e}, this implies that 
$$\PD_{X,Y;\bs_1\ldots\bs_N}^{V,A-\eta;\ga^*}\De
=\big\{\Th_{\eta}\!\times\!\id\big\}^*
\big(\PD_{X,Y;\bs_1\ldots\bs_N}^{V,A;\ga^*}\De\big)
=\sum_{i=1}^m \big(\Th_{\eta}^*\wt\ka_{X;i}\big)\!\otimes\!\wt\ka_{Y;i}$$
and that the $(\Ga,\ga^*,\ka_{\#;X},\ka_{\#;Y})$ contribution to 
$\GW_{g,A-\eta}^{X\#_VY}(\ka_{\#})$ is given~by
\BE{AbsGWthm_e21} 
\Contr_{\Ga}^{A-\eta;\ga^*}\big(\ka_{\#;X},\ka_{\#;Y}\big)
\equiv \sum_{i=1}^m \wt\GW_{\Ga}^{X,V}\!\!\big(\ka_{\#;X};\Th_{\eta}^*\wt\ka_{X;i}\big)
\,\wt\GW_{\Ga}^{Y,V}\!\!\big(\ka_{\#;Y};\wt\ka_{Y;i}\big).\EE
Since $H^*(\wh{W}_{X;\bs_1\ldots\bs_{N'}};\Q)$ is finitely generated and
$\wt\GW_{\Ga}^{X,V}$ is linear in its (relative) inputs,
\eref{AbsGWthm_e17} implies that infinitely many of the numbers~\eref{AbsGWthm_e21}
with $\eta\!\in\!G_0$ are nonzero
(if the dimension of $H^*(\wh{W}_{X;\bs_1\ldots\bs_{N'}};\Q)$ is~$d$,
every set of~$d$ consecutive numbers~\eref{AbsGWthm_e21} contains at least one nonzero number).
However, this contradicts Gromov's Compactness, since all classes $A\!-\!\eta$ have 
the same symplectic energy.
Therefore, \eref{AbsGWthm_e1} cannot hold.
\end{proof}

\begin{rmk}\label{AbsGW_rmk}
Let $W$ be the union of the first $N'\!\le\!N$ connected components of~$V$.
The conclusion of Theorem~\ref{AbsGW_rmk} remains valid if
$$A\in H_2(X\!\#_V\!Y;\Z) - \io_{X-V*}^{X\#_VY}\big(\Eff_{\om_X}(X,W)\big)
- \io_{Y-V*}^{X\#_VY}\big(\Eff_{\om_Y}(Y,V)\big)$$
and the cokernel of the homomorphism~\eref{VirConn_e} is surjective for every $r\!=\!1,\ldots,N'$.
\end{rmk}

\begin{proof}[{\bf{\emph{Proof of Corollary~\ref{AbsGW_crl}}}}]
Let
$$A=[\T^2\!\times\!\pt]\in H_2(\T^2\!\times\!F).$$
The moduli space of  genus~1 degree~$A$ stable morphisms for a product almost complex structure
on~$\T^2\!\times\!F$ and its obstruction bundle are described~by
$$\ov\fM_1(\T^2\!\times\!F;A)\approx F \qquad\hbox{and}\qquad
\Obs\approx\cH_{\T^2}^{0,1}\!\otimes\!TF,$$
where $\cH_{\T^2}^{0,1}$ is the space of harmonic $(0,1)$-forms on~$\T^2$. 
Thus, the genus~1 degree~$A$ GW-invariant of $\T^2\!\times\!F$ is 
\BE{AbsGWcrl_e3} \GW_{1,A}^{\T^2\times F}()=\blr{TF,F}=\chi(F).\EE
On the other hand,  $\T^2\!\times\!F$ is 
the symplectic sum of $\P^1\!\times\!F$ with itself along $V\!=\!\{0,\i\}\!\times\!F$ 
with respect to the canonical isomorphism~\eref{cNpair_e} and
$$A\not\in \io_{X-V*}^{X\#_VY}\big(H_2(X\!-\!V;\Z)\big)+\io_{Y-V*}^{X\#_VY}\big(H_2(Y\!-\!V;\Z)\big),$$
where $X,Y$ are the two copies of  $\P^1\!\times\!F$.
By \cite[Example~3.6]{GWrelIP}, 
the homomorphism~\eref{VirConn_e} is surjective for $r\!=\!1,2$ and so $V\!\subset\!X$ 
is virtually connected.
By \cite[Example~6.2]{GWrelIP}, 
$$\wh{V}_X= \wh{F}\!\times\!\wh{F}/H_1(V;\Z), \qquad
\big(\wh{x}_1,\wh{x}_2\big)\sim\big(\ga\!\cdot\!\wh{x}_1,\ga\!\cdot\!\wh{x}_2\big),$$
where $\wh{F}\!\lra\!F$ is the maximal abelian cover.
By Serre's Spectral Sequence (e.g.~Theorem~9.2.1, 9.2.17, or 9.3.1 in \cite{Sp}
applied with $\Z_2$-coefficients in the last two cases) for the fiber bundle
$$\wh{F}\lra  \wh{V}_X\lra F,$$
the $\Q$-cohomology of $\wh{V}_X$ is finitely generated if this is the case for 
$\Q$-cohomology of~$\wh{F}$.
By \cite[Example~4.7]{GWrelIP},
$$\cR_{X,Y}^V\approx H_1(F;\Z);$$
thus, $\cR_{X,Y}^V$ is infinite if $H_1(F;\Q)\!\neq\!0$.
Combining these observations with Theorem~\ref{AbsGW_thm}, we conclude that
$$ \GW_{1,A}^{\T^2\times F}()=0$$
if $H_1(F;\Q)\!\neq\!0$ and  $H_*(\wh{F};\Q)$ is finitely generated.
Comparing with~\eref{AbsGWcrl_e3}, we conclude that the latter is not the case if $\chi(F)\!\neq\!0$.
\end{proof}

\noindent
We next turn to the assertions made in Section~\ref{FluxAppl_subs}.
With $X$, $Y$, $V$, and $\Phi$ as in Propositions~\ref{EqGWs_prp1} 
and~\ref{EqGWs_prp2}, let  
$$\Flux(V_r)_{X,Y}\subset \cR_{X,Y}^V\subset H_2(X\!\#_V\!Y;\Z)$$
be as in the $\fc\!=\!2$ case of~\eref{FluxXYVdfn_e}.

\begin{thm}\label{EqGWs_thm}
Let $(X,\om_X)$ and $(Y,\om_Y)$ be compact symplectic manifolds,
$V\!\subset\!X,Y$ be a common compact symplectic divisor with topological
components $V_1,\ldots,V_N$, 
and $\Phi$ be an isomorphism of complex line bundles as in~\eref{cNpair_e}.
Suppose $\cN_X{V_r}\!\approx\!V_r\!\times\!\C$ for all $r\!=\!1,\ldots,N'$
and some $N'\!\le\!N$.
If 
\BE{EqGWsthm_e}A_1,A_2 \in H_2(X\!\#_V\!Y;\Z) \qquad\hbox{and}\qquad 
A_1\!-\!A_2\in \bigoplus_{r=1}^{N'} |A_1|_{V_r}\Flux(V_r)_{X,Y},\EE
then the GW-invariants of $X\!\#_V\!Y$ of degrees~$A_1$ and~$A_2$ with $\Phi$-admissible 
inputs are the same.
\end{thm}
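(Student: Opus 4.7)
The strategy is to compare the two GW-invariants term by term using the Refined Invariance Property \eref{GWsumIP_e} and Corollary~\ref{DiagFlux_crl}. Since $\Flux(V_r)_{X,Y}\subset\cR_{X,Y}^V$, the hypothesis \eref{EqGWsthm_e} gives $A_1-A_2\in\cR_{X,Y}^V$, so $A_1$ and $A_2$ lie in the same coset $A_X\#_V A_Y$, and the index set of pairs $(A_X,A_Y)$ and tuples $\bs_1,\ldots,\bs_N$ appearing on the right-hand side of \eref{GWsumIP_e} is the same for $A=A_1$ and $A=A_2$. It then suffices to prove that for each such summand
$$\PD_{X,Y;\bs_1\ldots\bs_N}^{V,A_1}\De=\PD_{X,Y;\bs_1\ldots\bs_N}^{V,A_2}\De\in H^*\big(\wh V_{X;\bs_1\ldots\bs_N}\!\times\!\wh V_{Y;\bs_1\ldots\bs_N};\Q\big).$$

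This is exactly the conclusion of Corollary~\ref{DiagFlux_crl}, whose criterion is $A_1-A_2\in\bigoplus_{r=1}^N\gcd(\bs_r)\Flux(V_r)_{X,Y}$. I would verify this by decomposing $A_1-A_2$ as prescribed by \eref{EqGWsthm_e}: it is a sum of elements of $|A_1|_{V_r}\Flux(V_r)_{X,Y}$ for $r\le N'$, with zero contribution for $r>N'$. For any $(A_X,A_Y)$ and $\bs$ whose associated relative moduli space is non-empty, the contact condition forces $\sum_{i=1}^{\ell_r}s_{r;i}=A_X\cdot_X V_r=|A_1|_{V_r}$ for each $r\le N'$ (well-defined on the $\cR_{X,Y}^V$-coset thanks to $\cN_XV_r\approx V_r\times\C$). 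Hence $\gcd(\bs_r)$ divides $|A_1|_{V_r}$, so $|A_1|_{V_r}\Flux(V_r)_{X,Y}\subseteq\gcd(\bs_r)\Flux(V_r)_{X,Y}$; for $r>N'$ the $r$-th piece of $A_1-A_2$ is zero and the required inclusion is trivial, so Corollary~\ref{DiagFlux_crl} applies.

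The main technical subtlety, and the point I expect to require the most care, is the edge case $\ell_r=0$ with $r\le N'$: the convention $\gcd(\bs_r)=0$ then collapses $\gcd(\bs_r)\Flux(V_r)_{X,Y}$ to $\{0\}$. In such summands, however, the relative moduli space is empty unless $A_X\cdot_X V_r=0$, in which case $|A_1|_{V_r}=0$ and the $r$-th piece of $A_1-A_2$ is already trivial, compatible with the collapsed subgroup. Granting this bookkeeping, the equality of the PD classes for every contributing summand, combined with the agreement of the outer index sets, yields $\GW_{g,A_1}^{X\#_V Y}(\ka)=\GW_{g,A_2}^{X\#_V Y}(\ka)$ for every $\Phi$-admissible input~$\ka$.
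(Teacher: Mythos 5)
Your argument is correct and follows essentially the same route as the paper's proof: decompose both GW-invariants via the refined invariance formula \eref{GWsumIP_e}, note that the outer index sets coincide since $A_1$ and $A_2$ lie in the same coset, observe that the contact condition gives $\gcd(\bs_r)\mid|A_1|_{V_r}$ for $r\le N'$ on the nonzero summands, and conclude by Corollary~\ref{DiagFlux_crl}. Your extra remark handling the $\ell_r=0$ edge case (where $\gcd(\bs_r)=0$ but then $|A_1|_{V_r}=0$ as well) is sound and makes explicit a point the paper's proof glosses over.
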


\begin{proof}
By~\eref{GWsumIP_e}, 
$\GW_{g,A_1}^{X\#_{\vph}Y}(\ka_{\#})$ and 
$\GW_{g,A_2}^{X\#_{\vph}Y}(\ka_{\#})$ are sums of 
the IP-counts for $X\!\cup_V\!Y$ of the~form
\BE{EqGWsthm_e1}
\wt\GW_{g,(A_X,A_Y);\bs_1\ldots\bs_N}^{X\cup_VY}\big(\ka_{\#};\PD_{X,Y;\bs_1\ldots\bs_N}^{V,A_1}\De\big)
~~\hbox{and}~~
\wt\GW_{g,(A_X,A_Y);\bs_1\ldots\bs_N}^{X\cup_VY}\big(\ka_{\#};\PD_{X,Y;\bs_1\ldots\bs_N}^{V,A_2}\De\big),\EE
respectively.
These sums are taken over all $(A_X,A_Y)$ in~\eref{HfcXTdfn_e}
and $\bs_1\!\in\!\Z^{\ell_1},\ldots,\bs_N\!\in\!\Z^{\ell_N}$ such~that
\BE{EqGWsthm_e3} A_1,A_2\in A_X\!\#_V\!A_Y, \quad
|\bs_r|=A_X\cdot_X\!V=A_Y\cdot_Y\!V=|A_1|_{V_r}=|A_2|_{V_r}~~\forall~r\!=\!1,\ldots,N'\,.\EE
By the second assumptions in~\eref{EqGWsthm_e} and~\eref{EqGWsthm_e3}, 
the second assumption in~\eref{DiagFluxcrl_e} is satisfied.
By Corollary~\ref{DiagFlux_crl}, the two numbers in~\eref{EqGWsthm_e1} are thus the same
for all relevant $(A_X,A_Y)$ and $\bs_1\!\in\!\Z^{\ell_1},\ldots,\bs_N\!\in\!\Z^{\ell_N}$.
\end{proof}

\noindent
Under the assumptions of Proposition~\ref{EqGWs_prp1}, $N,N'\!=\!1$ and
$\Flux(V_1)_{X,Y}\!=\!\cR_{X,Y}^V$.
Thus, the second condition in~\eref{EqGWsthm_e} reduces to 
the second condition in~\eref{EqGWsprp1_e} in this case.
Under the assumptions of Proposition~\ref{EqGWs_prp2}, 
$$|A_1|_{V_r}\Flux(V_r)_{X,Y}=\Flux(V_r)_{X,Y}~~\forall~r\!=\!1,\ldots,N, \qquad
 \bigoplus_{r=1}^N\Flux(V_r)_{X,Y}=\cR_{X,Y}^V.$$
Thus, the second condition in~\eref{EqGWsthm_e} reduces to 
the second condition in~\eref{EqGWsprp2_e} in this case.
Combining Theorem~\ref{EqGWs_thm} with Gromov's Compactness,
we obtain the following statement.

\begin{crl}\label{EqGWs_crl}
Let $(X,\om_X)$ and $(Y,\om_Y)$ be compact symplectic manifolds,
$V\!\subset\!X,Y$ be a common compact symplectic divisor with topological
components $V_1,\ldots,V_N$, 
and $\Phi$ be an isomorphism of complex line bundles as in~\eref{cNpair_e}.
Suppose $\cN_X{V_{r^*}}\!\approx\!V_{r^*}\!\times\!\C$ and 
$\Flux(V_{r^*})_{X,Y}$ is infinite for some $r^*\!=\!1,\ldots,N$.
If $A\!\in\!H_2(X\!\#_V\!Y;\Z)$ is such that $|A|_{V_{r^*}}\!\neq\!0$,
then all degree~$A$ GW-invariants of $X\!\#_V\!Y$ with $\Phi$-admissible inputs
%with intrinsic moduli space insertions and pullbacks $\vph$-admissible cohomology classes 
vanish.
\end{crl}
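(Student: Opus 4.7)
The plan is to argue by contradiction along the lines of the final paragraph of the proof of Theorem~\ref{AbsGW_thm}. After renumbering the components of~$V$, assume $r^*\!=\!1$, and suppose that for some $g\!\in\!\Z^{\ge0}$ and some $\Phi$-admissible insertion~$\ka_\#$ one has $\GW_{g,A}^{X\#_VY}(\ka_\#)\!\neq\!0$.

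The first step would be to feed this into Theorem~\ref{EqGWs_thm}, applied with $N'\!=\!1$, $A_1\!=\!A$, and $A_2\!=\!A\!-\!\eta$ for each $\eta\!\in\!|A|_{V_1}\Flux(V_1)_{X,Y}$. Since $\eta\!\in\!\cR_{X,Y}^V$ is a rim torus, the classes $A$ and $A\!-\!\eta$ lie in the same coset $A_X\!\#_V\!A_Y$ of~$\cR_{X,Y}^V$ in $H_2(X\!\#_V\!Y;\Z)$; in particular, $|A\!-\!\eta|_{V_1}\!=\!|A|_{V_1}$, so the hypothesis of Theorem~\ref{EqGWs_thm} is met and it produces
$$\GW_{g,A-\eta}^{X\#_VY}(\ka_\#)=\GW_{g,A}^{X\#_VY}(\ka_\#)\neq0
\qquad\forall\,\eta\!\in\!|A|_{V_1}\Flux(V_1)_{X,Y}.$$

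The next step is to verify that $|A|_{V_1}\Flux(V_1)_{X,Y}$ is infinite and that distinct~$\eta$ give distinct classes $A\!-\!\eta$. Since $H_2(X\!\#_V\!Y;\Z)$ is finitely generated, the infinitude of $\Flux(V_1)_{X,Y}$ forces it to contain an element of infinite order; multiplication by the nonzero integer $|A|_{V_1}$ preserves this, so $|A|_{V_1}\Flux(V_1)_{X,Y}$ is still infinite, and the map $\eta\!\mapsto\!A\!-\!\eta$ is injective.

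The last ingredient, already invoked implicitly at the end of the proof of Theorem~\ref{AbsGW_thm}, is that every $\eta\!\in\!\cR_{X,Y}^V$ has $\om_\#$-area zero, so all the classes $A\!-\!\eta$ carry the common symplectic energy $\om_\#(A)$. This follows from the local Darboux model $\om_\#\!=\!\om|_V\!+\!r\,dr\!\wedge\!d\th$ near the neck, on which a representative rim torus sits at a fixed normal radius~$r$ and hence has $dr\!=\!0$. Producing infinitely many degrees of bounded $\om_\#$-energy with nonzero genus-$g$ GW-invariants then contradicts Gromov's Compactness Theorem and completes the argument. The plan presents no serious obstacle once Theorem~\ref{EqGWs_thm} is in hand; the only point requiring any care is the zero-area assertion for rim tori.
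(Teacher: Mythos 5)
Your proposal is correct and follows the paper's own route: the paper's entire proof is the single sentence "Combining Theorem~\ref{EqGWs_thm} with Gromov's Compactness, we obtain the following statement," and your argument simply fills in the details of that combination. Each step checks out: the application of Theorem~\ref{EqGWs_thm} with $N'=1$, the observation that an infinite subgroup of the finitely generated group $H_2(X\#_V Y;\Z)$ must contain an element of infinite order (and that multiplication by the nonzero integer $|A|_{V_1}$ preserves it), and the zero $\om_\#$-area of rim tori (which the paper itself uses without justification at the end of the proof of Theorem~\ref{AbsGW_thm}).
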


\section{The convolution product on covers}
\label{ConvProd_sec}

\noindent
In Section~\ref{AbConv_subs}, we describe a convolution-like operation on abelian covers
that takes a product of covers for $V'\!\cup\!V$ and for $V\!\cup\!V''$
to a cover for $V'\!\cup\!V''$.
In the symplectic sum context, this operation takes a product of rim tori covers for
$(X,V'\!\cup\!V)$ and $(Y,V\!\cup\!V'')$ to a rim tori cover for $(X\!\#_V\!Y,V'\!\cup\!V'')$;
see Section~\ref{RTconv_subs}.
This operation is needed to make sense of \cite[(10.8)]{IPsum}, 
which expresses GW-invariants of $(X\!\#_V\!Y,V'\!\cup\!V'')$
in terms of GW-invariants of $(X,V'\!\cup\!V)$ and $(Y,V\!\cup\!V'')$.
Throughout Sections~\ref{AbConv_subs} and~\ref{RTconv_subs}, 
we let $H_1(V)\!=\!H_1(V;\Z)$ for any topological space~$V$.
We continue with the notation of Section~\ref{review_sec}.

\subsection{Abelian covers}
\label{AbConv_subs}

\noindent
Let $V,V',V''$ be topological spaces,
$$W=V'\!\sqcup\!V'', \qquad W'=V'\!\sqcup\!V, \qquad W''=V\!\sqcup\!V''\,,$$
and $H_{\De}$ be the diagonal submodule of $H_1(V)\!\oplus\!H_1(V)$.
Denote by
\begin{gather*}
\io\!: H_1(V')\!\oplus\!H_1(V'')\lra 
H_1(V')\!\oplus\!H_1(V)\!\oplus\!H_1(V)\!\oplus\!H_1(V''),\\
\pi_V\!:  H_1(V')\!\oplus\!H_1(V)\lra H_1(V), \quad
\pi_V\!:  H_1(V)\!\oplus\!H_1(V'')\lra H_1(V),\\
\pi_V\!: H_1(V')\!\oplus\!H_1(V)\!\oplus\!H_1(V)\!\oplus\!H_1(V'')\lra
H_1(V)\!\oplus\!H_1(V)
\end{gather*}
the canonical inclusion and projections.\\

\noindent
Fix submodules 
\begin{gather*}
H_1,H_2\subset H_1(V), \quad \obu{H}_{12}\subset H_1(V)\!\oplus\!H_1(V), \quad
\ori{H}_{12}\subset H_1(V')\!\oplus\!H_1(V''),\\
\wt{H}_1\subset  H_1(V')\!\oplus\!H_1(V), \quad 
\wt{H}_2\subset  H_1(V)\!\oplus\!H_1(V''),\quad
\wt{H}_{12}\subset H_1(V')\!\oplus\!H_1(V)\!\oplus\!H_1(V)\!\oplus\!H_1(V''), 
\end{gather*}
such that 
\begin{gather}
\label{H1H2cond_e1}
H_1\!\oplus\!H_2,H_{\De}\subset \obu{H}_{12}, \quad
\wt{H}_1\!\oplus\!\wt{H}_2,0\!\oplus\!H_{\De}\!\oplus\!0\subset \wt{H}_{12}, \\
\label{H1H2cond_e2}
\io(\ori{H}_{12})\supset \wt{H}_{12}\cap\ker\pi_V\,, \quad
\pi_V(\wt{H}_1)\subset H_1,\quad \pi_V(\wt{H}_2)\subset H_2,\quad 
\pi_V(\wt{H}_{12})\supset \obu{H}_{12}.
\end{gather}
Choose a collection of representatives
\BE{g12coll_e}\big\{\obu\ga_j\big\}\subset H_1(V)\!\oplus\!H_1(V) \EE
for the cosets of $\obu{H}_{12}$. \\

\noindent
Let $V_1,\ldots,V_N$, $V_1',\ldots,V_{N'}'$, and $V_1'',\ldots,V_{N''}''$
be the topological components of~$V$, $V'$, and~$V''$, respectively, and 
\BE{bstuples_e}\bs\equiv(s_{r;i})_{i\le\ell_r,r\le N}\in\prod_{r=1}^N\!\Z_{\pm}^{\ell_r}\,, ~~
\bs'\equiv(s_{r;i}')_{i\le\ell_r',r\le N'}\in\prod_{r=1}^{N'}\!\Z_{\pm}^{\ell_r'}\,, ~~
\bs''\equiv(s_{r;i}'')_{i\le\ell_r'',r\le N''}\in\prod_{r=1}^{N''}\!\Z_{\pm}^{\ell_r''}\,.\EE
Denote by 
$$\pi_{\bs}\!: W_{\bs'\bs}'\!\equiv\!V_{\bs'}'\!\times\!V_{\bs}\lra V_{\bs}
\qquad\hbox{and}\qquad
\pi_{\bs}\!: W_{\bs\bs''}''\!\equiv\!V_{\bs}\!\times\!V_{\bs''}''\lra V_{\bs}$$
the projection maps. Let
$$\wh{W}_{\wt{H}_1,\wt{H}_2;\bs'\bs\bs''} =
 \wh{W'}_{\wt{H}_1;\bs'\bs}\!\times_{V_{\bs}}\!\wh{W''}_{\wt{H}_2;\bs\bs''}
\equiv
\big\{\pi_{\bs}\!\circ\!\pi_{\wt{H}_1;\bs'\bs}\!\times\!
\pi_{\bs}\!\circ\!\pi_{\wt{H}_2;\bs\bs''}\big\}^{-1}
\big(\De_{\bs}^V\big).$$
We will describe a continuous map 
\BE{Xidfn_e0}\Xi_{\wt{H}_1,\wt{H}_2}^{H_{12},\wt{H}_{12}}\!:
\wh{W}_{\wt{H}_1,\wt{H}_2;\bs'\bs\bs''}\lra \wh{W}_{\ori{H}_{12};\bs'\bs''}\EE
so that the diagram in Figure~\ref{ConvProd_fig} commutes.\\

\begin{figure}
$$\xymatrix{\wh{V}_{\wt{H}_1,\wt{H}_2;\bs}  \ar[d]|{\pi_{H_1;\bs}\times\pi_{H_2;\bs}}&
&\ar[ll]_{\wt{q}_{\bs}^{\De}}\wh{W}_{\wt{H}_1,\wt{H}_2;\bs'\bs\bs''} 
\ar[rr]^{\Xi_{\wt{H}_1,\wt{H}_2}^{H_{12},\wt{H}_{12}}} 
\ar[d]|{\pi_{\wt{H}_1;\bs'\bs}\times\pi_{\wt{H}_2;\bs\bs''}}
&&  \wh{W}_{\ori{H}_{12};\bs'\bs''} \ar[d]|{\pi_{\ori{H}_{12};\bs'\bs''}} \\
\De_{\bs}&&\ar[ll]_{\pi_{\bs}\times\pi_{\bs}}
V_{\bs'}'\!\times\!\De_{\bs}^V\!\times\!V_{\bs''}'' \ar[rr]&& V_{\bs'}'\!\times\!V_{\bs''}''}$$
\caption{The convolution product and related morphisms.}
\label{ConvProd_fig}
\end{figure}

\noindent
Choose collections 
\begin{gather}
\label{wtgaj1j2_e1} \{(\ga_{j_1}',\ga_{1;j_1})\}\subset H_1(V')\!\oplus\!H_1(V), 
\quad \{(\ga_{2;j_2},\ga_{j_2}'')\}\subset H_1(V)\!\oplus\!H_1(V''),\\
\label{wtgaj1j2_e2}
 \big\{\ori\ga_j\big\}\subset H_1(V')\!\oplus\!H_1(V'')
\end{gather}
of representatives for the elements of 
$$\frac{\cR_{\wt{H}_1}}{\cR_{\wt{H}_1;\bs'\bs}'}\,,\qquad 
\frac{\cR_{\wt{H}_2}}{\cR_{\wt{H}_2;\bs\bs''}'}\,,  \qquad 
\frac{\cR_{\ori{H}_{12}}}{\cR_{\ori{H}_{12};\bs'\bs''}'},$$
respectively.
Suppose
$$\x\equiv \big(\big([\ga_{j_1}',\ga_{1;j_1}]_{\wt{H}_1;\bs'\bs},
[\wh{x}',\ga\!\cdot\!\wh{x}]_{\wt{H}_1}\big),
\big([\ga_{2;j_2},\ga_{j_2}'']_{\wt{H}_2;\bs\bs''},[\wh{x},\wh{x}'']_{\wt{H}_2}\big)\big)$$
for some $\ga\!\in\!H_1(V_{\bs})$.
Let $\obu{h}\!\in\!\obu{H}_{12}$ and  $\obu\ga$ be a coset representative 
from the collection in~\eref{g12coll_e} such~that 
\BE{gasplit_e}
\big(\ga_{1;j_1},\ga_{2;j_2}\big)+
\big(\Phi_{V;\bs}(\ga),0\big)=\obu\ga+\obu{h}\,.\EE
By the last assumption in~\eref{H1H2cond_e2},
\BE{wtH12_e} \big(h',\obu{h},h''\big)\in \wt{H}_{12} \EE
for some $(h',h'')$. 
Let $(\ga',\ga'')\!\in\!H_1(V_{\bs'}')\!\oplus\!H_1(V_{\bs''}'')$,
$\ori{h}\!\in\!\ori{H}_{12}$, and  $\ori\ga$ be a coset representative 
from the  collection in~\eref{wtgaj1j2_e2} such~that 
\BE{gasplit_e2}
\big(\ga_{j_1}'\!-\!h',\ga_{j_2}''\!-\!h''\big)=
\big(\Phi_{V';\bs'}(\ga'),\Phi_{V'';\bs''}(\ga'')\big)+\ori\ga+\ori{h}\,.\EE
We set 
\BE{Xidfn_e}\Xi_{\wt{H}_1,\wt{H}_2}^{\obu{H}_{12},\wt{H}_{12}}\big(\x\big)
=\big([\ori\ga]_{\ori{H}_{12};\bs'\bs''},
[\ga'\!\cdot\!\wh{x}',\ga''\!\cdot\!\wh{x}'']_{\ori{H}_{12}}\big).\EE
Below we show that the right-hand side of the above expression depends only on~$\x$.\\

\noindent
With the tuple~\eref{wtH12_e} fixed,  $(\ga',\ga'')$ is defined by~\eref{gasplit_e2}
up to an element of $(\ori{H}_{12})_{\bs'\bs''}$;
such an element has no effect on the right-hand side of~\eref{Xidfn_e}.
By the first assumption in~\eref{H1H2cond_e2},  \eref{wtH12_e} is determined 
by the left-hand side in~\eref{gasplit_e} up to an element of $\ori{H}_{12}$;
such an element has no effect on $(\ga',\ga'')$ in~\eref{gasplit_e2}.\\

\noindent
Suppose $(\al,\be)\!\in\!(\wt{H}_1)_{\bs'\bs}$ and so
$$\x=\big(\big([\ga_{1;j_1}',\ga_{1;j_1}]_{\wt{H}_1;\bs'\bs},
[\al\!\cdot\!\wh{x}',\be\ga\!\cdot\!\wh{x}]_{\wt{H}_1}\big),
\big([\ga_{2;j_2},\ga_{2;j_2}'']_{\wt{H}_2;\bs\bs''},[\wh{x},\wh{x}'']_{\wt{H}_2}\big)\big).$$
By the second assumption in~\eref{H1H2cond_e2}, $\Phi_{V;\bs}(\be)\!\in\!H_1$.
By the first inclusion in the first assumption in~\eref{H1H2cond_e1},
this change thus adds $(\Phi_{V;\bs}(\be),0)$ to $\obu{h}$ in~\eref{gasplit_e}. 
By the first inclusion in the second assumption in~\eref{H1H2cond_e1},
it adds $\Phi_{V';\bs'}(\al)$ to~$h'$ in~\eref{wtH12_e}
and subtracts $\al$ from~$\ga'$ in~\eref{gasplit_e2}.
Thus, \eref{Xidfn_e} becomes 
$$\Xi_{\wt{H}_1,\wt{H}_2}^{\obu{H}_{12},\wt{H}_{12}}\big(\x\big)
=\big([\ori\ga]_{\ori{H}_{12};\bs'\bs''},
[\ga'\al^{-1}\!\cdot\!(\al\!\cdot\!\wh{x}'),\ga''\!\cdot\!\wh{x}'']_{\ori{H}_{12}}\big),$$
which agrees with~\eref{Xidfn_e}.\\

\noindent
Suppose $(\al,\be)\!\in\!(\wt{H}_2)_{\bs\bs''}$ and so
$$\x=\big(\big([\ga_{1;j_1}',\ga_{1;j_1}]_{\wt{H}_1;\bs'\bs},
[\wh{x}',\ga\!\cdot\!\wh{x}]_{\wt{H}_1}\big),
\big([\ga_{2;j_2},\ga_{2;j_2}'']_{\wt{H}_2;\bs\bs''},
[\al\!\cdot\!\wh{x},\be\!\cdot\!\wh{x}'']_{\wt{H}_2}\big)\big).$$
By the third assumption in~\eref{H1H2cond_e2}, $\Phi_{V;\bs}(\al)\!\in\!H_2$.
By the first assumption in~\eref{H1H2cond_e1},
this change thus subtracts $(\Phi_{V;\bs}(\al),0)$ 
from $\obu{h}$ in~\eref{gasplit_e}. 
By the second assumption in~\eref{H1H2cond_e1},
it adds $\Phi_{V'';\bs''}(\be)$ to~$h''$ in~\eref{wtH12_e}
and subtracts $\be$ from~$\ga''$ in~\eref{gasplit_e2}.
Thus, \eref{Xidfn_e} becomes 
$$\Xi_{\wt{H}_1,\wt{H}_2}^{\obu{H}_{12},\wt{H}_{12}}\big(\x\big)
=\big([\ori\ga]_{\ori{H}_{12};\bs'\bs''},
[\ga'\!\cdot\!\wh{x}',\ga''\be^{-1}\!\cdot\!(\be\!\cdot\!\wh{x}'')]_{\ori{H}_{12}}\big),$$
which agrees with~\eref{Xidfn_e}.
It follows that the right-hand side of~\eref{Xidfn_e} depends only on~$\x$. \\

\noindent
Let $H_{12}\!\subset\!H_1(V)$ denote the image of $\obu{H}_{12}$ under the homomorphism
$$H_1(V)\!\oplus\!H_1(V)\lra H_1(V), \qquad (\ga_1,\ga_2)\lra\ga_1\!-\!\ga_2.$$
Thus, the homomorphism
\BE{H12wtH12_e}\De_{H_{12},\obu{H}_{12}}\!: \frac{H_1(V)\!\oplus\!H_1(V)}{\obu{H}_{12}} \lra 
\cR_{H_{12}}\!\equiv\!\frac{H_1(V)}{H_{12}}, \quad
\De_{H_{12},\obu{H}_{12}}\big([\ga_1,\ga_2]_{\obu{H}_{12}}\big)=
\big[\ga_1\!-\!\ga_2\big]_{H_{12}}\,,\EE
is well-defined and is an isomorphism.
By the middle two assumptions in~\eref{H1H2cond_e2}, there is a natural projection map
$$\wt\pi_{\bs}\!\times\!\wt\pi_{\bs}\!:\wh{W'}_{\wt{H}_1;\bs'\bs}\!\times\!\wh{W''}_{\wt{H}_2;\bs\bs''}
\lra \wh{V}_{H_1;\bs}\!\times\!\wh{V}_{H_2;\bs},$$
which restricts to a map
$$\wt{q}_{\bs}^{\De}\!:\wh{W}_{\wt{H}_1,\wt{H}_2;\bs'\bs\bs''}\lra \wh{V}_{H_1,H_2;\bs};$$
see \cite[(5.11)]{GWrelIP}.
If the (unparametrized) collections $\{\ga_{1;j_1}\},\{\ga_{2;j_2}\}\!\subset\!H_1(V)$ 
obtained from~\eref{wtgaj1j2_e1} are
the same as the collections~\eref{gaj1j2_e} used in the construction of~\eref{wtPsi_e},
then 
$$\wt\Psi_{H_1,H_2}^{H_{12}}\big(\wt{q}_{\bs}^{\De}(\x)\big)=
\De_{H_{12},\obu{H}_{12}}\big([\obu\ga]_{\obu{H}_{12}}\big)\in \cR_{H_{12}},$$
with $\obu\ga\!\equiv\!\obu\ga(\x)$ given by~\eref{gasplit_e}.\\

\noindent
The collections~\eref{wtgaj1j2_e1} can be chosen so that the induced collections
$\{\ga_{1;j_1}\}$ and $\{\ga_{2;j_2}\}$ contain precisely one representative for
each coset in $\cR_{H_1}/\cR_{H_1;\bs}'$ and $\cR_{H_2}/\cR_{H_2;\bs}'$ , respectively, 
if and only if the inclusions in the two middle conditions in~\eref{H1H2cond_e2} are equalities.
In such a case,
\BE{whWdiag_e}\wh{W}_{\wt{H}_1,\wt{H}_2;\bs'\bs\bs''}^{\wt\eta}\equiv 
\big\{\wt{q}_{\bs}^{\De}\big\}^{-1}\big(\{\wt\Psi_{H_1,H_2}^{H_{12}}\}^{-1}(\wt\eta)\big)
\subset \wh{W'}_{\wt{H}_1;\bs'\bs}\!\times\!\wh{W''}_{\wt{H}_2;\bs\bs''}\EE 
is a closed subset for every $\wt\eta\!\in\!\cR_{H_{12}}$.
If in addition $V,V',V''$ are oriented manifolds, then so is the subset~\eref{whWdiag_e}.
The cohomology class determined by this submanifold as in Section~\ref{DiagonSplit_subs}
satisfies
\BE{GenPD_e2}
\PD_{\wt{H}_1,\wt{H}_2;\bs'\bs\bs''}^{\wt\eta}\De
=\big\{\wt\pi_{\bs}\!\times\!\wt\pi_{\bs}\big\}^*
\big(\PD_{H_1,H_2;\bs}^{\wt\eta}\De\big)\in 
H^*\big(\wh{W'}_{\wt{H}_1;\bs'\bs}\!\times\!\wh{W''}_{\wt{H}_2;\bs\bs''};\Q\big),\EE
with $\PD_{H_1,H_2;\bs}^{\wt\eta}\De$ as in~\eref{GenPD_e}.

\begin{eg}\label{ConvProd_eg1}
Let $F$ be a  connected topological space, $V,V''\!=\!F$, $V'\!=\!\eset$,
\begin{gather*}
H_1,\ori{H}_{12},\wt{H}_1=0, \quad H_2\!=\!H_1(V), \quad 
\obu{H}_{12}\!=\!H_1(V)\!\oplus\!H_1(V), \quad
\wt{H}_2\!=\!H_{\De}\subset H_1(V)\!\oplus\!H_1(V''),\\
\wt{H}_{12}=\big\{(0,\al,\al\!+\!\be,\be)\!:\,\al,\be\!\in\!H_1(F)\big\}
\subset H_1(V')\!\oplus\!H_1(V)\!\oplus\!H_1(V)\!\oplus\!H_1(V'').
\end{gather*}
In this case, the collection~\eref{g12coll_e} consists of a single element,
which we can take $\obu\ga\!=\!0$.
Let $\bs_1\!\in\!\Z_{\pm}^{\ell_1}$, $\bs_2\!\in\!\Z_{\pm}^{\ell_2}$,
and
\BE{ConvProd_e1}\{\ga_{1;j_1}\},\{\ga_{2;j_2}\},\{\ga_{12;j_{12}}\}\subset H_1(F)\EE
be collections of representatives for the cosets of 
$\gcd(\bs_1)H_1(F)$, $\gcd(\bs_2)H_1(F)$, and $\gcd(\bs_1,\bs_2)H_1(F)$, respectively.
With identifications as in Example~\ref{S2T2_eg0}, the map~\eref{Xidfn_e} becomes
\begin{equation*}\begin{split}
\frac{H_1(F)}{\gcd(\bs_1)H_1(F)}\times
\frac{H_1(F)}{\gcd(\bs_1,\bs_2)H_1(F)}\times
\wh{F}_{0;\bs_1}'\!\times_{F_{\bs_1}}\!\wh{F}_{0;\bs_1(-\bs_2)}' &\lra  
\frac{H_1(F)}{\gcd(\bs_2)H_1(F)}\times\wh{F}_{0;\bs_2}', \\
\big([\ga_{1;j_1}]_{0;\gcd(\bs_1)},[\ga_{12;j_{12}}]_{0;\gcd(\bs_1\bs_2)},
[\ga\!\cdot\!\wh{x}]_0,[\wh{x},\wh{x}'']_0\big) &\lra 
\big([\ga_{2;j_2}]_{0;\gcd(\bs_2)},[\ga''\!\cdot\!\wh{x}'']_0\big),
\end{split}\end{equation*}
where $\ga''\!\in\!H_1(F_{\bs_2})$ and $\ga_{2;j_2}$ is an element of the second collection 
in~\eref{ConvProd_e1} such~that 
$$\Phi_{F;\bs_1}(\ga)\!+\!\ga_{1;j_1}\!-\!\ga_{12;j_{12}}
=\Phi_{F;\bs_2}(\ga'')\!+\!\ga_{2;j_2} \in H_1(F).$$
\end{eg}

\begin{eg}\label{ConvProd_eg2}
Suppose in addition that $F\!=\!\T^2$ and $\ell_1,\ell_2\!\ge\!1$.
With the identifications as in Examples~\ref{Tcov_eg} and \ref{ESsum_eg0} 
and in the second half of Example~\ref{S2T2_eg0}, 
\begin{equation*}\begin{split}
&\wh{F}_{0;\bs_1}\!\times_{F_{\bs_1}}\!\wh{F}_{0;\bs_1(-\bs_2)}\\
&\qquad=
\big\{\big(x,[z_i\!+\!s_{1;i}^{-1}x]_{i\le\ell_1},
y,\big([z_i\!+\!s_{1;i}^{-1}y]_{i\le\ell_1},[z_{2;i}\!-\!s_{2;i}^{-1}y]_{i\le\ell_2}\big)\big)\!\in\!
\C\!\times\!\T_{\bs_1}^{2(\ell_1-1)}\!\times\!\C\!\times\!\T_{\bs_1(-\bs_2)}^{2(\ell-1)}\big\},
\end{split}\end{equation*}
where $\ell\!=\!\ell_1\!+\!\ell_2$.
The map~\eref{Xidfn_e} becomes 
\BE{ConvEgMap_e}\begin{split}
&\big(x,[z_i\!+\!s_{1;i}^{-1}x]_{i\le\ell_1},
y,\big([z_i\!+\!s_{1;i}^{-1}y]_{i\le\ell_1},[z_{2;i}\!-\!s_{2;i}^{-1}y]_{i\le\ell_2}\big)\big)\\
&\hspace{1in}\lra
\Big(\ell_2^{-1}\big(\ell_1x\!-\!\ell y\big),
\big[z_{2;i}\!+\!\ell_2^{-1}s_{2;i}^{-1}\big(\ell_1x\!-\!\ell y\big)\big]_{i\le\ell_2}\Big) 
\in \C\!\times\!\T_{\bs_2}^{2(\ell_2-1)}\,.
\end{split}\EE
By \eref{GenPD_e2}, 
\BE{ConvProdeg_e3}
\PD_{0,H_{\De};()\bs_1\bs_2}^0\De
=\big\{\id\!\times\!\wt\pi_{\bs_1}\big\}^*
\big(\PD_{0,H;\bs_1}^0\De\big)\in 
H^*\big(\C\!\times\!\T_{\bs_1}^{2(\ell_1-1)}\!\times\!\C\!\times\!\T_{\bs_1(-\bs_2)}^{2(\ell-1)};
\Q\big),\EE
where $H\!=\!H_1(F)$,
$$\wt\pi_{\bs_1}\!:\C\!\times\!\T_{\bs_1(-\bs_2)}^{2(\ell-1)}\lra \T^{2\ell_1}, \qquad
\wt\pi_{\bs_1}\big(y,[z_{1;i}]_{i\le\ell_1},[z_{2;i}]_{i\le\ell_2}\big)=
\big[z_{1;i}\!-\!s_{1;i}^{-1}y\big]_{i\le\ell_1},$$
and
$$\PD_{0,H;\bs_1}^0\De \in 
H^*\big(\C\!\times\!\T_{\bs_1}^{2(\ell_1-1)}\!\times\!\T^{2\ell_1};\Q\big)$$
is described by~\eref{ESsum_e2}.
\end{eg}

\subsection{Rim tori covers}
\label{RTconv_subs}

\noindent
We begin this section by defining the rim tori convolution \cite[(10.8)]{IPsum}
as a special case of the continuous map~\eref{Xidfn_e0}; see~\eref{XiXYVdfn_e}.
We then show that its composition with the lifted evaluation morphisms~\eref{EvLift_e}
for the input divisors is compatible with lifted evaluation morphisms for the output divisor,
provided the relevant coset representatives are chosen consistently;
see Proposition~\ref{RimToriAct_prp3} and Figure~\ref{RimToriAct_fig2}.
We continue with the notation introduced at the beginning of Section~\ref{GlDeg_subs}.\\

\noindent
Throughout this section, $X$ and $Y$ denote compact oriented manifolds, $V,V'\!\subset\!X$
and $V,V''\!\subset\!Y$ are compact oriented disjoint submanifolds of codimension~$\fc$,
and
$$W=V'\!\sqcup\!V'', \qquad W'=V'\!\sqcup\!V, \qquad W''=V\!\sqcup\!V''\,.$$
Let \hbox{$\vph\!:S_XV\!\lra\!S_YV$} be an orientation-reversing
diffeomorphism commuting with the projections to~$V$ and
$X\!\#_{\vph}\!Y$ be the manifold
obtained by gluing the complements of tubular neighborhoods of~$V$ in~$X$ and~$Y$
by~$\vph$ along their common boundary.
Define
$$\wt\cR_{X\#_{\vph}Y}^W=\ker\big\{q_{\vph*}\!\circ\!\io_{X\#_{\vph}Y-W*}^{X\#_{\vph}Y}
\!:\,H_{\fc}(X\!\#_{\vph}\!Y\!-\!W;\Z)\!\lra\!H_{\fc}(X\!\cup_V\!Y;\Z)\big\},$$
where $q_{\vph}\!:X\!\#_{\vph}\!Y\!\lra\!X\!\cup_V\!Y$ is a smooth map obtained
by collapsing circles in the boundaries of the two complements;
see \cite[Section~2.2]{GWrelIP}.\\

\noindent
With $H_{\fc}(SV;\Z)_{X,Y}$ given by~\eref{Hfcdfn_e}, 
denote by $\obu{H}_{X,Y}^{SV}$ and $\wt{H}_{X,Y}^{SV}$ the images of
the homomorphisms 
\begin{equation*}\begin{split}
H_{\fc}(SV;\Z)_{X,Y}
&\stackrel{(\io_{SV*}^{X-V},-\io_{SV*}^{Y-V})}{\xra{2.5}}
\cR_X^V\!\oplus\!\cR_Y^V \stackrel{\approx}{\lra} 
\frac{H_1(V)}{H_X^V} \oplus \frac{H_1(V)}{H_Y^V} \qquad\hbox{and}\\
H_{\fc}(SV;\Z)_{X,Y}
&\stackrel{(\io_{SV*}^{X-W'},-\io_{SV*}^{Y-W''})}{\xra{2.5}}
\cR_X^{W'}\!\oplus\!\cR_Y^{W''} \stackrel{\approx}{\lra} 
\frac{H_1(V')\!\oplus\!H_1(V)}{H_X^{W'}} \oplus 
\frac{H_1(V)\!\oplus\!H_1(V'')}{H_Y^{W''}}\,,
\end{split}\end{equation*}
respectively; the second homomorphisms above are the isomorphisms of \cite[Corollary~3.2]{GWrelIP}.
Let $\obu{H}_{X,Y}^V$ and $\wt{H}_{X,Y}^V$ be the preimages of 
$\obu{H}_{X,Y}^{SV}$ and $\wt{H}_{X,Y}^{SV}$, respectively, under
the quotient projections
\begin{gather*}
H_1(V)\!\oplus\!H_1(V)\lra \frac{H_1(V)}{H_X^V} \oplus \frac{H_1(V)}{H_Y^V} 
\qquad\hbox{and}\\
H_1(V')\!\oplus\!H_1(V)\!\oplus\!H_1(V)\!\oplus\!H_1(V'')\lra
\frac{H_1(V')\!\oplus\!H_1(V)}{H_X^{W'}} \oplus 
\frac{H_1(V)\!\oplus\!H_1(V'')}{H_Y^{W''}}.
\end{gather*}
In particular,
\BE{H1VXdfn_e0}H_X^V\!\oplus\!H_Y^V,H_{\De}\subset \obu{H}_{X,Y}^V, \quad
H_X^{W'}\!\oplus\!H_Y^{W''},0\!\oplus\!H_{\De}\!\oplus\!0\subset \wt{H}_{X,Y}^V, \quad
\pi_V\big(\wt{H}_{X,Y}^V\big)=\obu{H}_{X,Y}^V\,.\EE
By \eref{H1VXdfn_e},
\BE{tiHvsH_e}\pi_V\big(H_X^{W'}\big)=H_X^V\subset H_1(V), \qquad
\pi_V\big(H_Y^{W''}\big)=H_Y^V\subset H_1(V).\EE
Define 
$$ \ori{H}_{X,Y}^V =\wt{H}_{X,Y}^V \cap H_1(V')\!\oplus\!0\!\oplus\!0\!\oplus\!H_1(V'') \,.$$
Thus, the modules
$$H_1=H_X^V, ~~ H_2=H_Y^V, ~~  \obu{H}_{12} =\obu{H}_{X,Y}^V, ~~
\ori{H}_{12} =\ori{H}_{X,Y}^V, ~~
\wt{H}_1=H_X^{W'}, ~~ \wt{H}_2=H_Y^{W''}, ~~
\wt{H}_{12}=\wt{H}_{X,Y}^V$$
satisfy the conditions~\eref{H1H2cond_e1} and~\eref{H1H2cond_e2};
all four inclusions in~\eref{H1H2cond_e2} are in fact equalities in this~case.\\

\noindent
By \cite[Proposition~4.9]{GWrelIP},  $\ori{H}_{X,Y}^V\!=\!H_{X\#_{\vph}Y}^W$ and
there is a natural isomorphism
$$\wt\fR_{X,Y}^V\!: \frac{H_1(W')\!\oplus\!H_1(W'')}{\wt{H}_{X,Y}^V}  
\lra \wt\cR_{X\#_{\vph}Y}^W \subset H_{\fc}(X\!\#_{\vph}Y\!-\!W;\Z)\,;$$
the $V',V''\!=\!\eset$ case of this isomorphism is the composition of 
the right vertical arrow in~\eref{cRH1diag_e} with~\eref{H12wtH12_e}.
Let 
\BE{gasum_e}\big\{\ga_{\#;j}\big\}\subset H_1(V)\!\oplus\!H_1(V)\EE
be a collection of coset representatives for 
$$\frac{H_1(V)\!\oplus\!H_1(V)}{\obu{H}_{X,Y}^V}\approx \cR_{X,Y}^V
\subset H_{\fc}(X\!\#_{\vph}Y;\Z)\,.$$\\

\noindent
With $\bs,\bs',\bs''$ as in~\eref{bstuples_e}, let 
$$\wh{W}_{X,Y;\bs'\bs\bs''}=\wh{W}_{H_X^{W'},H_Y^{W''};\bs'\bs\bs''}\,.$$
Choose collections 
\BE{wtgaj1j2_e2b}
\big\{\ga_{X;j_1}\big\},\big\{\ga_{Y;j_2}\big\}\subset H_1(V)
\qquad\hbox{and}\qquad
 \big\{\ori\ga_j\big\}\subset H_1(V')\!\oplus\!H_1(V'')\EE
of representatives for the elements of 
$$\frac{\cR_X^V}{\cR_{X;\bs}'^V}
\approx\frac{\cR_{H_X^V}}{\cR_{H_X^V;\bs}'}, \qquad
\frac{\cR_Y^V}{\cR_{Y;\bs}'^V}
\approx\frac{\cR_{H_Y^V}}{\cR_{H_Y^V;\bs}'}\,,\quad\hbox{and}\quad
\frac{\cR_{X\#_{\vph}Y}^W}{\cR_{X\#_{\vph}Y;\bs'\bs''}'^W}
\approx\frac{\cR_{\ori{H}_{X,Y}^V}}{\cR_{\ori{H}_{X,Y}^V;\bs'\bs''}'}.$$ 
By~\eref{tiHvsH_e},
\begin{equation*}\begin{split}
\pi_V\big(H_X^{W'}\!+\!\Im\,\Phi_{W';\bs'\bs}\big)&=
H_X^V\!+\!\Im\,\Phi_{V;\bs} \subset H_1(V), \\
\pi_V\big(H_Y^{W''}\!+\!\Im\,\Phi_{W'';\bs\bs''}\big)&=
H_Y^V\!+\!\Im\,\Phi_{V;\bs} \subset H_1(V).
\end{split}\end{equation*}
Thus, there exist collections
\BE{wtgaj1j2_e1b}
\big\{(\ga_{j_1}',\ga_{X;j_1})\big\}\subset H_1(V')\!\oplus\!H_1(V),  \quad 
\big\{(\ga_{Y;j_2},\ga_{j_2}'')\big\}\subset H_1(V)\!\oplus\!H_1(V'') \EE
of  representatives for the elements of 
$$\frac{\cR_X^{W'}}{\cR_{X;\bs'\bs}'^{W'}}
\approx\frac{\cR_{H_X^{W'}}}{\cR_{H_X^{W'};\bs'\bs}'}\qquad\hbox{and}\qquad 
\frac{\cR_Y^{W''}}{\cR_{Y;\bs\bs''}'^{W''}}
\approx\frac{\cR_{H_Y^{W''}}}{\cR_{H_Y^{W''};\bs\bs''}'}$$ 
so that the corresponding (unparametrized) collections 
$\{\ga_{X;j_1}\},\{\ga_{Y;j_2}\}\!\subset\!H_1(V)$ 
are the same as the first two collections in~\eref{wtgaj1j2_e2b}.
As described in Sections~\ref{GlDeg_subs} and~\ref{AbConv_subs},  
the collections~\eref{gasum_e}, \eref{wtgaj1j2_e2b}, and~\eref{wtgaj1j2_e1b} determine smooth~maps
\begin{gather}
\label{wtPsiXYV_e2}
\wt\Psi_{X,Y}^V\!\!\equiv\!\fR_{X,Y}^V\!\circ\!\wt\Psi_{H_X^V,H_Y^V}^{H_{X,Y}^V}\!\!: 
\wh{V}_{X,Y;\bs}\lra \cR_{X,Y}^V  \subset H_{\fc}(X\!\#_{\vph}\!Y;\Z),\\
\label{XiXYVdfn_e}
\Xi_{X,Y}^{V',V,V''}\!\equiv\!\Xi_{H_X^{W'},H_Y^{W''}}^{\obu{H}_{X,Y}^V,\wt{H}_{X,Y}^V}\!: 
\wh{W}_{X,Y;\bs'\bs\bs''} \lra \wh{W}_{X\#_{\vph}Y;\bs'\bs''}\,.
\end{gather}
We show below that the second map can be compatible with the lifted evaluation morphisms of
Proposition~\ref{RimToriAct_prp}.\\ 

\noindent
Let $\Si_X,\Si_Y$ be compact oriented $\fc$-dimensional manifolds,
$k_X,k_Y\!\in\!\Z^{\ge0}$, and $(A_X,A_Y)$ be an element of 
$H_{\fc}(X;\Z)\!\times_V\!H_{\fc}(Y;\Z)$.
Denote~by
\BE{liftev3_e}\begin{split}
&\wt\ev_X^{V'\cup V}\!:\fX_{\Si_X,k_X;\bs'\bs}^{V',V}(X,A_X)\lra \wh{W'}_{X;\bs'\bs}, \qquad
\wt\ev_Y^{V\cup V''}\!:\fX_{\Si_Y,k_Y;\bs\bs''}^{V,V''}(Y,A_Y) \lra 
\wh{W''}_{Y;\bs\bs''}, \\
&\qquad\hbox{and}\qquad
\wt\ev_{X\#_{\vph}Y}^{V'\cup V''}\!:
\fX_{\Si_X\#\Si_Y;\bs'\bs''}^{V',V''}\big(X\!\#_{\vph}\!Y,A_{\#}\big) \lra
 \wh{W}_{X\!\#_{\vph}\!Y;\bs'\bs''}
\end{split}\EE
the lifted relative evaluation morphisms of Proposition~\ref{RimToriAct_prp} for the relative pairs 
$$(X,V'\!\cup\!V), \qquad (Y,V\!\cup\!V''),\quad \hbox{and}  \quad (X\!\#_{\vph}\!Y,V'\!\cup\!V'')$$
compatible with the two collections in~\eref{wtgaj1j2_e1b} and
the last collection in~\eref{wtgaj1j2_e2b}.
Let
\BE{Xdiagdfn_e3}\begin{split}
\fX_{(\Si_X,\Si_Y),(k_X,k_Y);\bs'\bs\bs''}^{V',V,V''}\big((X,Y),(A_X,A_Y)\big)
&=\big\{\wt\ev_X^{V'\cup V}\!\times\!\wt\ev_Y^{V\cup V''}\big\}^{-1}
(\wh{W}_{X,Y;\bs'\bs\bs''})\\
&=\big\{\pi_{\bs}\!\circ\!\ev_X^{V'\cup V}\!\times\!\pi_{\bs}\!\circ\!\ev_Y^{V\cup V''}\big\}^{-1}
(\De_{\bs}^V)\,.
\end{split}\EE
Each element $(\bff_X,\bff_Y)$ of this space gives rise to a marked map
\BE{bffXY_e3}\bff_X\!\#_{\vph}\bff_Y\in  \bigsqcup_{A_{\#}\in A_X\#_{\vph}A_Y}\hspace{-.3in}
\fX_{\Si_X\#\Si_Y,k_X+k_Y;\bs'\bs''}^{V',V''}(X\!\#_{\vph}\!Y,A_{\#});\EE
see \cite[Section~2.2]{GWrelIP}.

\begin{prp}\label{RimToriAct_prp3}
Suppose $X$, $Y$, $V\!\subset\!X,Y$, $\vph$, and $(A_X,A_Y)$ are as in 
Proposition~\ref{RimToriAct_prp2},
$V'\!\subset\!X\!-\!V$ and $V''\!\subset\!Y\!-\!V$ are  compact oriented submanifolds 
of codimension~$\fc$,
and $\bs,\bs',\bs''$ are as in~\eref{bstuples_e}.
Let  $\wt\ev_X^{V'\cup V}$, $\wt\ev_Y^{V\cup V''}$, and $\wt\ev_{X\#_{\vph}Y}^{V'\cup V''}$
be the lifted evaluation morphisms as in~\eref{liftev3_e} compatible
with the two collections in~\eref{wtgaj1j2_e1b} and the last collection in~\eref{wtgaj1j2_e2b}.
Then there exists a collection 
$$\{\eta_{\#;j}\}\subset H_1(V')\!\oplus\!H_1(V'')$$
indexed as the collection in~\eref{gasum_e} with the following property.
If  $(\bff_X,\bff_Y)$ is an element of the fiber product~\eref{Xdiagdfn_e3} such~that
\BE{gAdfn_e2b}\wt\Psi_{X,Y}^V\big(\wt\ev_X^{V'\cup V}(\bff_X),\wt\ev_Y^{V\cup V''}(\bff_Y)\big)
=[\ga_{\#;j}]_{\obu{H}_{X,Y}^V} \in\cR_{X,Y}^V\,,\EE
then
\BE{gAdfn_e2} 
\Xi_{X,Y}^{V',V,V''}\big(\wt\ev_X^{V'\cup V}(\bff_X),\wt\ev_Y^{V\cup V''}(\bff_Y)\big)
=\Th_{\eta_{\#;j}}\big(\wt\ev_{X\#_{\vph}Y}^{V'\cup V''}(\bff_X\!\#_{\vph}\!\bff_Y)\big).\EE
\end{prp}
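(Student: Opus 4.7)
The plan is to exploit continuity and the discrete-valued nature of both sides of \eref{gAdfn_e2}: they take values in the fiber of the covering projection $\pi_{\ori{H}_{X,Y}^V;\bs'\bs''}$, which is discrete, and vary continuously over each connected component of the fiber product \eref{Xdiagdfn_e3}. Decompose the fiber product into topological components indexed by the cosets $[\ga_{\#;j}]_{\obu{H}_{X,Y}^V}$ of \eref{gAdfn_e2b}. For each $j$, pick a base pair $(\bff_X^{\bu;j},\bff_Y^{\bu;j})$ with $\ev$-images at a fixed base point of $V_{\bs}$, and \emph{define} $\eta_{\#;j}\!\in\!H_1(V')\!\oplus\!H_1(V'')$ to be a representative for the unique element of $H_1(V'_{\bs'})\!\oplus\!H_1(V''_{\bs''})$ exchanging the two sides of \eref{gAdfn_e2} at this base pair; it then remains to show that with this definition the identity \eref{gAdfn_e2} propagates from the base pair to every other pair in the $j$-th component.

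For an arbitrary $(\bff_X,\bff_Y)$ in the $j$-th component, I would apply Proposition~\ref{RimToriAct_prp} twice on the input side, to $\wt\ev_X^{V'\cup V}(\bff_X)$ relative to $\wt\ev_X^{V'\cup V}(\bff_X^{\bu;j})$ and to $\wt\ev_Y^{V\cup V''}(\bff_Y)$ relative to $\wt\ev_Y^{V\cup V''}(\bff_Y^{\bu;j})$. This expresses each input as a translation of the base by loop elements $\ga^X\!\in\!H_1(W'_{\bs'\bs})$ and $\ga^Y\!\in\!H_1(W''_{\bs\bs''})$ controlling the respective rim torus classes $[f_X\#(-f_X^{\bu;j})]\!\in\!\cR_X^{W'}$ and $[f_Y\#(-f_Y^{\bu;j})]\!\in\!\cR_Y^{W''}$ through \eref{RimToriAct_e}. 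Substituting these expressions into the definition \eref{Xidfn_e} of $\Xi_{X,Y}^{V',V,V''}$ produces a formula for the left-hand side of \eref{gAdfn_e2} in which the translation part is governed by the auxiliary elements $\obu h, h', h''$ of \eref{gasplit_e} and \eref{wtH12_e} and by the loop components of $\ga^X$ and $\ga^Y$ in $H_1(V_{\bs})$.

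On the output side, I would apply Proposition~\ref{RimToriAct_prp} once more to $\wt\ev_{X\#_{\vph}Y}^{V'\cup V''}(\bff_X\!\#_{\vph}\!\bff_Y)$ relative to $\wt\ev_{X\#_{\vph}Y}^{V'\cup V''}(\bff_X^{\bu;j}\!\#_{\vph}\!\bff_Y^{\bu;j})$. By the Mayer--Vietoris splitting used in the proof of Proposition~\ref{RimToriAct_prp2} (specifically the computation leading to \eref{RimToriAct_e5}), the rim torus class of this difference in $\wt\cR_{X\#_{\vph}Y}^W$ is the image under $\io_{X-W*}^{X\#_{\vph}Y}\!+\!\io_{Y-W*}^{X\#_{\vph}Y}$ of the two previous rim torus classes. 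Under the isomorphism $\wt\fR_{X,Y}^V$ from \cite[Proposition~4.9]{GWrelIP} and the compatibility between the collections \eref{wtgaj1j2_e1b} and the last collection in \eref{wtgaj1j2_e2b}, this identifies the right-hand side of \eref{gAdfn_e2} with the output of $\Xi_{X,Y}^{V',V,V''}$ up to the fixed translation $\Th_{\eta_{\#;j}}$ that was arranged at the base pair; comparing with the expression derived in the previous paragraph yields~\eref{gAdfn_e2}.

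The main obstacle is bookkeeping across four interlocking collections of coset representatives, namely $\{\ga_{\#;j}\}$, $\{\ori\ga_j\}$, $\{(\ga_{j_1}',\ga_{X;j_1})\}$, $\{(\ga_{Y;j_2},\ga_{j_2}'')\}$, together with the four nested submodules $H_X^V,H_Y^V\!\subset\!\obu{H}_{X,Y}^V$, the corresponding $W'$- and $W''$-versions $H_X^{W'},H_Y^{W''}\!\subset\!\wt{H}_{X,Y}^V$, and $\ori{H}_{X,Y}^V$, satisfying the inclusions \eref{H1VXdfn_e0}. One must verify that the correction $\eta_{\#;j}$, a priori dependent on the choice of base pair in each component, is in fact the same for any two base pairs in a common component (this follows by applying Proposition~\ref{RimToriAct_prp} to the base-to-base change on each side and observing that the resulting translation-corrections cancel via the same Mayer--Vietoris decomposition). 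Once this is done, \eref{gAdfn_e2} is essentially dictated by commutativity of the diagram \eref{cRH1diag_e} intertwined with its $W'$- and $W''$-analogues.
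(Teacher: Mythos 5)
Your proposal is correct and follows essentially the same route as the paper's proof: fix base points, define $\eta_{\#;j}$ at a base pair, compare an arbitrary pair against the base pair via two applications of Proposition~\ref{RimToriAct_prp} and the Mayer--Vietoris gluing identity, then match against the lifted evaluation on $X\#_\vph Y$ and extend by the continuity/discreteness of the difference. One small imprecision to tidy up: Proposition~\ref{RimToriAct_prp} compares $\wt\ev$-values only of tuples with the \emph{same} $\ev$-image, so before invoking it ``for an arbitrary $(\bff_X,\bff_Y)$'' you must first reduce, exactly via the continuity remark with which you opened, to pairs whose $\ev$-images sit at the fixed base point of $V_{\bs'}'\times V_{\bs}\times V_{\bs''}''$ (and note that both sides of~\eref{gAdfn_e2} take values in the cover $\wh{W}_{X\#_\vph Y;\bs'\bs''}$, not in a fiber; it is their \emph{difference}, a deck transformation, that is discrete and locally constant).
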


\begin{figure}
$$\xymatrix{
\fX_{(\Si_X,\Si_Y);\bs'\bs\bs''}^{V',V,V''}\big((X,Y),(A_X,A_Y)\big)
\ar[rr]^>>>>>>>>>>>>>>>>>>>>>>>>>>{\wt\ev_X^{V'\cup V}\times\wt\ev_Y^{V\cup V''}} \ar[ddd]_{\#_{\vph}} 
\ar[dr]|-{\ev_X^{V'\cup V}\times\ev_Y^{V\cup V''}}
&& \wh{W}_{X,Y;\bs'\bs\bs''} \ar[ddd]|-{g_{A_X,A_Y}\times \Xi_{X,Y}^{V',V,V''}} 
\ar[dl]|-{\pi_{X;\bs\bs'}^{V'\cup V}\times\pi_{Y;\bs\bs''}^{V\cup V''}}\\
& V_{\bs'}'\!\times\!\De_{\bs}^V\!\times\!V_{\bs''}'' \ar[d]\\
& V_{\bs'}'\!\times\!V_{\bs''}''\\
\bigsqcup\limits_{A_{\#}\in A_X\#_{\vph}A_Y}\!\!\!\!\!\!\!\!
\fX_{\Si_X\#\Si_Y;\bs'\bs''}^{V',V''}\big(X\!\#_{\vph}\!Y,A_{\#}\big) 
\ar[rr]^>>>>>>>>>>>>>>>>>>>{\deg\times\wt\ev_{X\#_{\vph}Y}^{V'\cup V''}} 
\ar[ru]|{\ev_{X\#_{\vph}Y}^{V'\cup V''}}&&  
H_{\fc}(X\!\#_{\vph}\!Y;\Z)\!\times\!\wh{W}_{X\#_{\vph}Y;\bs'\bs''}
\ar[lu]|{\pi_{X\#_{\vph}Y;\bs'\bs''}^{V'\cup V''}\circ\pi_2}}$$
\caption{The evaluation and gluing maps of Proposition~\ref{RimToriAct_prp3};
see Remark~\ref{RimToriAct_rmk3} regarding the commutativity of this diagram.}
\label{RimToriAct_fig2}
\end{figure}

\begin{proof} 
Fix base points 
% $\wh{x}_{\bs}\!\in\!\wh{V}_{\bs},\wh{x}_{\bs'}'\!\in\!\wh{V'}_{\bs'},\wh{x}_{\bs}''\!\in\!\wh{V''}_{\bs''}$
% over some  
$x_{\bs}\!\in\!V_{\bs},x_{\bs'}'\!\in\!V'_{\bs'},x_{\bs''}''\!\in\!V''_{\bs''}$.
% respectively.
Let 
\begin{equation*}\begin{split}
\bff_X^{\bu}&\equiv \big(z_{X;1}^{\bu},\ldots,z_{X;k_X+\ell_1+\ldots+\ell_N'+\ell_N}^{\bu},f_X^{\bu}\big)
\in  \fX_{\Si_X,k_X;\bs'\bs}^{V',V}(X,A_X),\\
\bff_Y^{\bu}&\equiv \big(z_{Y;1}^{\bu},\ldots,z_{Y;k_Y+\ell_1+\ldots+\ell_N+\ell_N''}^{\bu},f_Y^{\bu}\big)
\in \fX_{\Si_Y,k_Y;\bs\bs''}^{V,V''}(Y,A_Y)
\end{split}\end{equation*}
be such that 
\BE{ConvProd_e3b}
\ev_X^{V'\cup V}(\bff_X^{\bu})=\big(x'_{\bs'},x_{\bs}\big), \qquad
\ev_Y^{V\cup V''}(\bff_Y^{\bu})=\big(x_{\bs},x''_{\bs''}\big),\EE
and \eref{gAdfn_e2b} holds for $(\bff_X,\bff_Y)\!=\!(\bff_X^{\bu},\bff_Y^{\bu})$.
We choose $\eta_{\#;j}$ so that~\eref{gAdfn_e2} holds for 
$(\bff_X,\bff_Y)\!=\!(\bff_X^{\bu},\bff_Y^{\bu})$.
We then show that \eref{gAdfn_e2} holds for all $(\bff_X,\bff_Y)$ satisfying
\eref{ConvProd_e3b}, with  $(\bff_X^{\bu},\bff_Y^{\bu})$
replaced by  $(\bff_X,\bff_Y)$, and~\eref{gAdfn_e2b}.
Since both sides of~\eref{gAdfn_e2} are continuous lifts of the evaluation morphism
\BE{ConvProd_e5}\fX_{(\Si_X,\Si_Y);\bs'\bs\bs''}^{V',V,V''}\big((X,Y),(A_X,A_Y)\big)
\lra V_{\bs'}'\!\times\!V_{\bs''}'', ~~
(\bff_X,\bff_Y)\lra \big(\ev_X^{V'}(\bff_X),\ev_Y^{V''}(\bff_Y)\big),\EE
over the covering projection
$$\wh{W}_{X\#_{\vph}Y;\bs'\bs''} \lra V_{\bs'}'\!\times\!V_{\bs''}''$$
and this morphism is surjective on every topological component of the domain, 
it follows that  \eref{gAdfn_e2} holds for all $(\bff_X,\bff_Y)$ satisfying~\eref{gAdfn_e2b}.
If a pair $(\bff_X^{\bu},\bff_Y^{\bu})$ satisfying~\eref{ConvProd_e3b}
and~\eref{gAdfn_e2b} does not exist, then
the value of~$\eta_{\#;j}$ does not matter.\\

\noindent
By~\eref{ConvProd_e3b}
\BE{ConvProd_e5a}\begin{split}
\wt\ev_X^{V'\cup V}(\bff_X^{\bu})=
\big([\ga_j'^{\bu},\ga_{X;j}^{\bu}]_{X;\bs'\bs},[\wh{x}',\ga\!\cdot\!\wh{x}]_X\big), &\quad
\wt\ev_Y^{V\cup V''}(\bff_Y^{\bu})=
\big([\ga_{Y;j}^{\bu},\ga_j''^{\bu}]_{Y;\bs\bs''},[\wh{x},\wh{x}'']_Y\big)
\end{split}\EE
for some $\ga\!\in\!H_1(V_{\bs})$ and elements 
$(\ga_j'^{\bu},\ga_{X;j}^{\bu})$ and $(\ga_{Y;j}^{\bu},\ga_j''^{\bu})$ of
the two collections in~\eref{wtgaj1j2_e1b}.
Since \eref{gAdfn_e2b} holds for $(\bff_X,\bff_Y)\!=\!(\bff_X^{\bu},\bff_Y^{\bu})$,
there exists $(h'^{\bu},h^{\bu},h''^{\bu})\!\in\!\wt{H}_{X,Y}^V$ such~that 
\BE{ConvProd_e7}h^{\bu}\!\in\!\obu{H}_{X,Y}^V, \qquad 
\big(\ga_{X;j}^{\bu},\ga_{Y;j}^{\bu}\big)+\big(\Phi_{V;\bs}(\ga),0\big)=\ga_{\#;j}+h^{\bu}\,.\EE
Let $(\ga'^{\bu},\ga''^{\bu})\!\in\!H_1(V'_{\bs'})\!\oplus\!H_1(V''_{\bs''})$,
$\ori{h}_j\!\in\!\ori{H}_{X,Y}^V$, and $\ori\ga_j$ be a coset representative
from the last collection in~\eref{wtgaj1j2_e2b} such~that 
\BE{ConvProd_e9}
\big(\ga_j'^{\bu}\!-\!h'^{\bu},\ga_j''^{\bu}\!-\!h''^{\bu}\big)=
\big(\Phi_{V';\bs'}(\ga'^{\bu}),\Phi_{V'';\bs''}(\ga''^{\bu})\big)+\ori\ga_j+\ori{h}_j\,.\EE
By~\eref{Xidfn_e} and~\eref{ConvProd_e5a}-\eref{ConvProd_e9},
\BE{ConvProd_e11}
\Xi_{X,Y}^{V',V,V''}\big(\wt\ev_X^{V'\cup V}(\bff_X^{\bu}),\wt\ev_Y^{V\cup V''}(\bff_Y^{\bu})\big)
=\big([\ori\ga_j]_{X\#_{\vph}Y;\bs'\bs''},
[\ga'^{\bu}\!\cdot\!\wh{x}',\ga''^{\bu}\!\cdot\!\wh{x}'']_{X\#_{\vph}Y}\big).\EE  
Since both sides of~\eref{gAdfn_e2} are  lifts of~\eref{ConvProd_e5},
there exist $\al\!\in\!H(V_{\bs'}')$ and $\be\!\in\!H(V_{\bs''}'')$ such~that 
\BE{ConvProd_e15}\wt\ev_{X\#_{\vph}Y}^{V'\cup V''}(\bff_X^{\bu}\!\#_{\vph}\!\bff_Y^{\bu})
=\big([\ori\ga_j']_{X\#_{\vph}Y;\bs'\bs''},
[\al\ga'^{\bu}\!\cdot\!\wh{x}',\be\ga''^{\bu}\!\cdot\!\wh{x}'']_{X\#_{\vph}Y}\big)\EE
for some $\al\!\in\!H_1(V'_{\bs'})$, $\be\!\in\!H_1(V''_{\bs''})$, and 
a coset representative $\ori\ga_j'$
from the last collection in~\eref{wtgaj1j2_e2b}.
By~\eref{Thetadfn_e}, \eref{ConvProd_e11}, and~\eref{ConvProd_e15}, \eref{gAdfn_e2} with
\BE{etajdfn_e} \eta_{\#;j}=\ori\ga_j-\ori\ga_j'-\big(\Phi_{V';\bs'}(\al),\Phi_{V'';\bs''}(\be)\big)
\in H_1(V')\!\oplus\!H_1(V'')\EE
holds for $(\bff_X,\bff_Y)\!=\!(\bff_X^{\bu},\bff_Y^{\bu})$.\\

\noindent
Let $(\bff_X,\bff_Y)$ be any pair satisfying~\eref{ConvProd_e3b} and~\eref{gAdfn_e2b}. 
By the first assumption and~\eref{ConvProd_e5a}, 
\BE{ConvProd_e23a}\begin{split}
\wt\ev_X^{V'\cup V}(\bff_X)&=
\big([\ga_j',\ga_{X;j}]_{X;\bs'\bs},[\al'\!\cdot\!\wh{x}',\al\ga\!\cdot\!\wh{x}]_X\big), \\
\wt\ev_Y^{V\cup V''}(\bff_Y)&=
\big([\ga_{Y;j},\ga_j'']_{Y;\bs\bs''},[\be\!\cdot\!\wh{x},\be''\!\cdot\!\wh{x}'']_Y\big), 
\end{split}\EE
for some $\al,\be\!\in\!H_1(V_{\bs})$, $\al'\!\in\!H_1(V'_{\bs'})$, $\be''\!\in\!H_1(V''_{\bs''})$, 
and elements 
$(\ga_j',\ga_{X;j})$ and $(\ga_{Y;j},\ga_j'')$ of
the two collections in~\eref{wtgaj1j2_e1b}.
By~\eref{gAdfn_e2b}, there exists $(h',h,h'')\!\in\!\wt{H}_{X,Y}^V$ such~that 
\BE{ConvProd_e27}h\!\in\!\obu{H}_{X,Y}^V, \qquad 
\big(\ga_{X;j},\ga_{Y;j}\big)+\big(\Phi_{V;\bs}(\ga\!+\!\al\!-\!\be),0\big)=\ga_{\#;j}+h\,.\EE
Let $(\ga',\ga'')\!\in\!H_1(V'_{\bs'})\!\oplus\!H_1(V''_{\bs''})$,
$\ori{h}\!\in\!\ori{H}_{X,Y}^V$, and $\ori\ga$ be a coset representative
from the last collection in~\eref{wtgaj1j2_e2b} such~that 
\BE{ConvProd_e29}
\big(\ga_j'\!-\!h',\ga_j''\!-\!h''\big)=
\big(\Phi_{V';\bs'}(\ga'),\Phi_{V'';\bs''}(\ga'')\big)+\ori\ga+\ori{h}\,.\EE
By~\eref{Xidfn_e} and~\eref{ConvProd_e23a}-\eref{ConvProd_e29},
\BE{ConvProd_e31}
\Xi_{X,Y}^{V',V,V''}\big(\wt\ev_X^{V'\cup V}(\bff_X),\wt\ev_Y^{V\cup V''}(\bff_Y)\big)
=\big([\ori\ga]_{X\#_{\vph}Y;\bs'\bs''},
[\ga'\al'\!\cdot\!\wh{x}',\ga''\be''\!\cdot\!\wh{x}'']_{X\#_{\vph}Y}\big).\EE\\

\noindent
By \eref{RimToriAct_e}, \eref{ConvProd_e5a}, and~\eref{ConvProd_e23a},
\BE{ConvProd_e41}\begin{split}
\big[f_X\!\#\!(-f_X^{\bu})\big]&=
\io_{S_XW'*}^{X-W'}\big(\De_X^{W'}\big(\Phi_{V';\bs'}(\al')\!+\!\ga_j'\!-\!\ga_j'^{\bu},
\Phi_{V;\bs}(\al)\!+\!\ga_{X;j}\!-\!\ga_{X;j}^{\bu}\big)\big),\\
\big[f_Y\!\#\!(-f_Y^{\bu})\big]&=
\io_{S_YW''*}^{Y-W''}\big(\De_Y^{W''}\big(\Phi_{V;\bs}(\be)\!+\!\ga_{Y;j}\!-\!\ga_{Y;j}^{\bu},
\Phi_{V'';\bs''}(\be'')\!+\!\ga_j''\!-\!\ga_j''^{\bu}\big)\big).
\end{split}\EE
By~\eref{ConvProd_e41}, \eref{ConvProd_e7}, and~\eref{ConvProd_e27},
\begin{equation*}\begin{split}
&\big[(f_X\!\#_{\vph}\!f_Y)\!\#\!(-(f_X^{\bu}\!\#_{\vph}\!f_Y^{\bu}))\big]
= \io_{X-W'*}^{X\#_{\vph}Y-W}\big(\big[f_X\!\#\!(-f_X^{\bu})\big]\big)
+\io_{Y-W''*}^{X\#_{\vph}Y-W}\big(\big[f_Y\!\#\!(-f_Y^{\bu})\big]\big)\\
&\quad=\wt\fR_{X,Y}^V\big(\big(\Phi_{V';\bs'}(\al')\!+\!\ga_j'\!-\!\ga_j'^{\bu},
\Phi_{V;\bs}(\be), \Phi_{V;\bs}(\be),\Phi_{V'';\bs''}(\be'')\!+\!\ga_j''\!-\!\ga_j''^{\bu}\big)
+(0,h\!-\!h^{\bu},0)\big)\\
&\quad=\wt\fR_{X,Y}^V
\big(\Phi_{V';\bs'}(\al')\!+\!(\ga_j'\!-\!h')\!-\!(\ga_j'^{\bu}\!-\!h'^{\bu}),
0, 0,\Phi_{V'';\bs''}(\be'')\!+\!(\ga_j''\!-\!h'')\!-\!(\ga_j''^{\bu}\!-\!h''^{\bu})\big);
\end{split}\end{equation*}
the last equality makes use of the  second inclusion in
the middle statement in~\eref{H1VXdfn_e0}.
Combining the above with \eref{ConvProd_e9} and~\eref{ConvProd_e29}, we find~that 
\BE{ConvProd_e45}\begin{split}
&\big[(f_X\!\#_{\vph}\!f_Y)\!\#\!(-(f_X^{\bu}\!\#_{\vph}\!f_Y^{\bu}))\big]\\
&\hspace{.5in}= \io_{S_WX\#_{\vph}Y*}^{X\#_{\vph}Y-W}\big(\De_{X\#_{\vph}Y}^W\big(
\Phi_{W;\bs'\bs''}(\al'\!+\!\ga'\!-\!\ga'^{\bu},\be''\!+\!\ga''\!-\!\ga''^{\bu})
\!+\!\ori\ga\!-\!\ori\ga_j\big)\big).
\end{split}\EE
By \cite[Proposition~6.6]{GWrelIP}, Proposition~\ref{RimToriAct_prp} applied to $(X\#_{\vph}Y,W)$,
 \eref{ConvProd_e11}, and~\eref{ConvProd_e45},
$$\Th_{\eta_{\#;j}}\big(\wt\ev_{X\#_{\vph}Y}^{V'\cup V''}(\bff_X\!\#_{\vph}\!\bff_Y)\big)
=\big([\ori\ga]_{X\#_{\vph}Y;\bs'\bs''},
[\ga'\al'\!\cdot\!\wh{x}',\ga''\be''\!\cdot\!\wh{x}'']_{X\#_{\vph}Y}\big).$$
Comparing with~\eref{ConvProd_e31}, we obtain~\eref{gAdfn_e2}.
\end{proof}

\begin{rmk}\label{RimToriAct_rmk3}
In order to make the diagram in Figure~\ref{RimToriAct_fig2} fully commutative,
the second component in the bottom morphism should be twisted by 
suitable deck transformations $\Th_{\eta_{A_{\#}}}$ of $\wh{W}_{X\#_{\vph}Y;\bs'\bs''}$.
By~\cite[Proposition~6.6]{GWrelIP}, these deck transformations correspond to 
the differences between the possible lifts for $(X\!\#_{\vph}\!Y,W)$.
An intention of \cite[Section~5]{IPrel} is to take $\Th_{\eta_{A_{\#}}}\!=\!\id$
by choosing the lifts~\eref{evXVlift_e} consistently across different relative pairs.
This property, which is used in \cite[(10.8)]{IPsum}, is implicit
in the two set-theoretic descriptions of the rim tori covers;
see \cite[Remark~6.7]{GWrelIP} for more~details.
\end{rmk}

\noindent
For each $A\!\in\!A_X\!\#_{\vph}A_Y$, let 
\BE{whWXYV_e2}\wh{W}_{X,Y;\bs'\bs\bs''}^A=
\big\{\wt\pi_{\bs}\!\times\!\wt\pi_{\bs}\big\}^{-1}
\big(\wh{V}_{X,Y;\bs}^A\big)
\subset \wh{W'}_{X;\bs'\bs}\!\times\!\wh{W''}_{Y;\bs\bs''}\,,\EE
with $\wh{V}_{X,Y;\bs}^A$ as in~\eref{cHXYVsplit_e} and
$$\wt\pi_{\bs}\!:  \wh{W'}_{X;\bs'\bs}\lra \wh{V}_{X;\bs}
\qquad\hbox{and}\qquad
\wt\pi_{\bs}\!:  \wh{W''}_{Y;\bs\bs''}\lra \wh{V}_{X;\bs}$$
being the natural projection maps; see \cite[(6.4)]{GWrelIP}.
The cohomology class determined by the submanifold~\eref{whWXYV_e2} satisfies
\BE{whWXYVPD_e2}
\PD_{X,Y,W;\bs'\bs\bs''}^{V,A}\De
=\big\{\wt\pi_{\bs}\!\times\!\wt\pi_{\bs}\big\}^*
\big(\PD_{X,Y;\bs}^{V,A}\De\big)\in 
H^*\big(\wh{W'}_{X;\bs'\bs}\!\times\!\wh{W''}_{Y;\bs\bs''};\Q\big),\EE
with $\PD_{X,Y;\bs}^{V,A}\De$ as in~\eref{cHclass_e}.

\begin{eg}\label{S2T2sum_eg2}
With $X\!=\!\wh\P^2_9$, $Y\!=\!\P^1\!\times\!\T^2$, and $V\!=\!F\!\subset\!X,Y$ 
as in Example~\ref{ESsum_eg2}, we take $V'\!=\!\eset$ and $V''\!=\!F''\!\subset\!Y$
to be a fiber different from~$F$.
In this case,
\begin{gather*}
H_X^V,\ori{H}_{X,Y}^V,H_X^{W'}=0 \quad H_2\!=\!H_1(V), \quad 
\obu{H}_{X,Y}^V\!=\!H_1(V)\!\oplus\!H_1(V), \quad
H_Y^{W''}\!=\!H_{\De}\subset H_1(V)\!\oplus\!H_1(V''),\\
\wt{H}_{X,Y}^V=\big\{(0,\al,\al\!+\!\be,\be)\!:\,\al,\be\!\in\!H_1(F)\big\}
\subset H_1(V')\!\oplus\!H_1(V)\!\oplus\!H_1(V)\!\oplus\!H_1(V'').
\end{gather*}
The smooth map~\eref{XiXYVdfn_e} can be described as in Example~\ref{ConvProd_eg2}.
By~\eref{ConvProdeg_e3} and~\eref{ESsum_e9}, 
\BE{S2T2sum_e7}\PD_{X,Y,V'';()(1)(1)}^{V,A}\De= 1\!\times\!1\!\times\!\PD_{\T^2}(\pt)
\in H^*\big(\C\!\times\!\C\!\times\!\T^2;\Q\big),\EE
if the $(Y,V\!\cup\!V'')$ covering is written~as
$$\C\!\times\!\T^2 \lra \T^2\!\times\!\T^2, \qquad (z_1,[z_2])\lra \big([z_2\!-\!z_1],[z_2\!+\!z_1]\big).$$
Applying \eref{S2T2sum_e7} to the decomposition
$$(\wh\P^2_9,F)=(\wh\P^2_9,F)\#_F\big(\P^1\!\times\!\T^2,\{0,\i\}\!\times\!\T^2\big)$$
as in Example~\ref{ESsum_eg2}, we obtain
$$\wt\GW_{g,\fs_i+d\ff;(1)}^{\wh\P^2_9,F}\big(\pt^g;1\big)
=\sum_{\begin{subarray}{c}d_1,d_2\in\Z^{\ge0}\\ d_1+d_2=d\end{subarray}}\!\!\!\!\!
\wt\GW_{g,\fs_i+d_1\ff;(1)}^{\wh\P^2_9,F}\big(\pt^g;1\big)
\wt\GW_{0,\fs+d_2\ff;(1),(1)}^{\P^1\times\T^2,\{0,\i\}\times F}\big(;\PD_{\T^2}(\pt),1\big).$$
By the same considerations as in  Example~\ref{ESsum_eg2}, this identity reduces~to
$$\GW_{g,\fs_i+d\ff;(1)}^{\wh\P^2_9,F}\big(\pt^g;1\big)
=\sum_{\begin{subarray}{c}d_1,d_2\in\Z^{\ge0}\\ d_1+d_2=d\end{subarray}}\!\!\!\!\!
\GW_{g,\fs_i+d_1\ff;(1)}^{\wh\P^2_9,F}\big(\pt^g;1\big)
\GW_{0,\fs+d_2\ff;(1),(1)}^{\P^1\times\T^2,\{0,\i\}\times F}\big(;\pt,1\big).$$
This equation is consistent with
$$\GW_{0,\fs+d_2\ff;(1),(1)}^{\P^1\times\T^2,\{0,\i\}\times F}\big(;\pt,1\big)
=\begin{cases} 1,&\hbox{if}~d_2\!=\!0;\\
0,&\hbox{if}~d_2\!\neq\!0;
\end{cases}$$
the last statement can be seen directly from the moduli space consisting of the sections in the first case
and being empty in the second case.
Putting one point on the $Y$ side, we similarly obtain 
$$\GW_{g,\fs_i+d\ff;(1)}^{\wh\P^2_9,F}\big(\pt^g;1\big)
=\sum_{\begin{subarray}{c}d_1,d_2\in\Z^{\ge0}\\ d_1+d_2=d\end{subarray}}\!\!\!\!\!
\GW_{g-1,\fs_i+d_1\ff;(1)}^{\wh\P^2_9,F}\big(\pt^{g-1};1\big)
\GW_{1,\fs+d_2\ff;(1),(1)}^{\P^1\times\T^2,\{0,\i\}\times F}\big(\pt;\pt,1\big).$$
In light of Theorem~\ref{YZ_thm} and Lemma~\ref{RES_lmm}, this identity is consistent with
$$\sum_{d=0}^{\i}
\GW_{1,\fs+d\ff;(1),(1)}^{\P^1\times\T^2,\{0,\i\}\times F}\big(\pt;\pt,1\big)q^d
=qG'(q),$$
with $G(q)$ given by~\eref{Gdfn_e}.
A direct reason for the last equation, which corrects the statement in the middle case 
of the last claim in \cite[Lemma~14.5]{IPrel}, is indicated in the proof of
this lemma; see Remarks~\ref{14.2_rmk} and~\ref{14.4_rmk} for related comments.
\end{eg}

\section{The Bryan-Leung formula}
\label{RES_sec}

\noindent
One of the three applications of the symplectic sum formula appearing 
in~\cite{IPsum} is an alternative proof of \cite[Theorem~1.2]{BL},
a closed formula for the GW-invariants of the blowup~$\wh\P^2_9$ of~$\P^2$ at 9~points.
The approach of~\cite{IPsum} is significantly more efficient than the original proof
in~\cite{BL}, though less direct, as it relies heavily on the symplectic sum formula.
Unfortunately, the argument in~\cite{IPsum} contains some unnecessary statements
and several incorrect statements,
including one which results in the incorrect  main conclusion, \cite[(15.4)]{IPsum}.
It is also missing a crucial intermediate observation; 
see Lemma~\ref{P1T2_lmm} and Remark~\ref{15.3_rmk}.
Some of the incorrect claims in~\cite{IPrel} concern basic points regarding IP-counts.
The approach in~\cite{IPsum} in fact indicates an effective proof of \cite[Theorem~1.2]{BL}
via the standard symplectic sum formula.
In this section, we correct and slightly streamline the argument in~\cite{IPsum} by making
use of the vanishing result of \cite[Theorem~1.1]{GWrelIP} in a  case
when it is an obvious consequence of the existence of the lift~\eref{evXVlift_e}; see~\eref{RelGWvan_e}.
\\

\noindent
Let $\wh\P_9^2$ be a blowup of~$\P^2$ at the 9 intersection points 
of a general pair of smooth cubic curves~$C_1$ and~$C_2$
and $\pi\!:\wh\P^2_9\!\lra\!\P^1$ be the projection to the pencil parametrizing 
the cubics spanned by~$C_1$ and~$C_2$.
This fibration has 9~sections $S_1,\ldots,S_9$ corresponding to the~9 exceptional divisors.
The homology classes $\fs_1,\ldots,\fs_9$ of $S_1,\ldots,S_9$ and 
the homology class $\ff$ of a smooth fiber~$F$ form a basis for 
an index~3 sublattice of $H_2(\wh\P_9^2;\Z)\!\approx\!\Z^{10}$.
For $g\!\in\!\Z^{\ge0}$, define
\BE{Fgdfn_e} \cF_g(q)=\sum_{d=0}^{\i}\GW_{g,\fs_i+d\ff}^{\wh\P_9^2}(\pt^g)q^d,\EE
where the summand denotes the genus~$g$ degree~$\fs_i\!+\!d\ff$ GW-invariant
of~$\wh\P_9^2$ with $g$~point constraints. 
Since $\fs_i^2\!=\!-1$, there is only one holomorphic curve in the homology class~$\fs_i$ and thus
\BE{F0init_e}\cF_0(q)\in 1+q\Q[[q]], \qquad \cF_g(q)\in q\Q[[q]]~~\forall~g\!\in\!\Z^+.\EE
Let
\BE{Gdfn_e}G(q)=\sum_{d=1}^{\i}\si(d)q^d, \qquad\hbox{where}\quad 
\si(d)=\sum_{r|d}r.\EE

\begin{thm}[{\cite[Theorem~1.2]{BL}}]\label{YZ_thm}
For every $g\!\in\!\Z^{\ge0}$,
\BE{YZ_e1}
\cF_g(q)=\bigg(\prod_{d=1}^{\i}(1\!-\!q^d)\bigg)^{-12}\big(qG'(q)\big)^g\,.\EE
\end{thm}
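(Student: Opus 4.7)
The plan is to apply the symplectic sum formula twice in order to reduce $\cF_g(q)$ to two basic ingredients: a Yau-Zaslow-type identity in genus $0$ and a Hurwitz-style count on $\P^1\!\times\!\T^2$ producing $qG'(q)$.

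First, I apply the symplectic sum formula to the decomposition $\wh\P_9^2\!\cong\!\wh\P_9^2\#_F(\P^1\!\times\!\T^2)$ along a smooth fiber~$F$ of $\wh\P_9^2\!\lra\!\P^1$, as set up in Example~\ref{ESsum_eg2}. Three features collapse the a priori involved splitting into a single clean recursion: the intersection number $(\fs_i+d\ff)\cdot F=1$ restricts the relative contact vector to $\bs=(1)$; the vanishing $\cR_{X,Y}^V=0$ means the standard formula of~\cite{LR,Jun2} applies without any rim-tori refinement; and the identification $\wh{V}_{\wh\P_9^2;(1)}\!\approx\!\C$ combined with the factorization~\eref{evfactor_e} forces the vanishing of all relative invariants of $(\wh\P_9^2,F)$ paired against positive-degree classes on the cover. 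A Kunneth decomposition of $\PD_V\De$ on $\T^2\!\times\!\T^2$ combined with dimensional matching then leaves a single surviving summand, which distributes $g-1$ absolute point constraints to the $\wh\P_9^2$ side and one absolute point plus the insertion $\PD_{\T^2}(\pt)$ to the $\P^1\!\times\!\T^2$ side. The outcome is the recursion
\begin{equation*}
\cF_g(q)=\wt\cF_{g-1}(q)\cdot\wt H(q),\qquad g\ge 1,
\end{equation*}
where $\wt\cF_g(q)\!=\!\sum_d\GW_{g,\fs_i+d\ff;(1)}^{\wh\P_9^2,F}(\pt^g;1)\,q^d$ and $\wt H(q)\!=\!\sum_d\GW_{1,\fs+d\ff;(1)}^{\P^1\times\T^2,F}(\pt;\PD_{\T^2}(\pt))\,q^d$.

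Next, I iterate on the relative side through the decomposition $(\wh\P_9^2,F)\!\cong\!(\wh\P_9^2,F)\#_F(\P^1\!\times\!\T^2,\{0,\i\}\!\times\!\T^2)$ of Example~\ref{S2T2sum_eg2}. Sending one absolute marked point to the $\P^1\!\times\!\T^2$ factor and invoking the same vanishing of relative invariants against positive-degree cohomology on $\wh{V}_{\wh\P_9^2;(1)}$ gives the parallel recursion
\begin{equation*}
\wt\cF_g(q)=\wt\cF_{g-1}(q)\cdot H(q),\qquad g\ge 1,
\end{equation*}
with $H(q)\!=\!\sum_d\GW_{1,\fs+d\ff;(1),(1)}^{\P^1\times\T^2,\{0,\i\}\times\T^2}(\pt;\pt,1)\,q^d$. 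Iterating yields $\wt\cF_g=\wt\cF_0\cdot H^g$ and hence $\cF_g=\wt\cF_0\cdot H^{g-1}\cdot\wt H$ for $g\ge 1$. A further application of the same symplectic-sum decomposition with the point redistribution reversed identifies $\wt\cF_0\cdot\wt H=\cF_0\cdot H$, so that combining gives
\begin{equation*}
\cF_g(q)=\cF_0(q)\cdot H(q)^g,\qquad g\ge 0.
\end{equation*}

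Two independent inputs then finish the proof. The first is the direct evaluation $H(q)=qG'(q)$: section-class genus-$1$ relative stable maps to $\P^1\!\times\!\T^2$ with one point constraint and two order-one contacts with $\{0,\i\}\!\times\!\T^2$ correspond to connected degree-$d$ unramified covers $\T^2\!\to\!\T^2$, which a standard Hurwitz enumeration weights to produce $qG'(q)=\sum_{d\ge 1}d\,\sigma(d)\,q^d$. The second is the base case $\cF_0(q)=\prod_{d\ge 1}(1-q^d)^{-12}$, the Yau-Zaslow formula for the rational elliptic surface, which I establish either by a further symplectic-sum degeneration of $\wh\P_9^2$ along a singular $I_n$ fiber or by importing it from a known genus-zero computation. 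The main obstacle is precisely this base case: once $\cF_0$ is in hand, all $g\ge 1$ cases follow essentially formally from the two recursions and the elementary Hurwitz count. The qualitative ingredient that makes the recursive collapse possible is the vanishing of relative GW-invariants arising from the existence of the IP-lift~\eref{evXVlift_e}, which removes all but one Kunneth summand at each step and is what shortens the argument relative to the direct enumeration of~\cite{BL}.
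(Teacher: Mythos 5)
Your proposal correctly identifies the engine that drives the genus-lowering recursion: degenerate $\wh\P_9^2$ as $\wh\P_9^2\#_F(\P^1\!\times\!\T^2)$, move one point to the $\P^1\!\times\!\T^2$ side, and use the vanishing of relative GW-invariants of $(\wh\P_9^2,F)$ against positive-degree classes on the cover $\wh{F}_{\wh\P_9^2;(1)}\!\approx\!\C$ (i.e.~\eref{RelGWvan_e}) to collapse the Kunneth sum. That vanishing, together with Lemma~\ref{RES_lmm} and the $\P^1\!\times\!\T^2$ computation of Lemma~\ref{P1xT2_lmm2}, is essentially the content of the paper's Lemma~\ref{RES_lmm3}, $\cF_g = \cF_{g-1}\cdot qG'(q)$ for $g\!\ge\!1$. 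Your double recursion through the relative generating series $\wt\cF_g$ is more circuitous than needed: once Lemma~\ref{RES_lmm} is invoked, the relative count with insertion $\ff$ equals the absolute one and the recursion lands directly on $\cF_{g-1}$, so the intermediate step through $\wt\cF_g$ and the "point redistribution reversed" identity $\wt\cF_0\cdot\wt H = \cF_0\cdot H$ can be dispensed with.

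The genuine gap is the base case $\cF_0$, which you yourself flag as the main obstacle and then propose to handle by either a vague degeneration along an $I_n$ fiber or by importing the Yau-Zaslow formula from elsewhere. Neither is what the paper does, and neither is fleshed out enough to count as a proof; citing the $g=0$ case would also be circular, since the theorem being proved is \cite[Theorem~1.2]{BL}, which includes the $g=0$ case. The paper's proof of $\cF_0 = \prod_{d\ge1}(1-q^d)^{-12}$ relies on a distinct idea absent from your proposal: it introduces the genus-1 descendant series $H(q)=\sum_d\GW_{1,\fs_i+d\ff}^{\wh\P^2_9}(\tau_1[\ff])q^d$ and computes it in two independent ways. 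One computation uses the genus-1 topological recursion relation ($\psi_1=\tfrac1{12}\De_0+\De_{;1}$, Lemma~\ref{g1GWs_lmm}), giving $H=\tfrac1{12}(q\cF_0'-\cF_0)+\cF_0 G$. The other applies the symplectic sum formula to the same decomposition $\wh\P_9^2\#_F(\P^1\!\times\!\T^2)$ with the descendant insertion pushed to the $\P^1\!\times\!\T^2$ side, combined with Lemma~\ref{P1T2_lmm} and Corollary~\ref{g1GWs_crl2}, giving $H=-\tfrac1{12}\cF_0+\cF_0\cdot 2G$. Equating the two yields the first-order ODE $q\frac{\nd}{\nd q}\log\cF_0=12G(q)$ with $\cF_0(0)=1$, whose unique solution is the desired infinite product. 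Without this TRR step (or some replacement for it), your argument only establishes $\cF_g=\cF_0\cdot(qG'(q))^g$ with $\cF_0$ undetermined, so the theorem as stated is not proved.
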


\subsection{Genus 1 GW-invariants of $\P^1\!\times\!\T^2$}
\label{P1xT2_subs}

\noindent
We begin with some observations concerning the genus~1 GW-invariants of $\P^1\!\times\!\T^2$ and 
$(\P^1\!\times\!\T^2,F)$, where $F\!=\!p\!\times\!\T^2$
is a fiber of the projection to the first component.
We denote by $\fs$ and $\ff$ the homology classes of $\P^1\!\times\!q$ and $F$, respectively.

\begin{lmm}[{\cite[Lemma 14.4]{IPsum}}]
\label{P1xT2_lmm1}
The genus~1  GW-invariants of $\P^1\!\times\!\T^2$ satisfy
$$\sum_{d=1}^{\i}d\,\GW_{1,d\ff}^{\P^1\times\T^2}()q^d=2G(q).$$
\end{lmm}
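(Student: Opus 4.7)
My plan is to exploit the product structure of $\P^1\!\times\!\T^2$ and reduce to the classical Hurwitz count of unramified connected covers of~$\T^2$.

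I would first observe that for a product (or any generic $\om$-tame) almost complex structure, every stable holomorphic map $f\!:\Si\!\lra\!\P^1\!\times\!\T^2$ in class~$d\ff$ has constant first component $p\!\in\!\P^1$: the $\P^1$-degree of $\pi_1\!\circ\!f$ is $d\ff\!\cdot\!\fs\!-\!d\ff\!\cdot\!\fs\!=\!0$. Since $\T^2$ carries no rational curves and bubbling off a genus-one domain in the $\T^2$-direction is obstructed, the domain~$\Si$ is forced to be smooth and irreducible, and $g\!\equiv\!\pi_2\!\circ\!f$ is an unramified degree-$d$ cover. This identifies the moduli stack as
$$\ov\fM_{1,0}(\P^1\!\times\!\T^2,d\ff)\;=\;\P^1\!\times\!\ov\fM_{1,0}(\T^2,d),$$
and the right-hand factor is the disjoint union, over the $\si(d)$ index-$d$ sublattices $L\!\subset\!\pi_1(\T^2)\!\cong\!\Z^2$, of the classifying stacks $B(\pi_1(\T^2)/L)$ of the corresponding deck groups, each of order~$d$.

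Next I would identify the obstruction bundle. For a map $f\!=\!(p,g)$, the trivializations $f^*T\P^1\!=\!T_p\P^1$ and $f^*T\T^2\!=\!\cO_{\Si}$ give
$H^1(\Si,f^*T(\P^1\!\times\!\T^2))\!=\!(T_p\P^1\!\otimes\!H^1(\Si,\cO))\!\oplus\!H^1(\Si,\cO)$.
The second summand is the obstruction bundle for the genus-one deformation theory of maps to~$\T^2$ and is already packaged into the (trivial) virtual class of $\ov\fM_{1,0}(\T^2,d)$. The remaining rank-one complex obstruction bundle on the product moduli is globally $\pi_{\P^1}^*T\P^1\!\otimes\!\pi_{\ov\fM}^*\bE^*$, where $\bE$ is the Hodge bundle pulled back from~$\ov\fM_{1,1}$.

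Integrating its Euler class against the product stack, the $c_1(\bE^*)$ term vanishes by dimension (since $\ov\fM_{1,0}(\T^2,d)$ is zero-dimensional as a stack), so
$$\GW_{1,d\ff}^{\P^1\times\T^2}()\;=\;\int_{\P^1}\!c_1(T\P^1)\,\cdot\!\int_{\ov\fM_{1,0}(\T^2,d)}\!\!1\;=\;\chi(\P^1)\cdot\GW_{1,d}^{\T^2}()\;=\;\frac{2\si(d)}{d}\,,$$
using the Hurwitz evaluation $\GW_{1,d}^{\T^2}()\!=\!\si(d)/d$ (each of the $\si(d)$ index-$d$ sublattices contributes $1/d$ via the order of its deck group). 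Multiplying by~$d$ and summing over $d\!\ge\!1$ yields $\sum_{d\ge1}d\,\GW_{1,d\ff}^{\P^1\times\T^2}()\,q^d\!=\!2G(q)$, as desired. The hardest step is the clean identification of the obstruction bundle on the product moduli as $\pi_{\P^1}^*T\P^1\!\otimes\!\pi_{\ov\fM}^*\bE^*$: this is a Behrend--Manin-style product decomposition of the obstruction theory specialized to a trivial fiber bundle, combined with the observation that the $\P^1$-factor obstruction is the external tensor product of the $\P^1$-tangent with the genus-one Hodge obstruction.
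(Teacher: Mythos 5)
Your proof is correct but takes a genuinely different route from the paper's. The paper sidesteps all obstruction-bundle considerations by perturbing the complex structure: it realizes $\P^1\!\times\!\T^2$ as the ruled surface $\P(L\!\oplus\!\cO_{\T^2})$ for a non-torsion degree-$0$ line bundle $L$, for which the only holomorphic curves in class $d\ff$ are the degree-$d$ covers of the two distinguished sections $F_0\!=\!\P(0\!\oplus\!\cO)$ and $F_\i\!=\!\P(L\!\oplus\!0)$; each such map $h$ has normal bundle $h^*L^{\pm1}$, which remains non-torsion, so $H^1$ of the normal bundle vanishes and the moduli space is a transverse $0$-dimensional stack of $2\si(d)$ points with order-$d$ automorphism groups. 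You instead stay with the integrable product structure, for which the moduli space is the genuinely $1$-dimensional stack $\P^1\!\times\!\ov\fM_{1,0}(\T^2,d)$ and one must integrate the rank-one obstruction bundle $\pi_{\P^1}^*T\P^1\!\otimes\!\pi^*\bE^*$. Both yield $2\si(d)/d$; the paper's route trades the virtual-class machinery for the non-torsion-line-bundle trick, while yours keeps the maximally symmetric geometry at the cost of the obstruction-bundle identification, which you state correctly (the $\bE^*$ summand of $H^1(\Si,f^*T)$ is killed by the Kodaira--Spencer image of the domain moduli, leaving $T_p\P^1\!\otimes\!\bE^*$). Two small corrections to your writeup: the parenthetical ``or any generic $\om$-tame almost complex structure'' is misleading, since a generic perturbation would destroy the $\P^1$-family of torus fibers and yield a different, transverse moduli space (as the paper's choice illustrates) --- your moduli identification and obstruction-bundle computation are specific to the product structure; and the intersection computation for the $\P^1$-degree should simply read $d\ff\!\cdot\!\ff\!=\!0$ (the degree of $\pi_1\!\circ\!f$ is the intersection number of $f_*[\Si]$ with a fiber of $\pi_1$), not $d\ff\!\cdot\!\fs\!-\!d\ff\!\cdot\!\fs$.
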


\begin{proof}
Let $L\!=\!\cO_{\T^2}(p\!-\!q)\lra \T^2$ be a non-torsion line bundle 
($L^{\otimes k}\!\not\approx\!\cO_{\T^2}$ for all $k\!\in\!\Z^+$).
The only holomorphic maps in $\P(L\!\oplus\!\cO_{\T^2})\!\approx\!\P^1\!\times\!\T^2$ 
in the homology class $d\ff$ are then covers of 
$$F_0\equiv\P\big(0\!\oplus\!\cO_{\T^2}\big) \qquad\hbox{and}\qquad 
F_{\i}\equiv\P\big(L\!\oplus\!0\big),$$ 
and these maps are regular.
Since the number of degree~$d$ covers $\T^2\!\lra\!\T^2$
(or equivalently of subgroups of $\Z^2$ of index~$d$) is $\si(d)$,
$\ov\fM_{1,0}(\P^1\!\times\!\T^2,d\ff)$ consists of $2\si(d)$ elements.
Since the order of the automorphism group of each of these elements is~$d$,
we conclude that 
$$\GW_{1,d\ff}^{\P^1\times\T^2}()=2\si(d)/d ,$$
as claimed.
\end{proof}

\begin{lmm}[{\cite[Lemma 14.5]{IPsum}}]
\label{P1xT2_lmm2}
The genus~1  GW-invariants of % $\P^1\!\times\!\T^2$ and 
$(\P^1\!\times\!\T^2,F)$ with two point constraints satisfy
$$\sum_{d=0}^{\i}\GW_{1,\fs+d\ff;(1)}^{\P^1\times\T^2,F}\big(\pt;\pt\big)q^d=qG'(q).$$
\end{lmm}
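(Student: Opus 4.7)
The plan is to compute the invariant directly using the product almost complex structure $J=J_{\P^1}\times J_{\T^2}$ on $\P^1\times\T^2$, reducing the count to the absolute invariant $\GW_{1,d[\T^2]}^{\T^2}(\pt,\pt)$, which I would then evaluate by classifying elliptic covers of $\T^2$. First I would analyze $J$-holomorphic maps $f:\Sigma\to\P^1\times\T^2$ of arithmetic genus~$1$ in class $\fs+d\ff$ (with $d\ge 1$) having one order-$1$ contact with $F=\{p\}\times\T^2$. Since the projection $\pi_1\circ f$ has degree $(\fs+d\ff)\cdot\ff=1$, there is a unique domain component $\Sigma_0\simeq\P^1$ mapping isomorphically to a section $\P^1\times\{q_*\}$ of $\pi_1$, while the remaining components map to $\pi_1$-fibers. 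Since rational curves cannot cover $\T^2$, any component with positive $\pi_2$-degree must have positive geometric genus, and the arithmetic genus constraint then forces $\Sigma=\Sigma_0\cup\Sigma_1$ glued at a single node, with $\Sigma_1$ a single elliptic component mapping as a degree-$d$ cover of a fiber $\{p_*\}\times\T^2$ (ghost components contribute only in higher codimension).

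Next I would use the two point constraints to pin down this configuration. The relative constraint on $F$ can only be realized at the unique transverse intersection of $\Sigma_0$ with $F$, forcing $q_*=q_1$. For a generic absolute constraint point $(p_0,q_0)$ the absolute marked point cannot lie on $\Sigma_0$ (whose image is $\P^1\times\{q_1\}$), so it lies on $\Sigma_1$; this forces $p_*=p_0$ and sends the absolute marked point to $(p_0,q_0)$. The count thereby reduces to counting stable maps $f_1:\Sigma_1\to\T^2$ in $\ov\fM_{1,2}(\T^2,d[\T^2])$ whose marked points satisfy $f_1(z_*)=q_1$ and $f_1(z_1)=q_0$, i.e., the absolute invariant $\GW_{1,d[\T^2]}^{\T^2}(\pt,\pt)$. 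I would evaluate this as $d\si(d)$: the components of $\ov\fM_{1,2}(\T^2,d[\T^2])$ are indexed by the $\si(d)$ index-$d$ sublattices $\Lambda'\subset\Z^2\approx H_1(\T^2;\Z)$, each giving an unramified cover, and on each such component the evaluation map to $\T^2\times\T^2$ has degree $|\Lambda/\Lambda'|=d$, since after fixing the first marked point's image~$q_1$, the second has exactly $d$ preimages of~$q_0$ in the source. The $d=0$ term vanishes, as no genus-$1$ stable map in class $\fs$ can meet both a generic absolute constraint and the given relative constraint.

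Finally, I would verify that this naive enumeration matches the virtual count. The section contribution is unobstructed since $f_0^*T(\P^1\times\T^2)=\cO_{\P^1}(2)\oplus\cO_{\P^1}^{\oplus 2}$ has vanishing~$H^1$; gluing at the transverse node introduces no further correction; and on the fiber side the real dimensions of $\ov\fM_{1,2}(\T^2,d[\T^2])$ and of its virtual class both equal~$4$ per component, so the virtual class agrees with the fundamental class and the direct enumeration is valid. Summing $\sum_{d\ge 1}d\si(d)q^d=qG'(q)$ then concludes. The main obstacle will be this last step: justifying that in the relative moduli $\ov\fM_{1,1;(1)}^F(\P^1\times\T^2,\fs+d\ff)$ the virtual class receives no hidden contributions from expanded-target degenerations along~$F$, which requires matching the obstruction theories on the two sides of the node and excluding rubber bubbles by a dimension count.
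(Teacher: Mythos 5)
Your proposal is correct and follows essentially the same route as the paper's proof: both classify stable maps for the product $J$, identify the unique section component $\Si_0$ and the elliptic fiber-cover component $\Si_1$, and then enumerate. The repackaging of the count as $\GW_{1,d[\T^2]}^{\T^2}(\pt,\pt)$ (treating the node as a second marked point constrained to $q_1$) is cosmetic — your factorization $\si(d)\cdot d$ matches the paper's $(\si(d)/d)\cdot d\cdot d$ — and your final caveat about rubber contributions is a reasonable self-flag but not a new idea the paper lacks; the paper's brief "every such stable map is regular" covers the same ground informally.
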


\begin{proof}
Suppose $\Si$ is a connected nodal genus~1 curve and $u\!:\Si\!\lra\!\P^1\!\times\!\T^2$ 
is a degree $\fs\!+\!d\ff$ stable map. 
Since $\pi_1\!\circ\!u\!:\Si\!\lra\!\P^1$ has degree~1 and every holomorphic map $\P^1\!\lra\!\T^2$
is constant, $\Si$ contains a unique irreducible component $\Si_0\!\approx\!\P^1$ such that 
$u\!:\Si_0\!\lra\!\P^1\!\times\!q_2$ is an isomorphism for some $q_2\!\in\!\T^2$.
If $\Si_i$ is another irreducible rational component of~$\Si$, then $u|_{\Si_i}$ is constant.
Since $\Si$ is of genus~1, $\Si$ contains at most one (precisely one if $d\!>\!0$)
irreducible genus~1 component~$\Si_1$; furthermore, $u|_{\Si_1}$ is a degree~$d$ 
(unbranched) cover of $q_1\!\times\!\T^2$ for some $q_1\!\in\!\P^1$.
Every such stable map is regular.\\

\noindent
Thus, the subspace
$$\big\{[u,x_1,y_1]\!\in\!\ov\fM_{1,1;(1)}^F(\P^1\!\times\!\T^2,\fs\!+\!d\ff)\!: 
 u(x_1)\!=\!\pt_1,~u(y_1)\!=\!\pt_2\}$$
consists of maps $u\!:\Si_0\!\cup\!\Si_1\!\lra\!\P^1\!\times\!\T^2$ such that $u\!:\Si_0\!\lra\!\P^1\!\times\!\pi_2(\pt_2)$ is an isomorphism and 
$u\!:\Si_1\!\lra\!\pi_1(\pt_1)\!\times\!\T^2$ is a degree~$d$ 
cover.
%\footnote{The elements $[u,x_1,x_2]$ of $\ov\fM_{1,2}(\P^1\!\times\!\T^2,\fs\!+\!d\ff)$ 
%such that $u\!:\Si_0\!\lra\!\P^1\!\times\!\pi_2(\pt_1)$ is an isomorphism and 
%$u\!:\Si_1\!\lra\!\pi_1(p_2)\!\times\!\T^2$ is a degree~$d$ cover correspond
%to the elements $[u,x_1,y_1]$ of $\ov\fM_{1,1;(1)}^F(\P^1\!\times\!\T^2,\fs\!+\!d\ff)$ 
%such that $u\!:\Si_0\!\lra\!\P^1\!\times\!\pi_2(\pt_1)$ is an isomorphism,
%$u\!:\Si_{1;0}\!\cup\!\Si_{1;1}\!\lra\!\P^1\!\times\!\T^2$ is a map into the rubber,
%$u\!:\Si_{1;0}\!\lra\!\P^1\!\times\!\pi_2(\pt_1)$ is an isomorphism, and
%$u\!:\Si_1\!\lra\!q'\!\times\!\T^2$ 
%is a degree~$d$ cover for some $q'\!\in\!\P^1\!-\!\{0,\i\}$;
%this map does not pass through the relative constraint~$\pt_2$, which now lies
%on $0\!\times\!\T^2$ in the rubber.}
There are $\si(d)$ such maps~$u$, each of which has an automorphism of order~$d$.
For each choice of the map~$u$, there are $d$ choices for the preimage of~$\pt_1$
and $d$ choices for the nodes on~$\Si_1$.
Thus, 
$$\GW_{1,d\ff;(1)}^{\P^1\times\T^2,F}\big(\pt;\pt\big)=\si(d)/d\cdot d\cdot d=d\si(d);$$
this establishes the claim.
\end{proof}

\noindent
For any compact symplectic manifold $X$, we denote~by
$$\psi_1\in H^2\big(\ov\fM_{g,k}(X,A);\Q\big)$$
the first chern class of the universal cotangent line bundle for the 
first marked point.
For each $d\!\in\!\Z^{\ge0}$, let
\begin{equation*}\begin{split}
\GW_{1,\fs+d\ff}^{\P^1\times\T^2}\big(\tau_1[\ff],\pt\big)
&=\int_{[\ov\fM_{1,2}(\P^1\times\T^2,\fs+d\ff)]^{\vir}}\psi_1
\big(\ev_1^*\PD_{\P^1\times\T^2}\ff\big)\big(\ev_2^*\PD_{\P^1\times\T^2}\pt\big),\\
\GW_{1,\fs+d\ff;(1)}^{\P^1\times\T^2,F}\big(\tau_1[\ff];\pt\big)
&=\int_{[\ov\fM_{1,1;(1)}^F(\P^1\times\T^2,\fs+d\ff)]^{\vir}}\psi_1
\big(\ev_1^*\PD_{\P^1\times\T^2}\ff\big)\big(\ev_2^*\PD_{F}\pt\big).
\end{split}\end{equation*}

\begin{lmm}\label{P1T2_lmm}
For every $d\!\in\!\Z^{\ge0}$, 
\BE{P1T2lmm_e}\GW_{1,\fs+d\ff;(1)}^{\P^1\times\T^2,F}\big(\tau_1[\ff];\pt\big)=
\GW_{1,\fs+d\ff}^{\P^1\times\T^2}\big(\tau_1[\ff],\pt\big).\EE
\end{lmm}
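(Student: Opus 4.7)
The strategy is to exhibit the relative invariant on the left-hand side of~\eref{P1T2lmm_e} as a constrained version of the absolute invariant on the right-hand side, with the relative marked point forced to lie on~$F$. Since $(\fs\!+\!d\ff)\!\cdot\!F=\fs\!\cdot\!F=1$, every stable map in the class $\fs\!+\!d\ff$ meets~$F$ at a single point with multiplicity~$1$, and for the simple contact tuple $\bs\!=\!(1)$ the relative marked point~$y_1$ is precisely this unique intersection.

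First, I will take the point~$\pt$ on the right-hand side of~\eref{P1T2lmm_e} to lie in~$F$, so that
$$\PD_{\P^1\times\T^2}(\pt)=\io_{F*}^{\P^1\times\T^2}\PD_F(\pt),$$
with $\io_{F*}^{\P^1\times\T^2}$ the Gysin pushforward associated to the inclusion $\io_F^{\P^1\times\T^2}\!:F\!\lra\!\P^1\!\times\!\T^2$. Next, I will introduce the forgetful morphism
$$\Phi\!:\ov\fM_{1,1;(1)}^F(\P^1\!\times\!\T^2,\fs\!+\!d\ff)\lra \ov\fM_{1,2}(\P^1\!\times\!\T^2,\fs\!+\!d\ff),$$
obtained by contracting any rubber components of the expanded target and promoting the former relative marked point to a second absolute marked point. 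Set-theoretically, $\Phi$ identifies its source with $\ev_2^{-1}(F)$ in its target, because the simple contact condition forces $y_1$ to be the unique point of intersection with~$F$.

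The key technical input is the virtual-class identity
$$\Phi_*\bigl[\ov\fM_{1,1;(1)}^F(\P^1\!\times\!\T^2,\fs\!+\!d\ff)\bigr]^{\vir}=\bigl[\ov\fM_{1,2}(\P^1\!\times\!\T^2,\fs\!+\!d\ff)\bigr]^{\vir}\cap\ev_2^*\PD_{\P^1\times\T^2}(F),$$
i.e.,\ $\Phi_*$ of the relative virtual class is the refined intersection of the absolute virtual class with the embedding $\ev_2^{-1}(F)\!\hookrightarrow\!\ov\fM_{1,2}$. This is the standard comparison between relative and absolute virtual classes at a simple contact point: with $\ell\!=\!|\bs|\!=\!1$, no contact-multiplicity factor enters and rubber contributions drop out. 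This is the main potential obstacle. I would either invoke the general simple-contact comparison principle in relative Gromov--Witten theory, or verify the identity by hand via an explicit moduli-space description in the spirit of the proofs of Lemmas~\ref{P1xT2_lmm1} and~\ref{P1xT2_lmm2}: a stable map in class $\fs\!+\!d\ff$ decomposes into a section from a genus-$0$ component and, when $d\!>\!0$, a degree-$d$ unbranched cover of a fiber by a genus-$1$ component, all of which are regular with matching absolute and relative obstruction data.

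Granted the virtual class identity, the projection formula combined with base change for the Gysin pushforward yields
$$\int_{[\ov\fM_{1,2}]^{\vir}}\!\!\psi_1\,\ev_1^*\PD_{\P^1\times\T^2}(\ff)\,\ev_2^*\PD_{\P^1\times\T^2}(\pt)=\int_{[\ov\fM_{1,1;(1)}^F]^{\vir}}\!\!\Phi^*\psi_1\cdot \ev_1^*\PD_{\P^1\times\T^2}(\ff)\cdot \ev_2^*\PD_F(\pt).$$
Because the first marked point $x_1$ does not interact with $F$, one has $\Phi^*\psi_1\!=\!\psi_1$ and $\ev_1\!\circ\!\Phi\!=\!\ev_1$, so the right-hand side above is precisely the left-hand side of~\eref{P1T2lmm_e}, completing the argument.
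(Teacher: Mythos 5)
Your proposed virtual-class identity
$$\Phi_*\bigl[\ov\fM_{1,1;(1)}^F(\P^1\!\times\!\T^2,\fs\!+\!d\ff)\bigr]^{\vir}=\bigl[\ov\fM_{1,2}(\P^1\!\times\!\T^2,\fs\!+\!d\ff)\bigr]^{\vir}\cap\ev_2^*\PD_{\P^1\times\T^2}(F)$$
is not true in this setting, and the gap lies exactly where the paper's argument puts in real work. Even though $A\!\cdot\!F\!=\!1$ and the contact order is simple, there is a component of both moduli spaces where the unique genus-$1$ component of the domain is entirely mapped into the fiber~$F$: in the absolute moduli space $\ov\fM_{1,2}(\P^1\!\times\!\T^2,\fs\!+\!d\ff)$ this just gives a map landing in $F$, while in the relative moduli space $\ov\fM_{1,1;(1)}^F(\P^1\!\times\!\T^2,\fs\!+\!d\ff)$ the genus-$1$ piece lives in a rubber level, modulo the $\C^*$-scaling. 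The obstruction theories of these two pieces are \emph{not} the same, so $\Phi_*$ of the relative virtual cycle does not recover the absolute one capped with $\ev_2^*\PD(F)$; there is no ``general simple-contact comparison principle'' that delivers this identity once rubber degenerations are possible. Your explicit fallback description (section from a genus-$0$ component plus a degree-$d$ cover of a fiber by a genus-$1$ component) describes only the unproblematic pieces of the moduli space and omits precisely this third, subtle component.

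The paper's proof is structured around this difficulty rather than around a clean virtual-class identity. After imposing the constraints, both moduli spaces decompose into three components. The first two are genuinely isomorphic with matching deformation/obstruction data. The third components differ (map into $F$ vs.\ map into rubber), but on both of them the restriction of $\psi_1$ is the pullback, via the first evaluation map, of $c_1$ of the conormal bundle to the fiber $\ff$; since the normal bundle of a fiber in $\P^1\!\times\!\T^2$ is trivial, $\psi_1$ vanishes on both third components and their (different) contributions are killed. Thus the equality of Lemma~\ref{P1T2_lmm} is really a statement about the specific insertion $\psi_1\ev_1^*\PD(\ff)\ev_2^*\PD(\pt)$, not a consequence of a raw comparison of virtual classes; your argument, if it went through, would prove equality for \emph{any} insertion, which is more than is true here. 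To repair the proof along your lines, you would still need to identify the third component, compute the push-forward of its rubber virtual cycle under $\Phi$, and observe that the discrepancy is annihilated by $\psi_1$ -- at which point you have essentially reproduced the paper's argument.
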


\begin{proof}
As we explained below,
\begin{equation*}\begin{split}
&\big\{[u,x,y]\!\in\!\ov\fM_{1,1;(1)}^F(\P^1\!\times\!\T^2,\fs\!+\!d\ff)\!:~
u(x)\!\in\!\ff,~u(y)\!=\!\pt\big\}\\
&\hspace{1.5in}\approx\big\{[u,x_1,x_2]\!\in\!\ov\fM_{1,2}(\P^1\!\times\!\T^2,\fs\!+\!d\ff)\!:~
u(x_1)\!\in\!\ff,~u(x_2)\!=\!\pt\big\} ,
\end{split}\end{equation*}
where $\pt\!\in\!F$ is a fixed point.
Both spaces contain three irreducible components, which we describe below
and which  have essentially the same deformation/obstruction theory (after capping with $\psi_1$
in the third case);
see Figure~\ref{P1T2_fig}.
This implies the claim.\\ 

\noindent
One of the components common to both spaces is isomorphic to 
$$\P^1\times \big\{[u,x_1']\!\in\!\ov\fM_{1,1}(\T^2,d)\!:\,u(x_1')\!=\!\pt_2\big\},$$
if $\pt\!\equiv\!(\pt_1,\pt_2)\!\in\!\P^1\!\times\!\T^2$.
A generic element of this component is a morphism from a smooth genus~1 curve
 and a rational tail carrying the two marked points which restricts
to a degree~$d$ morphism to a fiber of~$\pi_1$ (specified by~$\P^1$)
on the genus~1 curve and an isomorphism from the tail to the section~$s_{\pt}$ through~$\pt$.\\

\begin{figure}
\begin{pspicture}(-1.5,-2.5)(10,0)
\psset{unit=.3cm}
\psline[linewidth=.02](0,-8)(0,-0)\psline[linewidth=.02](10,-8)(10,-0)
\pscircle*(0,-3){.2}\rput(-.8,-3.6){\sm{$\pt$}}
\rput(-.7,-8){\sm{$F$}}\rput(10.6,-8){\sm{$\ff$}}
\psline[linewidth=.1](-2,-3)(12,-3)\psline[linewidth=.1](5,-1)(5,-7)
\rput(5.5,-1){\sm{$d$}}\rput(.8,-2.4){\sm{$x_2$}}
\pscircle*(10,-3){.2}\rput(9.2,-2.4){\sm{$x_1$}}
% middle diagram starts here
\psline[linewidth=.02](18,-8)(18,-0)\psline[linewidth=.02](28,-8)(28,-0)
\pscircle*(18,-3){.2}\rput(17.2,-3.6){\sm{$\pt$}}
\rput(17.3,-8){\sm{$F$}}\rput(28.6,-8){\sm{$\ff$}}
\psline[linewidth=.1](16,-3)(30,-3)\psline[linewidth=.1](28,-1)(28,-7)
\rput(28.5,-1){\sm{$d$}}\rput(18.8,-2.4){\sm{$x_2$}}
\pscircle*(28,-5){.2}\rput(27.2,-5){\sm{$x_1$}}
% last diagram starts here
\psline[linewidth=.02](36,-8)(36,-0)\psline[linewidth=.02](46,-8)(46,-0)
\pscircle*(36,-3){.2}\rput(35.1,-3.1){\sm{$\pt$}}
\rput(35.3,-8){\sm{$F$}}\rput(46.6,-8){\sm{$\ff$}}
\psline[linewidth=.1](34,-5)(48,-5)\psline[linewidth=.1](36,-1)(36,-7)
\rput(36.5,-1){\sm{$d$}}\rput(37,-3.1){\sm{$x_2$}}
\pscircle*(46,-5){.2}\rput(45.2,-4.4){\sm{$x_1$}}
\end{pspicture}
\caption{The three components of $\ov\fM_{1,2}(\P^1\!\times\!\T^2,\fs\!+\!d\ff)$}
\label{P1T2_fig}
\end{figure}

\noindent
Another component is isomorphic~to 
$$\big\{[u,x_1,x_2']\!\in\!\ov\fM_{1,2}(\ff,d)\!:\,u(x_2')\!=\!\pt_2\big\}.$$
A generic element of this component is a morphism from a smooth genus~1 curve
carrying the first marked point
and a rational tail carrying the second marked point which restricts
to a degree~$d$ morphism to the fiber~$\ff$ of~$\pi_1$ on the genus~1 curve and 
an isomorphism from the tail to~$s_{\pt}$.\\

\noindent
The last component of the absolute moduli space is isomorphic~to
$$\big\{[u,x_1',x_2]\!\in\!\ov\fM_{1,2}(F,d)\!:\,u(x_2)\!=\!\pt\big\}.$$
A generic element of this component is a morphism from a smooth genus~1 curve
carrying the second marked point and a rational tail carrying the first marked point 
which restricts to a degree~$d$ morphism to the fiber~$F$ of~$\pi_1$ on the genus~1 curve and 
an isomorphism from the tail to a section of $\pi_1$ (through the image of the first marked 
point of an element of $\ov\fM_{1,2}(F,d)$).
The last component of the relative moduli space is described in the same way, except
the morphism on the genus~1 component above is replaced by 
the $\C^*$-equivalence class of a morphism into the rubber $\P^1\!\times\!\T^2$
from a smooth genus~1 component with a rational tail carrying two marked points
which restricts to a degree~$d$ morphism into a fiber of~$\pi_1$, but not over $0,\i\!\in\!\P^1$,
and an isomorphism from the tail to a section of~$\pi_1$.
The restriction of~$\psi_1$ to this component of the moduli space is the pullback
of the first chern class of the conormal bundle to~$\ff$ by the first evaluation map.
Thus, this restriction vanishes in the absolute and relative cases.
\end{proof}

\begin{rmk}\label{14.2_rmk}
Lemma~\ref{P1xT2_lmm1} corrects the first statement of \cite[Lemma 14.4]{IPsum};
the other two, one of which is similarly~off, are never used.
Lemma~\ref{P1xT2_lmm2} is the second statement of \cite[Lemma 14.5]{IPsum};
the other two are never used.
The proof of \cite[Lemma 14.5]{IPsum} has two mutually canceling errors, 
ignoring the automorphisms of the cover and the choices of the node on the genus~1 component.
The statement at the end of the first paragraph of the proof in~\cite{IPsum} is true only 
generically or after imposing the constraints; 
otherwise, there could be maps with a component mapped into the rubber.
The third statement of \cite[Lemma 14.5]{IPsum} and its proof incorrectly describe 
the IP-counts of $(\P^1\!\times\!\T^2,V\!\equiv\!F_0\!\cup\!F_{\i})$ as being indexed by the rim tori,
suggesting that the rim tori cover $\wh{V}_{\P^1\times\T^2;(1),(1)}$
is $\Z^2\!\times\!F_0\!\times\!F_{\i}$.
As explained in \cite[Example~6.9]{GWrelIP}, 
$\wh{V}_{\P^1\times\T^2;(1),(1)}\!\approx\!\C\!\times\!\T^2$  and there is 
only one IP-count of each type appearing in the third statement of \cite[Lemma 14.5]{IPsum};
there is no indexing by the rim tori.
\end{rmk}

\subsection{GW-invariants of $\wh\P^2_9$}
\label{RES_subs}

\noindent
We next make some observations concerning absolute GW-invariants of~$\wh\P^2_9$ and
relative GW-invariants of $(\wh\P^2_9,F)$, where $F\!\approx\!\T^2$ is a fiber of
the projection $\wh\P^2_9\!\lra\!\P^1$.
From
$$(\fs_i\!+\!d\ff)\cdot\ff=1, \qquad \blr{c_1(T\wh\P^2_9),\ff}=0, \qquad\hbox{and}\quad
\blr{c_1(T\wh\P^2_9),\fs_i}=1,$$
%\EE
we find that 
\begin{equation*}\begin{split}
\dim_{\C}\ov\fM_{1,0}(\wh\P^2_9,d\ff)&=0+(2\!-\!3)(1\!-\!1)=0,\\
\dim_{\C}\ov\fM_{g,0}(\wh\P^2_9,\fs_i\!+\!d\ff)
&=\dim_{\C}\ov\fM_{g,0;(1)}^F(\wh\P^2_9,\fs_i\!+\!d\ff)=1+(2\!-\!3)(1\!-\!g)=g.
\end{split}\end{equation*}
Thus, $\GW_{1,d\ff}^{\wh\P^2_9}()$, $\GW_{g,\fs_i+d\ff}^{\wh\P^2_9}(\pt^g)$, and 
$\GW_{g,\fs_i+d\ff;(1)}^{\wh\P^2_9,F}(\pt^g)$,
where $\pt^g$ denotes $g$ absolute point constraints, are rational numbers.
These invariants are independent of the choice of the complex
structure on $\wh\P^2_9$ and~$(\wh\P^2_9,F)$.

\begin{lmm}[{\cite[p1019]{IPsum}}]\label{g1RES_lmm}
The genus~1 GW-invariants of $\wh\P^2_9$ in the fiber classes are described~by
$$\sum_{d=1}^{\i} d\,\GW_{1,d\ff}^{\wh\P^2_9}()q^d=G(q).$$
\end{lmm}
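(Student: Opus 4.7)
My plan is to compute $\GW_{1,d\ff}^{\wh\P^2_9}()$ by reducing to a fiberwise count via the elliptic fibration structure on~$\wh\P^2_9$. I would fix an integrable complex structure on $\wh\P^2_9$ making $\pi\!:\!\wh\P^2_9\!\lra\!\P^1$ a holomorphic elliptic fibration with exactly 12 nodal singular fibers and no multiple fibers. Since $\ff\!\cdot\!\ff\!=\!0$, any $J$-holomorphic stable genus-1 map $u\!:\!\Si\!\lra\!\wh\P^2_9$ in class~$d\ff$ satisfies $(\pi\!\circ\!u)_*[\Si]\!=\!0$ in $H_2(\P^1;\Z)$, so $\pi\!\circ\!u$ is constant on the connected curve~$\Si$ and $u$ factors through a single fiber~$F_t$. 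Hence $\ov\fM_{1,0}(\wh\P^2_9,d\ff)$ is a family over~$\P^1$ of moduli spaces of degree-$d$ genus-1 covers of individual fibers.

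Over a smooth fiber $F_t\!\approx\!\T^2$, the connected degree-$d$ unramified covers $\T^2\!\lra\!F_t$ are parametrized by index-$d$ sublattices of~$\Z^2$, giving $\si(d)$ maps each with automorphism group of order~$d$, exactly as in the proof of Lemma~\ref{P1xT2_lmm1}. Over each of the 12 nodal fibers there are additional contributions from stable genus-1 maps factoring through the normalization $\P^1\!\lra\!F_t$ with the two preimages of the node identified. I would then assemble the full virtual count by integrating the Euler class of the associated obstruction bundle (governed by $H^1(\Si,u^*T_{F_t})$ twisted against deformations in the base~$\P^1$) over the parameter space; the fiberwise enumeration produces $\si(d)/d$ per smooth fiber, and the integration against the base consolidates the family contribution together with those of the 12 nodal fibers to yield $\GW_{1,d\ff}^{\wh\P^2_9}()\!=\!\si(d)/d$, whence the claim.

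The main obstacle is this final virtual-class step: the careful treatment of how the smooth-fiber contribution (which is a non-compact integral over~$\P^1$ minus 12 points) combines with the contributions from the 12 nodal fibers to produce the clean answer $\si(d)/d$. This is the technical heart of the calculation in~\cite{BL}. An alternative route that sidesteps the direct obstruction-bundle computation is to apply the (unrefined) symplectic sum formula to the decomposition $\bK_3\!=\!\wh\P^2_9\!\#_F\!\wh\P^2_9$, using the known vanishing of all GW-invariants of~$\bK_3$ together with Lemma~\ref{P1xT2_lmm1} applied to an auxiliary insertion of a trivial $\P^1\!\times\!\T^2$ factor; this yields a recursion that pins down $\GW_{1,d\ff}^{\wh\P^2_9}()$ uniquely from the small-degree values and the $\P^1\!\times\!\T^2$ data.
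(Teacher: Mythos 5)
Your proposal does not close the argument. You set up a fiberwise analysis of the elliptic fibration, correctly note that genus-1 maps in class $d\ff$ factor through single fibers, and then stop at exactly the point you yourself identify as ``the technical heart of the calculation in [BL]'': how the $\si(d)/d$ contribution from each smooth fiber, integrated over the non-compact locus $\P^1$ minus 12 points, combines with the 12 nodal-fiber contributions to give a finite answer. You offer no argument for this, and it is genuinely hard --- the naive ``integral'' over smooth fibers diverges, and the virtual-class bookkeeping (obstruction bundle, twisting by base deformations, nodal-fiber corrections) is precisely what makes the original Bryan--Leung proof substantial. The suggested fallback via $\bK_3 = \wh\P^2_9 \#_F \wh\P^2_9$ is also only sketched and does not constitute a proof either.

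The paper avoids all of this with a single observation you missed: GW-invariants depend only on the symplectic deformation class, so one may compute $\GW_{1,d\ff}^{\wh\P^2_9}()$ after \emph{deforming the 9 blown-up points to general position} in $\P^2$ rather than taking them to be the base points of a cubic pencil. For 9 general points there is no elliptic fibration at all; instead there is a \emph{unique} cubic through the 9 points, and its proper transform is the only holomorphic curve in class $\ff$. The moduli space $\ov\fM_{1,0}(\wh\P^2_9,d\ff)$ then consists exactly of the $\si(d)$ unbranched degree-$d$ covers of this one elliptic curve, all regular with automorphism group of order $d$, giving $d\,\GW_{1,d\ff}^{\wh\P^2_9}() = \si(d)$ immediately. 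No family over $\P^1$, no nodal fibers, no obstruction-bundle integration --- the deformation collapses the whole geometry to a single curve. That deformation step is the key idea you should look for when an enumerative problem seems to require integrating over a non-compact family.
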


\begin{proof}
If $\wh\P^2_9$ is obtained by blowing up $\P^2$ at 9 general points, 
there is only one degree~$\ff$ holomorphic curve;
this is the proper transform of the unique cubic passing through the 9~points.
In this case, $\ov\fM_{1,0}(\wh\P^2_9,d\ff)$ consists of the $\si(d)$ unbranched covers of 
this cubic, all of which are regular and have an automorphism of order~$d$.
Thus,
$$d\, \GW_{1,d\ff}^{\wh\P^2_9}=\si(d),$$
as claimed.
\end{proof}

\begin{lmm}[{\cite[Lemma~14.8]{IPsum}}]
\label{RES_lmm}
Let $d,g\!\in\!\Z^{\ge0}$.
The absolute and relative degree $\fs_i\!+\!d\ff$ genus $g$ GW-invariants of $\wh\P^2_9$ and 
$(\wh\P^2_9,F)$
with $g$ point insertions satisfy
$$\GW_{g,\fs_i+d\ff;(1)}^{\wh\P^2_9,F}(\pt^g;\ff)=\GW_{g,\fs_i+d\ff}^{\wh\P^2_9}(\pt^g).$$
\end{lmm}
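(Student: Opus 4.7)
The plan is to reduce the relative invariant to an absolute one via the comparison morphism between relative and absolute moduli spaces, followed by an application of the divisor equation. First I would interpret the relative insertion $\ff$ as the generator $1\in H^0(F;\Q)=\Q$, i.e.~$\PD_F[F]$; this is forced by a dimension count, since $\dim_\C\ov\fM_{g,g;(1)}^F(\wh\P^2_9,\fs_i+d\ff)=2g$ equals precisely the complex codimension of $g$ absolute point constraints in the surface $\wh\P^2_9$, leaving no room for a positive-degree relative insertion.

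Next I would invoke the forgetful morphism
$$\phi:\ov\fM_{g,g;(1)}^F(\wh\P^2_9,\fs_i+d\ff)\lra \ov\fM_{g,g+1}(\wh\P^2_9,\fs_i+d\ff)$$
that drops the relative structure, collapses any rubber components, and records the relative marked point as an absolute $(g{+}1)$-st marked point. Since $(\fs_i+d\ff)\cdot F=1$ and the contact profile $\bs=(1)$ saturates this intersection number, any stable map in the relative moduli meeting $F$ transversally corresponds bijectively to an absolute stable map meeting $F$ at the $(g{+}1)$-st marked point. A check of the rubber strata shows they contribute nothing: in the rubber $R=\P(\cN_{\wh\P^2_9}F\oplus\cO)\cong F\!\times\!\P^1$, a nontrivial bubble carries a class of the form $a[F]+b[\P^1_{\pt}]$ with $b\!\geq\!1$ on account of the contact condition on the top divisor, and the $\C^*$-quotient defining the rubber moduli together with the constant insertion $1\in H^0(F)$ forces the relevant integrand to vanish on the corresponding boundary virtual cycle. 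The resulting identity of virtual classes gives
$$\GW_{g,\fs_i+d\ff;(1)}^{\wh\P^2_9,F}\bigl(\pt^g;\ff\bigr)\;=\;\GW_{g,\fs_i+d\ff}^{\wh\P^2_9}\!\bigl(\pt^g;\PD_{\wh\P^2_9}F\bigr).$$

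Finally, the divisor equation for absolute GW-invariants applied to the divisor class $\PD_{\wh\P^2_9}F\in H^2(\wh\P^2_9;\Q)$ gives
$$\GW_{g,\fs_i+d\ff}^{\wh\P^2_9}\!\bigl(\pt^g;\PD_{\wh\P^2_9}F\bigr)
\;=\;\bigl\langle\PD_{\wh\P^2_9}F,\,\fs_i+d\ff\bigr\rangle\,\GW_{g,\fs_i+d\ff}^{\wh\P^2_9}(\pt^g)
\;=\;\GW_{g,\fs_i+d\ff}^{\wh\P^2_9}(\pt^g),$$
since $F\cdot(\fs_i+d\ff)=1$, which is the desired equality. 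The main obstacle is the rubber-vanishing step in the comparison: making it fully rigorous requires a careful analysis of the virtual fundamental class along the boundary strata of the Jun Li relative moduli space, but the underlying reason is dimensional — a rubber bubble with contact order $(1)$ on both divisors is driven into the lowest rigidified stratum, whose integral against a codimension-zero relative insertion is zero by standard string-type identities for the $\C^*$-quotient.
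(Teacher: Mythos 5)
Your overall structure is genuinely different from the paper's: you route the lemma through a relative--absolute comparison morphism plus the divisor equation, whereas the paper argues directly at the level of moduli spaces with a generic almost complex structure. The paper's proof picks a generic $J$ on $(\wh\P^2_9,F)$, observes that any irreducible component of a stable map mapping nonconstantly into $F\!\cong\!\T^2$ must itself have genus $\ge 1$ (there are no nonconstant holomorphic maps from genus~$0$ to a torus), so the remaining components carry total genus at most $g\!-\!1$ and hence cannot pass through $g$ generic points off $F$; thus all contributing maps are $F$-regular and the absolute and relative counts literally coincide. That argument is short and entirely avoids the relative/absolute comparison machinery.

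The gap in your version is precisely the rubber-vanishing step, and the justification you give is not correct as stated. There is no general principle that a rubber contribution with relative insertion $1\!\in\!H^0(F)$ vanishes by a ``string-type identity for the $\C^*$-quotient''; rubber corrections in the Maulik--Pandharipande-type relative/absolute comparison are generically nonzero. The actual reason the rubber strata drop out here is the same geometric fact the paper exploits: writing a rubber bubble's class in $P_F\!=\!\P(\cN F\!\oplus\!\cO)\cong F\!\times\!\P^1$ as $a[F]+b[\P^1_{\pt}]$ with $b\ge 1$, either $a>0$ (forcing a genus $\ge 1$ component of the bubble because $F\!\cong\!\T^2$) or $a=0$ with a single contact of order one on each side (a trivial cylinder, excluded from the rubber moduli). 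In either case the main component in $\wh\P^2_9$ has genus at most $g\!-\!1$, and it alone must absorb the $g$ generic point constraints (no absolute marked point can map into $F$ for generic constraints), so its count is virtually zero. Without making this observation -- which is exactly the content of the paper's two-line argument -- your rubber vanishing does not follow, and the ``lowest rigidified stratum'' heuristic does not supply it. So while your route is sound in outline, filling the gap requires importing the paper's argument, at which point the comparison-morphism framing is extra overhead rather than a simplification.
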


\begin{proof}
Let $J$ be a generic almost complex structure on $(\wh\P^2_9,F)$.
Suppose $\Si$ is a connected nodal genus~$g$ curve and $u\!:\Si\!\lra\!\wh\P^2_9$ 
is a degree $\fs_i\!+\!d\ff$ $J$-holomorphic stable map.
If $\Si_i$ is an irreducible component of $\Si$ such that $u\!:\Si_i\!\lra\!F$
is not constant, then the genus of~$\Si_i$ is at least one
and the sum of the genera of the remaining components of~$\Si$ is at most $g\!-\!1$.
Therefore, if the $g$ points are in general position, $u(\Si)$ does not contain all 
of them.
It follows that all of the maps contributing to the absolute invariant 
with $g$ point insertions are $F$-regular and thus contribute in the same way 
to the relative invariant.
\end{proof}

\begin{rmk}\label{14.4_rmk}
Lemma~\ref{RES_lmm} corrects the statement of \cite[Lemma~14.8]{IPsum}.
The latter and its proof incorrectly describe the IP-counts of $(\wh\P^2_9,F)$ 
as being indexed by the rim tori,
suggesting that the rim tori cover $\wh{F}_{\wh\P^2_9;(1)}$
is $\Z^2\!\times\!F$.
As explained in \cite[Example~6.8]{GWrelIP}, 
$\wh{F}_{\wh\P^2_9;(1)}\!\approx\!\C$ and there is 
only one IP-count appearing in the  statement of \cite[Lemma~14.8]{IPsum};
it is the count appearing in~\eref{ESsum_e15}.
Its relative constraint is the pullback of $1\!\in\!H^0(\wh{F}_{\wh\P^2_9;(1)};\Q)$
by a lifted evaluation map~\eref{evXVlift_e}; there is no indexing by the rim tori.
\end{rmk}

\subsection{Proof of Theorem~\ref{YZ_thm}}
\label{YauZaslow_subs}

\noindent
For each $d\!\in\!\Z^{\ge0}$, let
$$\GW_{1,\fs_i+d\ff}^{\wh\P^2_9}\big(\tau_1[\ff]\big) =
\deg\big([\ov\fM_{1,1}(\wh\P^2_9,\fs_i\!+\!d\ff)]^{\vir}\!\cap\!\psi_1\!\cap\!\ev_1^*1\big)
\equiv\int_{[\ov\fM_{1,1}(\wh\P^2_9,\fs_i+d\ff)]^{\vir}}\psi_1\ev_1^*1\,.$$
The $g\!=\!0$ case of~\eref{YZ_e1} is proved in~\cite{IPsum} by obtaining two different 
expressions~for 
\BE{Hdfn_e} H(q)\equiv\sum_{d=0}^{\i}\GW_{1,\fs_i+d\ff}^{\wh\P^2_9}\big(\tau_1[\ff]\big)q^d\EE
and setting them equal.

\begin{lmm}[{\cite[Lemma 15.1]{IPsum}}]\label{g1GWs_lmm}
Let $X$ be a symplectic 4-manifold with canonical class~$K_X$.
\begin{enumerate}[label=(\alph*),leftmargin=*]

\item For every $f\!\in\!H^2(X;\Q)$,
$$\GW_{1,0}^X(f)=\frac1{24}K_X\!\cdot\!f\,.$$

\item For every $A\!\in\!H_2(X;\Z)$ with $A\!\cdot\!K_X\!<\!0$ and $f\!\in\!H^2(X;\Q)$, 
\begin{equation*}\begin{split}
&\GW_{1,A}^X\big(\tau_1(f),\pt^{-K_X\cdot A-1}\big)
=\frac{f\!\cdot\!A}{24}\big(A\!\cdot\!A+K_X\!\cdot\!A\big)
\GW_{0,A}^X\big(\pt^{-K_X\cdot A-1}\big)\\
&\hspace{.1in}+\sum_{\begin{subarray}{c}A_0,A_1\in H_2(X;\Z)-0\\ A_0+A_1=A\end{subarray}}
\!\!\!\binom{\!-\!K_X\!\cdot\!A\!-\!1}{\!-\!K_X\!\cdot\!A_0\!-\!1}
(f\!\cdot\!A_0)(A_0\!\cdot\!A_1)\GW_{0,A_0}^X\big(p^{-K_X\cdot A_0-1}\big)
\GW_{1,A_1}^X\big(p^{-K_X\cdot A_1}\big).
\end{split}\end{equation*}

\end{enumerate}
\end{lmm}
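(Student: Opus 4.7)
The plan for part (a) is a direct moduli-space computation. Every degree-zero stable map is constant, so $\ov\fM_{1,1}(X,0)$ is canonically identified with $\ov\fM_{1,1}\!\times\!X$ of complex dimension~$3$, with evaluation map equal to the projection $p_2$ to $X$. The obstruction bundle for constant maps globalizes to $\bE^\vee\!\boxtimes\!TX$, where $\bE \to \ov\fM_{1,1}$ is the Hodge line bundle, so
\begin{equation*}
[\ov\fM_{1,1}(X,0)]^{\vir} = e(\bE^\vee\!\boxtimes\!TX)\cap [\ov\fM_{1,1}\!\times\!X].
\end{equation*}
Writing $\la = c_1(\bE)$, the Chern-class expansion gives $c_2(\bE^\vee\!\otimes\!TX) = \la^2 - \la\,c_1(TX) + c_2(TX)$. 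Only the middle term survives integration against $p_2^*f$, since $\la^2 = 0$ on $\ov\fM_{1,1}$ and $c_2(TX)\!\cdot\!f \in H^6(X;\Q) = 0$. Using $\int_{\ov\fM_{1,1}}\la = 1/24$ and $c_1(TX) = -K_X$, we obtain $\GW_{1,0}^X(f) = \tfrac{1}{24}K_X\!\cdot\!f$.

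For part (b), I would use a genus-reduction decomposition of the moduli space whose every factor of $1/24$ is traced back to the Mumford relation $12\la = \de_{\tn{irr}}$ on $\ov\fM_{1,1}$. For a generic compatible $J$ and using $A\!\cdot\!K_X<0$, I would first argue that every stable map contributing to $\GW_{1,A}^X(\tau_1(f),\pt^{-K_XA-1})$ has a unique subcurve of arithmetic genus one, with all other irreducible components being rational tails. This partitions the moduli space into two strata: \emph{(i)} the genus-one subcurve is contracted to a point lying on a genus-zero image curve of class $A$; \emph{(ii)} the genus-one subcurve is non-constant of some class $A_1\!\neq\!0$ and meets a genus-zero subcurve of class $A_0 = A-A_1$ at a node.

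Stratum (i) is analyzed by applying part (a) locally at the contracted bubble: an excess-intersection/virtual-normal-bundle computation at the node produces a factor equal to the degree of the adjunction class $c_1(TX)\!\cdot\!A + A\!\cdot\!A = A\!\cdot\!A + K_X\!\cdot\!A$ along the genus-zero curve (the standard $2p_a(A)-2$), multiplied by $\tfrac{1}{24}$, by the weight $f\!\cdot\!A$ from $\ev_1^*f$, and by $\GW_{0,A}^X(\pt^{-K_XA-1})$, giving the first term in the formula. Stratum (ii) is analyzed by the standard splitting principle at the gluing node: the resulting product of genus-zero and genus-one invariants is summed over splittings $A = A_0+A_1$, weighted by $A_0\!\cdot\!A_1$ from the diagonal class at the node, by $f\!\cdot\!A_0$ from the $\tau_1(f)$-marked point (which necessarily lies on the genus-zero side once $\psi_1$ is absorbed into the boundary divisor), and by the binomial coefficient $\binom{-K_XA-1}{-K_XA_0-1}$ counting the ways of distributing the point constraints between the two sides.

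The main obstacle will be the precise excess-intersection computation in stratum (i): one must set up the deformation theory at a nodal curve with a contracted genus-one bubble attached to a genus-zero component, identify the virtual normal bundle of the corresponding boundary stratum, and show that its Chern-class contribution against a genus-zero image of class $A$ equals $A\!\cdot\!A + K_X\!\cdot\!A$. The combinatorial analysis of stratum (ii) is then a routine application of the splitting principle together with binomial accounting of the distribution of point constraints across the node.
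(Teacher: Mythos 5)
Your part (a) is essentially the paper's computation: the paper pulls back along a map $h\!:\!Y\!\to\!X$ representing $\PD f$, while you integrate $p_2^*f$ against the Euler class of $\bE^\vee\!\boxtimes\!TX$ on $\ov\fM_{1,1}\!\times\!X$, but these are the same calculation.

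Your part (b) points in the right general direction (the paper does use the genus-1 topological recursion relation $\psi_1 = \frac{1}{12}\De_0 + \De_{;1}$, which is a global version of the Mumford relation you cite), but your two-stratum geometric decomposition has a genuine gap in how it accounts for the first term. In the paper, the factor $\frac{f\cdot A}{24}(A\!\cdot\!A + K_X\!\cdot\!A)$ is the \emph{sum} of two geometrically distinct boundary contributions: the divisor $\frac{1}{12}\De_0$, whose generic element is a map from an irreducible nodal genus-1 domain (a genus-0 curve with a self-node, so no contracted genus-1 subcurve at all), contributes $\frac{1}{24}(A\!\cdot\!A)(f\!\cdot\!A)\GW_{0,A}^X(\pt^{k-1})$ via the Kunneth splitting of the diagonal in $X^2$ at the node; and the $A_1\!=\!0$ components of $\De_{;1}$ (a contracted genus-1 tail attached to a genus-0 curve of class $A$) contribute $\frac{1}{24}(K_X\!\cdot\!A)(f\!\cdot\!A)\GW_{0,A}^X(\pt^{k-1})$ via part (a). Your ``stratum (i)'' describes only the second of these; $\De_0$ is simply absent from your decomposition. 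The excess-intersection computation you propose at the contracted bubble cannot produce the combined factor: the virtual normal bundle of that stratum involves $TX$ along the genus-0 image and yields only the $K_X\!\cdot\!A$ piece (exactly what part (a) gives), while $A\!\cdot\!A$ is a diagonal-splitting term at a self-node, not an adjunction term at a contracted tail. Your displayed identity $c_1(TX)\!\cdot\!A + A\!\cdot\!A = A\!\cdot\!A + K_X\!\cdot\!A$ has a sign error ($c_1(TX) = -K_X$), which is a symptom of this conflation. Finally, you would also need a separate argument to exclude the $A_0=0$ case of $\De_{;1}$ from the sum (the paper uses stability plus the genericity of the representative of $f$); the observation that the $\tau_1(f)$-marked point lies on the rational tail does not by itself rule out that tail being contracted.
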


\begin{proof}
(a) If $h\!:Y\!\lra\!X$ represents the Poincare dual of $f$ (after passing to a multiple
if necessary),
$$\big\{(y,[u,x_1])\!\in\!Y\!\times\!\ov\fM_{1,1}(X,0)\!:~h(y)\!=\!u(x_1)\big\}
\approx Y\!\times\!\ov\M_{1,1}$$
and the obstruction bundle is isomorphic to $\pi_1^*h^*TX\!\otimes\!\pi_2^*\bE^*$,
where $\bE\!\lra\!\ov\M_{1,1}$ is the Hodge line bundle. Thus,
\begin{equation*}\begin{split}
\GW_{1,0}^X(f)&=\blr{e\big(\pi_1^*h^*TX\!\otimes\!\pi_2^*\bE^*\big),Y\!\times\!\ov\M_{1,1}}\\
&=-\lr{h^*c_1(TX),Y}\lr{c_1(\bE),\ov\M_{1,1}}
=\frac1{24}K_X\!\cdot\!f.
\end{split}\end{equation*}
(b) Let $k\!=\!-K_X\!\cdot\!A$ and $\{H_i\},\{\check{H}_i\}\!\subset\!H^2(X;\Q)$ be dual
bases. Choose a representative $F\!\subset\!X$ for $f$ 
and $k\!-\!1$ general points $\pt_2,\ldots,\pt_k\!\in\!X$.
By the genus~1 topological recursion relation, illustrated in Figure~\ref{g1TRR_fig}
and explained in~\cite{Liu}, 
$$\psi_1=\frac1{12}\De_0+\De_{;1}\,,$$
where $\De_0,\De_{;1}\subset\ov\fM_{1,k}(X,A)$ are the virtual divisors 
whose virtually generic elements are morphisms from the genus~1 irreducible nodal
curve and from a smooth genus~1 curve with a rational tail which carries the first marked
point.\\

\begin{figure}
\begin{pspicture}(-1.5,-2.5)(10,0)
\psset{unit=.3cm}
\rput(10,-4){$\psi_1~~~=~~~\frac1{12}$}
\psarc(16,-5){1.5}{135}{45}\pscircle*(16,-2.88){.2}
\psline[linewidth=.05](17.06,-3.94)(15.06,-1.94)
\psline[linewidth=.05](14.94,-3.94)(16.94,-1.94)
\pscircle*(16,-6.5){.2}\rput(16,-7.3){\sm{2}}
\pscircle*(17.5,-5){.2}\rput(18,-5){\sm{1}}
\rput(20,-4){$+$}
\psarc(20,-4){3}{-45}{45}\rput(22.5,-7){\sm{$g\!=\!1$}}
\psline[linewidth=.05](22,-4)(27,-4)\rput(27,-5){\sm{$g\!=\!0$}}
\pscircle*(26,-4){.2}\rput(26,-3.2){\sm{1}}
\pscircle*(22.60,-2.5){.2}\rput(23.4,-2.5){\sm{2}}
\rput(30,-4){$+$}
\psarc(30,-4){3}{-45}{45}\rput(32.5,-7){\sm{$g\!=\!1$}}
\psline[linewidth=.05](32,-4)(37,-4)\rput(37,-5){\sm{$g\!=\!0$}}
\pscircle*(36,-4){.2}\rput(36,-3.2){\sm{1}}
\pscircle*(34.5,-4){.2}\rput(34.5,-3.2){\sm{2}}
\end{pspicture}
\caption{The genus 1 TRR on $\ov\fM_{1,2}(X,A)$}
\label{g1TRR_fig}
\end{figure}

\noindent
By the Kunneth decomposition of the diagonal in $X^2$ and the divisor relation, 
the degree of the intersection~of
$$\ov\fM_{1,k}'(X,A)\equiv\big\{[u,x_1,\ldots,x_k]\!\in\!\ov\fM_{1,k}(X,A)\!:~
u(x_1)\!\in\!f,~u(x_2)\!=\!\pt_2,\ldots,u(x_k)\!=\!\pt_k\big\}$$
with $\De_0$ is 
\begin{equation*}\begin{split}
\frac12\sum_i \GW_{0,A}^X(H_i,\check{H}^i,f,\pt^{k-1})
&=\frac12\sum_i (H_i\!\cdot\!A)(\check{H}_i\!\cdot\!A)(f\!\cdot\!A)
\GW_{0,A}^X(\pt^{k-1})\\
&=\frac12(A\!\cdot\!A)(f\!\cdot\!A)\GW_{0,A}^X(\pt^{k-1}).
\end{split}\end{equation*}
This gives the first term in our formula.\\

\noindent
The intersection of $\ov\fM_{1,k}'(X,A)$ with the components of~$\De_{;1}$ whose 
generic element restricts to a morphism of degree $A_1\!=\!0$ on
the genus~1 component~is the same as with the subset of these components consisting
of morphisms from domains with no marked points on the genus~1 component
(since the virtual complex dimension of $\ov\fM_{1,1}(X,0)$ is~$1$, it contains no elements
passing through any of the points $\pt_2,\ldots,\pt_k$).
Thus, similarly to the above, the degree of this intersection~is
\begin{equation*}\begin{split}
\sum_i\GW_{0,A}^X(H_i,f,\pt^{k-1})\GW_{1,0}^X(\check{H}^i)
&=\sum_i (H_i\!\cdot\!A)(f\!\cdot\!A)
\GW_{0,A}^X(\pt^{k-1})\frac{1}{24}K_X\!\cdot\!\check{H}_i\\
&=\frac1{24}(f\!\cdot\!A)(K_X\!\cdot\!A)\GW_{0,A}^X(\pt^{k-1});
\end{split}\end{equation*}
the first equality follows from part~(a).
This gives the second term in our formula.
The intersection of $\ov\fM_{1,k}'(X,A)$ with the components of~$\De_{;1}$ whose 
generic element restricts to a morphism of degree $A_1\!=\!A$ on
the genus~1 component~is empty, since 
the domain of any morphism in the intersection would contain a union of irreducible
components on which the morphism is of degree~0 and which carries at least one
of the last $k\!-\!1$ points (for stability), but $F$ does not contain any of the points
$\pt_2,\ldots,\pt_k$.\\

\noindent
For dimensional reasons, the intersection of $\ov\fM_{1,k}'(X,A)$ with 
the components of~$\De_{;1}$ whose generic element restricts to a morphism of 
degree $A_1\!\neq\!0$ on the genus~1 component and $A_0\!\neq\!0$ on the genus~0 tail
consists of morphisms from the domains so that the rational tail carries $-K_X\!\cdot\!A_0\!-\!1$
of the last $k\!-\!1$ marked points.
Thus, similarly to the above, the degree of this intersection~is
\begin{equation*}\begin{split}
&\sum_i  \binom{\!-\!K_X\!\cdot\!A\!-\!1}{\!-\!K_X\!\cdot\!A_0\!-\!1}
\GW_{0,A_0}^X\big(H_i,f,\pt^{-K_X\cdot A_0-1}\big)
\GW_{1,A_1}^X\big(\check{H}_i,p^{-K_X\cdot A_1}\big)\\
&\qquad=\sum_i 
\binom{\!-\!K_X\!\cdot\!A\!-\!1}{\!-\!K_X\!\cdot\!A_0\!-\!1}
(H_i\!\cdot\!A_0)(f\!\cdot\!A_0)\GW_{0,A_0}^X\big(\pt^{-K_X\cdot A_0-1}\big)
(\check{H}_i\!\cdot\!A_1)\GW_{1,A_1}^X\big(\pt^{-K_X\cdot A_1}\big)\\
&\qquad=\binom{\!-\!K_X\!\cdot\!A\!-\!1}{\!-\!K_X\!\cdot\!A_0\!-\!1}
(f\!\cdot\!A_0)(A_0\!\cdot\!A_1)
\GW_{0,A_0}^X\big(\pt^{-K_X\cdot A_0-1}\big)
\GW_{1,A_1}^X\big(\pt^{-K_X\cdot A_1}\big).
\end{split}\end{equation*}
This gives the last term in our formula.
\end{proof}

\begin{crl}[{\cite[(15.7)]{IPsum}}]\label{g1GWs_crl1}
The genus~0 and~1 GW-invariants of~$\wh\P^2_9$ satisfy
\BE{H_e1}H(q)=\frac{1}{12}\big(q\cF_0'(q)-\cF_0(q)\big)+\cF_0(q)\cdot G(q)\,. \EE
\end{crl}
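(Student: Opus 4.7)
The plan is to apply Lemma~\ref{g1GWs_lmm}(b) with $X\!=\!\wh\P_9^2$, $A\!=\!\fs_i\!+\!d\ff$, and $f\!=\!\PD_{\wh\P_9^2}\ff$, and then recognize the resulting identity of sequences as an identity of generating functions. Since $K_X$ is Poincar\'e dual to $-\ff$ on $\wh\P_9^2$, the relevant intersection numbers are
\[
f\!\cdot\!A=1,\qquad K_X\!\cdot\!A=-1,\qquad A\!\cdot\!A=2d-1,
\]
so that $-K_X\!\cdot\!A-1=0$ and $A\!\cdot\!A+K_X\!\cdot\!A=2d-2$. The first term of Lemma~\ref{g1GWs_lmm}(b) therefore reduces to $\tfrac{d-1}{12}\GW_{0,\fs_i+d\ff}^{\wh\P_9^2}()$.

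For the sum, the binomial coefficient $\binom{0}{-K_X\cdot A_0-1}$ vanishes unless $-K_X\!\cdot\!A_0=1$, which then forces $-K_X\!\cdot\!A_1=0$, i.e.\ $\ff\!\cdot\!A_1=0$. The key geometric input, which I would justify by choosing $\wh\P_9^2$ so that its elliptic fibration $\pi$ is generic, is that the only classes $A_1\!\neq\!0$ supporting genus~1 stable maps to $\wh\P_9^2$ with $\ff\!\cdot\!A_1\!=\!0$ are positive fiber multiples $A_1\!=\!d_1\ff$ with $1\!\le\!d_1\!\le\!d$; correspondingly $A_0\!=\!\fs_i\!+\!(d\!-\!d_1)\ff$. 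With $f\!\cdot\!A_0=1$ and $A_0\!\cdot\!A_1=d_1$, the sum collapses~to
\[
\sum_{d_1=1}^{d}d_1\,\GW_{0,\fs_i+(d-d_1)\ff}^{\wh\P_9^2}()\,\GW_{1,d_1\ff}^{\wh\P_9^2}().
\]

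Multiplying the resulting identity for $\GW_{1,\fs_i+d\ff}^{\wh\P_9^2}(\tau_1[\ff])$ by $q^d$ and summing over $d\!\ge\!0$, the first term becomes $\tfrac1{12}\!\sum_d(d\!-\!1)\GW_{0,\fs_i+d\ff}^{\wh\P_9^2}()q^d=\tfrac{1}{12}(q\cF_0'(q)-\cF_0(q))$ by the definition~\eref{Fgdfn_e} of $\cF_0$. The double sum is a Cauchy product equal to
\[
\cF_0(q)\cdot\sum_{d_1=1}^{\i}d_1\GW_{1,d_1\ff}^{\wh\P_9^2}()q^{d_1}=\cF_0(q)G(q),
\]
where the second equality is Lemma~\ref{g1RES_lmm}. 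Combining these two contributions yields~\eref{H_e1}. The main obstacle is the geometric claim restricting the genus~1 factor to fiber classes, which is needed to turn the finite sum in Lemma~\ref{g1GWs_lmm}(b) into the clean convolution with $G(q)$; everything else is a routine manipulation of intersection numbers and power series.
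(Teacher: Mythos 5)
Your proof is correct and follows the same route as the paper: apply Lemma~\ref{g1GWs_lmm}(b) with $X\!=\!\wh\P^2_9$, $A\!=\!\fs_i\!+\!d\ff$, $f\!=\!\PD\ff$, reduce the sum via the binomial coefficient and the geometry of the elliptic fibration, and read off the generating function using Lemma~\ref{g1RES_lmm} and~\eref{Fgdfn_e}. The only difference is one of exposition: the paper's proof states the needed intersection numbers and the emptiness of $\ov\fM_{0,k}(\wh\P^2_9,d\ff)$ for $d\!>\!0$ and then directly writes down the restricted double sum, whereas you spell out exactly why the binomial coefficient forces $\ff\cdot A_1\!=\!0$ and why, for a generic pencil of cubics, the genus-1 GW-invariants supported in such classes vanish unless $A_1$ is a positive fiber multiple -- a point the paper leaves implicit. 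That geometric claim is indeed the crux, and it is valid since $\wh\P^2_9$ is fixed at the start of Section~\ref{RES_sec} as the blowup at the base locus of a general cubic pencil.
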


\begin{proof}
We apply Lemma~\ref{g1GWs_lmm}(b) with $X\!=\!\wh\P^2_9$ and $A\!=\!\fs_i\!+\!d\ff$.
In this case,
$$-K_X\!\cdot\!(\fs_i\!+\!d\ff)=-1, \quad (\fs_i\!+\!d\ff)^2=2d\!-\!1,\quad
\ov\fM_{0,k}(X,d\ff)=\eset \qquad \forall~d\!\in\!\Z^+\,.$$
Thus,
$$H(q)=\sum_{d=0}^{\i}\frac{d\!-\!1}{12}\GW_{0,\fs+d\ff}^{\wh\P^2_9}()q^d
+\sum_{\begin{subarray}{c}d_0\in\Z^{\ge0},d_1\in\Z^+\\ d_0+d_1=d\end{subarray}}
\!\!\!\!\!\!\!\!\!\!\!\!\GW_{0,\fs+d_0\ff}^{\wh\P^2_9}()q^{d_0}\cdot d_1\GW_{1,d_1\ff}^{\wh\P^2_9}()q^{d_1}.$$
The claim now follows from the $g\!=\!0$ case of~\eref{Fgdfn_e} and
Lemma~\ref{g1RES_lmm}.
\end{proof}

\begin{crl}\label{g1GWs_crl2}
The genus~1 relative GW-invariants  of $(\P^1\!\times\!\T^2,F)$ satisfy
%\BE{g1GW_e3}
$$\GW_{1,\fs+d\ff;(1)}^{\P^1\times\T^2,F}\big(\tau_1[\ff];\pt\big)=\begin{cases}
-\frac{1}{12},&\hbox{if}~d=0;\\
d\,\GW_{1,d\ff}^{\P^1\times\T^2}(),&\hbox{if}~d\!>\!0.
\end{cases}$$%\EE
\end{crl}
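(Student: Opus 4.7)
The plan is to reduce to absolute invariants via Lemma~\ref{P1T2_lmm}, apply the genus~1 topological recursion formula of Lemma~\ref{g1GWs_lmm}(b), and then evaluate the resulting combinatorial expression using explicit dimension counts on $X\!=\!\P^1\!\times\!\T^2$.

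First, by Lemma~\ref{P1T2_lmm} it suffices to compute the absolute invariant
$$\GW_{1,\fs+d\ff}^{\P^1\times\T^2}\big(\tau_1[\ff],\pt\big).$$
Setting $X\!=\!\P^1\!\times\!\T^2$, $A\!=\!\fs\!+\!d\ff$, $f\!=\![\ff]$, the relevant intersection numbers are
$$K_X\!\cdot\!A=-2,\qquad A\!\cdot\!A=2d,\qquad f\!\cdot\!A=1,$$
so $-K_X\!\cdot\!A-1=1$ and the input $(\tau_1[\ff],\pt)$ matches the hypotheses of Lemma~\ref{g1GWs_lmm}(b).

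Next I would analyze which splittings $A\!=\!A_0\!+\!A_1$ with $A_0,A_1\!\neq\!0$ contribute to the sum. Writing $A_i\!=\!a_i\fs\!+\!b_i\ff$, the condition that $\GW_{0,A_0}^X(\pt^{-K_X\cdot A_0-1})\!\neq\!0$ forces $a_0\!\geq\!1$, since any genus-0 map to $\P^1\!\times\!\T^2$ has constant projection to $\T^2$ and so its homology class is a nonnegative multiple of~$\fs$; combined with $a_0\!+\!a_1\!=\!1$ and $a_1\!\geq\!0$ (needed for $A_1$ to be represented), this gives $(a_0,a_1)\!=\!(1,0)$. The same argument shows $\GW_{0,\fs+b_0\ff}^X(\pt)\!=\!0$ unless $b_0\!=\!0$, as no reducible nor irreducible genus-0 stable map to $\P^1\!\times\!\T^2$ has nonzero fiber degree. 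Thus the only surviving splitting is $A_0\!=\!\fs$, $A_1\!=\!d\ff$, which requires $d\!\geq\!1$ and for which
$$\GW_{0,\fs}^X(\pt)=1,\qquad f\!\cdot\!A_0=1,\qquad A_0\!\cdot\!A_1=d,\qquad \binom{1}{1}=1.$$
The vanishing of $\GW_{0,\fs+d\ff}^X(\pt)$ for $d\!\geq\!1$ also kills the first term of Lemma~\ref{g1GWs_lmm}(b) in that range, while for $d\!=\!0$ that first term evaluates to
$$\frac{1}{24}\big(A\!\cdot\!A+K_X\!\cdot\!A\big)\GW_{0,\fs}^X(\pt)=\frac{0-2}{24}\cdot 1=-\frac{1}{12},$$
and the sum is empty (no admissible splittings of $\fs$). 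Combining the two cases yields the claimed formula.

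No serious obstacle is anticipated: the main subtlety is simply the justification that $\GW_{0,\fs+b\ff}^{\P^1\times\T^2}(\pt)\!=\!0$ for $b\!>\!0$, which follows from the fact that any stable genus-0 map to $\P^1\!\times\!\T^2$ has image contained in a single section $\P^1\!\times\!\{q\}$ (because $\pi_1(\T^2)$ obstructs nonconstant maps out of a rational component), so no stable maps exist in the class $\fs\!+\!b\ff$ when $b\!>\!0$ and the virtual count vanishes automatically.
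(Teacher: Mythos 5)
Your proposal is correct and follows essentially the same route as the paper: reduce to the absolute invariant via Lemma~\ref{P1T2_lmm}, apply the genus-1 topological recursion formula Lemma~\ref{g1GWs_lmm}(b) with $X=\P^1\!\times\!\T^2$ and $A=\fs+d\ff$, and then observe that all genus-0 moduli spaces in positive fiber degree are empty (since a nonconstant rational component cannot map to $\T^2$), which kills the first TRR term for $d>0$ and collapses the sum to the single splitting $A_0=\fs$, $A_1=d\ff$. The only substantive difference is presentational: you make explicit the elimination of splittings with $a_0\neq1$ or $b_0\neq0$, while the paper records only the already-reduced formula~\eref{g1GWscrl2_e1} and then invokes the emptiness statement~\eref{ovMempt_e}.
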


\begin{proof}
We apply Lemma~\ref{g1GWs_lmm}(b) with $X\!=\!\P^1\!\times\!\T^2$
and $A\!=\!\fs\!+\!d\ff$ to the right-hand side of~\eref{P1T2lmm_e}.
In this case, 
$$-K_X\!\cdot\!(\fs\!+\!d\ff)=-2,\quad (\fs\!+\!d\ff)^2=2d, \qquad\forall~d\!\in\!\Z.$$
Thus,  
\BE{g1GWscrl2_e1}\GW_{1,\fs+d\ff}^{\P^1\times\T^2}\big(\tau_1[\ff],\pt\big)
=\frac{d\!-\!1}{12}\GW_{0,\fs+d\ff}^{\P^1\times\T^2}(\pt)
+\sum_{\begin{subarray}{c}d_0\in\Z^{\ge0},d_1\in\Z^+\\ d_0+d_1=d\end{subarray}}
\!\!\!\!\!\!\!\!\!\!
\GW_{0,\fs+d_0\ff}^{\P^1\times\T^2}(\pt)\cdot d_1\GW_{1,d_1\ff}^{\P^1\times\T^2}().\EE
Since the composition of a degree $\fs\!+\!d\ff$ morphism to $\P^1\!\times\!\T^2$
with the projection to the second factor is a degree~$d$ morphism to~$\T^2$
and there are no such morphisms from~$\P^1$ if $d\!\in\!\Z^+$, 
\BE{ovMempt_e}
\ov\fM_{0,1}(\P^1\!\times\!\T^2,\fs\!+\!d\ff),
\ov\fM_{0,1;(1)}^F(\P^1\!\times\!\T^2,\fs\!+\!d\ff)=\eset\qquad\forall\,d\!\in\!\Z^+.\EE
Thus, the first genus~0 term on the right-hand side of~\eref{g1GWscrl2_e1} vanishes unless $d\!=\!0$ 
and the second unless $d_0\!=\!0$; in the exceptional cases, they equal~1.
The claim now follows from Lemma~\ref{P1T2_lmm}. 
\end{proof}

\noindent
We next obtain a second expression for $H(q)$ by applying 
the symplectic sum formula to the decomposition
\BE{Esplit_e1} 
\wh\P^2_9=\wh\P^2_9\#_F(\P^1\!\times\!\T^2)\EE
and moving the fiber constraint to the $\P^1\!\times\!\T^2$ side.
Since $\cR_{\P^1\times \T^2}^F\!=\!0$, the homomorphism
$$\#\!: H_2(\wh\P^2_9;\Z)\!\times_F\!H_2(\P^1\!\times\!\T^2;\Z)\lra H_2(\wh\P^2_9;\Z)$$
is well-defined; see \cite[Corollary~4.2(2)]{GWrelIP}.
Since 
$$(a_1\fs_1\!+\!\ldots\!+\!a_9\fs_9+d'\ff)\cdot F
=(a\fs+d''\ff)\cdot F$$ 
if and only if $a_1\!+\!\ldots\!+\!a_9=\!a$ and $\fs_i\!\#\!\fs\!=\!\fs_i$,
the symplectic sum formula gives 
\BE{RESsplit_e1}\begin{split}
\GW_{1,\fs_i+d\ff}^{\wh\P^2_9}\big(\tau_1[\ff]\big)
&=\sum_{\begin{subarray}{c}d',d''\in\Z^{\ge0}\\ d'+d''=d\end{subarray}}\!\!\!\!\!
\GW_{0,\fs_i+d'\ff;(1)}^{\wh\P^2_9,F}(;\ff)\,
\GW_{1,\fs+d''\ff;(1)}^{\P^1\times\T^2,F}\big(\tau_1[\ff];\pt\big)\\
&\qquad+
\sum_{\begin{subarray}{c}d',d''\in\Z^{\ge0}\\ d'+d''=d\end{subarray}}\!\!\!\!\!
\GW_{1,\fs_i+d'\ff;(1)}^{\wh\P^2_9,F}(;\pt)\,
\GW_{0,\fs+d''\ff;(1)}^{\P^1\times\T^2,F}\big(\tau_1[\ff];\ff\big),
\end{split}\EE
where the relative constraints are listed after the semi-columns.\\

\noindent
By~\eref{ovMempt_e},
$$\GW_{0,\fs+d''\ff;(1)}^{\P^1\times\T^2,F}\big(\tau_1[\ff];\ff\big)=0
\qquad\forall\,d''\!\in\!\Z^+\,.$$
On the other hand, the morphism
$$\big\{[u,x,y]\!\in\!\ov\fM_{0,1;(1)}^F(\P^1\!\times\!\T^2,\fs)\!:~u(x)\!\in\!\ff\big\}
\lra \ff, \qquad [u,x,y]\lra u(x),$$
is an isomorphism and the restriction of $\psi_1$ under this isomorphism is 
the first chern class of the conormal bundle to a fiber $\T^2$ in $\P^1\!\times\!\T^2$,
i.e.~$0$.
Thus, the second sum in~\eref{RESsplit_e1} vanishes.\\

\noindent
Combining~\eref{RESsplit_e1} with the above conclusion,
the $g\!=\!0$ case of Lemma~\ref{RES_lmm}, 
and Corollary~\ref{g1GWs_crl2}, we find~that
$$\GW_{1,\fs_i+d\ff}^{\wh\P^2_9}\big(\tau_1[\ff]\big)
=-\frac1{12}\GW_{0,\fs_i+d\ff}^{\wh\P^2_9}()
+\sum_{\begin{subarray}{c}d'\in\Z^{\ge0},d''\in\Z^+\\ d'+d''=d\end{subarray}}
\!\!\!\!\!\!\!\!\!\!\!\!
\GW_{0,\fs_i+d'\ff}^{\wh\P^2_9}()\,d''\GW_{1,d''\ff}^{\P^1\times\T^2}().$$
Along with~\eref{Hdfn_e}, the $g\!=\!0$ case of~\eref{Fgdfn_e}, and Lemma~\ref{P1xT2_lmm1}, 
this identity gives
\BE{H_e2} H(q)=-\frac{1}{12}\cF_0(q)+\cF_0(q)\cdot 2G(q). \EE
By \eref{F0init_e}, \eref{H_e1}, and~\eref{H_e2},
\BE{F0ODE_e} \cF_0(0)=1, \qquad q\frac{\nd }{\nd q}\log \cF_0(q)=12G(q).\EE
Since
$$\frac{1}{12}q\frac{\nd }{\nd q}\log\bigg(\prod_{d=1}^{\i}(1\!-\!q^d)\bigg)^{-12}
=\sum_{d=1}^{\i}\frac{dq^d}{1-q^d}=\sum_{d=1}^{\i}\si(d)q^d=G(q),$$
\eref{F0ODE_e} implies the $g\!=\!0$ case of~\eref{YZ_e1}.
The full statement of~\eref{YZ_e1} follows from this case and the next lemma.

\begin{lmm}\label{RES_lmm3}
For every $g\!\in\!\Z^+$,
\BE{RES3_e}\cF_g(q)=\cF_{g-1}(q)\cdot qG'(q).\EE
\end{lmm}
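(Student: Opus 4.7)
The plan is to apply the standard symplectic sum formula of~\cite{LR,Jun2} to the decomposition $\wh\P^2_9 = \wh\P^2_9 \#_F (\P^1\!\times\!\T^2)$ analyzed in Example~\ref{ESsum_eg2}; since $\cR^F_{\wh\P^2_9,\P^1\times\T^2}\!=\!\{0\}$, no refinement is needed.  Applied to $\GW_{g,\fs_i+d\ff}^{\wh\P^2_9}(\pt^g)$, this produces a raw expansion~\eref{RESsplit_e2} as a sum over degree splittings $d_1\!+\!d_2\!=\!d$, genus/marked-point distributions between the two factors, and the four Kunneth terms of $\PD(\De_{\T^2})\in H^2(\T^2\!\times\!\T^2)$.

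I would then establish the vanishing~\eref{RelGWvan_e}:  for every $g',A,k$ and every $\kappa\in H^{>0}(F;\Q)$,
\[\GW_{g',A;(1)}^{\wh\P^2_9,F}(\pt^k;\kappa)=0.\]
This is an immediate consequence of the factorization $\ev_X^V\!=\!\pi_{X;\bs}^V\!\circ\!\wt\ev_X^V$ of~\eref{evfactor_e} together with the observation made in Example~\ref{ESsum_eg2} that the rim-tori cover $\wh{V}_{\wh\P^2_9;(1)}$ is~$\C$:  the covering projection $\pi_{X;\bs}^V$ kills positive-degree cohomology, so any positive-degree relative insertion pulls back to zero.  Combined with parity constraints from dimension matching, which automatically eliminate the odd-degree $H^1\!\otimes\!H^1$ Kunneth terms, \eref{RelGWvan_e} kills the $(\PD_F(\pt),1)$ contribution in~\eref{RESsplit_e2} and leaves only the Kunneth term $(1,\PD_F(\pt))$.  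In this surviving term, the $Y$-side invariants are then analyzed via structural considerations analogous to those in the proof of Lemma~\ref{P1xT2_lmm2}---every relevant stable map decomposes as a section $\P^1\!\times\!\{q\}$ glued to a genus-$g_Y$ unbranched cover of a fiber of $\pi_1$---which constrains $g_Y\!\le\!1$ and reduces~\eref{RESsplit_e2} to~\eref{ESsum_e15b}:
\[\GW_{g,\fs_i+d\ff}^{\wh\P^2_9}(\pt^g)=\sum_{d_1+d_2=d}\GW_{g-1,\fs_i+d_1\ff;(1)}^{\wh\P^2_9,F}(\pt^{g-1};1)\,\GW_{1,\fs+d_2\ff;(1)}^{\P^1\times\T^2,F}(\pt;\PD_{\T^2}(\pt)).\]

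Lemma~\ref{RES_lmm} applied at genus $g\!-\!1$ (with the relative insertion ``$\ff$'' read as $1\in H^0(F)$ per Remark~\ref{14.4_rmk}) then converts each $X$-side relative invariant to the corresponding absolute invariant $\GW_{g-1,\fs_i+d_1\ff}^{\wh\P^2_9}(\pt^{g-1})$.  Multiplying generating series and invoking Lemma~\ref{P1xT2_lmm2}, which yields $\sum_{d_2\ge0}\GW_{1,\fs+d_2\ff;(1)}^{\P^1\times\T^2,F}(\pt;\pt)\,q^{d_2}=qG'(q)$, produces $\cF_g(q)=\cF_{g-1}(q)\cdot qG'(q)$.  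The main obstacle will be the reduction step from~\eref{RESsplit_e2} to~\eref{ESsum_e15b}:  while the Kunneth collapse via~\eref{RelGWvan_e} is clean, ruling out the residual low-$(g_Y,k_Y)$ contributions on the $Y$-side---particularly the naively self-referential $(g_Y,k_Y)\!=\!(0,0)$, $d_2\!=\!0$ term coming from degree-$\fs$ sections meeting the constrained point on $F$---requires careful bookkeeping of combinatorial factors in the symplectic sum formula and a precise accounting of these degenerate section contributions.
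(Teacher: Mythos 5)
Your outline follows the paper's proof closely: apply the standard symplectic sum formula to $\wh\P^2_9 = \wh\P^2_9 \#_F (\P^1\!\times\!\T^2)$, invoke the vanishing~\eref{RelGWvan_e} (which indeed follows from~\eref{evfactor_e} and $\wh{F}_{\wh\P^2_9;(1)}\!\approx\!\C$) together with parity to discard all but one Kunneth term, convert $X$-side relative invariants to absolute via Lemma~\ref{RES_lmm}, and read off the $Y$-side factor from Lemma~\ref{P1xT2_lmm2}. However, the concern you flag at the end is not a bookkeeping nuisance but a genuine gap in your setup, and it cannot be patched by accounting for combinatorial factors.

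The ``self-referential'' $(g_Y,k_Y)=(0,0)$, $d_2=0$ contribution arises exactly when all $g$ absolute point classes are represented on the $\wh\P^2_9$ side. In that case dimension counting forces $(g_X,g_Y)=(g,0)$, the only Kunneth term that can contribute is $1\otimes\PD_F(\pt)$, and $\ov\fM_{0}^F(\P^1\!\times\!\T^2,\fs+d''\ff)$ is empty for $d''>0$ by the same reasoning as~\eref{ovmempt_e}; so the entire right-hand side collapses to $\GW_{g,\fs_i+d\ff;(1)}^{\wh\P^2_9,F}(\pt^g;1)\cdot\GW_{0,\fs;(1)}^{\P^1\times\T^2,F}(;\PD_F(\pt))$, which by Lemma~\ref{RES_lmm} equals $\GW_{g,\fs_i+d\ff}^{\wh\P^2_9}(\pt^g)$ exactly. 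The resulting identity is $\cF_g=\cF_g$ --- a tautology from which~\eref{RES3_e} cannot be extracted. This is precisely why the paper's derivation of~\eref{RESsplit_e2} explicitly ``moves one point to the $\P^1\!\times\!\T^2$ side'': one of the $g$ point classes is represented by a cycle whose limit lies in $Y-V$, so its restriction to $X$ is $0$. In the symplectic sum formula this forces $(k_X,k_Y)=(g-1,1)$ for every nonzero contribution (any other distribution inserts a zero class at some marked point). Once $k_Y=1$ is fixed, the distribution $(g_Y,k_Y)=(0,0)$ simply does not occur; the genus constraint $g_X+g_Y=g$ and dimension counting then leave exactly the two terms displayed in~\eref{RESsplit_e2}, with $(g_X,g_Y)\in\{(g-1,1),(g,0)\}$, and the $(g,0)$ term is precisely the one your application of~\eref{RelGWvan_e} kills. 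Make the placement of the $g$-th point constraint explicit, and the remainder of your argument goes through and coincides with the paper's.

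(A minor correction: the vanishing~\eref{RelGWvan_e} as used here concerns a relative insertion in $H^2(F;\Q)$, which is what comes from the Kunneth term $\PD_F(\pt)\otimes 1$. Your parity observation takes care of the $H^1\otimes H^1$ terms. The precise quantitative reason that $\GW_{0,\fs;(1)}^{\P^1\times\T^2,F}(;\PD_F(\pt))=1$ rather than some other number is immaterial once the point placement is fixed, since that invariant never appears.)
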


\begin{proof}
In light of Lemmas~\ref{P1xT2_lmm2} and~\ref{RES_lmm},
this statement is equivalent to~\eref{ESsum_e15b}, which was obtained based on
the approach to the symplectic sum formula in~\cite{IPsum}.
We now give a proof by
 applying the usual symplectic sum theorem to the splitting~\eref{Esplit_e1} and moving
one point to the $\P^1\!\times\!\T^2$ side.
Since $g\!-\!1$ points stay on the $\wh\P^2_9$ side, the genus on the $\wh\P^2_9$ side
in the symplectic sum formula
must be at least~$g\!-\!1$ for the invariants not to vanish.
Thus, similarly to~\eref{RESsplit_e1}, we~obtain
\BE{RESsplit_e2}\begin{split}
\GW_{g,\fs_i+d\ff}^{\wh\P^2_9}\big(\pt^g\big)
&=\sum_{\begin{subarray}{c}d',d''\in\Z^{\ge0}\\ d'+d''=d\end{subarray}}\!\!\!\!\!
\GW_{g-1,\fs_i+d'\ff;(1)}^{\wh\P^2_9,F}\big(\pt^{g-1};\ff\big)
\GW_{1,\fs+d''\ff;(1)}^{\P^1\times\T^2,F}(\pt;\pt)\\
&\qquad+
\sum_{\begin{subarray}{c}d',d''\in\Z^{\ge0}\\ d'+d''=d\end{subarray}}\!\!\!\!\!
\GW_{g,\fs_i+d'\ff;(1)}^{\wh\P^2_9,F}\big(\pt^{g-1};\pt\big)
\GW_{0,\fs+d''\ff;(1)}^{\P^1\times\T^2,F}(\pt;\ff).
\end{split}\EE
By \cite[Theorem~1.1]{GWrelIP},
\BE{RelGWvan_e}  \GW_{g,\fs_i+d'\ff;(1)}^{\wh\P^2_9,F}\big(\pt^{g-1};\pt\big)=0\,,\EE
This particular statement holds because the relative evaluation morphism~\eref{evdfn_e2}
factors through the lift to $\wh{F}_{\wh\P^2_9;(1)}\!\approx\!\C$;
see \cite[Example~6.8]{GWrelIP}.
Combining~\eref{RESsplit_e2} with~\eref{RelGWvan_e}
and Lemma~\ref{RES_lmm}, we find~that 
$$\sum_{d=0}^{\i}\GW_{g,\fs_i+d\ff}^{\wh\P^2_9}\big(\pt^g\big)q^d
=\sum_{d',d''\in\Z^{\ge0}}\!\!\!\!\!\GW_{g-1,\fs_i+d'\ff}^{\wh\P^2_9}\big(\pt^{g-1}\big)q^{d'}
\cdot\GW_{1,\fs+d''\ff;(1)}^{\P^1\times\T^2,F}(\pt;\pt)q^{d''}.$$
The claim now follows from~\eref{Fgdfn_e} and Lemma~\ref{P1xT2_lmm2}.
\end{proof}

\begin{rmk}\label{15.3_rmk}
Lemma~\ref{g1GWs_lmm}(b) extends \cite[Lemma~15.1(b)]{IPsum} from 
the $K_X\!\cdot\!A\!=\!-1$ case, using the same argument;
the $K_X\!\cdot\!A\!=\!-2$ case is needed to obtain the crucial identity \cite[(15.8)]{IPsum},
i.e.~\eref{H_e2} above.
Our use of~\eref{RelGWvan_e} avoids the need for \cite[Lemma~15.2(c)]{IPsum}
and directly establishes the last equation in \cite[Section~15.3]{IPsum}.
The statement of the symplectic sum formula in the middle of \cite[p1020]{IPsum}
is wrong, as it should involve relative GW-invariants;
as stated, the last factor is not even zero-dimensional.
The next displayed expression in~\cite{IPsum} has the same problem
and does not lead to \cite[(15.8)]{IPsum}.
Because of problems with these formulas, Lemma~\ref{P1T2_lmm} never even arises
in~\cite{IPsum}.
\end{rmk}

\vspace{.2in}

\noindent
{\it Simons Center for Geometry and Physics, SUNY Stony Brook, NY 11794\\
mtehrani@scgp.stonybrook.edu}\\

\noindent
{\it Department of Mathematics, SUNY Stony Brook, Stony Brook, NY 11794\\
azinger@math.sunysb.edu}\\

\end{document}